\newtheorem{teo}[equation]{Theorem}
\newtheorem{defin}[equation]{Definition}
\newtheorem{prop}[equation]{Proposition}
\newtheorem{cor}[equation]{Corollary}
\newtheorem{lemma}[equation]{Lemma}
\newtheoremstyle{named}{}{}{\itshape}{}{\bfseries}{.}{.5em}{\thmnote{#3}#1}
\theoremstyle{named}
\newcommand{\ga}{\gamma}
\newcommand{\OO}{\mathcal{O}}
\newcommand{\meno}{^{-1}}
\newcommand{\ext}{\operatorname{ext}} % set of extreme points of a convex set
\newcommand{\liu}{\mathfrak{u}}
\newcommand{\liek}{\mathfrak{k}}
\newcommand{\lieg}{\mathfrak{g}}
\newcommand{\liep}{\mathfrak{p}}
\newcommand{\lieq}{\mathfrak{q}}
\newcommand{\liez}{\mathfrak{z}}
\newcommand{\lia}{\mathfrak{a}}
\newcommand{\liem}{\mathfrak{m}}
\newcommand{\lien}{\mathfrak{n}}
\newcommand{\spam}{\,\operatorname{span}\, }
\newcommand{\alfa}{\alpha}
\newcommand{\vacuo}{\emptyset}
\newcommand{\la}{\lambda}
\newcommand{\enf}{\emph}
\newcommand{\desudt}[1] []      {\dfrac {\mathrm {d} #1 }{\mathrm {dt}}}
\newcommand{\desudtzero}        {\desudt \bigg \vert _{t=0} }
\newcommand{\restr}[1]          {\vert_{#1}}
\newcommand{\Ad}{\operatorname{Ad}}
\newcommand{\ad}{{\operatorname{ad}}}
\newcommand{\sx}{\langle}
\newcommand{\xs}{\rangle}
\newcommand{\scalo}{\sx \cdot , \cdot \xs}
\newcommand{\relint}{\operatorname{relint}}
\newcommand{\Gl}{\operatorname{GL}}
\newcommand{\End}{\operatorname{End}}
\newcommand{\cds}{\cdots}
\newcommand{\cd}{\cdot}
\renewcommand{\setminus}{-}
\newcommand{\ra}{\rightarrow}
\newcommand{\lra}{\longrightarrow}
\newcommand{\C}{\mathbb{C}}
\newcommand{\R}{\mathds{R}}
\newcommand{\om}{\omega}
\renewcommand{\phi}{\varphi}
\newcommand{\maxm}[2]{\mathrm{Max}_{#2}(#1)}
\newcommand{\Crit}{\operatorname{Crit}}
\newcommand{\roots}{\Delta}
\newcommand{\CF}{C_F}
\newcommand{\meo}{\end{document}}
\newcommand{\mup}{\mu_\liep}
\newcommand{\mua}{\mu_\lia}
\newcommand{\mupb}{\mu_\liep^\beta}
\newcommand{\simple}{\Pi} % set of simple roots
\begin{document}
\title{Projective representations of real reductive Lie groups and the gradient map}
\author{Leonardo Biliotti}
\address{(Leonardo Biliotti) Dipartimento di Scienze Matematiche, Fisiche e Informatiche \\
          Universit\`a di Parma (Italy)}
\email{leonardo.biliotti@unipr.it}
\begin{abstract}
Let $G$ be a connected semisimple noncompact real  Lie group and let $\rho: G \lra \mathrm{SL}(V)$ be a representation on a finite dimensional vector space $V$ over $\R$, with $\rho(G)$ closed in $\mathrm{SL}(V)$.   Identifying $G$ with $\rho(G)$, we assume there exists a $K$-invariant scalar product $\mathtt g$ such that $G=K\exp(\liep)$, where $K=\mathrm{SO}(V,\mathtt g)\cap G$, $\liep=\mathrm{Sym}_o (V,\mathtt g)\cap \lieg$ and $\lieg$ denotes the Lie algebra of $G$. Here $\mathrm{Sym}_o (V,\mathtt g)$ denotes the set  of symmetric endomorphisms with trace zero. Using the $G$-gradient map techniques we analyze the natural projective representation of $G$ on $\mathbb P(V)$.
\end{abstract}
 \keywords{Gradient map, projective reductive representations}

% \thanks{ The first author was supported by the Project MIUR
% ``Geometric Properties of Real and Complex Manifolds'' and by GNS% of INdAM.  The second author was supported by the PRIN 2007 MIUR
% ``Moduli, strutture geometriche e loro applicazioni'' and by GNSAGA
% of INdAM.  } %
%
 \subjclass[2010]{22E45,53D20; 14L24}
\thanks{The author was partially supported by PRIN  2017
   ``Real and Complex Manifolds: Topology, Geometry and holomorphic dynamics '' and GNSAGA INdAM.}
\maketitle
\section{Introduction}
Let $U$ be a compact connected semisimple Lie group and let $U^\C$ be its complexification \cite{akhiezer}. Let $\tau:U^\C \lra \mathrm{SL}(V)$
be an irreducible holomorphic representation. By the classical Borel-Weyl Theorem, the representation $\tau$ is completely determined by the unique
compact orbit of the $U^\C$-action on $\mathbb P(V)$ which is also the unique complex orbit of $U$.
The vector $v_{\mathrm{max}} \in V$ such that $x_0=[v_{\mathrm{max}}]\in \mathbb P(V)$ satisfies $U^\C \cdot x_0$ is compact, is the maximal weight vector
\cite{huckleberry-introduction-DMV}.

The $U$-action on $\mathbb P(V)$ is Hamiltonian and so there exists a momentum map $\mu:\mathbb P(V) \lra \mathfrak u^*$, where $\mathfrak u^*$ is the dual of the Lie algebra of $U$. If $T\subset U$ is
a maximal torus with Lie algebra $\mathfrak t$, then $\mu_{\mathfrak t}:=\mu\circ i^*$, where $i^*$ is the dual of the natural inclusion $i:\mathfrak t \hookrightarrow \liu$,
is the momentum map for the $T$-action on $\mathbb P(V)$ \cite{kirwan} .
With respect to the $T^\C$-action on $V$, there exist a finite set $\Delta(V,\mathfrak t^\C)\subset (\mathfrak t^\C)^*$ called weights of $V$, so that
\[
V=\bigoplus_{\lambda \in \Delta(V,\mathfrak t^\C)} V_\lambda,
\]
where $V_\lambda=\{v\in V:\, \tau(H)v=\lambda(H)v,\, \mathrm{for\ any\ } H\in \mathfrak t^\C\}$. It is well-known that
$v_{\mathrm{max}}\in V_{\lambda_{\mathrm{\tau}}}$ and $\dim  V_{\lambda_\tau}=1$. The functional $\lambda_\tau$ is called highest weight and $V$ is uniquely determined by $\lambda_\tau$ \cite{helgason}. These data are determined by the momentum map.
Indeed, $\bigcup_{\lambda \in \Delta(V,\mathfrak t^\C)} \mathbb P(V_\lambda)$ is the fixed point set for the $T^\C$-action on $\mathbb P(V)$ and
$\mu_{\mathfrak t}(\mathbb P(V_\lambda))=\lambda_{\vert_{\mathfrak t}}$. By the Atiyah-Guillemin-Sternberg convexity Theorem \cite{atiyah-commuting,guillemin-sternberg-convexity-1},
the set of the extreme points of the polytope $\mu_{\mathfrak t}(\mathbb P(V))$ are weights of $V$. This means that the momentum map of the $T$-action on $\mathbb P(V)$
encodes roots and associated structures of the irreducible representation $\tau$. Moreover, the unique compact orbit of the $U^\C$-action on $\mathbb P(V)$ achieves the maximum of the norm square momentum map. Indeed,  if $x\in \mathbb P(V)$ satisfies
$
\parallel \mu(x) \parallel =\mathrm{Max}_{y\in \mathbb P(V)} \parallel \mu(y) \parallel,
$
then $U^\C \cdot x$ is compact and it is a $U$-orbit \cite{gp,heinzner-schwarz-stoetzel}. In this paper, we analyze projective representations of
real reductive Lie groups. The main tool is the gradient map.

Let $\rho:G \lra \mathrm{GL}(V)$ be a representation on a finite dimensional real vector space. We identify $G$ with  $\rho(G)\subset \mathrm{GL}(V)$
and we assume that $G$ is closed and it is closed under transpose. This means  there exists a scalar product $\scalo$ on $V$ such that
$
G=K\exp (\liep),
$
where $K=G\cap \mathrm{O}(V)$ and $\liep=\lieg \cap \mathrm{Sym}(V)$. Here $\mathrm{O}(V)$ denotes the orthogonal group with respect to $\scalo$, $\mathrm{Sym}(V)$ the set of symmetric endomorphisms of $V$ and $\lieg$ the Lie algebra of $G$. By a standard theorem the existence of
$\scalo$ is proved for a large class of linear representations of a real reductive algebraic groups \cite{jabo,rs}.  The scalar product $\scalo$ defines
a Kempf-Ness function for the $G$-action on $V$ which is the basic tool to study certain geometric properties of linear actions of reductive real algebraic
groups \cite{kempf-ness,rs}. Note that $G$ is self-adjoint since $G$ is invariant with respect to the map $g\mapsto g^T$, where $g^T$
is the transpose of $g$ with respect to  $\scalo$. We recall that a classical Theorem of Mostow \cite{mostow-self} claims that any real reductive algebraic
subgroup $G$ of $\mathrm{GL}(n,\R)$ is conjugate to a self-adjoint subgroup of $\mathrm{GL}(n,\R)$. In this paper we always assume that $G$ is a
real noncompact semisimple linear Lie group and both the representations $\rho:G  \lra \mathrm{SL}(V)$ and $\rho:G \lra \mathrm{SL}(V^\C)$ are irreducible. If $\rho:G \lra \mathrm{SL}(V^\C)$ is reducible, then there exists a linear complex structure $J$ on $V$ such that
\[
\rho(G)\subset \{A\in \mathrm{SL}(V):\, AJ=JA\},
\]
\cite{fh}. Therefore $(V,J)$ is a complex vector space and the $G$-action on $V$ preserves $J$. This case has been  extensively studied in \cite{bsg}.
We may also assume, up to conjugate, that $V=\R^n$ and $\scalo$ is the canonical scalar product. This means that $G$ is a compatible subgroup of
$\mathrm{SL}(n,\R)$.

Let $\mathfrak g=\mathfrak k \oplus \liep$ be the Cartan decomposition of the Lie algebra of $G$ induced by the Cartan decomposition of $\mathrm{SL}(n.\R)$. $G$ is a compatible subgroup of $\mathrm{SL}(n,\C)$ as well. Since $\liek\cap i\liep=\{0\}$, by \cite[Proposition 3.3]{heinz-stoezel}, the Zariski closure of $G$  in $\mathrm{SL}(n,\C)$ is given by $U^\C$, where $U$ is the connected, compact semisimple Lie subgroup of $\mathrm{SU}(n)$  whose Lie algebra is given by $\mathfrak u=\liek \oplus i\liep$.  In particular,  $G$ is a compatible real form of $U^\C$ and the $U^\C$-action on $\C^n$ is irreducible as well.  Our aim is to investigate the natural irreducible
projective representation $\rho:G \lra \mathrm{PGL}(\R^n)$.  In the sequel we also denote by $\rho:G \lra \mathrm{PGL}(\C^n)$ and by
$\rho^\C:U^\C \lra \mathrm{PGL}(\C^n)$.

Let  $\scalo'$  be an $\mathrm{Ad}(\mathrm{SU}(n))$-invariant scalar product on $\mathfrak{su}(n)$. The $U$-action on $\mathbb P (\C^n)$ is Hamiltonian with momentum map
\[
\mu:\mathbb P(\C^n) \lra \liu, \qquad \mu=\pi_\liu \circ \Phi,
\]
where $\Phi(z)=-\frac{i}{2} \left(\frac{zz^*}{\parallel z \parallel^2} -\frac{1}{n}\mathrm{Id}_n\right)$ is the momentum map of the $\mathrm{SU}(n)$-action on
$\mathbb P(\C^n)$ and $\pi_\liu$ is the orthogonal projection of $\mathrm{su}(u)$ onto  $\liu$ \cite{kirwan}. The $G$-gradient map is defined as follows.

We also denote by $\scalo'$ the $\mathrm{Ad(SU(n))}$-invariant scalar product on
$i\mathfrak{su}(n)$  requiring that $i$ is an isometry of $\mathfrak{su}(n)$ onto $i\mathfrak{su}(n)$. Since $G$ is compatible, we have the subspace
$\liep\subset i\mathfrak u$. The $G$-gradient map $\mup:\mathbb P(\C^n) \lra \liep$  is the orthogonal projection of $i\mu$ onto $\liep$.  In other world we require
\[
\langle \mup(z),\beta \rangle'=\langle i\mu(z),\beta \rangle'=\langle \mu(z),-i\beta\rangle',
\]
for any $\beta \in \liep$. If $\lia \subset \liep$ is an Abelian subalgebra, then
\[
\mua:\mathbb P(\C^n) \lra \lia, \qquad \mua:=\pi_\lia \circ \mup,
\]
where $\pi_\lia$ is the orthogonal projection of $\liep$ onto $\lia$, is the $A=\exp(\lia)$-gradient map.
We usually restrict both $\mua$ and $\mup$ on $\mathbb P( \R^n)$.
%By Lemma \ref{restrizione}, $\mu(z)=i\mup(z)$ for any $z\in \mathbb P(\R^n)$, see also \cite{jabo}.
By Borel-Weyl Theorem there exists a unique compact orbit $\mathcal O'$ of the $U^\C$-action on
$\mathbb P(\C^n)$. By a Theorem of Wolf \cite{wolf}, the $G$-action on $\OO'$ admits a unique compact orbit. We prove the following result.
\begin{teo}
The set $\mathcal O=\mathbb P(\R^n) \cap \mathcal O'$ is the unique compact $G$-orbit contained in $\mathcal O'$.
Moreover, $\mathcal O$ is a Lagrangian submanifold of $\mathcal O'$ and the fixed point set of an anti-holomorphic involutive isometry of $\mathcal O'$
induced by a complex conjugation of $\mathbb P(\C^n)$. In particular $\mathcal O$ is a totally geodesic submanifold of $\mathcal O'$.
\end{teo}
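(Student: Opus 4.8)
The plan is to realize $\OO$ as the fixed point set of the complex conjugation of $\PP(\C^n)$, deduce the geometric assertions from general properties of anti-holomorphic involutions, and then identify this fixed point set with Wolf's unique compact $G$-orbit by an orbit-closure argument.

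First I would introduce the complex conjugation $\sigma:\PP(\C^n)\lra\PP(\C^n)$ induced by $z\mapsto\bar z$ on $\C^n=\R^n\oplus i\R^n$. A direct check shows that $\sigma$ is an involutive isometry of $(\PP(\C^n),\om_{\mathrm{FS}})$, that it is anti-holomorphic, so $d\sigma\circ J=-J\circ d\sigma$ and hence $\sigma^*\om_{\mathrm{FS}}=-\om_{\mathrm{FS}}$, and that its fixed point set is exactly $\PP(\R^n)$. Since $G\subset\mathrm{SL}(n,\R)$ consists of real matrices, $\rho(g)\circ\sigma=\sigma\circ\rho(g)$ for every $g\in G$; more precisely $\sigma\,\rho^\C(u)\,\sigma=\rho^\C(\bar u)$ for $u\in U^\C$, where $\bar u$ is the entrywise conjugate. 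Because $U^\C$ is the Zariski closure of the real group $G$ and complex conjugation is a real algebraic automorphism of $\mathrm{SL}(n,\C)$ fixing $G$ pointwise, it preserves $U^\C$; therefore $\sigma(\OO')=\sigma(U^\C\cdot x_0)=U^\C\cdot\sigma(x_0)$ is again a compact $U^\C$-orbit, and by uniqueness of the compact $U^\C$-orbit it coincides with $\OO'$. Thus $\sigma$ restricts to an anti-holomorphic, anti-symplectic, involutive isometry $\tau:=\sigma\restr{\OO'}$ of the compact \Keler manifold $\OO'$, with $\mathrm{Fix}(\tau)=\OO'\cap\PP(\R^n)=\OO$.

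Granting that $\OO$ is nonempty, the geometric statements are immediate. The fixed point set of an isometric involution is a totally geodesic submanifold, which yields the last assertion; and at $x\in\mathrm{Fix}(\tau)$ the differential $d\tau_x$ is an involution with $(d\tau_x)^*\om=-\om$ and $d\tau_x\circ J=-J\circ d\tau_x$, so its $+1$-eigenspace $W=T_x\OO$ is isotropic while $J$ carries $W$ isomorphically onto the $-1$-eigenspace; hence $\dim_\R W=\tfrac12\dim_\R\OO'$ and $\OO$ is Lagrangian. In particular $\OO$ is totally real of real dimension $\dim_\C\OO'$, and it is by construction the fixed point set of the anti-holomorphic isometry $\tau$ induced by a complex conjugation of $\PP(\C^n)$, as claimed.

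It remains to show $\OO\neq\vacuo$ and that it is the unique compact $G$-orbit $\OO_0\subset\OO'$ provided by Wolf's Theorem. I would first produce a real point of $\OO_0$: choosing a maximal abelian $\lia\subset\liep$, the torus $A=\exp(\lia)$ acts on $\R^n$ with real eigenvalues, so the restricted weight spaces are defined over $\R$; a highest restricted weight line is then spanned by a vector $v\in\R^n$, and $x:=[v]\in\PP(\R^n)$ is a critical point of $\parallel\mup\parallel^2$ whose $G$-orbit is compact, hence equal to $\OO_0$. Therefore $\OO_0\cap\PP(\R^n)\neq\vacuo$. Since $\sigma$ commutes with $G$, the dichotomy for a $G$-equivariant involution of the homogeneous space $\OO_0$ shows that $\OO_0$ is then pointwise fixed by $\sigma$, i.e. $\OO_0\subset\PP(\R^n)$, so $\OO_0\subset\OO$ and in particular $\OO\neq\vacuo$. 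Finally, since $\OO$ is Lagrangian we have $\dim_\R\OO=\dim_\C\OO'=\dim_\R\OO_0$, so the connected compact orbit $\OO_0$ is open and closed in $\OO$; as every nonempty compact $G$-invariant subset of $\OO'$ contains the unique closed orbit $\OO_0$, each connected component of $\OO$ contains the connected set $\OO_0$, which forces $\OO$ to be connected and hence $\OO=\OO_0$.

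The main obstacle is this last identification, and specifically the two inputs that make $\OO_0$ fill out the entire real locus: that the compact orbit $\OO_0$ actually meets $\PP(\R^n)$, and that it has full real dimension $\dim_\C\OO'$, equivalently that $G$ acts transitively on $\OO'\cap\PP(\R^n)$. Both reflect the hypothesis that $\rho$ is irreducible on $V$ and on $V^\C$, which places the representation in the real type and makes the real structure $\sigma$ compatible with the Cartan decomposition $\lieg=\liek\oplus\liep$; I expect to secure them through the gradient map, by analyzing the maximum of $\parallel\mup\parallel^2$ on $\PP(\R^n)\cap\OO'$ and matching the highest restricted weight datum of $A=\exp(\lia)$ with the complex highest weight vector $v_{\mathrm{max}}$ spanning $\OO'$.
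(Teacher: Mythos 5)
Your overall architecture matches the paper's: a real highest weight vector gives a point of $\OO'$ in $\PP(\R^n)$, the complex conjugation is an anti-holomorphic involutive isometry preserving $\OO'$ whose fixed point set in $\OO'$ is $\OO'\cap\PP(\R^n)$, and the Lagrangian and totally geodesic assertions follow from general facts about such involutions. Those parts are sound. But the step you yourself flag as ``the main obstacle'' --- that the single compact $G$-orbit $\OO_0$ fills out all of $\OO'\cap\PP(\R^n)$ --- is genuinely open in your write-up. Your proposed route (openness of $\OO_0$ in the fixed-point set plus connectedness) hinges on $\dim_\R\OO_0=\dim_\C\OO'$, which you never establish; and you also do not justify why the $G$-orbit of your real highest weight point is compact in the first place (being a critical point of $\|\mup\|^2$ does not suffice --- one needs it to be a maximum along the orbit).

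The paper closes both gaps at once with the norm-square map and no dimension count. On $\PP(\R^n)$ one has $\mup=i\mu$ (Lemma \ref{restrizione}), and in general $\|\mu(z)\|^2=\|\mu_{\liek}(z)\|^2+\|\mup(z)\|^2$; since $\OO'$ is exactly the maximum locus of $\|\mu\|^2$ and the basepoint $v$ is real, every $z\in\OO'\cap\PP(\R^n)$ satisfies $\|\mup(z)\|=\|\mu(z)\|=\|\mu(v)\|$ and hence maximizes the norm-square gradient map. By Proposition \ref{heinzner-maximun}, a $G$-orbit on which $\nu_{\liep}$ attains a local maximum is compact, so $G\cdot z$ is compact for every such $z$, and Wolf's uniqueness forces $G\cdot z=\OO$; this is precisely the transitivity of $G$ on $\OO'\cap\PP(\R^n)$ that you need (Corollary \ref{intersection}). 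The compactness (and Lagrangian property) of $G\cdot v$ itself is obtained in the paper from \cite[Proposition 9]{bsb} together with Proposition \ref{heinzner-maximun}. If you prefer to keep your dimension-theoretic route, you would have to import Wolf's lower bound $\dim_\R\OO_0\ge\dim_\C\OO'$, or the fact that the closed real-form orbit through a point of the Lagrangian $\PP(\R^n)$ is itself Lagrangian, to make $\OO_0$ open in $\OO$; as written, the argument does not go through.
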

The vice-versa holds as well.

Let $\rho:U^\C \lra \mathrm{SL}(n,\C)$ be an irreducible holomorphic representation of a semisimple complex Lie group.
Let $G$ be a noncompact real form of $U^\C$. Let $\OO'$ be the unique compact orbit of the $U^\C$-action on $\mathbb P(\C^n)$ and let $\OO$ the unique compact $G$-orbit in $\OO'$.
\begin{teo}
If there exists an anti-holomorphic involution $T$ of $\mathbb P(\C^n)$ preserving $\OO'$ and such that $\OO$ is contained in the fixed point set of $T$, then  there exists a real subspace $V\subset \C^n$ such that $G$ acts irreducibly on $V$ and $V^\C=\C^n$.
\end{teo}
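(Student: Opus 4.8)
The plan is to realize $V$ as the fixed-point set of an antilinear involution lifting $T$, and then to prove that $\rho(G)$ commutes with this lift; irreducibility of the resulting real representation will then follow from that of the complex one.

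First I would lift the anti-holomorphic involution $T$ to an antilinear map $\sigma\colon\C^n\to\C^n$, unique up to a nonzero scalar, with $T=[\sigma]$. As $T^2=\Id$, the linear map $\sigma^2$ is a scalar $\lambda\,\Id$, and comparing $\sigma\circ\sigma^2$ with $\sigma^2\circ\sigma$ (using $\sigma(\lambda v)=\overline\lambda\,\sigma v$) forces $\lambda\in\R$. If $\lambda<0$ then $T$ would be induced by a quaternionic structure $\sigma^2=-\Id$, which has no fixed point on $\PP(\C^n)$; since $\OO\subset\mathrm{Fix}(T)$ is nonempty this case is excluded, so $\lambda>0$ and, after rescaling, $\sigma^2=\Id$. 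Thus $\sigma$ is a real structure, $V:=\{v\in\C^n:\sigma v=v\}$ satisfies $V^\C=V\oplus iV=\C^n$, and $\mathrm{Fix}(T)=\PP(V)$.

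Next I would show that $T$ commutes with the projective action of each $g\in G$. For fixed $g$ the map $\psi_g:=T\circ\rho(g)\circ T$ is a holomorphic automorphism of $\PP(\C^n)$, being anti-holomorphic composed with holomorphic composed with anti-holomorphic. Since $\OO$ is $G$-invariant and pointwise fixed by $T$, every $x\in\OO$ gives $\psi_g(x)=T(\rho(g)\cdot x)=\rho(g)\cdot x$, so $\psi_g$ and $\rho(g)$ agree on $\OO$. Because $G$ is Zariski dense in $U^\C$, the Zariski closure of $\OO=G\cdot x_0$ is a $U^\C$-invariant closed subset of the homogeneous space $\OO'$ and hence equals $\OO'$; thus $\OO$ is dense in $\OO'$ and the two holomorphic automorphisms coincide on all of $\OO'$. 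By irreducibility the cone over $\OO'$ spans $\C^n$, so $\OO'$ is linearly nondegenerate, and an element of $\mathrm{PGL}(n,\C)$ fixing pointwise the irreducible nondegenerate variety $\OO'$ is the identity. Hence $\psi_g=\rho(g)$ on all of $\PP(\C^n)$, i.e. $T\circ\rho(g)\circ T=\rho(g)$.

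Lifting this identity gives $\sigma\rho(g)\sigma^{-1}=\zeta_g\,\rho(g)$ for a scalar $\zeta_g$; in a real basis of $V$ the map $\sigma$ is complex conjugation, so $\det(\sigma\rho(g)\sigma^{-1})=\overline{\det\rho(g)}=1$ and therefore $\zeta_g^{\,n}=1$. The assignment $g\mapsto\zeta_g$ is a continuous homomorphism from the connected group $G$ into the finite group of $n$-th roots of unity, hence trivial, so $\sigma$ commutes with $\rho(G)$ and $\rho(G)$ preserves $V$. Finally, $G$ acts irreducibly on $\C^n=V^\C$ (again by Zariski density), and the complexification of any proper nonzero $G$-invariant real subspace of $V$ would be a proper nonzero $G$-invariant complex subspace of $\C^n$; this contradiction shows that $G$ acts irreducibly on $V$. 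The main obstacle is the middle step: upgrading the agreement of $\psi_g$ and $\rho(g)$ from the real orbit $\OO$ to all of $\PP(\C^n)$, where both the Zariski density of $\OO$ in $\OO'$ and the linear nondegeneracy coming from irreducibility are essential; the scalar bookkeeping needed to pass from projective commutation to genuine $\rho(G)$-invariance of $V$ is the remaining delicate point.
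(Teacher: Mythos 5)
Your proof is correct, but it takes a genuinely different route from the paper's. The paper's argument is much shorter: it lifts $T$ to an antilinear involution of $\C^n$, sets $V=\mathrm{Ker}(T-\mathrm{Id})$, observes that $\mathrm{Fix}(T)\cap \OO'=\OO'\cap\mathbb P(V)$ forces $\OO\subseteq \mathbb P(V)$, and then passes directly to a $G$-invariant real subspace $W\subseteq V$ spanned by (representatives of) $\OO$, whose complexification is $U^\C$-invariant and hence all of $\C^n$, giving $W=V$ and the irreducibility statement. It never proves that $T$ commutes with the $G$-action. You instead establish exactly that stronger intermediate fact: $T\circ\rho(g)\circ T=\rho(g)$ via agreement on $\OO$, Zariski density of $\OO$ in $\OO'$, and linear nondegeneracy of $\OO'$, followed by the determinant/root-of-unity bookkeeping to promote projective commutation to $\sigma\rho(g)\sigma^{-1}=\rho(g)$. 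What your longer route buys is that $\rho(G)$ preserves $V=\mathrm{Fix}(\sigma)$ \emph{a priori}; this sidesteps the delicate point in the paper's one-line claim that ``$G$ preserves a real subspace $W$ of $V$'' --- for $v\in V$ with $[v]\in\OO$ one only knows $\rho(g)v$ is a \emph{complex} multiple of a vector in $V$ until one knows $\rho(g)V=V$, so the real span of such representatives is not obviously $G$-stable without your step. You are also more careful in excluding the quaternionic case $\sigma^2=-\mathrm{Id}$ (ruled out because $\mathrm{Fix}(T)\supseteq\OO\neq\emptyset$), which the paper asserts implicitly. Both proofs then conclude identically: any proper $G$-invariant real subspace of $V$ would complexify to a proper $U^\C$-invariant subspace of $\C^n$, contradicting irreducibility.
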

Given $\beta \in \liep$, we consider the parabolic subgroup
\[G^{\beta+} :=\{g \in G : \lim_{t\to - \infty} \exp({t\beta}) g
  \exp({-t\beta}) \text { exists} \}\\
\]
 of $G$ and $\mup^\beta:\mathbb P (\R^n) \lra \R$, where $\mup^\beta (z)=\langle \mup(z),\beta\rangle'$. Since $\beta \in \mathrm{Sym}_0 (n)$, $\beta$
 can be diagonalize.
Let $\lambda_1>\dots> \lambda_k$ be the eigenvalues of $\beta$.  We denote by $V_1,\dots, V_k$ the corresponding eigenspaces.
In view of the orthogonal decompositions $\R^n=V_1\oplus\dots\oplus V_{k}$, we get
\[
\mup^\beta ([x_1+\cdots+x_k])=
\frac{\lambda_1 \parallel x_1 \parallel^2 + \cdots +\lambda_k \parallel x_k \parallel^2}{\parallel x_1 \parallel^2 + \cdots +\parallel x_k \parallel^2}.
\]
In particular $\mathrm{Max}_{\mathbb P (\R^n)} (\beta)=\{p\in \mathbb P(\R^n):\, \mup^\beta (p)=\mathrm{max}_{z\in \mathbb P(\R^n)} \mup^\beta  \}=\mathbb P(V_1)$.
By Proposition \ref{rm}, we have $\mathrm{Max}_{\mathcal O} (\beta)\subset \mathrm{Max}_{\mathbb P (\R^n)} (\beta)$ and
$\mathrm{Max}_{\mathcal O} (\beta)\subset \mathrm{Max}_{\mathcal O'} (\beta)$. We point out that $\mathrm{Max}_{\mathcal O'} (\beta)$
is the unique compact orbit of the $(U^{\C})^{\beta+}$-action on $\mathbb P(\C^n)$ \cite[Corollary 1.0.1]{bj}.
\begin{teo}\label{teo1}
In the above setting, the following results hold true:
\begin{enumerate}
\item $V_1$ is the unique subspace of $\R^n$ such that $G^{\beta+}$ acts irreducibly on it;
\item $\maxm{\beta}{\mathcal O}$ is connected and it coincides with the unique compact orbit of the $G^{\beta+}$-action on $\mathcal O$. $\maxm{\beta}{\mathcal O}$  completely characterizes $V_1$;
\item $\maxm{\beta}{\mathcal O}$ is a Lagrangian submanifold of $\maxm{\beta}{\mathcal O'}$ and the fixed point set  of an anti-holomorphic involutive isometry  of $\maxm{\beta}{\mathcal O'}$. In particular, $\maxm{\beta}{\mathcal O}$ is  a totally geodesic submanifold of $\maxm{\beta}{\mathcal O'}$.
\end{enumerate}
\end{teo}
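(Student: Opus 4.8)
\emph{Proof plan.} The plan is to reduce all three statements to the first Theorem stated above, by applying it to the Levi factor of the parabolic subgroup $G^{\beta+}$. Write $G^{\beta+}=LN$, where $L=Z_G(\beta)$ is the Levi factor and $N$ is the unipotent radical, with Lie algebra $\lien=\bigoplus_{\alpha>0}\lieg_\alpha$ in the grading of $\lieg$ by the eigenvalues $\alpha$ of $\ad(\beta)$. Since $\ad(\beta)$ raises the $\beta$-eigenvalue on $\R^n$, the top eigenspace $V_1$ is annihilated by $\lien$; hence $N$ acts trivially on $\mathbb P(V_1)$ and the $G^{\beta+}$-action on $\mathbb P(V_1)$ coincides with the $L$-action. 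The two decisive structural facts I would establish first are: (i) $V_1=(\C^n)^{\lien}\cap\R^n$, i.e.\ $V_1$ is exactly the space of vectors killed by the nilradical; and (ii) $V_1^\C$ is an irreducible module for the complex Levi $L^\C=Z_{U^\C}(\beta)$. Fact (ii) follows because $\maxm{\beta}{\OO'}$, which by \cite[Corollary 1.0.1]{bj} is the unique compact $(U^\C)^{\beta+}$-orbit, lies in $\mathbb P(V_1^\C)$ and is a single $L^\C$-orbit (as $N^\C$ acts trivially there), so it is the Borel--Weyl orbit of the $L^\C$-representation on $V_1^\C$; fact (i) then follows since $\beta$ lies in the center of $\liel$ and hence acts by a scalar on the irreducible module $(\C^n)^{\lien}$, which therefore sits in a single eigenspace, namely $V_1^\C$.

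For (a), $V_1$ is $G^{\beta+}$-invariant: $L$ preserves each eigenspace and $N$ kills $V_1$. It is irreducible because $V_1^\C$ is $L^\C$-irreducible, hence $L$-irreducible over $\C$ by Zariski density, so its real form $V_1$ is $L$-irreducible over $\R$, while the $N$-action is trivial. For uniqueness, let $W\subset\R^n$ be $G^{\beta+}$-invariant and irreducible. The subspace $W^{\lien}$ of $\lien$-fixed vectors is nonzero (the elements of $\lien$ act nilpotently) and is $G^{\beta+}$-invariant because $\lien$ is an ideal of $\lieg^{\beta+}$; irreducibility forces $W^{\lien}=W$, i.e.\ $\lien W=0$, so $W\subset(\C^n)^{\lien}\cap\R^n=V_1$ by (i). As $V_1$ is $L$-irreducible and $W$ is a nonzero $L$-submodule, $W=V_1$.

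For (b) and (c), I would apply the first Theorem to the data $(L,V_1,V_1^\C,\maxm{\beta}{\OO'})$. One checks the hypotheses: since $\theta(\beta)=-\beta$ for $\beta\in\liep$, the centralizer $L=Z_G(\beta)$ is $\theta$-stable, hence compatible, and by \cite[Proposition 3.3]{heinz-stoezel} its Zariski closure is $L^\C$; moreover $V_1^\C=(V_1)^\C$ because $\beta$ is real, and complex irreducibility of $V_1^\C$ gives real irreducibility of $V_1$. Because the center of $L$ acts by scalars on the irreducible module $V_1$, the projective $L$-action factors through its semisimple part, so the first Theorem applies and yields that $\maxm{\beta}{\OO}=\mathbb P(V_1)\cap\maxm{\beta}{\OO'}$ is the unique compact $L$-orbit in $\maxm{\beta}{\OO'}$, that it is connected, Lagrangian, totally geodesic, and the fixed point set of the anti-holomorphic involution induced by complex conjugation (which preserves $\mathbb P(V_1^\C)$ since $V_1^\C$ is real). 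The identification $\maxm{\beta}{\OO}=\mathbb P(V_1)\cap\maxm{\beta}{\OO'}$ uses Proposition \ref{rm} for the inclusion $\subseteq$, and for $\supseteq$ the fact that any point of $\mathbb P(V_1)\cap\OO$ attains $\mup^\beta=\lambda_1=\max_{\mathbb P(\R^n)}\mup^\beta$. Since $N$ acts trivially on $\mathbb P(V_1)$, this unique compact $L$-orbit is the unique compact $G^{\beta+}$-orbit, proving (b); and $V_1$ is recovered as the linear span of the cone over $\maxm{\beta}{\OO}$ by irreducibility, which is the asserted characterization. Statement (c) is then exactly the conclusion of the first Theorem for the reduced data.

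The main obstacle I anticipate is the bookkeeping in the reduction: verifying that the reductive group $L=Z_G(\beta)$, together with $V_1$, genuinely satisfies the self-adjointness and compatibility hypotheses of the first Theorem, and handling the fact that $L$ is only reductive. The latter is dealt with by passing to the semisimple part, using that the center of $\liel$ acts by scalars on the irreducible module $V_1$; one should also record the degenerate case in which this semisimple part is compact (equivalently $\maxm{\beta}{\OO}$ is a single point, or $\maxm{\beta}{\OO}=\maxm{\beta}{\OO'}$), where the Lagrangian and totally geodesic conclusions hold trivially. Establishing fact (i)---that the nilradical-fixed vectors form exactly the top eigenspace and that $V_1^\C$ is $L^\C$-irreducible---is the one genuinely representation-theoretic input, and I would isolate it as a preliminary lemma.
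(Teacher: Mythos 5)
Your overall strategy is the same as the paper's: reduce to the Levi factor $G^{\beta}$ (in fact to its semisimple part), use that the nilradical annihilates the top eigenspace $V_1$, that $V_1^\C$ is the irreducible module of the complex Levi $(U^\C)^{\beta}$, and then apply the first Theorem (i.e.\ Proposition \ref{closed-orbit}, Corollary \ref{intersection} and Theorem \ref{involution}) to the data $(G^{\beta}_{ss},V_1,V_1^\C)$ to obtain the Lagrangian, anti-holomorphic involution and totally geodesic conclusions. Parts (a) and (c) are correct and follow the paper's proof almost step for step; the only caveat is that your preliminary facts (i) and (ii) are argued somewhat circularly --- calling $\maxm{\beta}{\OO'}$ ``the Borel--Weil orbit of the $L^\C$-representation on $V_1^\C$'' presupposes the irreducibility of $V_1^\C$ you are trying to prove, and your proof of (i) assumes $(\C^n)^{\lien}$ is already known to be irreducible. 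Both facts are standard (the paper simply cites \cite[Theorem 2]{bsg}), and the clean argument is that a nonzero $\lien$-fixed vector in an eigenspace $V_j^\C$ with $j\ge 2$ would generate a proper $\lieg^\C$-submodule contained in $\bigoplus_{i\ge j}V_i^\C$, contradicting irreducibility of $\C^n$.

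The genuine gap is in (b). You establish that $\maxm{\beta}{\OO}$ is the unique compact $L$-orbit \emph{inside} $\maxm{\beta}{\OO'}$, hence a compact $G^{\beta+}$-orbit, and then assert that it is therefore ``the unique compact $G^{\beta+}$-orbit.'' But (b) claims uniqueness among all compact $G^{\beta+}$-orbits in $\OO$, and nothing in your reduction rules out a compact $G^{\beta+}$-orbit lying in a lower critical level of $\mup^{\beta}$, i.e.\ outside $\mathbb P(V_1)$. That every compact $G^{\beta+}$-orbit must lie in $\maxm{\beta}{\OO}$ is a separate Morse-theoretic/Bruhat-type statement; the paper covers precisely this step by combining Proposition \ref{parabolic-preserve-maximun} with \cite[Theorem 1]{bj}, and you would need either to cite that result or to supply the argument (a closed $\exp(t\beta)$-invariant orbit meets $\Crit(\mup^{\beta})$, and at a non-maximal critical $G^{\beta}$-orbit the unipotent directions $\mathfrak r^{\beta+}$ push points off the critical set, forcing non-compactness). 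The same citation is what gives the paper that this orbit is a $(K^{\beta})^o$-orbit, whence connectedness; your connectedness claim inherits the same dependence.
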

A compact orbit of the $G^{\beta+}$-action on $\OO'$ is contained in a compact orbit of $G$ due to the fact that $G=KG^{\beta+}$  \cite{borel-ji-libro}, see also Proposition \ref{decomposition-parabolic}.
Since $G$ has a unique compact orbit on $\OO'$, it follows that $\maxm{\beta}{\mathcal O}$ is the unique compact orbit of the $G^{\beta+}$-action on $\OO'$ as well.

Let $\mathcal E=\mathrm{conv}(\mup(\mathcal O))$. Since $\mathcal O$ is a compact $G$-orbit, it is a $K$-orbit \cite{heinzner-schwarz-stoetzel}.
Therefore, keeping in mind that the gradient map is $K$-equivariant, $\mathcal E$ is the convex hull of a $K$-orbit in $\liep$ and so it is a
polar orbitope \cite{biliotti-ghigi-heinzner-2,orbitope}.
In particular, any face of $\mathcal E$ is exposed \cite{biliotti-ghigi-heinzner-2}.

Let $F$ be a face of $\mathcal E$. By Lemma \ref{face-chain}, there exists a chain of faces
\[
F=F_0 \subsetneq F_1 \subsetneq \cdots F_k \subsetneq \mathcal E.
\]
Since any face is exposed, there exists $\beta_0,\beta_1,\ldots,\beta_k \in \liep$ such that
\[
F_i=\mathrm{Max}_{\mathcal E} (\beta_i):=\{z\in \mathcal E:\, \langle z , \beta_i \rangle =\mathrm{max}_{y\in \mathcal E} \langle y, \beta_i \rangle \}
\]
Applying Theorem \ref{teo1}, we get the following result.
\begin{prop}\label{chain}
Given a chain of faces $F=F_0 \subsetneq F_1 \subsetneq \cdots F_k \subsetneq \mathcal E$, there exist two chains of submanifolds
\[
\begin{array}{ccccccccc}
\maxm{\beta_0}{\mathcal O'} & \subsetneqq & \maxm{\beta_1}{\mathcal O'} & \subsetneqq & \cdots & \cdots & \subsetneqq &  \maxm{\beta_k}{\mathcal O'} & \subseteq \mathbb P(\C^n) \\
\cup                                &                      & \cup                                &                      &           &            &         & \cup & \cup \\
\maxm{\beta_0}{\mathcal O} & \subsetneqq & \maxm{\beta_1}{\mathcal O} & \subsetneqq & \cdots & \cdots & \subsetneqq &  \maxm{\beta_k}{\mathcal O} & \subseteq \mathbb P(\R^n)\\
\end{array}
\]
such that the vertical inclusions are Lagrangian and totally geodesic immersions.
\end{prop}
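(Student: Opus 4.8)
The plan is to reduce the statement to Theorem~\ref{teo1} through the elementary dictionary between faces of the polar orbitope $\mathcal E$ and the maximum sets $\maxm{\beta}{\mathcal O}$ on the real orbit. First I would record the identity that makes the bottom row well defined: for $\beta\in\liep$ with $F=\Max_{\mathcal E}(\beta)$ and $\mathcal E=\conv(\mup(\mathcal O))$, the functional $\langle\,\cdot\,,\beta\rangle'$ attains the same maximum over $\mathcal E$ and over its generating set $\mup(\mathcal O)$, so a point $z\in\mathcal O$ satisfies $\mup^\beta(z)=\max_{\mathcal O}\mup^\beta$ exactly when $\mup(z)\in F$; that is, $\maxm{\beta}{\mathcal O}=(\mup|_{\mathcal O})^{-1}(F)$. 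In particular $\maxm{\beta}{\mathcal O}$ depends only on the face $F$ and not on the exposing vector, so the chain $F_0\subsetneq\cdots\subsetneq F_k$ attaches an unambiguous chain $\maxm{\beta_0}{\mathcal O}\subseteq\cdots\subseteq\maxm{\beta_k}{\mathcal O}$, with the inclusions immediate from $F_0\subseteq\cdots\subseteq F_k$.

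Next I would upgrade these to strict inclusions using only convex geometry. Suppose $\maxm{\beta_i}{\mathcal O}=\maxm{\beta_{i+1}}{\mathcal O}$ for some $i$; applying $\mup$ and the identity above gives $\mup(\mathcal O)\cap F_i=\mup(\mathcal O)\cap F_{i+1}$. Since $\mathcal E$ is compact and convex, Milman's theorem places its extreme points inside the compact generator $\mup(\mathcal O)$, and every face of $\mathcal E$ is the convex hull of the extreme points it contains; hence each extreme point of $\mathcal E$ lying in $F_{i+1}$ already lies in $F_i$, so $F_{i+1}=\conv(\ext(\mathcal E)\cap F_{i+1})\subseteq F_i$ and therefore $F_i=F_{i+1}$, contradicting the strictness of the given chain. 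This is exactly where the fact, already noted in the excerpt, that $\mathcal E$ is a polar orbitope with generating $K$-orbit $\mup(\mathcal O)$ is used.

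For the top row I would first identify the complex maximum sets with projectivised eigenspaces. If $\lambda_1$ is the largest eigenvalue of $\beta$ and $V_1(\beta)\subset\R^n$ its eigenspace, the displayed formula for $\mup^\beta$ holds verbatim on $\PP(\C^n)$, since $\beta$ is real symmetric and $V_1(\beta)\otimes\C$ is the $\lambda_1$-eigenspace in $\C^n$; thus $\maxm{\beta}{\PP(\C^n)}=\PP(V_1(\beta)\otimes\C)$ and $\maxm{\beta}{\mathcal O'}=\PP(V_1(\beta)\otimes\C)\cap\mathcal O'$. Because the maximum of $\mup^\beta$ equals $\lambda_1$ on both $\mathcal O$ and $\mathcal O'$ (both maxima are attained inside the $\lambda_1$-eigenspace by Proposition~\ref{rm}), one also gets $\maxm{\beta}{\mathcal O}=\maxm{\beta}{\mathcal O'}\cap\PP(\R^n)$. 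By Theorem~\ref{teo1}(1)--(2), $\maxm{\beta_i}{\mathcal O}$ is the unique compact $G^{\beta_i+}$-orbit, so the smallest subspace $W_i\subseteq\R^n$ with $\maxm{\beta_i}{\mathcal O}\subseteq\PP(W_i)$ is $G^{\beta_i+}$-invariant and nonzero, hence equals $V_1(\beta_i)$ by the irreducibility in Theorem~\ref{teo1}(1). From $\maxm{\beta_i}{\mathcal O}\subseteq\maxm{\beta_{i+1}}{\mathcal O}\subseteq\PP(V_1(\beta_{i+1}))$ we obtain $V_1(\beta_i)=W_i\subseteq V_1(\beta_{i+1})$ and hence $\maxm{\beta_i}{\mathcal O'}\subseteq\maxm{\beta_{i+1}}{\mathcal O'}$. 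Strictness of the top row is then inherited from the bottom: were $\maxm{\beta_i}{\mathcal O'}=\maxm{\beta_{i+1}}{\mathcal O'}$, intersecting with $\PP(\R^n)$ would give $\maxm{\beta_i}{\mathcal O}=\maxm{\beta_{i+1}}{\mathcal O}$, which the second paragraph excludes.

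Finally, each vertical inclusion $\maxm{\beta_i}{\mathcal O}\hookrightarrow\maxm{\beta_i}{\mathcal O'}$ is a Lagrangian and totally geodesic embedding by Theorem~\ref{teo1}(3) applied to $\beta_i$, which completes the diagram. I expect the one genuinely delicate step to be the identity $V_1(\beta_i)=W_i$: the inclusion $\maxm{\beta_i}{\mathcal O}\subseteq\PP(V_1(\beta_i))$ is automatic, but the reverse assertion that the real maximum set is not contained in any proper projective subspace relies essentially on the irreducibility of the parabolic action in Theorem~\ref{teo1}(1), and it is precisely this that lets the nesting of the eigenspaces, and thus the strictness, pass faithfully from the real submanifolds to the complex ones.
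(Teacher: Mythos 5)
Your proposal is correct and follows essentially the same route as the paper: both rest on the dictionary $\mup^{-1}(F_i)=\maxm{\beta_i}{\mathbb P(\R^n)}=\mathbb P(W_i)$ for an exposing vector $\beta_i$, the fact that $\maxm{\beta_i}{\mathcal O}$ determines the top eigenspace $W_i$ via the irreducibility statement of Theorem \ref{teo1}, and part (c) of that theorem for the Lagrangian and totally geodesic vertical inclusions. The only difference is that you spell out the strictness of the inclusions (via Milman's theorem and the fact that each face is the convex hull of its extreme points), a step the paper leaves implicit in the remark that $\mathbb P(W_i)$ is completely determined by $\maxm{\beta_i}{\mathcal O}$.
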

In \cite{biliotti-ghigi-heinzner-2}, the authors proved that the face structure of $\mathcal E$, up to $K$-equivalence, is completely determined by the face structure of $P=\mathcal E \cap \lia=\mua(\mathcal O)$, where $\lia\subset \liep$ is a maximal Abelian subalgebra, up to $\mathcal W=N_k (\lia)=\{k\in K:\, \mathrm{Ad}(k)(\lia)=\lia\}$-equivalence. We recall that $\mathcal W$ is called the Weyl group and it acts isometrically on $\lia$ as a finite group \cite{knapp}. By a Theorem of Kostant \cite{kostant-convexity}, $P$ is the convex hull of a Weyl group orbit and so it is a polytope \cite{schneider-convex-bodies}. By Proposition \ref{image}, $\mua(\mathbb P(\R^n))=\mua(\OO)$.
The centralizer of $\lia$ in $\lieg$ is given by
\[
\mathfrak z (\lia)=\mathfrak m \oplus \lia,
\]
where $\mathfrak m = \mathfrak z (\lia)\cap \mathfrak k$. If $\lia'\subset \mathfrak m$ is a maximal Abelian subalgebra of $\mathfrak m$, then $\lia'+ i \lia \subset \mathfrak u=\liek + i \liep$ is a maximal Abelian subalgebra of $\liu$ and so $(\lia'+ i \lia)^\C \subset \lieg^{\C}=\liu^{\C}$ is a Cartan subalgebra. Given $\lia, \lia'$,  and $\Pi\subset \Delta(\lieg,\lia)$ be a basis one can choose
a basis of $(\lia + i\lia')^*$ adapted to $\Pi$  and $(i\lia')^*$, see \cite[$p. 51-52$]{gjt},\cite[$p. 272-273$]{helgason}).
Let $\tilde \mu_\rho$ the highest weight of $\lieg^{\C}$ with respect to the partial ordering determined $\hat{\Delta}$. Let $x_o=[v_\rho]$, where $v_\rho$ is any highest weight vector.  It is well-known that
\[
\mu:\mathbb P(\C^n) \lra \lia'\oplus i \lia, \qquad \langle \mu(x_o) , \xi \rangle =\tilde{\mu_\rho}(\xi),
\]
see \cite{biliotti-ghigi-American} that has opposite sign convection for $\mu$, and \cite{baston-eastwood}.
By Proposition \ref{closed-orbit}, one can choose $v_\rho \in \R^n$. Moreover, $G\cdot x_o=\mathcal O$ and so
\[
\langle \mua(x_o),\xi \rangle=(i \tilde \mu_\rho)(\xi).
\]
$(i\tilde{\mu_\tau})_{\vert{\lia}}$ is the highest weight of $\lieg$ with respect the induced order on $\lia^*$. In the sequel we denote by $\mu_\rho=(i\tilde{\mu_\rho})_{\vert{\lia}}$ and also its dual in $\lia$ with respect to the scalar product $\scalo$.
\begin{prop}\label{weight}
$P=\mathrm{conv}(\mathcal W \cdot \mu_\rho)$.  In particular the weights of $\rho$ are contained in the convex hull of the Weyl group orbit through the highest weight
$\mu_\rho$.
\end{prop}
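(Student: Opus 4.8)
The plan is to reduce the identity to Kostant's linear convexity theorem \cite{kostant-convexity}, using that $\mathcal O$ is simultaneously a $G$-orbit and a $K$-orbit and that the gradient map is $K$-equivariant. First I would record that, being compact, $\mathcal O=G\cdot x_o=K\cdot x_o$ \cite{heinzner-schwarz-stoetzel}. Since $\mua=\pi_\lia\circ\mup$ and $\mup(k\cdot z)=\Ad(k)\mup(z)$, this gives
\[
P=\mua(\mathcal O)=\{\pi_\lia(\Ad(k)\mup(x_o)):k\in K\}=\pi_\lia(\Ad(K)\mup(x_o)).
\]
Once I know that $\mup(x_o)\in\lia$ and equals $\mu_\rho$, Kostant's theorem for the pair $(\lieg,\liek)$ with $\lia\subset\liep$ maximal Abelian yields $\pi_\lia(\Ad(K)\mu_\rho)=\mathrm{conv}(\mathcal W\cdot\mu_\rho)$, which is exactly the claim.

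The key step is therefore to verify that $\mup(x_o)$ already lies in $\lia$. I would argue as follows. By construction $\mu(x_o)\in\lia'\oplus i\lia$, so write $\mu(x_o)=a'+b$ with $a'\in\lia'\subset\liem\subset\liek$ and $b\in i\lia$. Then $i\mu(x_o)=ia'+ib$, where $ib\in i(i\lia)=\lia\subset\liep$ while $ia'\in i\liek$. Because the Cartan decomposition $\liu=\liek\oplus i\liep$ is $\scalo'$-orthogonal, so is $i\liu=i\liek\oplus\liep$, and hence the orthogonal projection $\pi_\liep$ annihilates $ia'$ and fixes $ib$. Thus $\mup(x_o)=\pi_\liep(i\mu(x_o))=ib\in\lia$, so that $\mup(x_o)=\mua(x_o)$; by the relation $\langle\mua(x_o),\xi\rangle=(i\tilde{\mu}_\rho)(\xi)$ already recorded, this element is $\mu_\rho$. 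Combined with the previous paragraph this proves $P=\mathrm{conv}(\mathcal W\cdot\mu_\rho)$.

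For the final assertion I would invoke Proposition \ref{image}, which gives $\mua(\mathbb P(\R^n))=\mua(\mathcal O)=P$. Each restricted weight $\alpha$ of $\rho$ is realized as $\mua([v])$ for a point $[v]\in\mathbb P(V_\alpha)$, where $V_\alpha$ is the $\lia$-weight space of $\alpha$; since $\lia\subset\mathrm{Sym}_0(\R^n)$ consists of commuting symmetric operators, the spaces $V_\alpha$ are real, so $[v]\in\mathbb P(\R^n)$ and $\alpha\in\mua(\mathbb P(\R^n))=\mathrm{conv}(\mathcal W\cdot\mu_\rho)$. This is the ``in particular'' statement.

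The part I expect to be most delicate is the identification $\mup(x_o)=\mu_\rho\in\lia$ together with the verification that Kostant's theorem applies verbatim to the gradient-map projection: one must be sure that $\pi_\lia$ is the orthogonal projection for the $\Ad(K)$-invariant product $\scalo$, that $\lia$ is genuinely maximal Abelian in $\liep$, and that $\mathcal W=N_K(\lia)$ induces the full Weyl group of the restricted root system acting on $\lia$. The remaining steps are then a direct application of convexity together with Proposition \ref{image}.
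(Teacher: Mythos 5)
Your proof is correct and follows essentially the same route as the paper: both arguments hinge on Kostant's convexity theorem applied to $\mup(\mathcal O)=\Ad(K)\mu_\rho$, the $K$-equivariance of the gradient map, and Proposition \ref{image} to relate $P$ to the weights of $\rho$. The only difference is one of presentation — the paper first identifies $\mua(\mathbb P(\R^n))$ with the convex hull of the weight vectors via the explicit eigenspace formula and then invokes Kostant, whereas you apply Kostant first and treat the weights afterwards, along the way spelling out the verification that $\mup(x_o)$ lies in $\lia$ and equals $\mu_\rho$, a point the paper leaves implicit.
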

Let $\beta \in \liep$. Then
\[
F_\beta (\mathcal E)=\{p\in \mathcal E:\, \langle p, \beta \rangle = \mathrm{max}_{y\in \mathcal E} \langle y , \beta \rangle\},
\]
is a face of $\mathcal E$ and any face  of $\mathcal E$ is given by $F_\beta (\mathcal E)$ for some $\beta \in \liep$.  Since $\mup(\OO)$ is a $K$-orbit, it
is a fundamental fact that the action of K on $\mup(\OO)$  extends to an action of G see \cite[Lemma 5]{heinzner-stoetzel-global}.
The set of extreme points of $F_\beta (\mathcal E)$, that we denote by $\mathrm{ext}\, (F_\beta (\mathcal E))$, is contained in $\mup(\OO)$. If $V_1$ is the eigenspace of $\beta$ relative to the maximal eigenvalue, then
$\mathbb P(V_1)=\mathrm{Max}_{\mathbb P (\R^n)} (\beta)=\mup^{-1}( F_\beta (\mathcal E))$. We prove that
\[
\{g\in G:\, gV_1=V_1\}=Q(F_\beta (\mathcal E))=\{h\in G:\, h \mathrm{ext}\,  F_\beta (\mathcal E)=\mathrm{ext}\, F_\beta (\mathcal E)\},
\]
is a parabolic subgroup of $G$ which it contains $G^{\beta+}$. Moreover, the $Q(F_\beta (\mathcal E))$-action on $V_1$ is irreducibly  and
$\maxm{\beta}{\mathcal O}$ is  the unique compact orbit of the $Q(F_{\beta}(\mathcal E))$-action on $\OO$.

The group $K$ acts on the set of faces of $\mathcal E$. Up to this $K$-action, a face of  $\mathcal E$ is described
in terms of root data \cite{biliotti-ghigi-heinzner-2}. The main tool is the notion of $\mu_\rho$-connected subset of the set of positive roots
$\Pi$ with respect to a fixed order. This notion was introduced  by Satake
in the study of the boundary components of the Satake compactifications of a symmetric space of noncompact type  \cite{Satake}.

A subset $I\subset \simple$ is $\mu_\rho$-\enf{connected} if
$I\cup\{\mu_\rho \}$ is connected, i.e., it is not the union of subsets orthogonal with respect to the Killing form. We denote by $I'$ the collection of all simple roots orthogonal to $\{\mu_\rho \}\cup I$. The set $J:=I\cup I'$ is called the $\mu_\rho$-\enf{saturation} of $I$. Given $I$, we consider the standard parabolic subalgebras $\mathfrak q_I$ and $\mathfrak q_J$, respectively, see Section \ref{compatible-parabolic} and \cite{borel-ji-libro,Satake}. We denote by $Q_I$, respectively $Q_J$, the parabolic subgroup of $G$ with Lie algebra $\mathfrak q_I$, respectively with Lie algebra $\mathfrak q_J$. If $Q_I=G^{\beta+}$ then $Q_J= Q(F_\beta (\mathcal E))$ \cite{biliotti-ghigi-heinzner-2} and  both $Q_I$ and $Q_J$ act irreducibly on the eigenspace of $\beta$ associated to the  maximum eigenvalue, that we denote by $W_I$. Satake proved that $W_I$ can be defined in terms of root data \cite[Lemma I.4.25, p. 69]{borel-ji-libro} and the $Q_J$-action on $W_I$ is completely determined by the $Q_I$-action on $W_I$.

Let $Q$ be a parabolic subgroup and let $W$ be the unique subspace of $V$ such that $Q$ acts irreducibly on it. We show  there exists $k\in K$ such that $kW=W_I$ and
\[
Q_I \subseteq k Q k^{-1}\subseteq Q_J.
\]
This means that up to the $K$-action, boundary components of $\mathcal E$ completely describe the irreducible representations of the
parabolic subgroups of $G$ induced by $\rho:G \lra \mathrm{SL}(n,\R)$.  We also prove that irreducible representations of parabolic
subgroups of $G$ induced by $\rho:G \lra \mathrm{SL}(n,\C)$ are the complexification of the irreducible representations of parabolic
subgroups of $G$ induced by $\rho:G \lra \mathrm{SL}(n,\R)$. Hence, we get the following result.
\begin{teo}
The face structure of $\mathcal E$, up to $K$-equivalence, describes the irreducible representations of the parabolic subgroups of $G$ induced
by $\rho:G \lra \mathrm{SL}(n,\R)$
and so induced also by $\rho:G \lra \mathrm{SL}(n,\C)$.
\end{teo}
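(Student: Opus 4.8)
The plan is to assemble the preceding results into a dictionary between the face lattice of $\mathcal E$ modulo the $K$-action on one side, and the irreducible representations of parabolic subgroups of $G$ modulo $K$-conjugacy on the other. First I would fix the map going from faces to representations. Since $\mathcal E$ is a polar orbitope, every face is exposed, so an arbitrary face has the form $F_\beta (\mathcal E)$ for some $\beta \in \liep$. To such a face I associate the parabolic subgroup $Q(F_\beta (\mathcal E))=\{g\in G:\, gV_1=V_1\}$, where $V_1$ is the maximal eigenspace of $\beta$, together with its irreducible action on $V_1$. By the discussion preceding the statement, $Q(F_\beta (\mathcal E))$ is parabolic, contains $G^{\beta+}$, and acts irreducibly on $V_1$, so the assignment $F_\beta (\mathcal E) \mapsto (Q(F_\beta (\mathcal E)), V_1)$ is well defined; replacing $\beta$ by $\mathrm{Ad}(k)\beta$ replaces $V_1$ by $kV_1$ and conjugates the parabolic by $k$, so it descends to $K$-equivalence classes.

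Next I would invoke the reduction to root data. By \cite{biliotti-ghigi-heinzner-2} the face structure of $\mathcal E$ up to $K$-equivalence is governed by that of $P=\mathcal E \cap \lia=\mua(\mathcal O)$ up to the action of the Weyl group $\mathcal W$, and each such face is encoded by a $\mu_\rho$-connected subset $I\subset \simple$. Fixing $I$ with $\mu_\rho$-saturation $J=I\cup I'$, one has $Q_I=G^{\beta+}$ and $Q_J=Q(F_\beta (\mathcal E))$ for a suitable $\beta$, and both groups act irreducibly on the maximal eigenspace $W_I$. Here I would record that, by Satake's analysis \cite{Satake,borel-ji-libro}, $W_I$ is expressible purely in terms of root data and the $Q_J$-action on $W_I$ is completely determined by the $Q_I$-action; this pins down the representation attached to the face in intrinsic terms.

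For the converse direction I would start from an arbitrary parabolic subgroup $Q$ and the unique subspace $W$ on which $Q$ acts irreducibly. The sandwiching result established earlier produces $k\in K$ with $kW=W_I$ and $Q_I \subseteq kQk^{-1}\subseteq Q_J$; thus $(Q,W)$ is $K$-conjugate to a datum wedged between $(Q_I,W_I)$ and $(Q_J,W_I)$, i.e.\ to the datum attached to the face indexed by $I$. This shows that every irreducible representation of a parabolic arises, up to $K$, from a face, so the dictionary is onto. Injectivity up to $K$-equivalence follows because $W$ (equivalently $V_1$) is uniquely determined by $Q$, while $\beta$ is recovered from $F_\beta (\mathcal E)$ up to the ambiguity already quotiented out by $\mathcal W$; combining this with the $K$-reduction of the face lattice closes the real case.

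Finally, to obtain the complex statement I would complexify. Since $\rho:G\lra \mathrm{SL}(V^\C)$ is irreducible and $V=\R^n$, we have $V^\C=\C^n$, and for a real symmetric $\beta$ the maximal eigenspace of $\beta$ acting on $\C^n$ is exactly $W_I^\C=W_I\otimes_\R \C$. Hence the subspace on which the complex parabolic acts irreducibly is the complexification of $W_I$, and the induced $\mathrm{SL}(n,\C)$-representation of the parabolic is the complexification of the $\mathrm{SL}(n,\R)$-representation, which is the content of the second clause. The main obstacle I expect is not any single computation but the bookkeeping of the two independent quotients, namely the $K$-action on faces and $K$-conjugacy on parabolic representations, and verifying that the sandwich $Q_I \subseteq kQk^{-1}\subseteq Q_J$ together with the uniqueness of $W$ makes the correspondence a genuine bijection rather than merely a surjection; the $\mu_\rho$-connectedness machinery of Satake is what guarantees that distinct saturations $J$ produce genuinely inequivalent representations.
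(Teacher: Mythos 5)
Your proposal follows the paper's own route: the dictionary is assembled from the bijection between $\rho$-admissible pairs $(W,Q(W))$ and faces of $\mathcal E$, the sandwich $Q_I \subseteq kQk^{-1}\subseteq Q_J$ of Proposition \ref{parabolic-coniugation}, Satake's observation that $M_{I'}$ acts trivially on $W_I$ (so the action of any parabolic wedged between $Q_I$ and $Q_J$ is determined by the $Q_I$-action), and complexification for the $\mathrm{SL}(n,\C)$ clause. The only point where you are lighter than the paper is the complex case: Corollary \ref{oi} shows that \emph{any} irreducible complex invariant subspace of a parabolic must equal $W^\C$ (using Engel's theorem on the unipotent radical), whereas you only identify the maximal eigenspace of $\beta$ on $\C^n$ with $W_I^\C$; this is a cosmetic rather than substantive difference.
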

In the sequel we always refer to \cite[Section I.1 Real Parabolic subgroups]{borel-ji-libro} and Section \ref{compatible-parabolic}.

Let $I\subset \Pi$ be a $\mu_\rho$-connected subset. By Theorem \ref{parabolici-faccie}, see \cite{biliotti-ghigi-heinzner-2}, $Q_I \cdot \mu_\rho$ is the set of extreme points of a face of  $\mathcal E$ that we denote by $F_I$. Although the $G$-gradient map is not $G$-equivariant, by Proposition \ref{gi} we get $Q_I \cdot x_o$ is compact and
\[
\mup(Q_I \cdot x_o )=Q_I \cdot \mu_\rho.
\]
Let $\mathfrak a_I :=\bigcap_{\alpha \in I} \, \mathrm{Ker}\, \alpha$ and let $\mathfrak a^I$  be the orthogonal complement of $\mathfrak a_I$ in $\mathfrak a$.  Then
$
\mathfrak q_I=\mathfrak n_I \oplus \mathfrak a_I \oplus \mathfrak m_I,
$
where $\mathfrak m_I=\mathfrak z_{\mathfrak k}(\mathfrak a)\oplus \mathfrak a^I \bigoplus_{\alpha \in I} \mathfrak g_\alpha$ is the Lie algebra of a Levi factor of $Q_I$, that we denote by $M_I$ which is not connected in general. We recall that $\mathfrak g_{\alpha}:=\{v\in \lieg:\, [H,v]=\alpha(H)v,\, \forall H\in \lia\}$. $M_I$ is compatible and $K_I=K\cap Q_I$  is a maximal compact subgroup of $M_I$. The Abelian subalgebra $\mathfrak a^I$ is a maximal Abelian subalgebra of $\mathfrak m_I\cap \liep$.
Let $\mathcal W_I =N_{K_I}(\lia_I)$. $\mathcal W_I$ is the subgroup of $\mathcal W$ generated by the root reflections defined by the element of $I$.
We split $\mu_\rho=y_0+y_1$, where $y_0 \in \mathfrak a_I$ and $y_1\in \mathfrak a^I$.
\begin{teo}
The map $I \mapsto \mathrm{conv}\, (\mathcal W_I \cdot \mu_\rho)$  induces a bijection between the $\mu_\rho$-connected subset of $\Pi$ and the faces of $P$ up to the Weyl-group action. Moreover,
\[
\mua(Q_I \cdot x_o)=\mathrm{conv}\, (\mathcal W_I \cdot \mu_\rho)=  y_0+\mathrm{conv}\, (\mathcal W_I \cdot y_1).
\]
\end{teo}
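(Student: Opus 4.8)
The plan is to prove the displayed chain of equalities first, since the bijection will then follow by transporting the already-established face correspondence for $\mathcal E$. The crux is to compute the image of the non-equivariant map $\mua$ on the orbit $Q_I\cdot x_o$ by reducing it to an $\Ad(K_I)$-orbit. First I would show that the Langlands decomposition $Q_I=M_I A_I N_I$ collapses on $x_o$: the unipotent radical $N_I$ fixes $x_o=[v_\rho]$ because $\mathfrak n_I$ is a sum of positive restricted root spaces $\mathfrak g_\alpha$, each of which raises the weight and so annihilates the highest weight vector $v_\rho$; and $A_I=\exp(\lia_I)$ scales $v_\rho$, hence fixes $x_o$. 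Writing $q=man$ gives $q\cdot x_o=m\cdot x_o$, so $Q_I\cdot x_o=M_I\cdot x_o$. Since this orbit is compact by Proposition \ref{gi} and $M_I$ is a compatible subgroup with maximal compact $K_I=K\cap Q_I$, a compact orbit is a $K_I$-orbit \cite{heinzner-schwarz-stoetzel}, so $Q_I\cdot x_o=K_I\cdot x_o$. Because $x_o$ is the highest weight line, $\mup(x_o)$ lies in $\lia$ and equals $\mua(x_o)=\mu_\rho$; since $\mup$ is $K$-equivariant this yields $\mup(Q_I\cdot x_o)=\Ad(K_I)\mu_\rho$.

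Next I would run the projection through Kostant's linear convexity theorem. Decompose $\mu_\rho=y_0+y_1$ with $y_0\in\lia_I$ and $y_1\in\lia^I$. Each reflection $s_\alpha$, $\alpha\in I$, fixes $\lia_I$ pointwise (as $\alpha$ vanishes on $\lia_I$), so $\mathcal W_I$ fixes $y_0$ and $\Ad(K_I)\mu_\rho=y_0+\Ad(K_I)y_1$, an orbit contained in $y_0+(\mathfrak m_I\cap\liep)$. The $\liep$-part of $\mathfrak m_I$ is $\lia^I$ together with the $\liep$-components of the root spaces $\mathfrak g_\alpha$, $\alpha\in I$, all of which are orthogonal to $\lia_I$; hence on $\mathfrak m_I\cap\liep$ the projection $\pi_\lia$ agrees with $\pi_{\lia^I}$. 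As $\lia^I$ is maximal abelian in $\mathfrak m_I\cap\liep$ with associated Weyl group $\mathcal W_I$, Kostant's theorem \cite{kostant-convexity} applied to $(M_I,K_I,\lia^I)$ gives $\pi_{\lia^I}(\Ad(K_I)y_1)=\mathrm{conv}(\mathcal W_I\cdot y_1)$. Therefore $\mua(Q_I\cdot x_o)=\pi_\lia(\Ad(K_I)\mu_\rho)=y_0+\mathrm{conv}(\mathcal W_I\cdot y_1)=\mathrm{conv}(\mathcal W_I\cdot\mu_\rho)$, which is the asserted identity.

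For the bijection I would first observe that $\mathrm{conv}(\mathcal W_I\cdot\mu_\rho)$ is genuinely a face of $P$: choosing $\beta\in\lia_I$ with $\alpha(\beta)>0$ for every simple root $\alpha\notin I$, the standard weight-polytope computation shows that the face $\{p\in P:\ \langle p,\beta\rangle\ \text{is maximal}\}$ equals $\mathrm{conv}(\mathcal W_I\cdot\mu_\rho)$, because $\mu_\rho$ is $\lia$-dominant and the stabilizer of $\beta$ in $\mathcal W$ is exactly $\mathcal W_I$. Surjectivity follows since every face of the polytope $P$ is exposed, and after moving the exposing covector into the closed chamber by $\mathcal W$ it is cut out by some $\beta\in\lia$; the exposed face is $\mathrm{conv}(\mathcal W_{I(\beta)}\cdot\mu_\rho)$ with $I(\beta)=\{\alpha\in\Pi:\alpha(\beta)=0\}$, and discarding the components of $I(\beta)$ orthogonal to $\mu_\rho$ (which fix $\mu_\rho$ and act trivially) leaves a $\mu_\rho$-connected $I$ giving the same face. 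For injectivity, that distinct $\mu_\rho$-connected subsets yield $\mathcal W$-inequivalent faces, I would transport the question to $\mathcal E$: by Theorem \ref{parabolici-faccie} the assignment $I\mapsto F_I$ is a bijection onto the faces of $\mathcal E$ modulo $K$, by \cite{biliotti-ghigi-heinzner-2} restriction to $\lia$ matches faces of $\mathcal E$ up to $K$ with faces of $P$ up to $\mathcal W$, and the identity of the first two paragraphs identifies $F_I\cap\lia$ with $\mathrm{conv}(\mathcal W_I\cdot\mu_\rho)$; the combinatorial core is Satake's saturation argument \cite{Satake}, by which a $\mu_\rho$-connected set is recovered from its face.

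I expect the main obstacle to be the first two paragraphs, namely controlling the non-$G$-equivariant gradient map $\mua$. The reduction $Q_I\cdot x_o=K_I\cdot x_o$ together with the orthogonality $\lia_I\perp(\mathfrak m_I\cap\liep)$ is precisely what makes Kostant's convexity theorem applicable and converts an a priori merely compact (and possibly non-convex) projected image into the polytope $\mathrm{conv}(\mathcal W_I\cdot\mu_\rho)$; once this is in hand, the bijection is essentially bookkeeping on top of the cited face theory.
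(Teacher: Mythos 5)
Your proposal is correct and follows essentially the same route as the paper: reduce $Q_I\cdot x_o$ to a $K_I$-orbit, split $\mu_\rho=y_0+y_1$ with $y_0\in\lia_I$ fixed by $\mathcal W_I$, apply Kostant's convexity theorem to $(M_I,K_I,\lia^I)$, and deduce the bijection from Theorem \ref{parabolici-faccie} together with the correspondence $\mathscr F(P)/\mathcal W\cong\mathscr F(\mathcal E)/K$. The only difference is cosmetic: where you argue directly that $N_I$ and $A_I$ fix the highest weight line so that $Q_I\cdot x_o=K_I\cdot x_o$, the paper instead invokes Proposition \ref{gi} and \cite[Proposition 3.5]{biliotti-ghigi-heinzner-2} to get $\mup(Q_I\cdot x_o)=Q_I\cdot\mu_\rho=K_I\cdot\mu_\rho=y_0+K_I\cdot y_1$.
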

The description of the faces of $P$  is proved in \cite{biliotti-ghigi-heinzner-2}, see \cite[Theorem 6.2]{ks}. The above statement is quoted from Casselman \cite[Theorem 3.1]{casselman}, where it is proved in the more general context of
of arbitrary finite Coxeter groups. Casselman also pointed out that the result is already implicit in the papers \cite{Satake} and \cite{borel-tits}. Our proof uses the description of the faces of $P$ given in \cite{biliotti-ghigi-heinzner-2}  and the techniques of the $G$-gradient map.

We also investigate the norm square gradient map and the norm square momentum map.

Let
\[
\nu_\liep: \mathbb P(\R^n) \lra \R, \qquad p\mapsto \frac{1}{2}\parallel \mup (p) \parallel^2,
\]
denote the norm square gradient map and let
\[
\nu_\liu (p):\mathbb P(\C^n) \lra \R, \qquad p \mapsto =\frac{1}{2}\parallel \mu (p) \parallel^2,
\]
denote the norm square momentum map. The gradient of $\nu_\liep$ with respect to the Riemannian metric induced by the Fubini-Study metric on
$\mathbb P(\C^n)$ is given by $\mathrm{grad}\, \nu_\liep (x)= \mu(x)_{\mathbb P (\R^n)}$, where
$\mu(x)_{\mathbb P (\R^n)}:=\desudtzero \exp(t\mup(x))x$ \cite{heinzner-schwarz-stoetzel}.
The gradient of $\nu_\liu$ is given by $\mu(x)_{\mathbb P (\C^n)}:=\desudtzero \exp(t\mu(x))x$ \cite{kirwan}. We point out that $x\in \mathbb P(\R^n)$ is a critical point of $\nu_\liep$ if and only if $x\in \mathbb P(\R^n)$ is a critical point of $\nu_\liu$, see also \cite{jabo}, and
\[
\mathrm{Max}_{x\in \mathbb P(\R^n)} \nu_\liep =\mathrm{Max}_{x\in \mathbb P(\C^n)} \nu_\liu, \qquad
\mathrm{Inf}_{x\in \mathbb P(\R^n)} \nu_\liep =\mathrm{Inf}_{x\in \mathbb P(\C^n)} \nu_\liu.
\]
The negative gradient flow line of $\nu_\liep$ through $x_0\in \mathbb P(\R^n)$ is the solution of the differential equation
\[ \left\{ \begin{array}{ll}
\dot{x}(t) = -\beta_{\mathbb P (\R^n)} (x(t)), \quad t\in \mathbb{R} \\
 x(0) = x_0.\end{array} \right.
\]
It s defined for any $t\in \R$ and the limit $$x_\infty := \lim_{t \rightarrow \infty} x(t) $$ exist \cite[Theorem 3.3]{bjmain}, see also \cite{grs} for the complex case.
Applying results proved in \cite[Theorem 4.7]{bjmain}, we get the following results.
\begin{prop}
Let  $x_0 \in \mathbb P(\R^n)$. Then
\[
\begin{split}
\parallel \mu_\mathfrak{p}(x_\infty)\parallel &=\text{Inf}_{g\in G}\parallel \mu_\mathfrak{p}(gx_0)\parallel \\ &=\text{Inf}_{g\in G^\C} \parallel \mu(gx_0)\parallel.
\end{split}
\]
Let $x_0\in \mathbb P(\C^n)$ be such that $U^\C \cdot x_0 \cap \mathbb P(\R^n) \neq \emptyset$. Then
\[
\mathrm{Inf}_{g\in U^\C}\parallel \mu (gx_0 )\parallel =\mathrm{Inf}_{y\in U^\C \cdot x_0 \cap \mathbb P(\R^n)} \parallel \mup (y)\parallel.
\]
\end{prop}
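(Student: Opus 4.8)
The plan is to glue the real Kempf--Ness theorem for $G$ to its \Keler counterpart for $U^\C$ along the real locus $\mathbb P(\R^n)$. The first displayed equality needs nothing new: by \cite[Theorem 3.3]{bjmain} the negative gradient flow of $\nu_\liep$ through $x_0$ is defined for all $t$ and converges to $x_\infty$, and \cite[Theorem 4.7]{bjmain} identifies $\parallel\mup(x_\infty)\parallel$ with $\inf_{g\in G}\parallel\mup(gx_0)\parallel$. The whole task is therefore to pass from $G$ to $G^\C=U^\C$.

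The bridge rests on one observation about real points. For $y=[v]$ with $v\in\R^n$ the matrix $\Phi(v)=-\tfrac i2\bigl(vv^*/\parallel v\parallel^2-\tfrac1n\mathrm{Id}\bigr)$ lies in $i\,\mathrm{Sym}_0(\R^n)$; writing $\mathrm{Sym}_0(\R^n)=\liep\oplus\liep^{\perp}$ and using that real antisymmetric matrices are $\langle\cdot,\cdot\rangle'$-orthogonal to $i$ times real symmetric ones, the projection $\pi_\liu$ annihilates the $i\liep^{\perp}$-component, so $\mu(y)\in i\liep$, whence $\mup(y)=i\mu(y)$ and $\parallel\mup(y)\parallel=\parallel\mu(y)\parallel$ for every $y\in\mathbb P(\R^n)$. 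A standard computation shows $\grad\,\nu_\liu(y)$ is the fundamental vector field of $i\mu(y)$ for the $U^\C$-action; since $i\mu(y)=\mup(y)\in\liep$ and $\exp(t\mup(y))\in G$ preserves $\mathbb P(\R^n)$, this vector is tangent to $\mathbb P(\R^n)$ and coincides with $\grad\,\nu_\liep(y)$. Hence $\grad\,\nu_\liu$ is tangent to $\mathbb P(\R^n)$ along the real locus, the $\nu_\liu$-flow issued from the real point $x_0$ stays real, and by uniqueness of solutions it coincides with the $\nu_\liep$-flow; in particular its limit is again $x_\infty$. Invoking the \Keler version of Kempf--Ness for $U^\C$ (existence of the limit as in \cite{grs}, its norm realizing $\inf_{g\in U^\C}\parallel\mu(gx_0)\parallel$) together with $\parallel\mup(x_\infty)\parallel=\parallel\mu(x_\infty)\parallel$ yields the second equality, recalling $G^\C=U^\C$.

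For the second statement, fix $y_0\in U^\C\cdot x_0\cap\mathbb P(\R^n)$, so $U^\C\cdot x_0=U^\C\cdot y_0$ and the left-hand side equals $\inf_{g\in U^\C}\parallel\mu(gy_0)\parallel$, to which the first part applies since $y_0\in\mathbb P(\R^n)$. Each point $g\cdot y_0$ with $g\in G$ lies in $U^\C\cdot x_0\cap\mathbb P(\R^n)$, so $\inf_{y\in U^\C\cdot x_0\cap\mathbb P(\R^n)}\parallel\mup(y)\parallel\le\inf_{g\in G}\parallel\mup(gy_0)\parallel=\inf_{g\in U^\C}\parallel\mu(gx_0)\parallel$. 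The reverse inequality is trivial: for any $y\in U^\C\cdot x_0\cap\mathbb P(\R^n)$ the observation above gives $\parallel\mup(y)\parallel=\parallel\mu(y)\parallel\ge\inf_{g\in U^\C}\parallel\mu(gx_0)\parallel$. The two inequalities give the asserted equality.

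The one genuinely substantive step is the middle one: that $\grad\,\nu_\liu$ is tangent to $\mathbb P(\R^n)$ and restricts there to $\grad\,\nu_\liep$, so the two negative gradient flows share the limit $x_\infty$. This is the infinitesimal form of the totally geodesic property of the real locus emphasized elsewhere in the paper; it hinges on the inclusion $\mu(\mathbb P(\R^n))\subset i\liep$ and the identity $i\mu(y)=\mup(y)$, and must be carried out with care for the sign and normalisation conventions for $\mu$ and for the gradient of the norm square (the complex structure implicit in the definition of $\grad\,\nu_\liu$). Once this tangency is established, everything else reduces either to a direct citation of \cite[Theorem 4.7]{bjmain} and the classical \Keler Kempf--Ness theorem, or to the elementary sandwich argument above.
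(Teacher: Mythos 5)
Your proposal is correct and follows essentially the same route as the paper: the identity $\mup=i\mu$ on $\mathbb P(\R^n)$ (the paper's Lemma \ref{restrizione}), the resulting coincidence of the negative gradient flows of $\nu_\liep$ and $\nu_\liu$ along the real locus, the citations of \cite[Theorems 3.3 and 4.7]{bjmain} and of the \Keler case \cite{grs,kirwan}, and a reduction of the second statement to the first by choosing a real representative of the orbit. If anything, you spell out more carefully than the paper why $\grad\nu_\liu$ is tangent to $\mathbb P(\R^n)$ (via $i\mu(y)=\mup(y)\in\liep$ and $\exp(t\mup(y))\in G$), which is exactly the point the paper leaves implicit.
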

We also investigate the stratification of $\mathbb P(\R^n)$ with respect to $\nu_\liep$.
\begin{prop}
The norm square gradient map $\nu_\liep$ has a unique open stratum which is the minimal stratum. This stratum is open, dense and it is given by the intersection of $\mathbb P(\R^n)$ with the minimal stratum of  $\nu_\liu$. \end{prop}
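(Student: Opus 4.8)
The plan is to realize the stratification of $\mathbb P(\R^n)$ as the trace of the Kirwan--Ness stratification of $\mathbb P(\C^n)$, using the preceding Proposition to convert statements about the real negative gradient flow into statements about $U^\C$-orbital infima. I first record that $\nu_\liep$ and $\nu_\liu$ agree on $\mathbb P(\R^n)$. For real $z$ the endomorphism $\Phi(z)$ lies in $i\,\mathrm{Sym}_0(\R^n)\subset\su(n)$, which is orthogonal to $\mathfrak{so}(n)\supseteq\liek$; hence $\mu(z)=\pi_\liu\Phi(z)\in i\liep$, so $i\mu(z)\in\liep$ is fixed by the projection onto $\liep$ and $\mup(z)=i\mu(z)$. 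Since $i$ is an isometry this gives $\nu_\liep(z)=\nu_\liu(z)$ for every $z\in\mathbb P(\R^n)$, and in particular the common value $c_0:=\mathrm{Inf}_{\mathbb P(\R^n)}\nu_\liep=\mathrm{Inf}_{\mathbb P(\C^n)}\nu_\liu$ is attained at a critical point of $\nu_\liep$. By \cite[Theorem 3.3, Theorem 4.7]{bjmain} the negative gradient flow of $\nu_\liep$ is defined on all of $\R$, its limit $x_\infty\in\Crit(\nu_\liep)$ exists, and grouping the critical set by critical values produces the stratification $\mathbb P(\R^n)=\bigsqcup_c S_c$, with $S_c=\{x:\nu_\liep(x_\infty)=c\}$; the minimal stratum is $S_{c_0}$.

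Next I would identify $S_{c_0}$ with the trace on $\mathbb P(\R^n)$ of the minimal stratum $S^\C_{\min}$ of $\nu_\liu$ on $\mathbb P(\C^n)$. Writing $r_0=\sqrt{2c_0}$, the preceding Proposition gives, for $x\in\mathbb P(\R^n)$,
\[
\|\mup(x_\infty)\|=\mathrm{Inf}_{g\in G}\|\mup(gx)\|=\mathrm{Inf}_{g\in U^\C}\|\mu(gx)\|,
\]
so that $S_{c_0}=\{x\in\mathbb P(\R^n):\ \mathrm{Inf}_{g\in U^\C}\|\mu(gx)\|=r_0\}$. On the complex side the analogous identity $\|\mu(x^\C_\infty)\|=\mathrm{Inf}_{g\in U^\C}\|\mu(gx)\|$ for the limit $x^\C_\infty$ of the $U^\C$-gradient flow, together with the Kirwan--Ness characterization of the minimal stratum \cite{kirwan,heinzner-schwarz-stoetzel,grs}, gives $S^\C_{\min}=\{x\in\mathbb P(\C^n):\ \mathrm{Inf}_{g\in U^\C}\|\mu(gx)\|=r_0\}$. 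Comparing the two descriptions yields
\[
S_{c_0}=\mathbb P(\R^n)\cap S^\C_{\min},
\]
which is the content of the last assertion. I expect this identification to be the main obstacle, precisely because the real flow on $\mathbb P(\R^n)$ and the restriction of the complex flow are genuinely different --- at a real point $\mathrm{grad}\,\nu_\liu=J\,\mathrm{grad}\,\nu_\liep$ is normal to $\mathbb P(\R^n)$ --- so the two stratifications cannot be matched flow-line by flow-line and the comparison must be routed entirely through the $G$- and $U^\C$-orbital infima.

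Finally I would deduce openness, density, and uniqueness. The set $C_{c_0}$ of global minima of $\nu_\liep$ is asymptotically stable for the negative gradient flow, so its basin of attraction $S_{c_0}$ is open; alternatively this is immediate from $S_{c_0}=\mathbb P(\R^n)\cap S^\C_{\min}$ and the openness of $S^\C_{\min}$ in $\mathbb P(\C^n)$. For density, the complement $\mathbb P(\C^n)-S^\C_{\min}$ is the unstable locus, a proper closed $U^\C$-invariant set which is a finite union of complex submanifolds of positive codimension, hence a proper complex-analytic subset $Y$; since $S^\C_{\min}$ meets $\mathbb P(\R^n)$ (by the first paragraph $c_0$ is attained on $\mathbb P(\R^n)$), $Y$ cannot contain an open subset of the totally real submanifold $\mathbb P(\R^n)$, so $Y\cap\mathbb P(\R^n)$ is nowhere dense and $S_{c_0}=\mathbb P(\R^n)-(Y\cap\mathbb P(\R^n))$ is dense. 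Uniqueness of the open stratum then follows at once: every non-minimal stratum is contained in the closed set $Y\cap\mathbb P(\R^n)$, which has empty interior, so none of them can be open. This completes the plan.
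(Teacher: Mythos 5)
Your overall strategy is workable and rests, in the end, on the same two pillars as the paper's own argument (the identity $\mup=i\mu$ on $\mathbb P(\R^n)$ and the fact that a positive\mbox{-}codimension complex piece cannot contain an open subset of the totally real $\mathbb P(\R^n)$), but there is one genuine gap. In your first paragraph you declare $c_0:=\mathrm{Inf}_{\mathbb P(\R^n)}\nu_\liep=\mathrm{Inf}_{\mathbb P(\C^n)}\nu_\liu$ as though it followed from $\nu_\liep=\nu_\liu$ on $\mathbb P(\R^n)$; that only yields $\mathrm{Inf}_{\mathbb P(\R^n)}\nu_\liep\ge\mathrm{Inf}_{\mathbb P(\C^n)}\nu_\liu$, the right-hand infimum being taken over a larger set. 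Everything downstream leans on this equality: without it $r_0$ is not the minimal critical value of $\nu_\liu$, the identification $S_{c_0}=\mathbb P(\R^n)\cap S^{\C}_{\min}$ collapses, and your density argument becomes circular, since it invokes ``$c_0$ is attained on $\mathbb P(\R^n)$, by the first paragraph'' --- which is precisely the unproved claim. The repair is short but must be made explicit: the minimal stratum of $\nu_\liu$ is open and dense in $\mathbb P(\C^n)$, hence meets $\mathbb P(\R^n)$; picking $x$ in the intersection and applying the preceding Proposition produces $x_\infty\in\mathbb P(\R^n)$ with $\parallel\mup(x_\infty)\parallel=\mathrm{Inf}_{g\in U^\C}\parallel\mu(gx)\parallel=\mathrm{min}_{\mathbb P(\C^n)}\parallel\mu\parallel$, which supplies the missing inequality. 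Note that the paper obtains the equality of infima as a \emph{consequence} of the proposition, not as an input.

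You should also delete the parenthetical claim that $\mathrm{grad}\,\nu_\liu=J\,\mathrm{grad}\,\nu_\liep$ is normal to $\mathbb P(\R^n)$ at real points: it is false. At $x\in\mathbb P(\R^n)$ one has $\mu(x)\in i\liep$, so the gradient of $\nu_\liu$ at $x$ is the vector field induced by $i\mu(x)=\mup(x)\in\liep\subset\lieg$, which is tangent to $\mathbb P(\R^n)$ and coincides with $\mathrm{grad}\,\nu_\liep(x)$; the two negative gradient flows therefore agree along $\mathbb P(\R^n)$. This is exactly the simplification the paper exploits: each real stratum sits inside the corresponding complex stratum, an open real stratum forces the ambient complex stratum (a complex submanifold containing an open piece of the Lagrangian $\mathbb P(\R^n)$) to be open, and Kirwan's uniqueness of the open stratum of $\nu_\liu$ concludes, with density coming from the codimension of the remaining strata. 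Your detour through the orbital infima is legitimate once the gap above is closed, but it is not forced, and the flow-line comparison you dismiss as unavailable is in fact available and gives the shorter route.
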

In principle there could be many different open stratum for the norm square gradient map but we do not know any example.
Any such example would imply that the nonAbelian convexity Theorem fails \cite{heinzner-schutzdeller}.
%the authors proved that for any irreducible semi-algebraic closed $G$-stable subset of $\mathbb P(V)$, where
%$V$ is an unitary representation of $U$, the nonAbelian convexity Theorem holds.
\section{Preliminaries}
\subsection{Convex geometry}\label{convex-geometry}
It is useful to recall a few definitions and results regarding convex
 sets. The reader may refer for instance  to \cite{schneider-convex-bodies} for more details.

 Let $V$ be a real vector
 space with a scalar product $\scalo$ and let $E\subset V$ be a
 compact convex subset.  The \emph{relative interior} of
 $E$, denoted $\mathrm{relint} E$, is the interior of $E$ in its affine hull.
 A face $F$ of $E$ is a convex subset $F\subset E$ with the following
 property: if $x,y\in E$ and $\mathrm{relint}[x,y]\cap F\neq \emptyset$, then
 $[x,y]\subset F$.  The \emph{extreme points} of $E$ are the points
 $x\in E$ such that $\{x\}$ is a face. We denote by $\mathrm{ext}\, E$ the set of the extreme points of $E$. By a Theorem of Minkowski \cite[Corollary 1.4.5 p.19]{schneider-convex-bodies},
 $E$ is the convex hull of  its extremal points. Since  $E$ is compact the faces
 are closed \cite[p. 62]{schneider-convex-bodies}.  A face distinct
 from $E$ and $\vacuo$ will be called a \enf{proper face}.  The
 \emph{support function} of $E$ is the function $ h_E : V \lra \R$, $
 h_E(u) = \max_{x \in E} \langle x, u \rangle$.  If $ u \neq 0$, the
 hyperplane $H(E, u) : = \{ x\in E : \langle x, u \rangle = h_E(u)\}$ is
 called the \emph{supporting hyperplane} of $E$ for $u$. The set
   \begin{gather}
     \label{def-exposed}
     F_u (E) : = E \cap H(E,u)
   \end{gather}
   is a face and it is called the \emph{exposed face} of $E$ defined by
   $u$.
In general not all faces of a convex subset are exposed. A simple example is given by the convex hull of a closed disc and a point outside the disc:
the resulting convex set is the union of the disc and a triangle. The two vertices of the triangle that lie on the boundary of the disc are non-exposed faces
  \begin{lemma}
[\protect{\cite[Lemma 3]{biliotti-ghigi-heinzner-1}}]
\label{ext-facce}
If $F$ is a face of a convex set $E$, then $\ext F = F \cap \ext E$.
 \end{lemma}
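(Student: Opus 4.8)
The plan is to reduce everything to the transitivity of the face relation: a face of a face of $E$ is again a face of $E$. First I would record this transitivity statement explicitly. Suppose $G$ is a face of $F$ and $F$ is a face of $E$; to see that $G$ is a face of $E$, take $x,y\in E$ with $\relint[x,y]\cap G\neq\vacuo$. Since $G\subset F$, the intersection $\relint[x,y]\cap F$ is already nonempty, and because $F$ is a face of $E$ this forces $[x,y]\subset F$; in particular $x,y\in F$. Now $x,y\in F$ and $\relint[x,y]\cap G\neq\vacuo$, so the fact that $G$ is a face of $F$ gives $[x,y]\subset G$. Hence $G$ is a face of $E$.

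With transitivity in hand, the inclusion $\ext F\subset F\cap\ext E$ is immediate: if $x\in\ext F$ then $\{x\}$ is a face of $F$, and since $F$ is a face of $E$, transitivity shows $\{x\}$ is a face of $E$, that is $x\in\ext E$; as $x\in F$ we conclude $x\in F\cap\ext E$.

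For the reverse inclusion $F\cap\ext E\subset\ext F$ I would argue directly from the definition, without invoking transitivity. Let $x\in F\cap\ext E$, so that $\{x\}$ is a face of $E$ and $x\in F$. To check that $\{x\}$ is a face of $F$, take $y,z\in F$ with $x\in\relint[y,z]$. Because $F\subset E$ we have $y,z\in E$, and since $\{x\}$ is a face of $E$ the condition $\relint[y,z]\cap\{x\}\neq\vacuo$ forces $[y,z]\subset\{x\}$, whence $y=z=x$. Thus $\{x\}$ is a face of $F$, i.e. $x\in\ext F$, which completes the proof.

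I do not expect a genuine obstacle here: the argument is elementary, and the only points requiring care are faithfully using the relative-interior clause in the definition of a face and the trivial but essential observation that points of $F$ are points of $E$. The transitivity step is the conceptual core, and both inclusions then follow by directly unwinding the definitions.
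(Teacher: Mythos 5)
Your argument is correct: the transitivity of the face relation is established faithfully from the paper's definition (using that $G\subset F$ to pass from $\relint[x,y]\cap G\neq\vacuo$ to membership of the endpoints in $F$), and both inclusions then follow by unwinding the definition of extreme point exactly as in the standard proof. The paper itself gives no proof of this lemma — it is quoted from an external reference — and your argument coincides with the usual one given there.
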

 \begin{lemma}[\protect{\cite[Lemma 8]{biliotti-ghigi-heinzner-1}}]
   \label{face-chain}
   If $E$ is a compact convex set and $F\subset E$ is a face, then
   there is a chain of faces $ F_0=F \subsetneq F_1 \subsetneq \cds
   \subsetneq F_k=E $ which is maximal, in the sense that for any $i$
   there is no face of $E$ strictly contained between $F_{i-1}$ and
   $F_i$.
 \end{lemma}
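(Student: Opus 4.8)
The plan is to control the construction by the \emph{dimension} of a face, where the dimension of a face $G$ is that of its affine hull $\operatorname{aff} G$. Two facts drive the argument: a proper face of a compact convex set drops dimension strictly, and the face relation is transitive. Since the ambient space is finite dimensional, strict dimension drops force every ascending chain of faces to be finite, and this lets me produce a maximal chain greedily.

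First I would establish the dimension drop: if $G$ is a face of a compact convex set $C$ and $G \neq C$, then $\dim G < \dim C$. The standard step is that a proper face is disjoint from $\relint C$. Indeed, if some $x \in G \cap \relint C$, then for an arbitrary $y \in C$ one can push slightly past $x$ away from $y$ to find $z \in C$ with $x \in \relint [y,z]$; the defining property of the face $G$ then forces $[y,z] \subseteq G$, so $y \in G$, and as $y$ was arbitrary $G = C$, a contradiction. Hence $G$ lies in the relative boundary of $C$. If we had $\dim G = \dim C$, then $\operatorname{aff} G = \operatorname{aff} C$ and any point of $\relint G$ would be a relatively interior point of $C$, contradicting the disjointness just proved; therefore $\dim G < \dim C$. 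I also record the transitivity observation: a face $H$ of $E$ contained in a face $F'$ of $E$ is automatically a face of $F'$, since the face condition, tested on segments with endpoints in $F' \subseteq E$, is a restriction of the condition tested in $E$.

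With these in hand I build the chain inductively, starting from $F_0 = F$. Whenever $F_i \neq E$ the family of faces of $E$ strictly containing $F_i$ is nonempty, since $F_i \subsetneq E$ puts $E$ itself in it; among this family I choose $F_{i+1}$ of minimal dimension. This $F_{i+1}$ covers $F_i$: if some face $H$ of $E$ satisfied $F_i \subsetneq H \subsetneq F_{i+1}$, then $H$ would be a proper face of $F_{i+1}$ by transitivity, whence $\dim H < \dim F_{i+1}$ by the dimension drop, while $H$ strictly contains $F_i$, contradicting minimality of $\dim F_{i+1}$. The same dimension drop applied to the proper face $F_i$ of $F_{i+1}$ gives $\dim F_i < \dim F_{i+1}$, so $\dim F_0 < \dim F_1 < \cdots$ is a strictly increasing sequence of integers bounded above by $\dim E$. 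It must therefore be finite, and the construction can only halt at a stage where no face strictly contains the current one, i.e. where $F_k = E$. The resulting chain $F = F_0 \subsetneq F_1 \subsetneq \cdots \subsetneq F_k = E$ is maximal by the covering property established at each step.

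I expect the only real obstacle to be the dimension-drop lemma, that is, the relative-interior disjointness argument for proper faces; the rest is bookkeeping with dimensions and the transitivity of faces. The finite dimensionality of $V$, already part of the paper's standing hypotheses, enters precisely here, both to make dimension a well-defined nonnegative integer and to guarantee that the strictly increasing chain of dimensions terminates.
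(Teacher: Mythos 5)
Your argument is correct: the dimension-drop lemma (a proper face misses the relative interior, hence has strictly smaller dimension), the transitivity of the face relation, and the greedy choice of a face of minimal dimension among those strictly containing $F_i$ together yield a finite maximal chain exactly as claimed. The paper itself only cites \cite[Lemma 8]{biliotti-ghigi-heinzner-1} for this statement, and your proof is essentially the standard induction-on-dimension argument given there, so there is nothing further to reconcile.
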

\begin{lemma}[\protect{\cite[Prop. 5]{biliotti-ghigi-heinzner-1}}] \label{u-cono}
   If $F \subset E$ is an exposed face, the set $\CF : = \{ u\in V:
   F=F_u(E) \}$ is a convex cone. If $K$ is a compact subgroup of
   $O(V)$ that preserves both $E$ and $F$, then $\CF$ contains a fixed
   point of $K$.
 \end{lemma}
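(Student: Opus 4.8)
The plan is to handle the two assertions separately, establishing first that $\CF$ is a convex cone by elementary manipulations of the support function, and then producing the $K$-fixed point by averaging a chosen $u\in\CF$ over Haar measure.

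For the cone property, I would observe that for $u\in\CF$ and $\lambda>0$ one has $h_E(\lambda u)=\lambda h_E(u)$ and $\langle x,\lambda u\rangle=\lambda\langle x,u\rangle$, so the set of maximizers is unchanged and $F_{\lambda u}(E)=F_u(E)=F$; hence $\lambda u\in\CF$. For additivity, take $u_1,u_2\in\CF$. Every $x\in F$ satisfies $\langle x,u_i\rangle=h_E(u_i)$, so $\langle x,u_1+u_2\rangle=h_E(u_1)+h_E(u_2)$, while every $y\in E$ satisfies $\langle y,u_1+u_2\rangle\le h_E(u_1)+h_E(u_2)$; thus $h_E(u_1+u_2)=h_E(u_1)+h_E(u_2)$ and $F\subseteq F_{u_1+u_2}(E)$. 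Conversely, if $y\in E$ attains the maximum for $u_1+u_2$, then the two inequalities $\langle y,u_i\rangle\le h_E(u_i)$ must both be equalities, so $y\in F_{u_1}(E)=F$. Hence $F_{u_1+u_2}(E)=F$ and $u_1+u_2\in\CF$, which together with positive scaling shows that $\CF$ is a convex cone.

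For the second statement, note that $\CF\neq\vacuo$ because $F$ is exposed. Fix $u\in\CF$. The first step is to check that the whole $K$-orbit of $u$ lies in $\CF$: since each $k\in K$ is orthogonal and preserves $E$, one computes $\langle x,ku\rangle=\langle k^{-1}x,u\rangle$ and therefore $h_E(ku)=h_E(u)$, while the substitution $y=k^{-1}x$ gives $F_{ku}(E)=kF_u(E)=kF=F$ because $K$ preserves $F$; thus $ku\in\CF$. Now I would set $\bar u=\int_K ku\,dk$ for the normalized Haar measure. Invariance of Haar measure gives $k_0\bar u=\bar u$ for all $k_0\in K$, so $\bar u$ is the desired fixed point, provided one knows $\bar u\in\CF$.

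The delicate point — the step I expect to be the main obstacle — is verifying $\bar u\in\CF$, since the integral is only a limit of convex combinations and $\CF$ need not be closed. Rather than invoke closedness, I would argue directly. For $x\in F$ we have $\langle x,ku\rangle=h_E(u)$ for all $k$, so $\langle x,\bar u\rangle=h_E(u)$; and for any $y\in E$, the bound $\langle y,ku\rangle\le h_E(u)$ for all $k$ gives $\langle y,\bar u\rangle\le h_E(u)$. Hence $h_E(\bar u)=h_E(u)$ and $F\subseteq F_{\bar u}(E)$. Conversely, if $y\in F_{\bar u}(E)$, then $\int_K\langle y,ku\rangle\,dk=h_E(u)$ while the integrand is a continuous function of $k$ bounded above by $h_E(u)$; a nonnegative continuous function on the compact group $K$ with vanishing integral against the full-support Haar measure is identically zero, so $\langle y,ku\rangle=h_E(u)$ for every $k$, in particular for $k=e$, giving $y\in F_u(E)=F$. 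Therefore $F_{\bar u}(E)=F$, so $\bar u\in\CF$ is a $K$-fixed point, which completes the argument.
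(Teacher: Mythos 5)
Your argument is correct, and the delicate step you flag (that the Haar average $\bar u=\int_K ku\,dk$ actually lies in $\CF$ and not merely in its closure) is handled properly by the full-support argument forcing $\langle y,ku\rangle=h_E(u)$ for every $k$. The paper itself does not prove this lemma but quotes it from \cite[Prop.~5]{biliotti-ghigi-heinzner-1}, and your proof (support-function homogeneity and additivity for the cone property, $K$-invariance of $\CF$ plus averaging for the fixed point) is essentially the argument given in that reference.
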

We denote by $\CF^K$ the elements of $\CF$ fixed by a compact group $K$.
The faces of a convex compact set give a stratification of $E$. The following result is well-known and a proof is given in \cite[p. 62]{schneider-convex-bodies}
\begin{teo} \label{schneider-facce} If $E$ is a compact convex set and
   $F_1,F_2$ are distinct faces of $E$, then $\relint F_1 \cap \relint
   F_2=\vacuo$. If $G$ is a nonempty convex subset of $ E$ which is
   open in its affine hull, then $G \subset\relint F$ for some face
   $F$ of $E$. Therefore $E$ is the disjoint union of the
     relative interiors of its faces.
 \end{teo}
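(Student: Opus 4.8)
The plan is to establish the three assertions in order, after first recording two elementary facts about faces. First I would prove \emph{transitivity}: if $F'$ is a face of $F$ and $F$ is a face of $E$, then $F'$ is a face of $E$. This is immediate from the definition: $F'$ is convex, and if $x,y\in E$ with $\relint[x,y]\cap F'\neq\vacuo$, then since $F'\subseteq F$ also $\relint[x,y]\cap F\neq\vacuo$, so $[x,y]\subseteq F$ because $F$ is a face of $E$; now $x,y\in F$ and $\relint[x,y]\cap F'\neq\vacuo$ force $[x,y]\subseteq F'$ because $F'$ is a face of $F$. I would also note that an arbitrary intersection of faces of $E$ is again a face, by the same one-line check, so every nonempty subset of $E$ is contained in a smallest face.

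For the first assertion, suppose $z\in\relint F_1\cap\relint F_2$ and let me show $F_1=F_2$. Given $x\in F_1$, since $z\in\relint F_1$ I can prolong the segment $[x,z]$ slightly beyond $z$ inside $F_1$, obtaining $y\in F_1$ with $z\in\relint[x,y]$. As $z\in F_2$ and $x,y\in E$, the face property of $F_2$ yields $[x,y]\subseteq F_2$, so $x\in F_2$; hence $F_1\subseteq F_2$, and by symmetry $F_1=F_2$. Contrapositively, distinct faces have disjoint relative interiors.

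The main step is the second assertion. Let $F$ be the smallest face of $E$ containing $G$ (it exists by the intersection remark, since $E$ itself is a face containing $G$). I claim $G\subseteq\relint F$; suppose instead some $x_0\in G$ lies on the relative boundary of $F$. By the supporting hyperplane theorem there is a linear functional $u$, nonconstant on $F$, with $\langle x_0,u\rangle=\max_{y\in F}\langle y,u\rangle=:h_F(u)$, and the associated exposed face $F'=F_u(F)=\{y\in F:\langle y,u\rangle=h_F(u)\}$ is a proper face of $F$ containing $x_0$. Now comes the crucial observation: because $G\subseteq F$, the functional $u$ attains its maximum over $G$ at $x_0$, which is a relative interior point of $G$ (here I use that $G$ is open in its affine hull, so $G=\relint G$); and a linear functional attaining its maximum over a convex set at a relatively interior point must be constant on that set. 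Hence $u\equiv h_F(u)$ on $G$, so $G\subseteq F'$. But $F'$ is a face of $E$ by transitivity, it contains $G$, and it is strictly smaller than $F$, contradicting minimality. Therefore $G\subseteq\relint F$.

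Finally, the third assertion follows by combining the first two. Disjointness of the relative interiors of distinct faces is exactly the first assertion, while applying the second assertion to the singleton $G=\{z\}$ (trivially convex and open in its zero-dimensional affine hull) shows that every $z\in E$ lies in $\relint F$ for some face $F$; thus the relative interiors of the faces cover $E$, and $E$ is their disjoint union. The one place demanding genuine care is the minimal-face argument in the second assertion, where the reduction to a smaller exposed face relies on both the supporting hyperplane theorem and the fact that a linear functional cannot be maximized at a relatively interior point of a convex set without being constant there.
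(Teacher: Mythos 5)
Your proof is correct. The paper itself gives no argument for this statement --- it is quoted as well known with a reference to Schneider's book --- and your proof (disjointness via prolonging a segment through a common relative-interior point, then passing to the smallest face containing $G$ and excluding relative-boundary points by a supporting-hyperplane argument) is essentially the standard one found in that reference, so nothing is missing.
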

The following result is probably well-known.
\begin{prop}\label{convex-criterium}
Let $C_1 \subseteq C_2$ be two compact convex subsets of $V$. Assume that for any $\beta \in V$ we have
\[
\mathrm{max}_{y\in C_1} \langle y , \beta \rangle=\mathrm{max}_{y\in C_2} \langle y , \beta \rangle.
\]
Then $C_1=C_2$.
\end{prop}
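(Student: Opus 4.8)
The plan is to use the elementary but fundamental fact that a compact convex set is recovered from its support function, so that two such sets with the same support function must coincide. In the notation of Section \ref{convex-geometry}, the hypothesis is precisely $h_{C_1}=h_{C_2}$, where $h_{C_i}(\beta)=\max_{y\in C_i}\langle y,\beta\rangle$. Since we are already given $C_1\subseteq C_2$, it suffices to establish the reverse inclusion $C_2\subseteq C_1$, and for this I would argue by contradiction.

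Suppose, to the contrary, that there exists a point $x_0\in C_2$ with $x_0\notin C_1$. As $C_1$ is a nonempty compact convex set and $x_0$ lies in its complement, the strict separation theorem (Hahn--Banach in finite dimensions) supplies a vector $\beta\in V$ and a constant $c\in\R$ with $\langle x_0,\beta\rangle>c>\langle y,\beta\rangle$ for all $y\in C_1$. Taking the maximum over the compact set $C_1$ preserves the strict inequality, so that
\[
\langle x_0,\beta\rangle>\max_{y\in C_1}\langle y,\beta\rangle.
\]
On the other hand, since $x_0\in C_2$ we have $\max_{y\in C_2}\langle y,\beta\rangle\geq\langle x_0,\beta\rangle$, whence
\[
\max_{y\in C_2}\langle y,\beta\rangle\geq\langle x_0,\beta\rangle>\max_{y\in C_1}\langle y,\beta\rangle.
\]
This contradicts the hypothesis $\max_{y\in C_1}\langle y,\beta\rangle=\max_{y\in C_2}\langle y,\beta\rangle$ applied to this particular $\beta$. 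Therefore no such $x_0$ exists, $C_2\subseteq C_1$, and $C_1=C_2$.

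There is essentially no obstacle here; the only point requiring a moment of care is to ensure that the separation is \emph{strict} and correctly oriented, i.e.\ that one invokes the separation of a point from a disjoint compact convex set rather than the weaker separation valid for two arbitrary convex sets. Once the strict inequality against the maximum over $C_1$ is in hand, the conclusion is immediate. Conceptually, the argument is nothing more than the standard observation that the support function determines a compact convex set, specialized to the nested situation $C_1\subseteq C_2$.
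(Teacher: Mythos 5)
Your proof is correct, and it takes a different route from the one in the paper. The paper also argues by contradiction, but it works from the inside: it normalizes so that the affine hull of $C_2$ is all of $V$, produces a point $p\in\partial C_1$ lying in the interior of $C_2$, invokes the fact that every face of a compact convex set is contained in an exposed face to find $\beta$ with $\max_{y\in C_1}\langle y,\beta\rangle=\langle p,\beta\rangle$, and then derives a contradiction from the observation that the linear functional $x\mapsto\langle x,\beta\rangle$ would attain its maximum over $C_2$ at an interior point. You instead take a point $x_0\in C_2\setminus C_1$ and strictly separate it from the compact convex set $C_1$, obtaining directly $\max_{y\in C_2}\langle y,\beta\rangle\geq\langle x_0,\beta\rangle>\max_{y\in C_1}\langle y,\beta\rangle$, i.e.\ the statement that the support function determines a compact convex set. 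Your argument is the more elementary and more robust of the two: it needs no normalization of the affine hull, no appeal to the face structure, and it sidesteps the somewhat delicate existence claim of a point of $\partial C_1$ interior to $C_2$ (which, taken literally, can fail when $C_1$ is lower-dimensional, although in that situation the hypothesis of the proposition is violated anyway). What the paper's phrasing buys is consistency with the exposed-face language used throughout Section \ref{convex-geometry}; what yours buys is a shorter, self-contained proof resting only on the Hahn--Banach separation theorem. Your one point of care --- insisting on \emph{strict} separation of a point from a disjoint compact convex set --- is exactly the right one.
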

\begin{proof}
We may assume without loss of generality  that the affine hull of $C_2$ is $V$.
Assume by contradiction that $C_1 \subsetneq C_2$. Since $C_1$ and $C_2$ are both compact, it follows that there exists $p\in \partial C_1$ such that $p\in \stackrel{o}{C_2}$. Since every face of a compact convex set is contained in an exposed face \cite{schneider-convex-bodies}, there exists $\beta \in V$ such that
\[
\mathrm{max}_{y\in C_1} \langle y , \beta \rangle=\langle p, \beta \rangle.
\]
This means the linear function $x\mapsto \langle x, \beta \rangle$ restricted on $C_2$ achieves its maximum at an interior point which is a contradiction.
\end{proof}
Let $E$ be a $K$-invariant convex body of $\liep$. Let $\lia \subset \liep$ be a maximal Abelian subalgebra of $\liep$ and let $\mathcal W:=\{\mathrm{Ad}(k):\, k\in K\, \mathrm{and} \mathrm{Ad}(k)(\lia)=\lia\}$. $\mathcal W$ is called \emph{Weyl group} and it acts on $\lia$ as a finite group \cite{knapp}. Let $P=E\cap \lia$ and let $\pi_\lia:\liep \lra \lia$ be the orthogonal projection of $\liep$ onto $\lia$.
The following result is proved in \cite{gichev-polar}, see also \cite{ks}.
\begin{teo}\label{convex-reduction}
$P$ is a $\mathcal W$-invariant convex body of $\lia$ satisfying $\pi_\lia (E)=P$ and $E=KP$. Hence if $E_1,E_2 \subset \liep$ are two $K$-invariant convex bodies of $\liep$ then $E_1 \cap \lia =E_2 \cap \lia$ if and only if $E_1=E_2$.
\end{teo}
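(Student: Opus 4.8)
The plan is to reduce the statement to two classical facts about the isotropy representation of $K$ on $\liep$: first, that $K\cdot\lia=\liep$, i.e.\ every element of $\liep$ is $\Ad(K)$-conjugate into the maximal Abelian subalgebra $\lia$ \cite{knapp}; and second, the containment half of Kostant's convexity theorem \cite{kostant-convexity}, which asserts that for $a\in\lia$ one has $\pi_\lia(\Ad(K)\cdot a)\subseteq \mathrm{conv}(\mathcal W\cdot a)$. Granting these, the remaining work is formal.

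I would first record the elementary properties of $P=E\cap\lia$: it is compact and convex, being the intersection of the convex body $E$ with the subspace $\lia$, and it is $\mathcal W$-invariant, since for $w=\Ad(k)$ with $\Ad(k)(\lia)=\lia$ the $K$-invariance of $E$ gives $wP=\Ad(k)(E)\cap\Ad(k)(\lia)=E\cap\lia=P$. Next I would prove $E=KP$. The inclusion $KP\subseteq E$ is immediate from $P\subseteq E$ and the $K$-invariance of $E$; conversely, given $x\in E$ I choose $k\in K$ with $\Ad(k)x\in\lia$, note that $\Ad(k)x\in E\cap\lia=P$ by $K$-invariance, and conclude $x\in\Ad(k^{-1})P\subseteq KP$. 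This identity already yields the final assertion of the theorem: if $E_1,E_2$ are $K$-invariant convex bodies with $E_1\cap\lia=E_2\cap\lia$, then $E_1=K(E_1\cap\lia)=K(E_2\cap\lia)=E_2$, the reverse implication being trivial.

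It then remains to identify the projection. The inclusion $P\subseteq\pi_\lia(E)$ holds because $\pi_\lia$ fixes $\lia$ pointwise. For the reverse, I take $x\in E$ and, using $E=KP$, write $x=\Ad(k)a$ with $a\in P$. Kostant's theorem gives $\pi_\lia(x)\in\mathrm{conv}(\mathcal W\cdot a)$; since $P$ is convex, $\mathcal W$-invariant, and contains $a$, the convex hull $\mathrm{conv}(\mathcal W\cdot a)$ is contained in $P$, so $\pi_\lia(x)\in P$ and $\pi_\lia(E)=P$ follows. Finally $P$ is full-dimensional, hence a convex body: $\pi_\lia$ is a surjective linear map and therefore open, so $\pi_\lia(\inte E)$ is a nonempty open subset of $\lia$ contained in $\pi_\lia(E)=P$.

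The only genuinely nontrivial ingredient, and hence the main obstacle, is the containment half of Kostant's convexity theorem, namely that the orthogonal projection onto $\lia$ of a single $\Ad(K)$-orbit through $a$ is trapped inside $\mathrm{conv}(\mathcal W\cdot a)$. Once this and the conjugacy statement $K\cdot\lia=\liep$ are available, all four conclusions follow by the bookkeeping above, with no further analysis of the geometry of $E$.
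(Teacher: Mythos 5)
Your argument is correct, but note that the paper does not actually prove this statement: it is quoted from Gichev \cite{gichev-polar} (see also \cite{ks}), so there is no in-text proof to match step by step. Your reduction is the natural one in this specific setting. The two external inputs you isolate are exactly right: $\Ad(K)\cdot\lia=\liep$ gives $E=KP$ (and hence the final ``$E_1\cap\lia=E_2\cap\lia$ iff $E_1=E_2$'' statement for free), and the containment half of Kostant's convexity theorem, combined with the $\mathcal W$-invariance and convexity of $P$, gives $\pi_\lia(E)\subseteq P$; the reverse inclusion and the full-dimensionality of $P$ via openness of the surjective linear map $\pi_\lia$ are routine and you handle them correctly. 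The difference from the cited source is one of generality: Gichev works with arbitrary polar representations of compact groups, where the substitute for Kostant's theorem (projection of an orbit onto a section lands in the convex hull of the generalized Weyl group orbit) is itself part of what must be established, whereas you may simply invoke Kostant \cite{kostant-convexity} because here the $K$-action on $\liep$ is the isotropy representation of the symmetric space $G/K$ with section $\lia$. For the purposes of this paper that specialization is entirely adequate, so your proof stands as a complete, self-contained replacement for the citation.
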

The $\mathcal W$-action on $P$ induces an action on the faces of $P$. Similarly $K$ acts on the set of faces of $E$. Denote these sets by $\mathscr F (P)$ respectively by $\mathscr F (E)$. Let $\sigma$  be a face of $P$. Then its affine hull is given by $x_o+W$, where $W\subset \liep$ is a subspace.  We denote by $\sigma^\perp$ the orthogonal of $W$. The following result is proved in \cite{bgh-israel-p}.
\begin{teo}\label{meo-israel}
The map $\mathscr F (P) \lra \mathscr F (E), \sigma \mapsto K^{\sigma^\perp} \cdot \sigma$ is well-defined and induces a bijection between $\mathscr F (P)/ \mathcal W$ and $\mathscr F (E)/K$. Moreover, $\sigma$ is an exposed face if and only if $K^{\sigma^\perp} \cdot \sigma$ does.
\end{teo}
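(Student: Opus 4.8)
The plan is to build everything on one computation. For $\beta\in\lia$ I claim every maximizer of $\langle\cdot,\beta\rangle$ on $E$ lies in $\liep^\beta:=\{\xi\in\liep:[\beta,\xi]=0\}$. Indeed, if $x\in F_\beta(E)$ then $x$ maximizes $\langle\cdot,\beta\rangle$ on the orbit $K\cdot x\subset E$, so $\frac{d}{dt}\big|_{0}\langle\Ad(\exp t\xi)x,\beta\rangle=\langle[\xi,x],\beta\rangle=0$ for every $\xi\in\liek$; taking $\xi=[x,\beta]\in[\liep,\liep]\subset\liek$ and using $\Ad(K)$-invariance of $\scalo$ gives $\langle[x,\beta],[x,\beta]\rangle=0$, hence $[x,\beta]=0$. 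Thus $F_\beta(E)\subset\liep^\beta$. Now $K^\beta:=Z_K(\beta)$ is again a compatible subgroup, its Cartan decomposition is $\liek^\beta\oplus\liep^\beta$, and $\lia$ is still maximal abelian in $\liep^\beta$; since $F_\beta(E)$ is a $K^\beta$-invariant convex body of $\liep^\beta$, Theorem \ref{convex-reduction} applied to $K^\beta$ acting on $\liep^\beta$ yields
\[
F_\beta(E)=K^\beta\cdot\left(F_\beta(E)\cap\lia\right)=K^\beta\cdot F_\beta(P),
\]
and clearly $F_\beta(E)\cap\lia=F_\beta(P)$ is the exposed face of $P$ cut out by $\beta$.

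This immediately settles the exposed faces. Because every element of $\liep$ is $\Ad(K)$-conjugate into $\lia$, every exposed face $F_u(E)$ of $E$ is $K$-conjugate to some $F_\beta(E)$ with $\beta\in\lia$, and the displayed formula identifies it with $K^{\sigma^\perp}\cdot\sigma$ for $\sigma=F_\beta(P)$; here one checks that the group $K^{\sigma^\perp}$ attached to the affine direction of $\sigma$ is exactly $K^\beta$, which is the step where the precise definition of $\sigma^\perp$ enters. Conversely $\sigma=F_\beta(E)\cap\lia$ shows that $\sigma\mapsto K^{\sigma^\perp}\cdot\sigma$ and $F\mapsto F\cap\lia$ are mutually inverse on exposed faces. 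Both constructions are $K$-natural and $\mathcal W=N_K(\lia)/Z_K(\lia)$ is realised inside $K$, so they intertwine the $\mathcal W$-action on $\mathscr F(P)$ with the $K$-action on $\mathscr F(E)$; injectivity on the quotients is then forced by Theorem \ref{convex-reduction}, since two $K$-invariant bodies with equal $\lia$-slice coincide, and taking the same $\beta$ on both sides gives surjectivity.

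For an arbitrary, possibly non-exposed, face $\sigma$ of $P$ I would induct on the length of a maximal chain $\sigma=\sigma_0\subsetneq\sigma_1\subsetneq\cdots\subsetneq\sigma_m=P$ furnished by Lemma \ref{face-chain}. The top proper face $\sigma_{m-1}$ is exposed in $P$, say $\sigma_{m-1}=F_\beta(P)$, and by the computation above $F_\beta(E)$ is a $K^\beta$-invariant convex body of $\liep^\beta$ with $F_\beta(E)\cap\lia=\sigma_{m-1}$ and $\lia$ maximal abelian in $\liep^\beta$. Thus the whole situation descends to the strictly smaller pair $(F_\beta(E),\sigma_{m-1})$, in which $\sigma_0\subsetneq\cdots\subsetneq\sigma_{m-1}$ is a shorter chain; the inductive hypothesis produces a face $F(\sigma)$ of $F_\beta(E)$, hence of $E$, with $F(\sigma)\cap\lia=\sigma$, and unwinding the construction identifies it with $K^{\sigma^\perp}\cdot\sigma$, the centraliser accumulated along the chain being the stabiliser recorded by $\sigma^\perp$. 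To invert on $K$-classes I note that every face $F$ of $E$ has an extreme point in $\ext E\subset K\cdot\ext P$ (because $E=KP$ and $\ext E\cap\lia\subset\ext P$), so after translating by a suitable $k\in K$ we may assume $F\cap\lia\neq\vacuo$; then $F\cap\lia=F\cap P$ is a face of $P$, and Theorem \ref{convex-reduction} again gives that $\sigma\mapsto F(\sigma)$ and $F\mapsto F\cap\lia$ descend to mutually inverse bijections between $\mathscr F(P)/\mathcal W$ and $\mathscr F(E)/K$.

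Finally, for the exposedness statement I would use Lemma \ref{u-cono}. If $\sigma=F_u(P)$ is exposed with $u\in\lia$, choosing $u$ in the relative interior of the convex cone $C_\sigma=\{u\in\lia:\sigma=F_u(P)\}$ gives $F_u(E)=K^{\sigma^\perp}\cdot\sigma$, which is exposed in $E$. Conversely, if $K^{\sigma^\perp}\cdot\sigma=F_u(E)$ is exposed, then this face is preserved by the compact group $K^{\sigma^\perp}$, so by Lemma \ref{u-cono} its exposing cone contains a $K^{\sigma^\perp}$-fixed vector; projecting it into $\lia$ produces a functional in $\lia$ exposing $\sigma$ in $P$, whence $\sigma$ is exposed. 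The main obstacle is the non-exposed case: maintaining, along a maximal chain, the compatibility of the successive $\lia$-slices, the accumulated centraliser, and their identification with $K^{\sigma^\perp}$, and in particular verifying that the resulting face of $E$ is independent of the chosen chain, is the delicate part; by contrast the maximizer lemma and the reduction Theorem \ref{convex-reduction} make the exposed case essentially formal.
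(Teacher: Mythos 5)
First, a point of comparison: the paper does not actually prove Theorem \ref{meo-israel}; it is quoted from \cite{bgh-israel-p}, so there is no in-paper argument to measure yours against. Your opening computation is correct and is indeed the right engine for any proof of this statement: for $\beta\in\lia$ and $x\in F_\beta(E)$, differentiating $t\mapsto\langle\Ad(\exp t\xi)x,\beta\rangle$ along $K\cdot x\subset E$ and testing against $\xi=[x,\beta]$ gives $[x,\beta]=0$, hence $F_\beta(E)\subset\liep^\beta$; combined with Lemma \ref{lemcomp} and Theorem \ref{convex-reduction} applied to the compatible pair $(K^\beta,\liep^\beta)$, with $\lia$ still maximal Abelian in $\liep^\beta$, this yields $F_\beta(E)=K^\beta\cdot F_\beta(P)$ and $F_\beta(E)\cap\lia=F_\beta(P)$. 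That part is sound.

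However, the steps that carry the actual content of the theorem are asserted rather than proved. (1) You never establish $K^{\sigma^\perp}\cdot\sigma=K^\beta\cdot\sigma$ for $\sigma=F_\beta(P)$. One inclusion is clear (the affine direction of $\sigma$ is orthogonal to $\beta$, so $\beta\in\sigma^\perp$ and $K^{\sigma^\perp}\subseteq K^\beta$), but the reverse is exactly where the definition of $\sigma^\perp$ must be tied to the span of the exposing cone $C_\sigma:=\{u\in\lia:\sigma=F_u(P)\}$ and to the restricted root structure of $(\liek,\lia)$, e.g.\ that for $\beta$ in the relative interior of $C_\sigma$ the centralizer $K^\beta$ agrees, at least in its action on $\sigma$, with $K^{\sigma^\perp}$. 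Without this, the map $\sigma\mapsto K^{\sigma^\perp}\cdot\sigma$ is not even shown to take values in $\mathscr F(E)$, so well-definedness is open already for exposed faces. (2) The non-exposed case is only a plan: independence of the iterated construction from the chosen chain, and the identification of the accumulated centralizer $K^{\{\beta_0,\dots,\beta_{m-1}\}}$ with $K^{\sigma^\perp}$, are precisely the hard steps, and you explicitly leave them unresolved; since the theorem concerns an arbitrary $K$-invariant convex body $E$, non-exposed faces of $P$ genuinely occur and cannot be bypassed. (3) Injectivity on the quotients does not follow from Theorem \ref{convex-reduction} as you claim: that theorem compares two $K$-invariant bodies with equal $\lia$-slice, whereas a proper face of $E$ is only invariant under the smaller group $K^{\sigma^\perp}$; one needs the additional conjugacy argument that if $kF_1=F_2$ then $k$ can be corrected by elements of the face stabilizers so as to normalize $\lia$, producing $w\in\mathcal W$ with $w\sigma_1=\sigma_2$. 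These are genuine gaps rather than routine verifications, even though the overall strategy matches the one used in the cited reference.
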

\section{Compatible subgroups and parabolic subgroups}\label{compatible-parabolic}
In the sequel we always refer to \cite{borel-ji-libro,gjt,heinzner-schwarz-stoetzel,knapp}.

Let $U$ be compact connected Lie group. Let $U^\C$ be its universal complexification which
 is a linear reductive complex algebraic group \cite{akhiezer}. We
   denote by $\theta$ both the conjugation map $\theta : \liu^\C \lra
   \liu^\C$ and the corresponding group isomorphism $\theta : U^\C \lra
   U^\C$.  Let $f: U \times i\liu \lra U^\C$ be the diffeomorphism
 $f(g, \xi) = g \exp \xi$.  Let $G\subset U^\C$ be a closed
 subgroup. Set $K:=G\cap U$ and $\liep:= \lieg \cap i\liu$.  We say
 that $G$ is \emph{compatible} if $f (K \times \liep) = G$.  The
 restriction of $f$ to $K\times \liep$ is then a diffeomorphism onto
 $G$. Hence $\lieg=\liek\oplus \liep$ is the familiar Cartan decomposition  and so  $K$ is a maximal compact subgroup of $G$.  Note that $G$ has finitely many connected components. Since $U$ can be embedded in $\Gl(N,\C)$ for
 some $N$, and any such embedding induces a closed embedding of
 $U^\C$, any compatible subgroup is a closed linear group. Moreover
 $\lieg$ is a real reductive Lie algebra, hence $\lieg =
 \liez(\lieg)\oplus [\lieg, \lieg]$. Denote by $G_{ss}$ the analytic
 subgroup tangent to $[\lieg, \lieg]$. Then $G_{ss}$ is closed and
 $G^o=Z(G)^o \cd G_{ss}$ \cite[p. 442]{knapp}, where $G^o$, respectively $Z(G)^o$, denotes the connected component of  $G$, respectively of $Z(G)$, containing $e$.
The following lemma is well-known. A proof is given in \cite[pag.584]{biliotti-ghigi-heinzner-2}.
 \begin{lemma}$\, $ \label{lemcomp}
   \begin{enumerate}
   \item \label {lemcomp1} If $G\subset U^\C$ is a compatible
     subgroup, and $H\subset G$ is closed and $\theta$-invariant,
     then $H$ is compatible if and only if $H$ has only finitely many connected components.
   \item \label {lemcomp2} If $G\subset U^\C$ is a connected
     compatible subgroup, then $G_{ss}$ is compatible.
     \item \label{lemcomp3} If $G\subset U^\C$ is a compatible
       subgroup, and $E\subset \liep$ is any subset, then \[G^E=\{g\in G:\, \mathrm{Ad}(g)(z)=z,\, \forall z\in E\}\] is
       compatible. Indeed, $G^E=K^E \exp(\liep^E)$, where $K^E=G^E \cap K$ and $\mathfrak p^E=\{v\in \liep:\, [v,E]=0\}$.
   \end{enumerate}
 \end{lemma}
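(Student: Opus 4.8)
The three parts are largely independent, and I would treat (c) first since it is self-contained, then deduce (b) from (a), keeping the genuinely delicate argument for (a) last. For part (c) the plan is to prove the explicit decomposition $G^E=K^E\exp(\liep^E)$ directly, from which compatibility is immediate: indeed $K^E=G^E\cap K=G^E\cap U$, and writing $\lieg^E=\{X\in\lieg:[X,z]=0\ \forall z\in E\}$ for the Lie algebra of $G^E$ one has $\lieg^E\cap i\liu=\lieg^E\cap\liep=\liep^E$. The inclusion $K^E\exp(\liep^E)\subseteq G^E$ is clear, since $v\in\liep^E$ gives $[v,z]=0$, hence $\Ad(\exp v)z=\e^{\ad v}z=z$ for $z\in E$. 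For the reverse inclusion I would take $g\in G^E$ and use the Cartan decomposition of the ambient compatible group, $g=k\exp\xi$ with $k\in K$, $\xi\in\liep$. The key observation is that $G^E$ is $\theta$-stable: applying $\theta$ to $\Ad(g)z=z$ and using $\theta z=-z$ for $z\in E\subseteq\liep\subseteq i\liu$ gives $\Ad(\theta g)z=z$, so $\theta g\in G^E$; hence $\theta(g)^{-1}g=\exp(2\xi)\in G^E$, i.e. $\e^{2\ad \xi}z=z$ for all $z\in E$. Since $\xi\in\liep\subseteq i\liu$, the operator $\ad\xi$ is self-adjoint with respect to an $\Ad(U)$-invariant scalar product on $\liu^\C$ and is therefore diagonalizable with real eigenvalues; decomposing $z$ into eigenvectors, $\e^{2\ad\xi}z=z$ forces $\ad\xi\,z=0$, that is $\xi\in\liep^E$. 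Consequently $\exp\xi\in G^E$ and $k=g\exp(-\xi)\in G^E\cap K=K^E$, giving $g\in K^E\exp(\liep^E)$.

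Part (b) I expect to fall out of (a) with essentially no extra work. By definition $G_{ss}$ is the analytic (hence connected) subgroup tangent to $[\lieg,\lieg]$; it is closed by hypothesis, and it is $\theta$-stable because the Cartan decomposition $\lieg=\liek\oplus\liep$ makes $\theta$ an automorphism of $\lieg$ preserving the derived algebra, so $\theta(G_{ss})=G_{ss}$. Being connected it has exactly one component, so part (a) applies and $G_{ss}$ is compatible.

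The heart of the matter, and the step I expect to be the main obstacle, is the converse implication of (a). The forward direction is easy: if $H$ is compatible then the restriction of $f$ gives a diffeomorphism $K_H\times\liep_H\to H$ with $K_H=H\cap U$ compact and $\liep_H$ a vector space, so $H$ has exactly as many components as the compact group $K_H$, namely finitely many. For the converse I would first record that $\theta$-stability forces $\lieh=\liek_H\oplus\liep_H$ with $\liek_H=\lieh\cap\liu$, $\liep_H=\lieh\cap i\liu$, and that a $\theta$-stable subalgebra of the reductive $\liu^\C$ is itself reductive with $\theta|_\lieh$ a Cartan involution. I would then argue that the closed connected subgroup $H^o$ (closed, being the identity component of a closed group) is compatible, via the global Cartan decomposition of connected reductive subgroups, checking $H^o\cap U=K_H^o$ by uniqueness of the polar decomposition of $U^\C$. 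Finally, to pass from $H^o$ to $H$, given $h=u\exp\xi\in H$ I would use $\theta$-stability to get $\exp(2\xi)=\theta(h)^{-1}h\in H$; since $H/H^o$ is finite, some power $\exp(2N\xi)$ lies in $H^o$, and uniqueness of the polar decomposition together with compatibility of $H^o$ forces $2N\xi\in\liep_H$, whence $\xi\in\liep_H$ and $u=h\exp(-\xi)\in H\cap U=K_H$. The delicate point throughout is precisely the interplay between the finite component group and the one-parameter subgroup $\exp(\R\xi)$: without the finiteness hypothesis a discrete cyclic group such as $\{\exp(n\xi):n\in\Zeta\}$ with $\xi\in i\liu\setminus\{0\}$ is closed and $\theta$-stable but not compatible, so finiteness must be used exactly to upgrade ``$\exp(2\xi)\in H$'' to ``$\xi\in\lieh$''.
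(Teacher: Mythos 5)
The paper does not actually prove this lemma: it is stated as well known and the proof is delegated to \cite[p.~584]{biliotti-ghigi-heinzner-2} (which in turn rests on Knapp's treatment of $\theta$-stable closed subgroups with finitely many components). So there is no internal argument to compare against; judged on its own, your proposal is essentially the standard proof and is correct in its overall architecture. Part (c) is complete and well executed: the identity $\theta(g)^{-1}g=\exp(2\xi)$, the self-adjointness of $\ad\xi$ for $\xi\in\liep\subset i\liu$, and the resulting forcing of $\ad\xi\, z=0$ give exactly the decomposition $G^E=K^E\exp(\liep^E)$, and you correctly note that $\lieg^E\cap i\liu=\liep^E$ so that this is literally the compatibility condition. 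Part (b) is fine, granting the closedness of $G_{ss}$, which the paper itself records (citing Knapp) in the paragraph preceding the lemma. The counterexample $\{\exp(n\xi):n\in\Zeta\}$ at the end correctly isolates where finiteness of $\pi_0(H)$ enters.

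The one soft spot is in part (a), at the step ``$H^o$ is compatible via the global Cartan decomposition of connected reductive subgroups.'' As stated this is dangerously close to circular: the assertion that a closed connected $\theta$-stable subgroup of $U^\C$ satisfies $H^o=(H^o\cap U)\exp(\lieh\cap i\liu)$ \emph{is} the connected case of (a), and the usual proof of the subgroup version runs through the very $\exp(2\xi)$ argument you are in the middle of. To discharge it honestly you must invoke the \emph{intrinsic} global Cartan decomposition (Knapp, Theorem 6.31) for the connected semisimple group $H^o_{ss}$ --- which is proved by independent means via $\Ad$ and the symmetric space --- together with a separate, elementary treatment of the central factor $\exp(\liez(\lieh))=\exp(\liez\cap\liek_H)\exp(\liez\cap\liep_H)$, and then identify the abstract factor $K_1$ with $H^o\cap U$ by uniqueness of the polar decomposition of $U^\C$, as you indicate. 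With that made explicit, the reduction of the general case to $H^o$ via $\exp(2N\xi)\in H^o$ and uniqueness of the polar map is correct and complete. Alternatively, one can avoid the detour through $H^o$ altogether by quoting the key lemma (Knapp, Lemma 7.22) that $\exp X\in H$ with $X$ symmetric and $H$ closed with finitely many components already forces $\R X\subset\lieh$; either route is acceptable, but one of them must be named rather than gestured at.
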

A subalgebra $\lieq \subset \lieg$ is \enf{parabolic} if
$\lieq^\C$ is a parabolic subalgebra of $\lieg^\C$.  One way to
describe the parabolic subalgebras of $\lieg$ is by means of
restricted roots.  If $\lia \subset \liep$ is a maximal subalgebra,
let $\roots(\lieg, \lia)$ be the (restricted) roots of $\lieg$ with
respect to $\lia$, let $\lieg_\la$ denote the root space corresponding
to $\lambda$ and let $\lieg_0 = \liem \oplus \lia$, where $\liem =
\liez_\liek(\lia)=\liez (\lia)\cap \liek$. We denote by $\mathfrak z (\lia)=\{x\in \lieg:\, [x,\lia]=0\}$.  Let $\simple \subset \roots(\lieg, \lia)$ be a
base and let $\roots_+$ be the set of positive roots. If $I\subset
\simple$, set $\roots_I : = \spam(I) \cap \roots$. Then
\begin{gather}
  \label{para-dec}
  \lieq_I:= \lieg_0 \oplus \bigoplus_{\la \in \roots_I \cup \roots_+}
  \lieg_\la
\end{gather}
is a parabolic subalgebra. Conversely, if $\lieq \subset \lieg$ is a
parabolic subalgebra, then there are a maximal subalgebra $\lia
\subset \liep$ contained in $\lieq$, a base $\simple \subset
\roots(\lieg, \lia)$ and a subset $I\subset \simple $ such that $\lieq
= \lieq_I$.  We can further introduce
\begin{gather}
\label{notaz-I}
\begin{gathered}
  \lia_I : = \bigcap_{\la \in I} \ker \la \qquad \lia^I := \lia_I^\perp \\
  \lien_I = \bigoplus_{\la \in \roots_+ \setminus \roots_I} \lieg_\la
  \qquad \liem_I : = \liem \oplus \lia^I \oplus \bigoplus_{\la \in
    \roots_I}\lieg_\la.
\end{gathered}
\end{gather}
Then $\lieq_I = \liem_I \oplus \lia_I \oplus \lien_I$. Since
$\theta\lieg _ \la = \lieg_{-\la}$, it follows that $ \lieq_I \cap
\theta\lieq_I = \lia_I \oplus \liem_I$.  This latter Lie algebra coincides
with the centralizer of $\lia_I$ in $\lieg$. It is a Levi factor
of $\lieq_I$ and
\begin{gather}
  \label{liaI}
  \lia_I =\liez (\lieq_I \cap \theta\lieq_I) \cap \liep.
\end{gather}
If we denote by $\Delta_{-}$ the set of negative root, then $\mathfrak n_I^{-}= \bigoplus_{\la \in \roots_{-} \setminus \roots_I} \lieg_\la$ is a subalgebra.
It follows from standard commutation relations that $\mathfrak z (\lia_I)$ normalizes $\mathfrak n_I$ and $\mathfrak n_I^{-}$ and the centralizer of $\lia^I$ in either is reduced to zero. Then, keeping in mind $\lieg=\mathfrak n_I^{-}\oplus \lieq_I$, $\lieq_I$ is self-normalizing.
\begin{defin}
A subgroup $Q$ of $G$ is called parabolic if it is the normalizer of a parabolic subalgebra in $\lieg$.
\end{defin}
The normalizer of $\lieq_I$ is the \emph{standard parabolic subalgebra} $Q_I$. Let $R_I$ and let $A_I$ be the unique connected Lie subgroups of $G$ with Lie algebra equals to $\mathfrak n_I$ and $\lia_I$ respectively. $R_I$ is the unipotent radical of  $Q_I$. The group $Q_I$ is the semidirect product of $R_I$ and of $Z(A_I)$, i.e., the centralizer of $A_I=\exp(a_I)$ in $G$. Moreover, $Z(A_I)=A_I \times M_I$, where $M_I$ is a closed Lie group whose Lie algebra is $\mathfrak m_I$. It is not connected in general but it is compatible. Since $M_I$ is stable with respect to the Cartan involution, $K_I=M_I\cap K$ is  maximal compact in $M_I$. It is also maximal compact in $Q_I$ and the quotient
\[
X_I=M_I/K_I=Q_I/ K_I A_I N_I
\]
is a symmetric space of noncompact type for $M_I$. Finally, as a consequence of the Iwasawa decomposition $G=NAK$, where $N=\exp(\mathfrak n)$, $\mathfrak n=\mathfrak n_{\emptyset}$, and $NA\subset Q_I$, we get the following result
\begin{prop}\label{decomposition-parabolic}
$G=KQ_I$.
\end{prop}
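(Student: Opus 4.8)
The plan is to reduce the statement to the Iwasawa decomposition together with the elementary observation that both the Iwasawa $A$ and $N$ sit inside the standard parabolic $Q_I$. Since $Q_I$ is a subgroup and $\lieq_I$ is self-normalizing, $\Lie(Q_I)=\lieq_I$, so it suffices to check the inclusions $\lia\subseteq\lieq_I$ and $\mathfrak n\subseteq\lieq_I$ at the Lie-algebra level and then exponentiate.

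First I would verify $\lia\subseteq\lieq_I$. Using the orthogonal splitting $\lia=\lia_I\oplus\lia^I$ from \eqref{notaz-I}, the summand $\lia_I$ lies in $\lieq_I=\liem_I\oplus\lia_I\oplus\lien_I$ by definition, while $\lia^I\subseteq\liem_I\subseteq\lieq_I$, again by \eqref{notaz-I}; hence $\lia\subseteq\lieq_I$. Next I would verify $\mathfrak n=\bigoplus_{\la\in\roots_+}\lieg_\la\subseteq\lieq_I$ by splitting the positive roots as $\roots_+=(\roots_+\setminus\roots_I)\cup(\roots_+\cap\roots_I)$: the root spaces indexed by $\roots_+\setminus\roots_I$ make up precisely $\lien_I\subseteq\lieq_I$, whereas those indexed by $\roots_+\cap\roots_I$ lie in $\bigoplus_{\la\in\roots_I}\lieg_\la\subseteq\liem_I\subseteq\lieq_I$. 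Thus $\mathfrak n\subseteq\lieq_I$.

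Now I would exponentiate. Since $A=\exp(\lia)$ and $N=\exp(\mathfrak n)$ are connected and their Lie algebras are contained in $\lieq_I=\Lie(Q_I)$, both $A$ and $N$ lie in $Q_I$; as $Q_I$ is a subgroup, the product set satisfies $AN\subseteq Q_I$. (Equivalently, $AN\subseteq Q_\emptyset\subseteq Q_I$, where $Q_\emptyset$ is the minimal parabolic corresponding to $I=\vacuo$.) Finally I invoke the Iwasawa decomposition $G=NAK$; taking inverses and using that $K,A,N$ are subgroups gives $G=G^{-1}=KAN$. Therefore every $g\in G$ can be written $g=k(an)$ with $k\in K$ and $an\in AN\subseteq Q_I$, so $G\subseteq KQ_I$. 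The reverse inclusion $KQ_I\subseteq G$ is immediate since $K,Q_I\subseteq G$, and we conclude $G=KQ_I$.

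I do not expect a genuine obstacle here: the argument is essentially bookkeeping with restricted root spaces plus the Iwasawa decomposition. The only two points requiring a little care are (i) using the $KAN$ form of Iwasawa rather than $NAK$, so that the factor $K$ appears on the left and the remaining $AN$ is absorbed into $Q_I$, and (ii) the passage from the Lie-algebra inclusions $\lia,\mathfrak n\subseteq\lieq_I$ to the group inclusions $A,N\subseteq Q_I$, which is legitimate precisely because $A$ and $N$ are connected.
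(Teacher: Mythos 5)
Your proof is correct and follows essentially the same route as the paper, which likewise deduces $G=KQ_I$ from the Iwasawa decomposition $G=NAK$ together with the observation that $NA\subset Q_I$. The only difference is that you spell out the root-space bookkeeping showing $\lia,\mathfrak n\subseteq\lieq_I$ and the inversion needed to pass from $NAK$ to $KAN$, details the paper leaves implicit.
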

Another way to describe parabolic subalgebras of $\lieg$ is the
following.  If $\beta \in \liep$, the endomorphism $\ad \beta \in \End
\lieg$ is diagonalizable over $\R$. Denote by $ V_\la (\ad \beta) $ the
eigenspace of $\ad \beta$ corresponding to the eigenvalue $\la$.  Set
\begin{gather*}
  \lieg^{\beta+}: = \bigoplus_{\la \geq 0} V_\la (\ad \beta).
\end{gather*}
The following result is proved in \cite{biliotti-ghigi-heinzner-2}.
\begin{lemma}
  \label{para-beta}
  For any $\beta$ in $ \liep$, $\lieg^{\beta+}$ is a parabolic
  subalgebra of $\lieg$.  If $\lieq \subset \lieg$ is a parabolic
  subalgebra, there is some vector $\beta \in \liep$ such that $\lieq
  = \lieg^{\beta+}$.  The set of all such vectors is an open convex
  cone in
% $\liep$, whose linear span  coincides with
$\liez(\lieq \cap \theta\lieq) \cap \liep$.
\end{lemma}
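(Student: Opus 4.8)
The plan is to establish the three assertions by passing to a restricted root decomposition adapted to $\beta$. Throughout I fix a maximal abelian subalgebra $\lia \subset \liep$ and use $\lieg = \lieg_0 \oplus \bigoplus_{\la \in \roots(\lieg,\lia)} \lieg_\la$ together with \eqref{para-dec}. For the first assertion I would choose $\lia$ so that $\beta \in \lia$, which is always possible. Since $\ad\beta$ acts on $\lieg_\la$ as the scalar $\la(\beta)$ and annihilates $\lieg_0$, its eigenspaces are unions of root spaces: $V_0(\ad\beta) = \lieg_0 \oplus \bigoplus_{\la(\beta)=0}\lieg_\la$ and $V_\mu(\ad\beta) = \bigoplus_{\la(\beta)=\mu}\lieg_\la$ for $\mu\neq 0$. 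Picking an order on $\lia^*$ for which $\la(\beta)\geq 0$ on all of $\roots_+$ and setting $I := \{\alpha\in\simple : \alpha(\beta)=0\}$, expanding a positive root in simple roots shows $\{\la\in\roots : \la(\beta)=0\} = \roots_I$ and hence $\{\la\in\roots : \la(\beta)\geq 0\} = \roots_+\cup\roots_I$. Comparing with \eqref{para-dec} yields $\lieg^{\beta+} = \lieq_I$, so it is parabolic.

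For the second assertion, given an arbitrary parabolic $\lieq$ the structure theory recalled above supplies $\lia$, a base $\simple$ and $I\subset\simple$ with $\lieq = \lieq_I$. As $\lieg$ is semisimple the simple restricted roots are linearly independent on $\lia$, so I can choose $\beta\in\lia_I$ with $\alpha(\beta)>0$ for each $\alpha\in\simple\setminus I$; such a $\beta$ automatically satisfies $\alpha(\beta)=0$ for $\alpha\in I$. The computation of the first step then gives $\lieg^{\beta+} = \lieq_I = \lieq$.

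The third assertion is where the argument has to be made carefully. The crucial point is that $\theta$ reverses the grading: because $\theta\beta=-\beta$ and $\theta$ is an automorphism, $\theta V_\mu(\ad\beta) = V_{-\mu}(\ad\beta)$, so $\theta\lieg^{\beta+} = \bigoplus_{\mu\leq 0}V_\mu(\ad\beta)$ and therefore $\lieg^{\beta+}\cap\theta\lieg^{\beta+} = V_0(\ad\beta) = \liez(\beta)$, the centralizer of $\beta$. If $\lieg^{\beta+}=\lieq_I$, this identity forces $\liez(\beta)=\lieq_I\cap\theta\lieq_I$; since $\beta$ lies in the center of its own centralizer, \eqref{liaI} gives $\beta\in\liez(\lieq_I\cap\theta\lieq_I)\cap\liep = \lia_I$. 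Conversely, for $\beta\in\lia_I$ the positivity computation of the first step shows $\lieg^{\beta+}=\lieq_I$ precisely when $\alpha(\beta)>0$ for every $\alpha\in\simple\setminus I$. Hence the set of admissible $\beta$ equals $\{\beta\in\lia_I : \alpha(\beta)>0 \ \text{for}\ \alpha\in\simple\setminus I\}$, a finite intersection of open half-spaces in $\lia_I$, i.e. an open convex cone in $\liez(\lieq_I\cap\theta\lieq_I)\cap\liep$.

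I expect the main obstacle to be precisely this last point: one must rule out the possibility that some $\beta$ with $\lieg^{\beta+}=\lieq$ lies outside the distinguished subspace $\lia_I$ (for instance in a different maximal abelian subalgebra of $\liep$). This is exactly where the $\theta$-antisymmetry of $\ad\beta$ on $\liep$, which identifies $\lieg^{\beta+}\cap\theta\lieg^{\beta+}$ with the centralizer of $\beta$ and hence with the Levi factor $\lieq_I\cap\theta\lieq_I$, together with the intrinsic description \eqref{liaI} of $\lia_I$ as the $\liep$-part of the center of that Levi factor, forces every admissible $\beta$ into $\lia_I$ and pins down the cone independently of all the choices made.
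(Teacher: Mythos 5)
Your argument is correct and follows essentially the same route as the source the paper cites for this lemma (the paper itself gives no proof, deferring to \cite{biliotti-ghigi-heinzner-2}, where the statement is established by exactly this restricted-root computation: identifying $\lieg^{\beta+}$ with $\lieq_I$ for $I=\{\alpha\in\simple:\alpha(\beta)=0\}$, and using $\theta V_\mu(\ad\beta)=V_{-\mu}(\ad\beta)$ together with \eqref{liaI} to pin the admissible $\beta$ into $\lia_I$). In particular your treatment of the delicate third assertion, forcing $\beta\in\liez(\lieq\cap\theta\lieq)\cap\liep$ via $\lieg^{\beta+}\cap\theta\lieg^{\beta+}=\liez(\beta)$, is the standard and correct resolution.
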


A \enf {parabolic subgroup} of $G$ is a subgroup of the form $Q =
N_G(\lieq)$ where $\lieq $ is a parabolic subalgebra of $\lieg$.
Equivalently, a parabolic subgroup of $G$ is a subgroup of the form
$\tilde Q \cap G$ where $\tilde Q$ is a parabolic subgroup of $G^\C$ and $\tilde{\mathfrak q}$ is the
complexification of a subspace $\lieq \subset \lieg$.

If $\beta
\in \liep$ set
% \begin{gather*}
%   G^{\beta+} :=\{g \in G : \lim_{t\to - \infty} \exp({t\beta}) g
%   \exp({-t\beta}) \text { exists} \}\\
%   R^{\beta+} :=\{g \in G : \lim_{t\to - \infty} \exp({t\beta})
%   g \exp({-t\beta}) =e \}\\
%   \lier^{\beta+}: = \bigoplus_{\la > 0} V_\la (\ad \beta).
% \end{gather*}
\begin{gather*}
\begin{gathered}
  G^{\beta+} :=\{g \in G : \lim_{t\to - \infty} \exp({t\beta}) g
  \exp({-t\beta}) \text { exists} \}\\
  R^{\beta+} :=\{g \in G : \lim_{t\to - \infty} \exp({t\beta})
  g \exp({-t\beta}) =e \} \\
\end{gathered}
\qquad
  \mathfrak r^{\beta+}: = \bigoplus_{\la > 0} V_\la (\ad \beta).
\end{gather*}
The following result characterizes completely the parabolic subgroups of $G$. The result is classical and a proof is given in \cite{biliotti-ghigi-heinzner-2}.
\begin{lemma}
  $G^{\beta +} $ is a parabolic subgroup of $G$ with Lie algebra
  $\lieg^{\beta+}$ and it is the semidirect product of $G^{\beta}$ with $R^{\beta+}$. Moreover, $G^\beta$ is a Levi factor, $R^{\beta+}$ is connected and it is the
  unipotent radical of $G^{\beta+}$. Finally, every parabolic subgroup of $G$ equals
  $G^{\beta+}$ for some $\beta \in \liep$.
\end{lemma}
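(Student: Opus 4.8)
The plan is to reduce the statement to the structure theory of the standard parabolic $Q_I$ already recorded in Section~\ref{compatible-parabolic}, after first extracting the Lie algebra and the group-theoretic skeleton of $G^{\beta+}$ by a direct eigenvalue computation. First I would observe that $\Ad(\exp t\beta)=e^{t\,\ad\beta}$ acts on $V_\la(\ad\beta)$ as multiplication by $e^{t\la}$, so that for $X=\sum_\la X_\la\in\lieg$ one has $\exp(t\beta)\exp(sX)\exp(-t\beta)=\exp(s\sum_\la e^{t\la}X_\la)$. The limit as $t\to-\infty$ exists exactly when $X_\la=0$ for all $\la<0$, and it then equals $\exp(sX_0)$. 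This simultaneously yields $\Lie(G^{\beta+})=\lieg^{\beta+}$ and $\Lie(R^{\beta+})=\mathfrak r^{\beta+}=\bigoplus_{\la>0}V_\la(\ad\beta)$. Because conjugation is an automorphism, products and inverses of elements whose limit exists again have a limit, so $G^{\beta+}$ is a subgroup and $p(g):=\lim_{t\to-\infty}\exp(t\beta)g\exp(-t\beta)$ is a continuous homomorphism on it; its image commutes with $\exp(\R\beta)$, hence lies in $G^\beta:=Z_G(\beta)$, and $p$ is the identity on $G^\beta$. Thus $G^\beta\subseteq G^{\beta+}$, $R^{\beta+}=\ker p$, and abstractly $G^{\beta+}=R^{\beta+}\rtimes G^\beta$ with $R^{\beta+}\cap G^\beta=\{e\}$.

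Next I would realize $\lieg^{\beta+}$ as a standard parabolic subalgebra. Choose a maximal abelian $\lia\subset\liep$ with $\beta\in\lia$ and a base $\simple\subset\roots(\lieg,\lia)$ for which $\beta$ is dominant, and set $I:=\{\alpha\in\simple:\alpha(\beta)=0\}$. Since $V_\la(\ad\beta)=\bigoplus_{\alpha(\beta)=\la}\lieg_\alpha$ (with $\lieg_0$ absorbed at $\la=0$), comparison with \eqref{para-dec} and \eqref{notaz-I} gives $\lieg^{\beta+}=\lieq_I$, $\mathfrak r^{\beta+}=\mathfrak n_I$ and $\lieg^\beta=V_0(\ad\beta)=\liem_I\oplus\lia_I$. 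In particular $\lieg^{\beta+}$ is parabolic (as also follows from Lemma~\ref{para-beta}), and by definition the associated parabolic subgroup is $Q_I=N_G(\lieq_I)=N_G(\lieg^{\beta+})$, for which we already know $Q_I=R_I\rtimes Z(A_I)$ with $R_I=\exp\mathfrak n_I$ connected and $Z(A_I)=A_I\times M_I$ compatible.

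The core step is then the group-level identification $G^{\beta+}=Q_I$. Both factors normalize $\lieq_I=\lieg^{\beta+}$: $G^\beta$ preserves every $\ad\beta$-eigenspace, and $R^{\beta+}$ because $[\mathfrak r^{\beta+},\lieg^{\beta+}]\subset\mathfrak r^{\beta+}$; hence $G^{\beta+}\subseteq N_G(\lieq_I)=Q_I$. To match the factors I use $p$. Since $Z(A_I)$ centralizes $\beta\in\lia_I$ we have $Z(A_I)\subseteq G^\beta$; conversely any $g\in G^\beta\subseteq Q_I$ writes $g=rz$ with $r\in R_I$, $z\in Z(A_I)$, and then $r=gz^{-1}$ would centralize $\beta$, so $r\in R_I\cap Z_G(\beta)=\{e\}$ and $g=z$, giving $G^\beta=Z(A_I)$. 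On the unipotent side, $p\equiv e$ on $R_I=\exp\mathfrak n_I$ (as $\mathfrak n_I=\mathfrak r^{\beta+}$ contracts to $0$), so $R_I\subseteq\ker p=R^{\beta+}$; and if $g\in R^{\beta+}$ is written $g=rz\in R_I\rtimes Z(A_I)$ then $e=p(g)=p(r)p(z)=z$, whence $R^{\beta+}\subseteq R_I$. Thus $R^{\beta+}=R_I$ is connected and unipotent, $G^\beta=Z(A_I)$ is a Levi factor, and $Q_I=R_IZ(A_I)=R^{\beta+}G^\beta\subseteq G^{\beta+}\subseteq Q_I$ forces $G^{\beta+}=Q_I=N_G(\lieg^{\beta+})$, a parabolic subgroup with Lie algebra $\lieg^{\beta+}$. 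Finally, every parabolic subgroup is $N_G(\lieq)$ for a parabolic subalgebra $\lieq$, and by Lemma~\ref{para-beta} $\lieq=\lieg^{\beta+}$ for some $\beta\in\liep$, so it coincides with $G^{\beta+}$.

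I expect the main obstacle to be precisely the passage from the Lie algebra to the group. The identities $\Lie(G^{\beta+})=\lieg^{\beta+}$ and $G^{\beta+}\subseteq N_G(\lieg^{\beta+})$ are routine, but the equality of the possibly disconnected groups — that $G^{\beta+}$ is not strictly smaller than its normalizer-parabolic — is the delicate point, and it is the retraction $p$ together with the already-established decomposition $Q_I=R_I\rtimes Z(A_I)$ that forces the component groups to agree. The genuinely load-bearing equality is $G^\beta=Z(A_I)$, the centralizer of the full split torus $A_I$ rather than of $\beta$ alone; here the compatibility of $M_I$ and the self-normalizing property of $\lieq_I$ are what is really used.
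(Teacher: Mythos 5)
The paper does not actually prove this lemma: it states that the result is classical and refers to \cite{biliotti-ghigi-heinzner-2}, so there is no internal proof to compare against. Your argument is a correct, essentially self-contained version of the standard one, and its architecture --- extract $G^{\beta+}=R^{\beta+}\rtimes G^{\beta}$ from the retraction $p(g)=\lim_{t\to-\infty}\exp(t\beta)g\exp(-t\beta)$, identify $\lieg^{\beta+}$ with a standard $\lieq_I$ by choosing $\lia\ni\beta$ and a base making $\beta$ dominant, then squeeze $G^{\beta+}$ between $R_IZ(A_I)$ and $N_G(\lieq_I)$ --- is exactly what is needed to pass from the Lie-algebra statement of Lemma \ref{para-beta} to the group statement for a possibly disconnected $G$. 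You are right that the load-bearing equality is $G^{\beta}=Z(A_I)$, obtained from $R_I\cap Z_G(\beta)=\{e\}$ and the decomposition $Q_I=R_I\rtimes Z(A_I)$ already recorded in Section \ref{compatible-parabolic}.

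Two steps need one more line each. First, the claim that $\exp(s\sum_\lambda e^{t\lambda}X_\lambda)$ diverges whenever some $X_\lambda\neq 0$ with $\lambda<0$ is not automatic: the exponential of a divergent curve in $\lieg$ can converge. Since $G\subset\Gl(V)$ is linear, argue blockwise instead: with respect to the eigenspace decomposition of $\beta$ on $V$ with eigenvalues $\mu_1>\cdots>\mu_k$, the blocks of $\exp(t\beta)g\exp(-t\beta)$ are $e^{t(\mu_i-\mu_j)}g_{ij}$, so the limit exists iff $g_{ij}=0$ whenever $\mu_i<\mu_j$; applying this to $g=\exp(sX)$, whose offending block is $sX_{ij}+O(s^2)$, gives $\Lie(G^{\beta+})=\lieg^{\beta+}$ (and closedness of $G^{\beta+}$) with no exponential subtlety. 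Second, you justify that $R^{\beta+}$ normalizes $\lieq_I$ via $[\mathfrak r^{\beta+},\lieg^{\beta+}]\subset\lieg^{\beta+}$, but at that stage you have not yet shown $R^{\beta+}$ is connected, so the bracket only controls $\exp(\mathfrak r^{\beta+})$; replace this by noting that conjugation by any $g\in G^{\beta+}$ preserves $G^{\beta+}$ (because $p(ghg^{-1})=p(g)p(h)p(g)^{-1}$ exists when $p(h)$ does), hence $\Ad(g)$ preserves $\Lie(G^{\beta+})=\lieg^{\beta+}$, which yields $G^{\beta+}\subseteq N_G(\lieq_I)$ for the whole, possibly disconnected, group in one stroke. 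With these two repairs the proof is complete.
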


\subsection{Basic properties of the gradient map}\label{subsection-gradient-moment}
Let $(Z, \omega)$ be a K\"ahler manifold. Assume that $U^\C$ acts
holomorphically on $Z$, that $U$ preserves $\om$ and that there is a
momentum map $\mu: Z \lra \liu$.  If $\xi \in \liu$ we denote by $\xi_Z$
the induced vector field on $Z$, i.e., $\xi_Z (p)=\desudtzero \exp(t\xi) p$, and we let $\mu^\xi \in C^\infty (Z)$ be
the function $\mu^\xi(z) := \langle \mu(z),\xi\rangle$, where $\langle \cdot,\cdot \rangle$ is an $\Ad(U)$-invariant scalar product on $\liu$.  That $\mu$ is the
momentum map means that it is $U$-equivariant and that $d\mu^\xi =
i_{\xi_Z} \omega$.

Let $G \subset U^\C$ be compatible.
% The map $\beta \mapsto -i\beta $ is an injection $\liep
% \hookrightarrow \liu$.
If $z \in Z$, let $\mup (z) \in \liep$ denote $-i$ times the component
of $\mu(z)$ in the direction of $i\liep$.  In other words, if we also denote by $\langle \cdot, \cdot\rangle$ the $\Ad(U)$-invariant scalar product on $i\liu$  requiring the multiplication by $i$ is an isometry of $\liu$ onto $i\liu$, then
$\langle \mup (z) , \beta \rangle = \langle i\mu(z) , \beta\rangle=\langle \mu(z),-i\beta\rangle$ for any
$\beta \in \liep$, defines the $G$-\emph{gradient map}
\begin{gather*}
  \mu_\liep : Z \lra \liep.
\end{gather*}
Let $\mup^\beta \in C^\infty (Z)$ be the function $ \mup^\beta(z) = \langle
\mup(z) , \beta\rangle = \mu^{-i\beta}(z)$.  Let $(\cdot,\cdot)$ be the K\"ahler
metric associated to $\om$, i.e. $(v, w) = \om (v, Jw)$. Then
$\beta_Z$ is the gradient of $\mup^\beta$. If $M \subset Z$ is a
locally closed $G$-invariant submanifold, then $\beta_M $ is the
gradient of $\mup^\beta \restr{M}$ with respect to the induced
Riemannian structure on $M$. From now on we always assume that $M$ is compact and connected.
\begin{teo}\label{line}[Slice Theorem \protect{\cite[Thm. 3.1]{heinzner-schwarz-stoetzel}}]
  If $x \in M$ and $\mup(x) = 0$, there are a $G_x$-invariant
  decomposition $T_x M = \lieg \cd x \oplus W$, open $G_x$-invariant
  subsets $S \subset W$, $\Omega \subset M$ and a $G$-equivariant
  diffeomorphism $\Psi : G \times^{G_x}S \ra \Omega$, such that $0\in
  S, x\in \Omega$ and $\Psi ([e, 0]) =x$.
\end{teo}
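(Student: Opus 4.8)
The plan is to follow the standard slice construction, adapted to the fact that $G$ is noncompact, using the hypothesis $\mup(x)=0$ to guarantee that the isotropy group $G_x$ is again a compatible subgroup. First I would show that $\lieg_x$ is $\theta$-stable. Write $\xi=\xi_\liek+\xi_\liep$ with $\xi_\liek\in\liek$ and $\xi_\liep\in\liep$; then $(\xi_\liep)_M(x)=\grad\mup^{\xi_\liep}(x)$ is a gradient direction, while $(\xi_\liek)_M(x)$ is a Hamiltonian direction. Their \Keler inner product at $x$ equals, up to sign, $\mu^{[\xi_\liek,\xi_\liep]}$ evaluated through $\mu$, and since $[\liek,\liep]\subset\liep$ this pairing is $\langle\mup(x),[\xi_\liek,\xi_\liep]\rangle$, which vanishes because $\mup(x)=0$. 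Hence $(\xi_\liek)_M(x)$ and $(\xi_\liep)_M(x)$ are orthogonal, so $\xi_M(x)=0$ forces each to vanish separately, giving $\xi_\liek,\xi_\liep\in\lieg_x$. Thus $\lieg_x=\liek_x\oplus\liep_x$ with $\liek_x=\liek\cap\lieg_x$ and $\liep_x=\liep\cap\lieg_x$, and Lemma \ref{lemcomp} yields that $G_x=K_x\exp(\liep_x)$ is compatible.

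Next I would produce the $G_x$-invariant decomposition $T_xM=\lieg\cd x\oplus W$. Because $G_x$ is compatible, in particular reductive with finitely many connected components, its isotropy representation on $T_xM$ is completely reducible, so the subspace $\lieg\cd x=T_x(G\cd x)$ admits a $G_x$-invariant complement $W$. Concretely I would choose a $K_x$-invariant inner product on $T_xM$, set $W$ to be the orthogonal complement of $\lieg\cd x$, and verify invariance under $\exp(\liep_x)$ using semisimplicity of the $G_x$-action, or equivalently by averaging over the maximal compact $K_x$ and invoking the polar decomposition $G_x=K_x\exp(\liep_x)$.

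Finally I would build the equivariant diffeomorphism. Let $E_x\colon W\supset S\ra M$ be a $G_x$-equivariant embedding of a neighbourhood of $0$ with $E_x(0)=x$ and $d(E_x)_0=\mathrm{id}_W$, obtained from the normal exponential map of a $K$-invariant metric on $M$ and averaged over $K_x$ to make it $G_x$-equivariant. Define
\[
\Psi\colon G\times^{G_x}S\ra M,\qquad \Psi([g,w])=g\cd E_x(w).
\]
Its differential at $[e,0]$ is the isomorphism $\lieg/\lieg_x\oplus W\ra\lieg\cd x\oplus W=T_xM$, so $\Psi$ is a local diffeomorphism near $[e,0]$; shrinking $S$ to a $G_x$-invariant neighbourhood and using that $G\cd x$ is closed and that the $G_x$-action is proper, one upgrades this to a diffeomorphism onto an open $G$-invariant set $\Omega$ with $x\in\Omega$.

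The hard part will be the last step: since $G$ is noncompact there is no $G$-invariant metric, so one cannot simply quote the tubular-neighbourhood theorem, and global injectivity of $\Psi$ near the orbit can fail for a general reductive orbit. It is rescued precisely by the hypothesis $\mup(x)=0$, which makes $G\cd x$ closed and $G_x$ compatible; the gradient map has to be used transversally, comparing $\mup$ on the slice with the moment-map data of the slice representation, in order to rule out spurious identifications $g\cd E_x(w)=g'\cd E_x(w')$ and thereby establish injectivity on a suitable $G_x$-saturated neighbourhood.
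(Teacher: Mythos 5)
You should first be aware that the paper does not prove this statement at all: it is quoted verbatim from Heinzner--Schwarz--St\"otzel \cite[Thm. 3.1]{heinzner-schwarz-stoetzel}, so there is no internal proof to compare yours with, and I can only assess your outline on its own terms. Your first step is sound as far as it goes: the orthogonality of $(\xi_\liek)_M(x)$ and $(\xi_\liep)_M(x)$ at a zero of $\mup$ does follow from the $K$-equivariance of $\mup$, and it gives $\lieg_x=\liek_x\oplus\liep_x$. But you then invoke Lemma \ref{lemcomp} to conclude that $G_x$ is compatible, and that lemma requires $G_x$ to be $\theta$-invariant \emph{as a group}; the infinitesimal computation does not give this. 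You need the separate, nontrivial argument that if $g=k\exp(\beta)\in G_x$ then $k\in K_x$ and $\beta\in\liep_x$: the function $f(t)=\mup^\beta(\exp(t\beta)x)$ has nonnegative derivative $\parallel\beta_M(\exp(t\beta)x)\parallel^2$ and vanishes at $t=0$ and $t=1$ (the latter because $\exp(\beta)x=k^{-1}x$ and $\mup$ is $K$-equivariant with $\mup(x)=0$), hence $f\equiv 0$ and $\beta_M(x)=0$.

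The second gap is in the construction of $W$. Averaging a metric over $K_x$ produces a $K_x$-invariant complement, but nothing in your argument makes it $\exp(\liep_x)$-invariant, and ``semisimplicity of the $G_x$-action'' is precisely what is in question: a real reductive group can act non--completely-reducibly through its centre. What actually rescues this step is Proposition \ref{linearization1}: for $\beta\in\liep_x$ the infinitesimal isotropy action $d\beta_M(x)$ equals the Hessian $D^2\mup^\beta(x)$ and is therefore \emph{symmetric}, while $\liek_x$ acts skew-symmetrically; consequently the orthogonal complement of the invariant subspace $\lieg\cdot x$ is automatically $\lieg_x$-invariant, hence $G_x$-invariant since $G_x=K_x\exp(\liep_x)$. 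Finally, and most seriously, the last step --- injectivity of $\Psi$ on a $G_x$-saturated neighbourhood, i.e.\ the existence of an honest tube --- is the entire content of the theorem for a noncompact group, and you explicitly defer it (``the gradient map has to be used transversally \dots\ in order to rule out spurious identifications''). Identifying where the difficulty lies is not the same as resolving it: as written, your proposal establishes only that $\Psi$ is a local diffeomorphism near $[e,0]$ together with equivariance, which any smooth action provides, and leaves the substance of the theorem to an unproved final paragraph.
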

Here $G \times^{G_x}S$ denotes the associated bundle with principal
bundle $G \ra G/G_x$.
\begin{cor} \label{slice-cor} If $x \in M$ and $\mup(x) = \beta$,
  there are a $G^\beta$-invariant decomposition $T_x M = \lieg^\beta
  \cd x \, \oplus W$, open $G^\beta$-invariant subsets $S \subset W$,
  $\Omega \subset M$ and a $G^\beta$-equivariant diffeomorphism $\Psi
  : G^\beta \times^{G_x}S \ra \Omega$, such that $0\in S, x\in \Omega$
  and $\Psi ([e, 0]) =x$.
\end{cor}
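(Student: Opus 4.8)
The plan is to reduce the statement to the Slice Theorem (Theorem~\ref{line}) by passing to the centralizer of $\beta$ and translating the gradient map so that the base point becomes a zero. First I would set $H:=G^\beta$. By Lemma~\ref{lemcomp}~(\ref{lemcomp3}), applied to the subset $E=\{\beta\}\subset\liep$, the group $H=G^{\{\beta\}}$ is a compatible subgroup of $G$, with Cartan decomposition $H=K^\beta\exp(\liep^\beta)$, where $K^\beta=H\cap K$ and $\liep^\beta=\{v\in\liep:[v,\beta]=0\}=\liep\cap\lieg^\beta$. Thus $H$ acts on $M$ and has its own gradient map $\mu_{\liep^\beta}:M\ra\liep^\beta$.

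Next I would compute this gradient map at $x$. Since $\liep^\beta\subset\liep$, the defining relation $\langle\mu_{\liep^\beta}(z),v\rangle=\langle i\mu(z),v\rangle=\langle\mup(z),v\rangle$ for $v\in\liep^\beta$ shows that $\mu_{\liep^\beta}=\pi_{\liep^\beta}\comp\mup$, where $\pi_{\liep^\beta}:\liep\ra\liep^\beta$ is the orthogonal projection. Because $\beta\in\liep^\beta$, this gives $\mu_{\liep^\beta}(x)=\pi_{\liep^\beta}(\mup(x))=\pi_{\liep^\beta}(\beta)=\beta$. The key point is now that $\beta$ is central in $\lieg^\beta$ and is fixed by $\Ad(H)$, hence by $\Ad(K^\beta)$. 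Consequently the translated map $\tmu:=\mu_{\liep^\beta}-\beta:M\ra\liep^\beta$ still satisfies the two formal properties used in the construction of Theorem~\ref{line}: it is $K^\beta$-equivariant, since $\tmu(kz)=\Ad(k)\mu_{\liep^\beta}(z)-\beta=\Ad(k)(\mu_{\liep^\beta}(z)-\beta)=\Ad(k)\tmu(z)$ for $k\in K^\beta$; and for every $v\in\liep^\beta$ the function $\tmu^v$ differs from $\mu_{\liep^\beta}^v$ by the constant $\langle\beta,v\rangle$, so $\grad\tmu^v=\grad\mu_{\liep^\beta}^v=v_M$. Moreover $\tmu(x)=\beta-\beta=0$.

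Finally I would run the construction of Theorem~\ref{line} for the $H$-action on $M$, using $\tmu$ in place of $\mup$ and the point $x$ at which $\tmu$ vanishes. This produces an $H_x$-invariant splitting $T_xM=\lieh\cd x\oplus W=\lieg^\beta\cd x\oplus W$, open $H_x$-invariant subsets $S\subset W$ and $\Omega\subset M$, and an $H$-equivariant diffeomorphism $\Psi:H\times^{H_x}S\ra\Omega$ with $0\in S$, $x\in\Omega$ and $\Psi([e,0])=x$. Since $H=G^\beta$ and the relevant isotropy group is $H_x=G^\beta\cap G_x$, the stabilizer of $x$ in the acting group $G^\beta$ (which is what is denoted $G_x$ in the associated bundle of the statement), this is precisely the asserted decomposition and slice.

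The main obstacle, and really the only nontrivial point, is the middle step: one must check that the construction of Theorem~\ref{line} may be invoked for the shifted map $\tmu$ rather than for a canonical gradient map. This is legitimate because that construction uses only $K^\beta$-equivariance together with the identity $\grad\tmu^v=v_M$, both of which survive the translation precisely because $\beta$ lies in the center of $\lieg^\beta$ and is $\Ad(K^\beta)$-fixed; no realization of $\tmu$ as a gradient map of a momentum map on the ambient \Keler manifold is needed. One should also be careful that the orbit directions in the splitting are those of the $H$-orbit, namely $\lieg^\beta\cd x$, and that the isotropy is taken inside $G^\beta$, since in general $G_x\not\subseteq G^\beta$.
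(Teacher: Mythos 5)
Your proof is correct and follows essentially the same route as the paper: the paper's own (one-line) argument applies Theorem~\ref{line} to the $G^\beta$-action with the shifted map $\widehat{\mu_{\liu^\beta}}:=\mu_{\liu^\beta}-i\beta$, which is exactly your translation $\mu_{\liep^\beta}-\beta$ read on the $\liep^\beta$-side. You merely make explicit the verifications (compatibility of $G^\beta$, $K^\beta$-equivariance and the gradient identity surviving the constant shift) that the paper leaves implicit.
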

This follows applying the previous theorem to the action of $G^\beta$
with the gradient map $\widehat{\mu_{\liu^\beta}} := \mu_{\liu^\beta} -
i\beta$, where $\mu_{\liu^\beta}$ denotes the projection of $\mu$ onto
$\liu^\beta$.
% It turns out that the $i\liep^\beta$-component of $
% \widehat{\mu_{\liu^\beta}} (x)$ vanishes, so one can indeed apply
% the previous theorem to get the corollary.
See \cite[p.$169$]{heinzner-schwarz-stoetzel} and \cite{sjamaar} for more details.

If $\beta \in \liep$, then $\beta_M$ is a vector field on
$M$, i.e. a section of $TM$. For $x\in M$, the differential is a map
$T_x M \ra T_{\beta_M (x)}(TM)$. If $\beta_M (x) =0$, there is a
  canonical splitting $T_{\beta_M (x)}(TM) = T_x M \oplus
  T_x M$. Accordingly $d\beta_M (x)$ splits into a horizontal and a
vertical part. The horizontal part is the identity map. We denote the
vertical part by $d\beta_M  (x)$.  It belongs to $\End(T_x M)$.  Let
$\{\phi_t=\exp(t\beta)\} $ be the flow of $\beta_M$.  There
  is a corresponding flow on $TM$. Since $\phi_t(x)=x$, the flow on
  $TM$ preserves $T_x M$ and there it is given by $d\phi_t(x) \in
  \Gl(T_x M)$.  Thus we get a linear $\R$-action on $T_x M$ with
  infinitesimal generator $d\beta_M  (x) $.
\begin{cor}
  \label{slice-cor-2}
  If $\beta \in \liep $ and $x \in M$ is a critical point of $\mupb$,
  then there are open invariant neighborhoods $S \subset T_x M$ and
  $\Omega \subset M$ and an $\R$-equivariant diffeomorphism $\Psi : S
  \ra \Omega$, such that $0\in S, x\in \Omega$, $\Psi ( 0) =x$. Here
  $t\in \R$ acts as $d\phi_t(x)$ on $S$ and as $\phi_t$ on $\Omega$.)
\end{cor}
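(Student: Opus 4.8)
The plan is to reduce the statement to the Slice Theorem \ref{line}, applied to the one-parameter subgroup generated by $\beta$, in exactly the spirit in which Corollary \ref{slice-cor} was derived. First I note that since $\beta_M$ is the gradient of $\mupb\restr{M}$, the hypothesis that $x$ is a critical point of $\mupb$ is equivalent to $\beta_M(x)=0$, and this in turn says that $\phi_t=\exp(t\beta)$ fixes $x$ for every $t\in\R$. I may assume $\beta\neq 0$, the case $\beta=0$ being trivial since then $\phi_t=\Id$ and any chart does the job.

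Next I introduce $A:=\exp(\R\beta)$. Since $\beta\in\liep$ is a hyperbolic element, $t\mapsto\exp(t\beta)$ is a proper embedding of $\R$, so $A$ is closed; it is connected and $\theta$-invariant (as $\theta\exp(t\beta)=\exp(-t\beta)\in A$), hence by Lemma \ref{lemcomp} it is a compatible subgroup with Cartan decomposition $K_A=\{e\}$ and $\liep_A=\R\beta$. Its gradient map is $\mu_A=\pi_{\R\beta}\circ\mup$, and because $A$ is abelian I may shift it by a constant element of $\R\beta$ without altering the associated vector field $\beta_M$: I set $\widehat{\mu_A}:=\mu_A-c\beta$ with $c=\mupb(x)/\langle\beta,\beta\rangle$, so that $\widehat{\mu_A}(x)=0$. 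This is precisely the analogue of the shift by $i\beta$ used to pass from Theorem \ref{line} to Corollary \ref{slice-cor}.

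Now I apply the Slice Theorem \ref{line} to the $A$-action equipped with the gradient map $\widehat{\mu_A}$ at the point $x$, where $\widehat{\mu_A}(x)=0$. Since $x$ is fixed by $A$, the isotropy group is $A_x=A$, the orbit $A\cdot x=\{x\}$ contributes no tangent directions, and the $A_x$-invariant complement $W$ equals all of $T_xM$; consequently the associated bundle collapses, $A\times^{A_x}S=A\times^{A}S\cong S$. The theorem therefore produces an $A$-equivariant diffeomorphism $\Psi:S\to\Omega$ with $S\subset T_xM$ an $A$-invariant open neighbourhood of $0$, $\Omega\subset M$ an $A$-invariant open neighbourhood of $x$, and $\Psi(0)=x$.

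Finally I identify the two $A$-actions with the data in the statement. On $\Omega$ the element $\exp(t\beta)\in A$ acts by its action on $M$, which by definition is the flow $\phi_t$ of $\beta_M$. On $S$ the $A$-action is the isotropy representation at the fixed point $x$, that is $\exp(t\beta)\mapsto d(\phi_t)(x)=d\phi_t(x)$. Reading $A\cong\R$ through $t\mapsto\exp(t\beta)$ then yields exactly the asserted $\R$-equivariance, with $t$ acting as $d\phi_t(x)$ on $S$ and as $\phi_t$ on $\Omega$. The only genuinely delicate points are the closedness of $A=\exp(\R\beta)$, which makes Lemma \ref{lemcomp} applicable and the noncompact slice theorem available, and the verification that the constant shift still yields a legitimate $A$-gradient map; both are consequences of $\beta$ lying in $\liep$ and of $A$ being abelian.
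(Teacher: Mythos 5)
Your proof is correct and follows essentially the same route as the paper: the paper's own argument is simply to observe that $H:=\exp(\R\beta)$ is compatible and to apply Corollary \ref{slice-cor} to the $H$-action at $x$, and your explicit shift $\widehat{\mu_A}=\mu_A-c\beta$ followed by an application of Theorem \ref{line} is exactly the mechanism by which Corollary \ref{slice-cor} is itself obtained, just unfolded one level. The additional details you supply (closedness and compatibility of $\exp(\R\beta)$ via Lemma \ref{lemcomp}, the collapse of the associated bundle at a fixed point, the identification of the slice action with $d\phi_t(x)$) are all correct and consistent with the paper's intent.
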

\begin{proof}
  The subgroup $H:=\exp(\R \beta)$ is compatible.  It is enough to
  apply the previous corollary to the $H$-action at $x$.
\end{proof}
Let $x \in
\Crit(\mu^\beta_\liep)=\{y\in M:\, \beta_M (y)=0\}$. Let $D^2\mup^\beta(x) $ denote the Hessian,
which is a symmetric operator on $T_x M$ such that
\begin{gather*}
  ( D^2 \mup^\beta(x) v, v) = \frac{\mathrm d^2}{\mathrm
    dt^2} %\bigg \vert
  (\mup^\beta\circ \ga)(0)
  % \frac{\mathrm d^2}{\mathrm dt^2} \bigg \vert _{t=0}
  % \mup^\beta(\alfa(t)
\end{gather*}
where $\ga$ is a smooth curve, $\ga(0) = x$ and $ \dot{\ga}(0)=v$.
Denote by $V_-$ (respectively $V_+$) the sum of the eigenspaces of the
Hessian of $\mupb$ corresponding to negative (resp. positive)
eigenvalues. Denote by $V_0$ the kernel.  Since the Hessian is
symmetric we get an orthogonal decomposition
\begin{gather}
  \label{Dec-tangente}
  T_x M = V_- \oplus V_0 \oplus V_+.
\end{gather}
Let $\alfa : G \ra M$ be the orbit map: $\alfa(g) :=gx$.  The
differential $d\alfa_e$ is the map $\xi \mapsto \xi_M (x)$. The following result is well-know. A proof is given in \cite{biliotti-ghigi-heinzner-2}.
% \textcolor{Red}{If $\beta \in \lieg$ and $\beta_X(x) =0$, the
% differential of $\beta_X$ viewed as a section of the tangent bundle
% splits canonically into a horizontal and a vertical part. The
% horizontal part is the identity map. We denote the vertical part by
% $d\beta_X(x)$.  It belongs to $\End(T_xX)$. Let $\{\phi_t\} $ be the
% flow of $\beta_X$. Then $\phi_t(x)=x$ and $d\phi_t(x) \in
% \Gl(T_xX)$. Thanks to Schwarz theorem on mixed derivatives
% $d\beta_X(x) $ is nothing but the infinitesimal generator of the
% 1-parameter subgroup $\{d\phi_t(x)\} \subset \Gl(T_xX)$.}
\begin{prop}\label{linearization1}
  \label{tangent}
  If $\beta \in \liep$ and $x \in \Crit(\mu^\beta_\liep)$ then
  \begin{gather*}
    D^2\mup^\beta(x) = d \beta_M (x).
  \end{gather*}
  Moreover $d\alfa_e (\mathfrak r^{\beta\pm} ) \subset V_\pm$ and $d\alfa_e(
  \lieg^\beta) \subset V_0$.  If $M$ is $G$-homogeneous these are
  equalities.
\end{prop}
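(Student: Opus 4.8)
The plan is to deduce all three assertions from a single intertwining identity on $\lieg$, namely
\[
d\beta_M(x)\circ d\alfa_e = d\alfa_e\circ \ad(\beta),
\]
once the Hessian has been identified with the vertical part $d\beta_M(x)$. For this first identification, recall that $\beta_M$ is by definition the gradient of $f:=\mup^\beta\restr{M}$ for the induced Riemannian metric on $M$. The equality $D^2\mup^\beta(x)=d\beta_M(x)$ is then a general fact about gradient fields at their zeros: at a zero $x$ of a vector field $X=\grad f$ the vertical component of $dX_x$ is independent of the connection (two connections differ by a term involving $X(x)=0$) and equals the covariant derivative $v\mapsto\nabla_v X$; for the Levi-Civita connection $(\nabla_v\grad f,w)=(D^2 f(x)v,w)$ is exactly the Hessian form. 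Since $\beta_M=\grad f$ and $x\in\Crit(\mup^\beta)$, this gives the first formula.

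To obtain the intertwining relation I would use instead the second description of $\beta_M$, as the fundamental vector field of $\beta$ whose flow is $\phi_t=\exp(t\beta)$. First, $\beta_M(x)=0$ says that $x$ is a fixed point of this flow, hence $\exp(t\beta)x=x$ for all $t$. Fix $\xi\in\lieg$; using $d\alfa_e(\xi)=\xi_M(x)$ and inserting $\exp(-t\beta)\exp(t\beta)$,
\[
d\phi_t(x)\bigl(\xi_M(x)\bigr)=\frac{\mathrm{d}}{\mathrm{d}s}\Big|_{s=0}\exp(t\beta)\exp(s\xi)\exp(-t\beta)\,x=\bigl(\Ad(\exp(t\beta))\xi\bigr)_M(x),
\]
where I used $\exp(t\beta)\exp(s\xi)\exp(-t\beta)=\exp(s\,\Ad(\exp(t\beta))\xi)$ together with $\exp(t\beta)x=x$. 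Differentiating at $t=0$, and recalling that $d\beta_M(x)$ is the infinitesimal generator of the linear flow $d\phi_t(x)$ on $T_xM$ while $\eta\mapsto\eta_M(x)=d\alfa_e(\eta)$ is linear, the left-hand side becomes $d\beta_M(x)(d\alfa_e\xi)$ and the right-hand side becomes $([\beta,\xi])_M(x)=d\alfa_e(\ad(\beta)\xi)$, which is the claimed identity.

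With this in hand the remaining assertions are immediate. By the first step $d\beta_M(x)=D^2\mup^\beta(x)$ is symmetric, and the identity shows that $d\alfa_e$ sends each eigenspace $V_\lambda(\ad\beta)$ into the $\lambda$-eigenspace of $D^2\mup^\beta(x)$. Since $\mathfrak r^{\beta+}=\bigoplus_{\lambda>0}V_\lambda(\ad\beta)$, $\mathfrak r^{\beta-}=\bigoplus_{\lambda<0}V_\lambda(\ad\beta)$ and $\lieg^\beta=V_0(\ad\beta)$, and the image of an eigenvector with positive (resp. negative, resp. zero) eigenvalue lies in $V_+$ (resp. $V_-$, resp. $V_0$), we obtain $d\alfa_e(\mathfrak r^{\beta\pm})\subset V_\pm$ and $d\alfa_e(\lieg^\beta)\subset V_0$. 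If $M$ is $G$-homogeneous then $d\alfa_e$ maps onto $T_xM=\lieg\cd x$; writing $\lieg=\mathfrak r^{\beta-}\oplus\lieg^\beta\oplus\mathfrak r^{\beta+}$ and using the orthogonal splitting $T_xM=V_-\oplus V_0\oplus V_+$, the three images lie in pairwise orthogonal subspaces yet already span $T_xM$, so a dimension count forces equality in each of the three inclusions.

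I expect the only delicate point to be the intertwining step: one must verify that $x$ is fixed by the entire one-parameter group $\exp(t\beta)$, not merely infinitesimally, so that the insertion of $\exp(\pm t\beta)$ is legitimate, and one must keep the two roles of $\beta_M$ straight, namely as the metric gradient of $\mup^\beta\restr{M}$ (used to identify the Hessian) and as the fundamental field with flow $\exp(t\beta)$ (used for the $\Ad$-computation). Everything after that is the spectral theorem for the symmetric operator $D^2\mup^\beta(x)$ together with elementary linear algebra.
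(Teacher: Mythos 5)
Your proof is correct, and it takes essentially the approach of the source the paper cites: the paper itself gives no proof of this proposition but refers to \cite{biliotti-ghigi-heinzner-2}, where the argument is likewise to identify $D^2\mup^\beta(x)$ with the vertical part $d\beta_M(x)$ of the differential of the gradient field at its zero, to use $\exp(t\beta)\exp(s\xi)\exp(-t\beta)=\exp\bigl(s\,\Ad(\exp(t\beta))\xi\bigr)$ at the fixed point $x$ to obtain $d\beta_M(x)\circ d\alfa_e=d\alfa_e\circ\ad(\beta)$, and to conclude via the eigenspace decomposition of $\ad\beta$ together with the orthogonality of $V_-\oplus V_0\oplus V_+$ in the homogeneous case. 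The point you flag as delicate is handled exactly right: $x\in\Crit(\mup^\beta)$ means $\beta_M(x)=0$, and since the flow of $\beta_M$ is $\phi_t=\exp(t\beta)$, the point $x$ is fixed by the whole one-parameter group, which legitimizes inserting $\exp(\pm t\beta)$.
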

\begin{cor}
  \label{MorseBott}
  For every $\beta \in \liep$, $\mupb$ is a Morse-Bott function.
\end{cor}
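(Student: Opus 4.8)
The plan is to verify the Morse--Bott condition directly: that $\Crit(\mupb)$ is a disjoint union of closed submanifolds and that at each critical point the null space of the Hessian coincides with the tangent space to the critical manifold through that point. Recall from the orthogonal decomposition \eqref{Dec-tangente} that $V_0$ denotes the kernel of the Hessian $D^2\mupb(x)$, so the property to establish is precisely $V_0 = T_x \Crit(\mupb)$ at every $x \in \Crit(\mupb)$.

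First I would identify the critical set. Since $\beta_M$ is the gradient of $\mupb$ with respect to the \keler metric, we have $\Crit(\mupb) = \{y \in M : \beta_M(y) = 0\}$, which is exactly the fixed point set of the one-parameter flow $\phi_t = \exp(t\beta)$ on $M$. This reformulation is the key: the critical points of $\mupb$ are the zeros of the vector field generated by $\beta$, equivalently the fixed points of the compatible subgroup $\exp(\R\beta)$.

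Next I would exploit the linearization furnished by Corollary \ref{slice-cor-2}. Fix a critical point $x$. That corollary provides an $\R$-equivariant diffeomorphism $\Psi : S \ra \Omega$ from an invariant neighborhood $S$ of $0$ in $T_x M$ onto an invariant neighborhood $\Omega$ of $x$ in $M$, with $t \in \R$ acting on $S$ as the linear flow $d\phi_t(x) = e^{t\, d\beta_M(x)}$ and on $\Omega$ as $\phi_t$. Because an equivariant diffeomorphism carries fixed points to fixed points, the critical set near $x$ is the image under $\Psi$ of the fixed point locus of the linear flow on $S$. The infinitesimal generator of this linear action is $d\beta_M(x)$, so its fixed point set is $\ker d\beta_M(x) \cap S$. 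By Proposition \ref{linearization1} we have $D^2\mupb(x) = d\beta_M(x)$, whence $\ker d\beta_M(x) = V_0$. Therefore $\Crit(\mupb) \cap \Omega = \Psi(V_0 \cap S)$ is a submanifold whose tangent space at $x$ equals $V_0$.

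This yields everything at once. The critical set is locally the image under a diffeomorphism of an open piece of the linear subspace $V_0 \subset T_x M$, hence a submanifold; since $M$ is compact, $\Crit(\mupb)$ has finitely many connected components, each a closed submanifold. Moreover the tangent space to the critical manifold at $x$ is exactly the null space $V_0$ of the Hessian, so the Hessian is nondegenerate in the normal directions, which are precisely the defining conditions of a Morse--Bott function. The only point demanding care is the exact matching of fixed point sets under $\Psi$, namely that the fixed locus of the linearized flow is the full kernel $\ker d\beta_M(x)$ and nothing larger; this relies on $d\beta_M(x)$ being the symmetric operator $D^2\mupb(x)$, so that $e^{t\, d\beta_M(x)} v = v$ for all $t$ forces $d\beta_M(x) v = 0$.
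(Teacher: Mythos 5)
Your proof is correct and follows essentially the same route as the paper: Corollary \ref{slice-cor-2} gives the local linearization identifying $\Crit(\mupb)$ with a piece of $V_0$, and Proposition \ref{linearization1} identifies the Hessian with $d\beta_M(x)$ so that it is nondegenerate in the normal directions. You merely spell out the details (in particular why the fixed locus of the linearized flow is exactly $\ker d\beta_M(x)$) that the paper's two-line proof leaves implicit.
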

\begin{proof}
Corollary \ref{slice-cor-2} implies that $\Crit(\mu^\beta_\liep)$ is a smooth
    submanifold. Since $T_x \Crit(\mu^\beta_\liep) = V_0$ for $x\in \Crit(\mu^\beta_\liep)$, the
  first statement of Proposition \ref{tangent} shows that the Hessian
  is nondegenerate in the normal directions.
\end{proof}
\begin{cor}\label{critical}
If $M$ is $G$-homogenous then $G^{\beta}$-orbits are open and closed in $\mathrm{Crit}\, \mup^\beta$.
\end{cor}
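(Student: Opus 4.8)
The plan is to regard the critical set $C:=\Crit(\mu^\beta_\liep)=\{y\in M:\ \beta_M(y)=0\}$ as a smooth manifold partitioned into $G^\beta$-orbits, and to show that each orbit is open; closedness will then come for free. First I would verify that $G^\beta$ really acts on $C$. Since $G^\beta$ is the stabilizer of $\beta$, every $g\in G^\beta$ satisfies $\Ad(g)\beta=\beta$, and the standard equivariance of fundamental vector fields says that the diffeomorphism $p\mapsto gp$ of $M$ carries $\beta_M(p)$ to $(\Ad(g)\beta)_M(gp)=\beta_M(gp)$. Hence $\beta_M(x)=0$ forces $\beta_M(gx)=0$, so $gx\in C$; applying this to $g$ and $g^{-1}$ shows that $g$ restricts to a diffeomorphism of $C$. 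Thus $C$ is the disjoint union of the $G^\beta$-orbits it contains.

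Next I would compare tangent spaces at a point $x\in C$. By Corollary \ref{slice-cor-2} the set $C$ is a smooth submanifold, and, as recorded in the proof of Corollary \ref{MorseBott}, its tangent space there is $T_xC=V_0$, the kernel of the Hessian $D^2\mup^\beta(x)$ in the decomposition \eqref{Dec-tangente}. On the other hand the orbit $G^\beta\cdot x$ is an immersed submanifold with tangent space $d\alfa_e(\lieg^\beta)=\{\xi_M(x):\xi\in\lieg^\beta\}$ at $x$. Because $M$ is $G$-homogeneous, the last sentence of Proposition \ref{tangent} upgrades the inclusion $d\alfa_e(\lieg^\beta)\subset V_0$ to the equality $d\alfa_e(\lieg^\beta)=V_0$. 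Therefore $T_x(G^\beta\cdot x)=T_xC$, so the orbit is a submanifold of $C$ of full dimension and hence contains a neighborhood of $x$ in $C$. Since $G^\beta$ acts transitively on the orbit while preserving $C$, this openness near $x$ transports to every point of the orbit, and each $G^\beta$-orbit is open in $C$.

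Finally, closedness is purely formal: $C$ is the disjoint union of its $G^\beta$-orbits, each of which is open, so the complement in $C$ of any single orbit is a union of open orbits and is therefore open; hence every orbit is also closed in $C$. The only genuinely delicate step is the passage from equality of tangent spaces to openness of the orbit, and this rests entirely on the homogeneity hypothesis, which enters through Proposition \ref{tangent}: without it one obtains merely $d\alfa_e(\lieg^\beta)\subset V_0$, and then the orbits can be proper submanifolds of $C$ rather than open pieces of it.
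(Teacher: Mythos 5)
Your proof is correct and follows the same route as the paper: the paper's own argument is the one-line observation that $T_x \mathrm{Crit}\,\mup^\beta = T_x\, G^\beta\cdot x$ for $x$ in the critical set (via Proposition \ref{tangent} and homogeneity), from which openness, and hence closedness, of the orbits follows. You have merely spelled out the routine details (invariance of the critical set under $G^\beta$, the submersion argument giving openness from equality of tangent spaces, and the disjoint-union argument for closedness) that the paper leaves implicit.
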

\begin{proof}
Since $T_x \mathrm{Crit}\, \mup^\beta = T_x G^{\beta} \cdot x$ for $x\in  \mathrm{Crit}\, \mup^\beta$, the result follows.
\end{proof}
Let $c_1 > \cds > c_r$ be the critical
  values of $\mup^\beta$. The corresponding level sets of $\mup^\beta$, $C_i:=(\mup^\beta)\meno ( c_i)$ are submanifolds which are union of
  components of $\Crit(\mup^\beta)$. The function $\mup^\beta$ defines a gradient flow generated by its gradient which is given by $\beta_M$.
By Theorem \ref{line}, it follows that for any $x\in M$ the limit:
  \begin{gather*}
    \phi_\infty (x) : = \lim_{t\to +\infty }
    \exp(t\beta ) x,
  \end{gather*}
  exists.
Let us denote by $W_i^\beta$ the \emph{unstable manifold} of the critical component $C_i$
  for the gradient flow of $\mup^\beta$:
  \begin{gather}
    \label{def-wu}
    W_i^\beta := \{ x\in M: \phi_\infty (x) \in C_i \}.
  \end{gather}
Applying Theorem \ref{line}, we have the following well-known decomposition of $M$ into unstable manifolds with respect to $\mup^\beta$.
\begin{teo}\label{decomposition}
In the above assumption, we have
\begin{gather}
    \label{scompstabile}
    M = \bigsqcup_{i=1}^r W_i^\beta,
  \end{gather}
and for any $i$ the map:
  \begin{gather*}
    (\phi_\infty)\restr{W_i} : W_i^\beta \ra C_i,
  \end{gather*}
  is a smooth fibration with fibres diffeomorphic to $\R^{l_i}$ where
  ${l_i}$ is the index (of negativity) of the critical submanifold
  $C_i$
\end{teo}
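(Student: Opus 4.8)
The plan is to verify that $\mupb$ meets the hypotheses of the classical Morse--Bott unstable manifold theorem and then to run the standard stable/unstable manifold construction, using the explicit linearization of Corollary \ref{slice-cor-2} in place of the usual Hartman--Grobman step. First I would prove the decomposition \eqref{scompstabile}. Along the flow one has $\frac{\mathrm d}{\mathrm dt}\mupb(\exp(t\beta)x)=(\beta_M,\beta_M)\ge 0$, since $\beta_M$ is the gradient of $\mupb$; as $M$ is compact the forward orbit has the limit $\phi_\infty(x)$ recalled before the statement, and $\phi_\infty(x)$ is a fixed point of the flow, hence a zero of $\beta_M$, i.e. a point of $\Crit(\mupb)=\bigsqcup_{i=1}^r C_i$. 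Since the $C_i=(\mupb)\meno(c_i)$ are the distinct level sets they are pairwise disjoint, so every $x$ lies in exactly one $W_i^\beta$ and $M=\bigsqcup_{i=1}^r W_i^\beta$.

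Next I would describe the structure near a fixed $C_i$. By Corollary \ref{MorseBott} the function $\mupb$ is Morse--Bott, so $C_i$ is a closed submanifold with $T_xC_i=V_0$; by Proposition \ref{tangent} and the splitting \eqref{Dec-tangente} the normal space at $x\in C_i$ is $V_-\oplus V_+$. Since the Hessian is nondegenerate in the normal directions, its negative eigenspaces have constant dimension along each component of $C_i$ and assemble into a smooth subbundle $\nu^-\to C_i$ of rank $l_i:=\dim V_-$. The crucial local input is Corollary \ref{slice-cor-2}: near $x$ the flow $\exp(t\beta)$ is $\R$-equivariantly conjugate to $\mathrm e^{t\,d\beta_M(x)}$ on $T_xM$, and since $d\beta_M(x)=D^2\mupb(x)$ is negative definite precisely on $V_-$, the nearby points whose forward orbit converges to $C_i$ are exactly those in the $V_-$ directions. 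This yields a local unstable set diffeomorphic to a neighbourhood of the zero section of $\nu^-$, on which $\phi_\infty$ is the bundle projection onto $C_i$.

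To pass to the global statement I would use completeness of the flow. Each $\exp(-t\beta)$, $t\ge 0$, is a diffeomorphism of $M$; every $x\in W_i^\beta$ enters the chosen neighbourhood of $C_i$ after finite forward time; and $\phi_\infty\comp\exp(-t\beta)=\phi_\infty$. Hence flowing the local unstable set backward saturates it to all of $W_i^\beta$ and transports the fibration structure, producing a diffeomorphism of $W_i^\beta$ onto the total space of $\nu^-$ that intertwines $\phi_\infty$ with the bundle projection; each fibre, being the increasing flow-out of a disk of dimension $l_i$, is diffeomorphic to $\R^{l_i}$. This establishes the smooth fibration $(\phi_\infty)\restr{W_i^\beta}:W_i^\beta\to C_i$ with fibre $\R^{l_i}$.

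I expect the main difficulty to lie in this last globalization step: one must check that the backward flow embeds the local unstable set as a genuine boundaryless submanifold of $M$ without self-accumulation, and that the resulting fibres are truly $\R^{l_i}$ rather than some proper quotient. The disjointness in \eqref{scompstabile} prevents distinct strata from meeting, while the uniqueness of the forward limit together with the strict contraction on $V_-$ forces injectivity of the flow-out within a single stratum; compactness of $C_i$ and completeness of $\exp(t\beta)$ then upgrade the local triviality of $\nu^-$ to a global smooth fibration.
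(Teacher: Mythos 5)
Your argument is correct and follows exactly the route the paper intends: the paper offers no written proof, merely asserting the decomposition as ``well-known'' and pointing to the Slice Theorem, and your proof is the standard Morse--Bott unstable-manifold argument built on precisely that linearization (Corollary \ref{slice-cor-2} together with Proposition \ref{tangent}), with the monotonicity of $\mup^\beta$ giving the disjoint decomposition and the backward flow-out giving the global fibration. The globalization issues you flag are real but are handled exactly as you indicate, so nothing further is needed.
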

Let $\beta \in \liep$ and let $\mathrm{Max}_M (\beta):=\{x\in M:\, \mup^\beta(x)=\mathrm{max}_{y\in M} \mup^\beta\}$.
By Proposition \ref{linearization1}, $\mathrm{Max}_M (\beta)$
is a smooth, possibly disconnected, locally closed submanifold of $M$. The following result is proved in \cite[Lemma 30 and Proposition 31]{bsg}.
\begin{prop}\label{parabolic-preserve-maximun}
$\mathrm{Max}_M (\beta)$ is $G^{\beta+}$-invariant. Moreover, $R^{\beta+}$ acts trivially on $\mathrm{Max}_M (\beta)$ and the $G^{\beta+}$-action
on $\mathrm{Max}_M (\beta)$ admits a compact orbit which is also a $K^\beta$-orbit.
\end{prop}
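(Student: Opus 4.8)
The plan is to realize $\mathrm{Max}_M(\beta)$ as the top critical manifold of the Morse--Bott function $\mupb$ (Corollary \ref{MorseBott}) and to exploit two soft facts about the flow $\phi_t=\exp(t\beta)$ of $\beta_M$. Since $\beta_M$ is the gradient of $\mupb\restr M$, for every $y\in M$ one has $\frac{\mathrm d}{\mathrm dt}\mupb(\phi_t(y))=\|\beta_M(\phi_t(y))\|^2\ge 0$, so $\mupb$ is non-decreasing along the positive flow; moreover every $x\in\mathrm{Max}_M(\beta)$ is a critical point, whence $\beta_M(x)=0$ and $\phi_t(x)=x$ for all $t$. Set $c:=\max_M\mupb$, so that $\mathrm{Max}_M(\beta)=(\mupb)\meno(c)$ and $c=c_1$ in the notation of Theorem \ref{decomposition}.

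First I would show that $R^{\beta+}$ acts trivially on $\mathrm{Max}_M(\beta)$. Fix $x\in\mathrm{Max}_M(\beta)$ and $g\in R^{\beta+}$. Using $\exp(t\beta)x=x$ we get $\phi_t(gx)=\exp(t\beta)g\exp(-t\beta)\,x$, and by definition of $R^{\beta+}$ the conjugate $\exp(t\beta)g\exp(-t\beta)$ tends to $e$ as $t\to-\infty$, so $\phi_t(gx)\to x$ and hence $\mupb(\phi_t(gx))\to c$. But $t\mapsto\mupb(\phi_t(gx))$ is non-decreasing, so for every fixed $t$ and every $s<t$ we have $\mupb(\phi_s(gx))\le\mupb(\phi_t(gx))$; letting $s\to-\infty$ gives $c\le\mupb(\phi_t(gx))$, and since also $\mupb\le c$ on $M$ the function is identically $c$. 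In particular $gx\in\mathrm{Max}_M(\beta)$, so $gx$ is a critical point and $\phi_t(gx)=gx$ for every $t$; letting $t\to-\infty$ yields $gx=x$. Thus $R^{\beta+}$ fixes $\mathrm{Max}_M(\beta)$ pointwise.

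Next I would establish $G^{\beta+}$-invariance. As $G^{\beta+}=G^\beta\ltimes R^{\beta+}$ and $R^{\beta+}$ acts trivially, it suffices to prove that the Levi factor $G^\beta$ preserves $\mathrm{Max}_M(\beta)$; then $G^{\beta+}x=G^\beta x\subset\mathrm{Max}_M(\beta)$ for $x\in\mathrm{Max}_M(\beta)$. Now $G^\beta=G^{\{\beta\}}$ is compatible (Lemma \ref{lemcomp}), with Cartan decomposition $G^\beta=K^\beta\exp(\liep^\beta)$, where $K^\beta=K\cap G^\beta$ and $\liep^\beta=\liep\cap\lieg^\beta$. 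For $k\in K^\beta$ the $K$-equivariance of $\mup$ gives $\mupb(kx)=\langle\Ad(k)\mup(x),\beta\rangle=\langle\mup(x),\Ad(k\meno)\beta\rangle=\mupb(x)$, since $\Ad(k)\beta=\beta$; hence $K^\beta$ preserves $\mathrm{Max}_M(\beta)$. For $\xi\in\liep^\beta$ we have $[\xi,\beta]=0$, so $\exp(s\xi)$ commutes with $\phi_t$ and therefore $\beta_M(\exp(s\xi)x)=0$ for all $s$; thus $s\mapsto\exp(s\xi)x$ is a connected curve inside $\Crit(\mupb)$, along which $\mupb$ takes only the finitely many values $c_1>\cds>c_r$ and so stays equal to $c$. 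Hence $\exp(\liep^\beta)$, and with it $G^\beta$, preserves $\mathrm{Max}_M(\beta)$.

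Finally I would produce the compact orbit. Since $R^{\beta+}$ acts trivially, the $G^{\beta+}$-action on the compact manifold $\mathrm{Max}_M(\beta)$ factors through the compatible group $G^\beta$, and I would run the norm square machinery for $G^\beta$. Consider the $G^\beta$-gradient map $\mu_{\liep^\beta}$, the orthogonal projection of $\mup$ onto $\liep^\beta$, and its norm square $\nu_{\liep^\beta}=\tfrac12\|\mu_{\liep^\beta}\|^2$ restricted to $\mathrm{Max}_M(\beta)$. Let $x_0$ be a global minimum of $\nu_{\liep^\beta}$ on this compact set. Since $\grad\nu_{\liep^\beta}(x)=(\mu_{\liep^\beta}(x))_M(x)$ is tangent to the invariant set $\mathrm{Max}_M(\beta)$, it vanishes at $x_0$; thus $x_0$ is a critical point of $\nu_{\liep^\beta}$ and the negative gradient flow issuing from it is stationary, so by \cite[Theorems 3.3 and 4.7]{bjmain} the orbit $G^\beta x_0$ realizes $\inf_{g\in G^\beta}\|\mu_{\liep^\beta}(gx_0)\|$ and is therefore closed \cite{heinzner-schwarz-stoetzel}. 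A closed subset of the compact set $\mathrm{Max}_M(\beta)$ is compact, so $G^\beta x_0$ is compact, and a compact orbit of the compatible group $G^\beta$ is a $K^\beta$-orbit \cite{heinzner-schwarz-stoetzel}, which is the desired orbit. The main obstacle is precisely this last step: the first three steps are soft consequences of monotonicity of $\mupb$ along the flow, whereas the existence of a \emph{closed} (hence compact) $G^\beta$-orbit inside $\mathrm{Max}_M(\beta)$ rests on the nontrivial structure theory of the gradient map, namely that an orbit minimizing the norm of the gradient map is closed.
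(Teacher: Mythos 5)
Your first two steps are correct and pleasantly elementary. The monotonicity of $\mu_\liep^\beta$ along the flow of its gradient $\beta_M$, combined with $\exp(t\beta)g\exp(-t\beta)\to e$ for $g\in R^{\beta+}$, does force $gx=x$ for every $x\in\mathrm{Max}_M(\beta)$; and the decomposition $G^{\beta+}=G^{\beta}R^{\beta+}$ together with $G^{\beta}=K^{\beta}\exp(\liep^{\beta})$ reduces invariance to the two cases you treat, both handled correctly (finiteness of the critical values of the Morse--Bott function $\mu_\liep^\beta$ is exactly what makes the $\exp(s\xi)$ argument close). For the record, the paper does not prove this proposition at all but cites \cite[Lemma 30 and Proposition 31]{bsg}, so these two steps are a genuine self-contained alternative.

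The third step, however, contains a real error: the implication ``$G^{\beta}\cdot x_0$ realizes $\inf_{g\in G^{\beta}}\|\mu_{\liep^{\beta}}(gx_0)\|$, hence it is closed'' is false, and neither \cite{heinzner-schwarz-stoetzel} nor \cite{bjmain} asserts it. Already for $\beta=0$ (so $G^{\beta+}=G^{\beta}=G$ and $\mathrm{Max}_M(\beta)=M$) take $G=\mathrm{SL}(2,\R)$ acting on $M=\PP(\mathrm{Sym}^2(\R^2))\cong\PP(\R^3)$: the point $x_0=[x^2+y^2]$ is fixed by $K=\mathrm{SO}(2)$, hence $\mu_\liep(x_0)=0$, so $x_0$ is a critical point and a global minimum of $\nu_\liep$ realizing the infimum over its orbit; yet $G\cdot x_0$ is the open set of definite forms, neither closed nor compact. (The compact orbit promised by the proposition is the conic of rank-one forms, on which $\nu_\liep$ is \emph{maximal}.) Minimizing the norm square along an orbit of a compact manifold detects polystability of a linear lift, not closedness of the orbit in the manifold. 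The repair is short and uses a tool you already have: since $\mathrm{Max}_M(\beta)$ is a compact $G^{\beta}$-invariant submanifold by your first two steps, choose $x_0$ where $\nu_{\liep^{\beta}}$ attains its \emph{maximum} on $\mathrm{Max}_M(\beta)$; then $\nu_{\liep^{\beta}}$ restricted to $G^{\beta}\cdot x_0\subseteq\mathrm{Max}_M(\beta)$ has a local maximum at $x_0$, so the first bullet of Proposition \ref{heinzner-maximun}, applied to the compatible group $G^{\beta}$ acting on $\mathrm{Max}_M(\beta)$, gives $G^{\beta}\cdot x_0=K^{\beta}\cdot x_0$; triviality of the $R^{\beta+}$-action then upgrades this to $G^{\beta+}\cdot x_0=K^{\beta}\cdot x_0$, which is the desired compact orbit.
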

Using an $\mathrm{Ad}(K)$-invariant  inner product of  $\liep$, we define $\nu_\liep (z):=\frac{1}{2}\parallel \mup(z)\parallel^2$.
The function $\nu_\liep$ is $K$-invariant and it is called \emph{the norm square function}.
In \cite{heinzner-schwarz-stoetzel} (see Corollary 6.11 and Corollary 6.12 p. $21$) the following result is proved.
\begin{prop}\label{heinzner-maximun}
Let $x\in M$. Then:
\begin{itemize}
\item if $\nu_\liep$ restricted to $G\cdot x$ has a local maximum at $x$, then $G\cdot x=K\cdot x$
\item if $G\cdot x$ is compact, then $G\cdot x =K\cdot x$
\end{itemize}
\end{prop}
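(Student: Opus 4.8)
The plan is to derive statement (2) from statement (1) and to prove (1) by a second-order analysis at the maximum, reducing the problem to the centralizer of $\beta:=\mup(x)$. The reduction of (2) to (1) is immediate: if $G\cdot x$ is compact, then $\nu_\liep$ attains its maximum on $G\cdot x$ at some $y$, which is in particular a local maximum, so by (1) we have $G\cdot y=K\cdot y$; since $G\cdot y=G\cdot x\ni x$ we get $x=ky$ for some $k\in K$, whence $K\cdot x=K\cdot y=G\cdot x$. So fix attention on (1).

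First I record that $\grad\nu_\liep(y)=\mup(y)_M(y)$: freezing $\beta=\mup(y)$ and using that $\beta_M$ is the gradient of $\mup^\beta\restr{M}$, one gets $d\nu_\liep(y)(v)=\langle\mup(y),d\mup(y)v\rangle=(\mup(y)_M(y),v)$. Since $\mup(y)\in\liep\subseteq\lieg$, this gradient is tangent to $G\cdot y$, hence it is also the gradient of $\nu_\liep\restr{G\cdot x}$, and the hypothesis that $x$ is a local maximum forces $\beta_M(x)=0$, i.e. $\beta\in\lieg_x$ and $x\in\Crit(\mup^\beta)$. Next, by Cauchy--Schwarz, for $z\in G\cdot x$ near $x$ one has $\mup^\beta(z)=\langle\mup(z),\beta\rangle\le\|\mup(z)\|\,\|\beta\|\le\|\beta\|^2=\mup^\beta(x)$, using $\|\mup(z)\|^2=2\nu_\liep(z)\le 2\nu_\liep(x)=\|\beta\|^2$; thus $x$ is a local maximum of $\mup^\beta$ on $G\cdot x$, so $D^2\mup^\beta(x)$ is negative semidefinite on $T_x(G\cdot x)$. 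Since $d\alfa_e(\mathfrak r^{\beta+})\subseteq V_+$ by Proposition \ref{linearization1}, where $D^2\mup^\beta(x)$ is strictly positive, this forces $d\alfa_e(\mathfrak r^{\beta+})=0$, i.e. $\mathfrak r^{\beta+}\subseteq\lieg_x$ and $R^{\beta+}$ fixes $x$.

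It remains to treat the Levi directions $\liep^\beta=\liep\cap\lieg^\beta$, and here the point is that $\nu_\liep$, not merely $\mup^\beta$, controls the second order. For $\xi\in\liep^\beta$ put $\tau(t)=\nu_\liep(\exp(t\xi)x)$ and $m(t)=\mup(\exp(t\xi)x)$; then $\tau''(0)=\|m'(0)\|^2+(D^2\mup^\beta(x)\xi_M(x),\xi_M(x))$ (here $\langle\beta,m''(0)\rangle=\frac{d^2}{dt^2}\mup^\beta(\exp(t\xi)x)\restr{t=0}=(D^2\mup^\beta(x)\xi_M(x),\xi_M(x))$ since $x\in\Crit(\mup^\beta)$). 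As $\xi_M(x)\in d\alfa_e(\lieg^\beta)\subseteq V_0=\ker D^2\mup^\beta(x)$ by Proposition \ref{linearization1}, the second summand vanishes, and the local maximum condition $\tau''(0)\le 0$ gives $m'(0)=0$. Since $\langle m'(0),\eta\rangle=(\eta_M(x),\xi_M(x))$ for every $\eta\in\liep$, taking $\eta=\xi$ yields $\|\xi_M(x)\|^2=0$, so $\xi\in\lieg_x$. Hence $\liep^\beta\subseteq\lieg_x$ and $\exp(\liep^\beta)$ fixes $x$, so $G^\beta\cdot x=K^\beta\exp(\liep^\beta)\cdot x=K^\beta\cdot x$. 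Combining with $R^{\beta+}\subseteq G_x$ and the Levi decomposition $G^{\beta+}=R^{\beta+}G^\beta$ gives $G^{\beta+}\cdot x=K^\beta\cdot x$, and since $G=KG^{\beta+}$ by Proposition \ref{decomposition-parabolic} we conclude $G\cdot x=KG^{\beta+}\cdot x=K\cdot x$.

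The main obstacle is exactly this last step. The conditions $\beta_M(x)=0$ and $D^2\mup^\beta(x)\le 0$ say nothing about the negative directions $V_-$, and indeed $\lieg_x$ need not be $\theta$-invariant, so $\mup^\beta$ alone cannot detect compactness of the orbit. What rescues the argument is that differentiating $\nu_\liep$ (rather than the linear function $\mup^\beta$) produces the extra nonnegative term $\|m'(0)\|^2$, which forces the commuting directions $\liep^\beta$ into the stabilizer; the remaining directions are then absorbed through the parabolic decomposition $G=KG^{\beta+}$, circumventing the failure of $\theta$-invariance of $\lieg_x$.
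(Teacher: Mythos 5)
Your argument is correct, but it cannot be compared line-by-line with the paper, because the paper does not prove this proposition at all: it is quoted verbatim from Heinzner--Schwarz--St\"otzel (Corollaries 6.11 and 6.12 of \cite{heinzner-schwarz-stoetzel}) with no argument supplied. What you have written is a self-contained proof assembled entirely from tools the paper does state: the formula $\grad\nu_\liep=\mup(\cdot)_M$, Proposition \ref{linearization1} for the inclusions $d\alfa_e(\mathfrak r^{\beta+})\subset V_+$ and $d\alfa_e(\lieg^\beta)\subset V_0$, the compatibility $G^\beta=K^\beta\exp(\liep^\beta)$ from Lemma \ref{lemcomp}, and $G=KG^{\beta+}$ from Proposition \ref{decomposition-parabolic}. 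Each step checks out: the Cauchy--Schwarz comparison legitimately converts the local maximum of $\nu_\liep$ into a local maximum of the linear functional $\mup^\beta$ along the orbit, which kills the $\mathfrak r^{\beta+}$ directions; and the identity $\tau''(0)=\|m'(0)\|^2$ on the Levi directions (valid because the Hessian term sits in $V_0$) is a disguised form of the standard convexity statement $\tfrac{d}{dt}\mup^\xi(\exp(t\xi)x)=\|\xi_M(\exp(t\xi)x)\|^2$ for $\xi\in\liep$, which is essentially how the cited source proceeds. Your closing remark correctly identifies the crux: the sign information from $\mup^\beta$ alone says nothing about $V_-$, and it is the decomposition $G=KG^{\beta+}$ that makes those directions irrelevant. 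Two minor points you could make explicit: the equality $(D^2\mup^\beta(x)\xi_M(x),\xi_M(x))=\frac{d^2}{dt^2}\mup^\beta(\exp(t\xi)x)\restr{t=0}$ uses that $x$ is a critical point of $\mup^\beta$ on all of $M$ (which you do establish via $\beta_M(x)=\grad\nu_\liep(x)=0$), and passing from $\mathfrak r^{\beta+}\subset\lieg_x$ to $R^{\beta+}\subset G_x$ uses the connectedness of $R^{\beta+}$ recorded in the paper's lemma on parabolic subgroups. Neither is a gap.
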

A strategy to analyzing the $G$-action on $M$ is to view $\nu_\liep$ as generalized Morse function.
In \cite{heinzner-schwarz-stoetzel} the authors proved the existence of a smooth $G$-invariant stratification of $M$ and they studied its properties. If $\beta \in \liep$ is a critical value, then we may associate a stratum $S_\beta$ which is a $G$-invariant locally closed submanifold of $M$. The stratification Theorem proves that $S_\beta$ only depends upon $K\cdot \beta$ which is called critical orbit. Indeed, $S_\beta =S_{\beta'}$ if and only if $K\cdot \beta =K\cdot \beta'$ and
\[
M=\bigsqcup_{\beta } S_\beta,
\]
where $\beta$ runs through a complete set of representative $K$-orbits in the set of critical value.

Let $\lia \subset \liep$ be an Abelian subalgebra. The $A=\exp(\lia)$-gradient map is given by  $\pi_\lia \circ \mup$, where  $\pi_\lia:\liep \lra \lia$
denotes the orthogonal projection of $\liep$ onto $\lia$. If $M$ is connected and compact, applying the convexity Theorem along $A$-orbits
\cite{atiyah-commuting,heinzner-schutzdeller},  then $\mua(M^A)$ is a finite set and  $\mua(M)\subseteq \mathrm{conv} (\mua(M^A))$,
where  $M^A=\{p\in M:\, A\cdot p=p\}$ \cite[Proposition 3.1]{bgh-israel-p} . In particular $\mathrm{conv} (\mua(M))=\mathrm{conv}(\mua(M^A))$. Therefore,
if $\mua(M)$ is convex then $\mua(M)$ is the convex hull of $\mua(M^A)$ and so a polytope \cite{schneider-convex-bodies}.

Let $\mathcal E$ denote the convex hull of $\mup(M)$ and let $\lia\subset \liep$ be a maximal Abelian subalgebra.
Since $\mathcal E\cap \lia=\pi_\lia (E)=\mathrm{conv} ( \mua(M))$, applying Theorem \ref{convex-reduction} we get $\mathcal E=K\mathrm{conv}(\mua(M))$.
\section{Projective representations of real reductive Lie groups}\label{section}
Let $G$ be a connected real semisimple noncompact Lie group and let $\rho:G \lra \mathrm{SL}(V)$ be an irreducible
representation on a finite dimensional real vector space $V$. We identify $G$ with $\rho(G)\subset \mathrm{GL}(V)$ and we assume that
$G$ is closed and there exists $K$-invariant scalar product $\mathtt g$ on $V$ such that $G=K\exp(\liep)$, where
$K\subset \mathrm{SO}(V,\mathtt g)$, $\liep= \mathrm{Sym}_0 (V,\mathtt g)\cap \lieg$ and  $\lieg$ denotes the Lie algebra of $G$. Here,  $\mathrm{Sym}_o (V,\mathtt g)$ denotes the set of
symmetric endomorphisms with trace zero.
Roughly speaking, if we identify $V$ with $\R^n$, then $G$ is  a closed and compatible subgroup of $\mathrm{SL}(n,\R)$.
Hence $G=K\exp(\liep)$, where $K:=G\cap \mathrm{SO}(n)$  and $\liep\subset \mathrm{Sym}_o (n)$. In particular, $K$ is a maximal compact subgroup of $G$ and
$\lieg=\mathfrak k \oplus \liep$ is the Cartan decomposition of $G$, where $\mathfrak k$ is the Lie algebra of $K$, induced by the
Cartan decomposition of $\mathrm{SL}(n,\R)$. Now,   $G\subset \mathrm{SL}(n,\C)$ is compatible as well and $\liek \cap i\liep =\{0\}$. By \cite[Proposition 2 p. $4$]{heinz-stoezel},
the Zariski closure of $G$ in $\mathrm{SL}(n,\C)$ is given by  $U^\C$, where $U$ is the compact connected semisimple Lie group with Lie algebra
 $\mathfrak u=\mathfrak k \oplus i\mathfrak  p$.  In particular $G$ is a compatible real form of $U^\C$. In the sequel we always assume that both the $G$-action on $\R^n$ and the $G$-action on $\C^n$ are irreducible. Hence the
$U^\C$-action on $\C^n$ is irreducible as well. We are interested to the natural projective representation of $G$ on
$\mathbb P (\R^n)$. The $G$-action on $\mathbb P (\R^n)$ admits a $G$-gradient map.

Let $B:\mathfrak{sl}(n,\C) \times \mathfrak{sl}(n,\C) \lra \R$ be the symmetric bilinear form given by
\[
B(X,Y)=\mathrm{Re}(\mathrm{Tr}(XY)).
\]
It is an $\mathrm{Ad}(\mathrm{SL}(n,\C))$-invariant and nondegenerate bilinear form. Indeed, the splitting $\mathfrak{sl}(n,\C) =\mathfrak{su}(n) \oplus i \mathfrak{su}(n)$ is $B$-orthogonal,  $B$ is negative definite on $\mathfrak{su}(n)$ and positive definite on $i\mathfrak{su}(n)$.
We define $\langle \cdot , \cdot \rangle$ on $\mathfrak{sl}(n,\C)$ as follows: $\scalo=-B(\cdot , \cdot)$ on $\mathfrak{su}(n)$; $\scalo =B(\cdot,\cdot)$ on $i\mathfrak{su}(n)$; $\langle \mathfrak{su}(n),i \mathfrak{su}(n)\rangle =0$. $\scalo$ is $\mathrm{Ad}(\mathrm{SU}(n))$-invariant scalar product on $\mathfrak{sl}(n,\C)$ such that the multiplication by $i$ defines an isometry of $\mathfrak{su}(n)$ onto $i\mathfrak{su}(n)$.
The $U$-action on $\mathbb P (\C^n)$ is Hamiltonian with momentum map
\[
\mu:\mathbb P(\C^n) \lra \liu, \qquad \mu=\pi_\liu \circ \Phi,
\]
where $\Phi(z)=-\frac{i}{2} \left(\frac{zz^*}{\parallel z \parallel^2} -\frac{1}{n}\mathrm{Id}_n\right)$ is the momentum map of the $\mathrm{SU}(n)$-action on $\mathbb P(\C^n)$ and $\pi_\liu$ is the orthogonal projection of $\mathrm{su}(u)$ onto  $\liu$ \cite{kirwan}. The momentum map satisfies the following conditions:
\begin{itemize}
\item for any $z\in \mathbb P(\C^n)$ and any $g\in U$, we have $\mu(gz)=\mathrm{Ad}(g)(\mu(z))$;
\item  If $\xi \in \liu$, we denote the induced vector field by $\xi_{\mathbb P(\C^n)} (p)=\desudtzero \exp(t\xi) p$. Let $\mu^\xi \in C^\infty (Z)$ be
the function $\mu^\xi(z) := \langle \mu(z),\xi\rangle$. Then $d\mu^\xi =
i_{\xi_{\mathbb P(\C^n)}} \omega$.
\end{itemize}
Let $z \in \mathbb P(\C^n)$.  Then $\mup (z) \in \liep$ denotes the component of $i\mu(z)$ in the direction of $\liep$.  In other words we require
that $\langle \mup (z) , \beta \rangle = \langle i \mu(z) , \beta\rangle=\langle \mu(z), -i \beta \rangle$ for any
$\beta \in \liep$.  We have thus defined the $G$-\emph{gradient map}
\begin{gather*}
  \mu_\liep : \mathbb P(\C^n) \lra \liep,
\end{gather*}
which satisfies the following conditions:
\begin{enumerate}
\item $\mup(kz)=\mathrm{Ad}(k)(\mup(z))$, for any $k\in K$ and for any $z\in \mathbb P(\C^n)$;
\item for any $\beta \in \liep$, let $\mup^\beta \in C^\infty (\mathbb P(\C^n))$ be the function $\mup^\beta(z)=\langle \mup(z),\beta \rangle=\mu^{-i\beta} (z)$. Then the gradient of $\mup^\beta$, with respect to the Riemannian metric induced by the K\"ahler structure, is given by $\beta_{\mathbb P(\C^n)}$.
\end{enumerate}
If $X\subset \mathbb P(\C^n)$ is a connected $G$-stable real submanifold of $\mathbb P(\C^n)$, we consider $\mup$ as a $K$-equivariant mapping $\mup:X \lra \liep$ such that for any $\beta \in \liep$, the gradient of the smooth function $\mup^\beta$ is given by $\beta_X$, where the gradient is computed with respect to the induced Riemannian metric on $X$.
\begin{lemma}\label{restrizione}
For any $z\in \mathbb P(\R^n)$ we have
$
\mup(z)=i\mu (z)
$
\end{lemma}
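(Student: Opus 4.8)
The plan is to prove the apparently stronger statement that $i\mu(z)\in\liep$ for every $z\in\mathbb P(\R^n)$; since by definition $\mup(z)=\pi_\liep(i\mu(z))$ is the orthogonal projection of $i\mu(z)$ onto $\liep$, this forces $\mup(z)=i\mu(z)$ at once. To set up the reduction I would use the Cartan decomposition $\liu=\liek\oplus i\liep$, which is orthogonal for $\scalo$, together with the fact that multiplication by $i$ is an isometry of $\su(n)$ onto $i\su(n)$. Applying this isometry to $\liu=\liek\oplus i\liep$ gives the orthogonal splitting $i\liu=\liep\oplus i\liek$. Since $\mu(z)\in\liu$ we have $i\mu(z)\in i\liu$, so it is enough to show that the $i\liek$-component of $i\mu(z)$ vanishes, i.e. that $\langle i\mu(z),i\eta\rangle=0$ for every $\eta\in\liek$.

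The next step is to unwind this pairing down to a trace computation. Because multiplication by $i$ is an isometry, $\langle i\mu(z),i\eta\rangle=\langle\mu(z),\eta\rangle$, and since $\mu=\pi_\liu\circ\Phi$ with $\eta\in\liek\subset\liu$, the orthogonal projection $\pi_\liu$ disappears and $\langle\mu(z),\eta\rangle=\langle\Phi(z),\eta\rangle=-B(\Phi(z),\eta)=-\mathrm{Re}\,\mathrm{Tr}(\Phi(z)\eta)$, using that $\scalo=-B$ on $\su(n)$. For $z\in\R^n$ one has $z^*=z^T$, hence $\Phi(z)=-\tfrac{i}{2}S$ where $S=\frac{zz^T}{\parallel z\parallel^2}-\frac{1}{n}\mathrm{Id}_n$ is a real symmetric trace-zero matrix. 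Therefore $\mathrm{Tr}(\Phi(z)\eta)=-\tfrac{i}{2}\mathrm{Tr}(S\eta)$, and $\mathrm{Tr}(S\eta)=0$ because $S$ is symmetric while $\eta\in\liek\subset\mathfrak{so}(n)$ is skew-symmetric. Thus $\langle i\mu(z),i\eta\rangle=0$ for all $\eta\in\liek$.

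Combining the two steps, $i\mu(z)$ is orthogonal to $i\liek$ and therefore lies in $\liep$, which gives $\mup(z)=\pi_\liep(i\mu(z))=i\mu(z)$. The only delicate point is the careful bookkeeping of the two scalar products and the isometry $i\colon\su(n)\to i\su(n)$ — in particular the sign $\scalo=-B$ on $\su(n)$ and the identity $\langle i\mu(z),i\eta\rangle=\langle\mu(z),\eta\rangle$ — after which the vanishing of the trace of a symmetric matrix times a skew-symmetric one is the elementary computational heart of the argument.
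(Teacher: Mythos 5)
Your proof is correct and follows essentially the same route as the paper: both arguments come down to observing that for $z\in\mathbb P(\R^n)$ one has $\Phi(z)\in i\,\mathrm{Sym}_0(n)$, which is orthogonal to $\mathfrak{so}(n)\supseteq\liek$ (the trace of a symmetric times a skew-symmetric matrix vanishes), so that $\mu(z)$ already lies in $i\liep$ and hence $i\mu(z)\in\liep$ coincides with its projection $\mup(z)$. You merely spell out the orthogonality $\langle\mathfrak{so}(n),i\,\mathrm{Sym}_0(n)\rangle=0$ that the paper states without computation.
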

\begin{proof}
Let $z\in\mathbb P(\R^n)$. Then $\Phi(z)=-\frac{i}{2} \left(\frac{zz^T}{\parallel z \parallel^2} -\frac{1}{n}\mathrm{Id}_n\right) \in i\mathrm{Sym}_0 (n)$. Since $\langle \mathfrak{so}(n),i\mathrm{Sym}_0 (n) \rangle =0$, it follows that
 $\mu(z)$ is the orthogonal projection of $\Phi$ onto $i\liep$ and so the result follows.
\end{proof}
\begin{prop}\label{closed-orbit}
There exists $v\in \mathbb P(\R^n)$ such that $U^\C \cdot v$ is the unique compact orbit of the $U^\C$-action on $\mathbb P(\C^n)$. Moreover,
$G\cdot v$ is the unique compact orbit of the $G$-action on $U^\C \cdot v$. It is also a $K$-orbit and a Lagrangian submanifold of $U^\C \cdot v$.
\end{prop}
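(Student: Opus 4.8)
The plan is to obtain $v$ as a maximum point of the norm square gradient map on $\PP(\R^n)$ and then read off the four assertions from Proposition \ref{heinzner-maximun}, Wolf's theorem \cite{wolf}, and a tangent space computation at $v$. First I would pick $v\in\PP(\R^n)$ realizing $\mathrm{Max}_{x\in\PP(\R^n)}\nu_\liep$, which exists by compactness of $\PP(\R^n)$. By Lemma \ref{restrizione} and the fact that multiplication by $i$ is an isometry, for this $v$ we have
\[
\nu_\liep(v)=\tfrac12\parallel\mup(v)\parallel^2=\tfrac12\parallel i\mu(v)\parallel^2=\tfrac12\parallel\mu(v)\parallel^2=\nu_\liu(v).
\]
Together with the equality $\mathrm{Max}_{x\in\PP(\R^n)}\nu_\liep=\mathrm{Max}_{x\in\PP(\C^n)}\nu_\liu$ recorded in the introduction, this shows that $v$ maximizes $\nu_\liu$ over all of $\PP(\C^n)$, so $\parallel\mu(v)\parallel$ is maximal. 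By the criterion quoted from \cite{gp,heinzner-schwarz-stoetzel}, $U^\C\cdot v$ is then compact and a $U$-orbit, and by the Borel-Weil description it must be the unique compact $U^\C$-orbit $\OO'$.

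For the remaining assertions I would use that $G\subset\mathrm{SL}(n,\R)$ preserves $\R^n$, so $G\cdot v\subseteq\PP(\R^n)$ and hence $\nu_\liep$ restricted to $G\cdot v$ attains its global, and a fortiori local, maximum at $v$. Proposition \ref{heinzner-maximun} then yields $G\cdot v=K\cdot v$, which is compact. Since $G\cdot v\subseteq U^\C\cdot v=\OO'$ and Wolf's theorem guarantees a unique compact $G$-orbit inside $\OO'$, the orbit $G\cdot v$ must be that unique compact orbit. This disposes of the existence of $v$, the uniqueness of the compact $G$-orbit in $\OO'$, and the fact that it is a $K$-orbit.

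The hard part will be the Lagrangian statement, which I would prove at the level of the tangent space $L:=\liek\cdot v=T_v(K\cdot v)\subset T_v\OO'$ equipped with the Fubini--Study form $\om$. To see that $L$ is isotropic, recall the momentum map identity $\om(\xi_{\PP(\C^n)}(v),\eta_{\PP(\C^n)}(v))=\langle\mu(v),[\xi,\eta]\rangle$ for $\xi,\eta\in\liu$; taking $\xi,\eta\in\liek$ gives $[\xi,\eta]\in\liek$, while $\mu(v)=-i\mup(v)\in i\liep$, and the orthogonality $\liek\perp i\liep$ in $\liu$ forces this pairing to vanish. To see that $L$ is maximal isotropic, observe that $G\cdot v=K\cdot v$ forces $\lieg\cdot v=\liek\cdot v$ (equal dimensions), hence $\liep\cdot v\subseteq\liek\cdot v$; since the $U^\C$-action is holomorphic, $(i\xi)_{\PP(\C^n)}(v)=J(\xi_{\PP(\C^n)}(v))$, so that
\[
T_v\OO'=\lieg^\C\cdot v=\lieg\cdot v+J(\lieg\cdot v)=L+JL.
\]
An isotropic $L$ with $L+JL=T_v\OO'$ has $\dim L\ge\tfrac12\dim_\R\OO'$, which together with isotropy gives equality; thus $L$ is Lagrangian and $K\cdot v$ is a Lagrangian submanifold of $\OO'$. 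One may also note that the complex conjugation $\sigma$ of $\PP(\C^n)$ fixing $\PP(\R^n)$ preserves $\OO'$, since $\lieg^\C=\liu^\C$ is $\sigma$-stable and $v$ is $\sigma$-fixed, and restricts to an anti-holomorphic isometric involution of $\OO'$ whose fixed-point set is automatically Lagrangian; the delicate point, and the only place where $G\cdot v=K\cdot v$ is genuinely used, is the inclusion $\liep\cdot v\subseteq\liek\cdot v$, equivalently the identity $L+JL=T_v\OO'$.
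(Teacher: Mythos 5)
The weak point is your very first step. You take $v$ to be a maximizer of $\nu_\liep$ on $\PP(\R^n)$ and then invoke the equality $\mathrm{Max}_{x\in\PP(\R^n)}\nu_\liep=\mathrm{Max}_{x\in\PP(\C^n)}\nu_\liu$ to promote $v$ to a global maximizer of $\nu_\liu$ on all of $\PP(\C^n)$, which is what feeds the compactness criterion of \cite{gp,heinzner-schwarz-stoetzel}. But in this paper that equality is not an independent fact: in Section 5 it is explicitly derived \emph{from} Proposition \ref{closed-orbit}. Indeed the set of maximizers of $\nu_\liu$ is exactly the unique compact orbit $\OO'$, so the equality of the two maxima is equivalent to $\OO'\cap\PP(\R^n)\neq\emptyset$, i.e.\ to the existence of the very point $v$ you are trying to produce; your argument is circular. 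Lemma \ref{restrizione} only gives the easy inequality $\mathrm{Max}_{\PP(\R^n)}\nu_\liep\leq\mathrm{Max}_{\PP(\C^n)}\nu_\liu$, and nothing you write supplies the reverse one. Note also that this is precisely where the standing hypothesis that $\rho:G\to\mathrm{SL}(V^\C)$ is irreducible must enter (otherwise the compact $U^\C$-orbit need not meet $\PP(\R^n)$ at all), and your proof never uses it. The paper instead constructs $v$ directly as a \emph{real} highest weight vector: choosing the Cartan subalgebra $(\mathfrak b\oplus\lia)^\C$ of $\liu^\C$ with $\lia\subset\liep$ maximal Abelian, the one-dimensional highest weight space sits inside the complexification of a real $\lia$-eigenspace and therefore contains, up to scalar, a vector of $\R^n$; Borel--Weil then identifies $U^\C\cdot v$ with $\OO'$. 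Some argument of this kind is what your proof is missing.

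Once $U^\C\cdot v=\OO'$ is granted, the remainder of your proof is correct, and in part goes beyond the paper. The identification $G\cdot v=K\cdot v$ via Proposition \ref{heinzner-maximun} and the uniqueness via Wolf's theorem coincide with the paper's route. For the Lagrangian assertion the paper simply cites \cite[Proposition 9]{bsb}, whereas your tangent-space computation --- isotropy of $L=\liek\cdot v$ from $\mu(v)\in i\liep$ and $\liek\perp i\liep$, together with maximality from $\liep\cdot v\subseteq\liek\cdot v$ and $T_v\OO'=\lieg\cdot v+J(\lieg\cdot v)=L+JL$ --- is a clean self-contained substitute, and it correctly isolates that the only input needed is $G\cdot v=K\cdot v$.
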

\begin{proof}
Let $\lia\subset \liep$ be a maximal Abelian subalgebra. The centralizer of $\lia$ is $\lieg$ is compatible and is given
by $\mathfrak{z}_{\lieg} (\lia)=\mathfrak{z}_\liek (\lia) \oplus \lia$.

Let $\mathfrak b$ be a maximal Abelian subalgebra of
$ \mathfrak{z}_\liek (\lia)$ and
let $\mathfrak h=\mathfrak b \oplus \mathfrak a$. Then $\mathfrak h^\C=(\mathfrak b \oplus i\mathfrak a)\oplus i (\mathfrak b \oplus i\mathfrak a)$
is a maximal Abelian subalgebra of $\liu^\C$. Since the $\mathfrak a$-action on $\C^n$ is the complexification of the $\mathfrak a$-action on $\R^n$,
it follows that the eigenspaces of the $\mathfrak a$-action on $\C^n$ are the complexification of the eigenspaces of the $\mathfrak a$-action on $\R^n$.
Since $U^\C$ acts irreducible on $\C^n$, the eigenspaces corresponding to the weight space is one dimensional. If $z$ is a nonzero vector belonging to the
weight space, then it is a an eigenvector for the $\lia$-action on $\C^n$. This implies $\pi(z) \in \mathbb P (\R^n)$, where
$\pi:\C^n \setminus \{0\} \lra \mathbb P (\C^n)$ is the natural projection. We denote $v=\pi(z)$.
By Borel-Weyl Theorem $U^\C \cdot v$ is the unique compact orbit of the $U^\C$-action on $\mathbb P(\C^n)$
which coincides with the unique complex orbit of $U$ \cite{huckleberry-introduction-DMV}. Since $\mathbb P (\R^n)$ is Lagrangian and $G$ is a real form of $U^\C$,
by \cite[Proposition 9]{bsb} it follows that $G\cdot v$ is compact as well and Lagrangian.
By Proposition \ref{heinzner-maximun}, $G\cdot v$ is a $K$-orbit.
Finally, applying a  Theorem of Wolf \cite{wolf}, see also \cite{heinzner-stoetzel-global},
it follows that $G\cdot v$ is the unique compact $G$-orbit in $U^\C \cdot v$.
\end{proof}
In the sequel we always denote by $\OO'$ the unique compact orbit of the $U^\C$-action on $\mathbb P(\C^n)$ and by $\OO$ the unique compact orbit of the $G$-action on $\OO'$.
\begin{cor}\label{intersection}
$\OO' \cap \mathbb P(\R^n)=\OO$.
\end{cor}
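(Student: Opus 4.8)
The inclusion $\OO\subseteq \OO'\cap\mathbb P(\R^n)$ is free: by Proposition \ref{closed-orbit} we have $\OO=G\cdot v$ with $v\in\mathbb P(\R^n)$, and since $G\subset\mathrm{SL}(n,\R)$ preserves $\mathbb P(\R^n)$ while $\OO=G\cdot v\subset U^\C\cdot v=\OO'$, the orbit $\OO$ lies in both sets. The plan is therefore to prove the reverse inclusion by showing that $X:=\OO'\cap\mathbb P(\R^n)$ is a smooth submanifold having $\OO$ as a full connected component, and then to use the uniqueness of the compact $G$-orbit in $\OO'$ to eliminate every other component.

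First I would realize $X$ as the fixed point set of a conjugation. Let $\sigma\colon\mathbb P(\C^n)\lra\mathbb P(\C^n)$, $\sigma([z])=[\bar z]$, be the standard anti-holomorphic involution; its fixed point set is exactly $\mathbb P(\R^n)$ and it is an isometry of the Fubini-Study metric. Because $\lieg\subset\mathfrak{sl}(n,\R)$ we have $\overline{\lieg^\C}=\lieg^\C=\liu^\C$, so $U^\C$ is stable under conjugation; since $v$ is a real point, $\sigma(\OO')=\sigma(U^\C)\cdot\sigma(v)=U^\C\cdot v=\OO'$. Hence $\sigma$ restricts to an anti-holomorphic involutive isometry of $\OO'$ and $X$ is its fixed point set, so $X$ is a totally geodesic submanifold of $\OO'$. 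At each fixed point the differential of $\sigma$ anti-commutes with the complex structure, so its $(+1)$-eigenspace is a real form of the tangent space; thus every connected component of $X$ is Lagrangian of real dimension $\dim_\C\OO'$.

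Next I would check that $\OO$ is an entire connected component of $X$. By Proposition \ref{closed-orbit}, $\OO$ is a connected (as $G$ is connected) Lagrangian submanifold of $\OO'$, so $\dim_\R\OO=\dim_\C\OO'=\dim_\R X$. A compact connected submanifold of $X$ whose dimension equals that of $X$ is open (invariance of domain) and closed (compactness) in $X$, hence equals the component of $X$ containing it; so $\OO$ is precisely the component of $X$ through $v$. Finally, suppose $X\neq\OO$; then some connected component $C$ of $X$ satisfies $C\cap\OO=\vacuo$. As $G$ is connected it preserves each component, so $C$ is compact and $G$-invariant, and an orbit of minimal dimension inside $C$ is closed, hence a compact $G$-orbit contained in $\OO'$. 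By Proposition \ref{closed-orbit} the only such orbit is $\OO$, contradicting $C\cap\OO=\vacuo$. Therefore $X=\OO$, which is the claim.

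The main obstacle is solely the a priori possibility that $X$ is disconnected with spurious components: the dimension count pins down $\OO$ as a full component, after which the uniqueness of the compact $G$-orbit in $\OO'$ finishes the argument. The only points needing care are the conjugation-stability of $U^\C$ (hence of $\OO'$) and the fact that the fixed locus of $\sigma$ has uniform Lagrangian dimension, both of which follow from $\lieg$ being a real subalgebra and $\sigma$ being an anti-holomorphic isometry.
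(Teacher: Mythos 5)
Your proof is correct in substance but takes a genuinely different route from the paper's. The paper argues pointwise with the norm square maps: by Proposition \ref{heinzner-maximun} the orbit $\OO'$ is exactly where $\nu_\liu=\frac{1}{2}\parallel\mu\parallel^2$ attains its maximum; the orthogonal splitting $\parallel \mu(z)\parallel^2=\parallel\mu_{\liek}(z)\parallel^2+\parallel\mup(z)\parallel^2$ together with Lemma \ref{restrizione} (which kills the $\liek$-component at real points) shows that every $z\in\OO'\cap\mathbb P(\R^n)$ maximizes $\nu_\liep$, so Proposition \ref{heinzner-maximun} forces $G\cdot z$ to be compact, and the uniqueness of the compact $G$-orbit in $\OO'$ (Proposition \ref{closed-orbit}) gives $G\cdot z=\OO$. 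You instead realize $\OO'\cap\mathbb P(\R^n)$ as the fixed locus of the conjugation restricted to $\OO'$, use anti-holomorphicity to see that every component is Lagrangian of real dimension $\dim_\C\OO'$, identify $\OO$ as a full component by invariance of domain, and rule out further components by extracting a compact $G$-orbit from each. Your route essentially proves Theorem \ref{involution} along the way (the conjugation-stability of $\OO'$ and the totally geodesic, Lagrangian structure of the fixed locus), so it delivers more geometric information, while the paper's argument is shorter and never has to discuss components or dimensions. The one step of yours that needs more justification than you give is ``an orbit of minimal dimension inside $C$ is closed'': for a general smooth Lie group action this is false (every orbit of an irrational flow on a torus is one-dimensional and none is closed), so you must either invoke Wolf's theorem that $G$ has finitely many, locally closed orbits on the flag manifold $\OO'$, or -- closer to the paper's spirit -- maximize $\nu_\liep$ over the compact invariant set $C$ and apply Proposition \ref{heinzner-maximun} to produce a compact $G$-orbit in $C$. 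With that patch your argument is complete.
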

\begin{proof}
By Proposition \ref{heinzner-maximun}, $\OO'$ is a $U$-orbit and the set where the norm square momentum map achieves its maximum.
Now, if $z\in \mathbb P(\C^n)$, then
\begin{equation}\label{splitting}
\parallel \mu(z) \parallel^2 = \parallel \mu_{\mathfrak k} (z) \parallel^2 + \parallel \mup (z) \parallel^2,
\end{equation}
where $\mu_{\mathfrak k}$ is the momentum map of the $K$-action on $\mathbb P(\C^n)$.
Since $\OO'=U\cdot v$ and $v\in \mathbb P(\R^n)$, keeping in mind Lemma \ref{restrizione},  $v$ achieves the maximum of the norm square gradient map as well.  Moreover, if $z\in U \cdot v \cap \mathbb P(\R^n)$ then
\[
\parallel \mu(z) \parallel =\parallel \mu (v ) \parallel =\parallel \mup (z) \parallel,
\]
and so  $z$ achieves the maximum of the norm square gradient map. By Proposition \ref{heinzner-maximun}, $G\cdot z$ is compact and it is contained in $\OO'$. By Proposition \ref{closed-orbit}, $G$ has a unique compact orbit on $\OO'$ concluding the proof.
\end{proof}
The map
\[
T:\mathbb P(\C^n) \lra \mathbb P(\C^n), \qquad [x] \mapsto [\overline x ],
\]
is an anti-holomorphic isometric involution. The Lie algebra $\liu=\mathfrak k \oplus i \liep$ is invariant with respect to the matrix conjugation induced by $T$. Hence, keeping in mind that the exponential map of $U$ is surjective \cite{knapp}, it follows $\overline{U}=U$. Therefore, keeping in mind that $v\in \mathbb P(\R^n)$, we get
\[
\OO'=U^\C \cdot v=U\cdot v=\overline{U\cdot v}.
\]
This implies that complex conjugation $T$ induces an anti-holomorphic isometry on $\OO'$ whose fixed point set is given by
\[
\OO' \cap \mathbb P(\R^n)=\OO.
\]
Since the fixed point set of an isometry is a totally geodesic submanifold \cite{bm}, we get the following result.
\begin{teo}\label{involution}
$\OO$ is the fixed point set of an anti-holomorphic involutive isometry of $\OO'$. In particular $\OO$ is a totally geodesic submanifold of $\OO'$.
\end{teo}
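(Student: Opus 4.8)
The plan is to produce the involution concretely as complex conjugation and then read off its fixed-point set. First I would set $T([x])=[\overline{x}]$ on $\mathbb P(\C^n)$ and recall that $T$ is an anti-holomorphic isometric involution for the Fubini--Study metric. The first thing to check is that $T$ preserves $\OO'$. Here I would use that $\OO'=U\cdot v$ (it is a compact $U^\C$-orbit, hence a $U$-orbit by Proposition~\ref{heinzner-maximun}) with $v\in\mathbb P(\R^n)$, together with the fact that $U$ is a \emph{real} subgroup of $\mathrm{SU}(n)$: indeed $\liu=\liek\oplus i\liep$ consists of real matrices, since $\liek\subset\mathfrak{so}(n)$ and $i\liep\subset i\,\mathrm{Sym}_0(n)$. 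By surjectivity of the exponential map of the compact connected group $U$ \cite{knapp}, this gives $\overline U=U$, whence $T(\OO')=T(U\cdot v)=\overline U\cdot v=U\cdot v=\OO'$, using $\overline v=v$.

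Second, I would determine the fixed-point set of $T$ on all of $\mathbb P(\C^n)$ and show it is exactly $\mathbb P(\R^n)$. If $[\overline x]=[x]$ then $\overline x=\lambda x$ for some $\lambda\in\C^\ast$; conjugating gives $x=|\lambda|^2 x$, so $|\lambda|=1$, and after rescaling the representative one may arrange $\overline x=x$, i.e.\ $[x]\in\mathbb P(\R^n)$. The reverse inclusion is immediate. Restricting to the $T$-invariant submanifold $\OO'$ then yields
\[
\mathrm{Fix}(T|_{\OO'})=\OO'\cap\mathbb P(\R^n)=\OO,
\]
the last equality being Corollary~\ref{intersection}. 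Since $\OO'$ is a complex submanifold, $T|_{\OO'}$ is again an anti-holomorphic isometric involution, so the first assertion follows.

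Finally, for the ``in particular'' clause I would invoke the classical fact that the fixed-point set of an isometric involution of a Riemannian manifold is totally geodesic \cite{bm}: a geodesic issuing from a fixed point in a direction tangent to the fixed set is carried to itself by the isometry, hence is pointwise fixed, so it stays in the fixed set; this forces the second fundamental form to vanish.

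I expect the only genuine points requiring care to be the two invariance checks: that $T$ maps $\OO'$ into itself (which hinges on $\OO'$ being a \emph{real} orbit, i.e.\ on $\overline U=U$), and that the projective fixed-point set is not strictly larger than $\mathbb P(\R^n)$ despite the scalar ambiguity in $[\overline x]=[x]$. Both reduce to short elementary arguments once $v$ is known to lie in $\mathbb P(\R^n)$, which is exactly the content of Proposition~\ref{closed-orbit}; everything else is packaged in the cited structural results.
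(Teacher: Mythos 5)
Your proof follows exactly the paper's route: the involution is complex conjugation $T([x])=[\overline x]$, one checks $\overline U=U$ via surjectivity of the exponential map, concludes $T(\OO')=\OO'$ because $v\in\mathbb P(\R^n)$, identifies the fixed-point set with $\OO'\cap\mathbb P(\R^n)=\OO$ by Corollary \ref{intersection}, and invokes the standard fact that fixed-point sets of isometries are totally geodesic. The one genuine error is your justification that ``$\liu=\liek\oplus i\liep$ consists of real matrices'': it does not, since $i\liep\subset i\,\mathrm{Sym}_0(n)$ is purely imaginary (and if $\liu$ were real one would have $U\subset\mathrm{SO}(n)$, which is not the situation here). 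What you actually need, and what the paper asserts, is only that $\liu$ is \emph{stable} under entrywise conjugation; this holds because $\liek\subset\mathfrak{so}(n)$ is pointwise real while $\overline{i\liep}=-i\liep=i\liep$. With that one-line correction, $\overline U=U$ follows as you say and the rest of your argument is sound; indeed you supply two details the paper leaves implicit, namely the verification (via the scalar ambiguity $\overline x=\lambda x$, $|\lambda|=1$) that the fixed-point set of $T$ in $\mathbb P(\C^n)$ is exactly $\mathbb P(\R^n)$, and the explicit appeal to Corollary \ref{intersection}.
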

We claim that the vice-versa holds.

Let $\rho:U^\C \lra \mathrm{SL}(n,\C)$ be an holomorphic irreducible representation of a semisimple complex Lie group. Let $G$ be a noncompact real form of $U^\C$.
Assume there exists an anti-holomorphic involution $T$ of $\mathbb P(\C^n)$ preserving $\OO'$ and such that $\OO$ is contained in the fixed point set of $T$.  Now, the application $T$
is induced by an anti-linear map $T:\C^n \lra \C^n$ such that $T^2=\mathrm{Id}_n$.  Let $V=\mathrm{Ker}(T-\mathrm{Id}_n)$. Since $T\circ J=-J\circ T$, it follows that $\C^n=V\oplus J(V)$. and so the fixed point set of $T$ restricted on $\OO'$ is given by $\OO' \cap \mathbb P(V)$. This implies that $\OO\subseteq \mathbb P(V)$. Hence $G$ preserves a real subspace $W$ of $V$ and so $W^\C$ is preserved by $U^\C$. This implies that $W=V$ and $V^\C=\C^n$. Summing up, we have proved the following result.
\begin{teo}
In the above assumption, there exists a real subspace $V\subset \C^n$ such that $G$ acts irreducibly on $V$ and $V^\C=\C^n$.
\end{teo}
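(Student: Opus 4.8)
The plan is to promote the projective involution $T$ to a linear real structure on $\C^n$ commuting with $\rho(G)$; its fixed-point space will be the desired $V$. First I would lift $T$ to an anti-linear map $\widetilde T:\C^n\lra\C^n$. Since $\widetilde T^2$ is complex linear and induces the identity on $\mathbb P(\C^n)$, it equals a scalar $c\,\Id$, and an elementary computation using the anti-linearity of $\widetilde T$ forces $c\in\R$. A quaternionic structure ($c<0$) would have empty fixed locus in $\mathbb P(\C^n)$, whereas here $\mathrm{Fix}(T)\supseteq\OO\neq\vacuo$; hence $c>0$, and after rescaling $\widetilde T^2=\Id$. Writing $J$ for multiplication by $i$, the relation $\widetilde T J=-J\widetilde T$ shows that $V:=\mathrm{Ker}(\widetilde T-\Id)$ is a real form, $\C^n=V\oplus JV$, and that the fixed locus of $T$ in $\mathbb P(\C^n)$ is precisely $\mathbb P(V)$; in particular $\OO\subseteq\mathbb P(V)$.

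The heart of the argument is to show that $\widetilde T$ commutes with $\rho(g)$ for every $g\in G$. The key observation is that, as transformations of $\mathbb P(\C^n)$, $T\circ\rho(g)$ and $\rho(g)\circ T$ agree on $\OO$: if $x\in\OO$ then both $x$ and $\rho(g)x$ lie in $\OO\subseteq\mathrm{Fix}(T)$, so each composition sends $x$ to $\rho(g)x$. Next I would note that $\OO$ spans $\mathbb P(\C^n)$, because the complex cone over $\OO$ is $G$-invariant, hence $U^\C$-invariant by Zariski density of $G$ in $U^\C$, hence all of $\C^n$ by irreducibility. A spanning subvariety contains a projective frame, and two (anti-)projective transformations agreeing on a projective frame coincide; therefore $T\circ\rho(g)=\rho(g)\circ T$ on all of $\mathbb P(\C^n)$. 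Lifting to $\C^n$ introduces a scalar ambiguity: $\widetilde T\,\rho(g)\,\widetilde T^{-1}=\chi(g)\,\rho(g)$ for some $\chi(g)\in\C^*$.

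Pinning down $\chi$ is the last technical point. Using $\widetilde T^{-1}=\widetilde T$, the relation reads $\widetilde T\,\rho(g)\,\widetilde T=\chi(g)\rho(g)$. In a real basis of $V$ the map $A\mapsto\widetilde T A\widetilde T$ is entrywise complex conjugation, so it preserves $\mathrm{SL}(n,\C)$; comparing determinants gives $\overline{\det\rho(g)}=\chi(g)^n\det\rho(g)$, that is $\chi(g)^n=1$. As $g\mapsto\chi(g)$ is continuous on the connected group $G$ with values in the discrete group of $n$-th roots of unity, $\chi\equiv\chi(e)=1$. Hence $\widetilde T\,\rho(g)=\rho(g)\,\widetilde T$ for all $g\in G$, so $G$ preserves the real form $V=\mathrm{Ker}(\widetilde T-\Id)$.

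Finally I would record irreducibility. Since $\C^n=V\oplus JV$ we have $V^\C=\C^n$, and any nonzero $G$-invariant real subspace $W\subseteq V$ complexifies to a nonzero $U^\C$-invariant subspace $W^\C\subseteq\C^n$, again by Zariski density; irreducibility of the $U^\C$-action then forces $W^\C=\C^n$ and hence $W=V$. Thus $G$ acts irreducibly on $V$ with $V^\C=\C^n$, as claimed. The main obstacle is exactly the commuting relation of the second and third paragraphs: the hypothesis $\OO\subseteq\mathbb P(V)$ only tells us a priori that $\rho(g)$ carries real lines over $\OO$ to real lines, and upgrading this to the linear intertwining $\widetilde T\rho(g)=\rho(g)\widetilde T$—in particular eliminating the scalar $\chi(g)$—is where the genuine work lies.
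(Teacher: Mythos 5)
Your proof is correct, and it reaches the conclusion by the same overall skeleton as the paper (lift $T$ to an anti-linear involution $\widetilde T$ with $\widetilde T^2=\Id$, set $V=\mathrm{Ker}(\widetilde T-\Id)$, get $\OO\subseteq\mathbb P(V)$, then use Zariski density of $G$ in $U^\C$ plus irreducibility), but you do genuinely more at the one step where the two arguments diverge. The paper passes directly from $\OO\subseteq\mathbb P(V)$ to the assertion that $G$ preserves a real subspace $W$ of $V$; as you yourself point out, this is exactly the non-obvious point, since $G$-invariance of $\OO$ only says that $\rho(g)$ sends real lines over $\OO$ to real lines up to a complex scalar, and the paper offers no justification. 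Your intermediate result --- that $\widetilde T$ actually commutes with $\rho(g)$ for all $g$, proved by comparing $T\rho(g)$ and $\rho(g)T$ on the spanning set $\OO$ and then killing the residual character $\chi$ by the determinant-plus-connectedness argument --- is precisely what upgrades projective invariance to the linear invariance $\rho(g)V=V$ that the theorem's statement requires, so your version closes a real gap rather than merely re-deriving the printed proof. The only point I would tighten is the appeal to ``a spanning subvariety contains a projective frame'': $\OO$ is a compact real submanifold, not a complex subvariety, so either justify the frame by noting that $\OO$ is connected and real-analytic and not contained in any complex hyperplane (so the finitely many bad hyperplanes cut out nowhere dense subsets), or bypass frames entirely by observing that $\rho(g)^{-1}\widetilde T\rho(g)\widetilde T$ is a linear map fixing every line of the connected cone over $\OO$, hence restricts to a single eigenspace, which must be all of $\C^n$ since that cone spans. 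Everything else (the reality of $c$, the exclusion of the quaternionic case via $\mathrm{Fix}(T)\supseteq\OO\neq\vacuo$, and the final irreducibility step through $W^\C$) is sound.
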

Let $\beta \in \liep$ and let
$
\mup^\beta:\mathbb P (\R^n) \lra \R.$
Let $\lambda_1>\dots> \lambda_k$ be the eigenvalues of $\beta$.  We denote by $V_1,\dots, V_k$ the corresponding eigenspaces. In view of the orthogonal decomposition $\R^n=V_1\oplus\dots\oplus V_{k}$, $\mup^\beta$ is given by
\[
\mup^\beta ([x_1+\cdots+x_k])=\frac{\lambda_1 \parallel x_1 \parallel^2 + \cdots +\lambda_k \parallel x_k \parallel^2}{\parallel x_1 \parallel^2 + \cdots +\parallel x_k \parallel^2}.
\]
Therefore $\mathrm{Max}_{\mathbb P (\R^n)} (\beta)=\mathbb P(V_1)$. The gradient flow of $\mup^\beta$ is given by
\[
\R \times \mathbb P(\R^n) \lra \mathbb P(\R^n), \qquad \big(t,[x_1+\cdots + x_k] \big) \mapsto [e^{t\lambda_1}x_1+\cdots+e^{t\lambda_1}x_1].
\]
Hence the critical points of $\mup^\beta$ are given by $\mathbb {P}(V_1)\cup\dots\cup \mathbb {P}(V_{k})$ and the corresponding unstable manifolds are given by:
$$\label{unstable}
W_1^\beta=\mathbb {P} (\R^n)\setminus \mathbb {P}(V_2\oplus\dots\oplus V_k),
$$
$$
W_2^\beta=\mathbb {P}(V_2\oplus\dots\oplus V_k)\setminus \mathbb {P}(V_3\oplus\dots\oplus V_k),
$$
$$
\vdots
$$
$$
W_{k-1}^\beta=\mathbb {P}(V_{k-1}\oplus V_k)\setminus \mathbb {P}(V_k),
$$
$$
W_k^\beta=\mathbb {P}(V_k).$$
\begin{lemma}\label{rm}
Let $M$ be a closed $G$-stable submanifold of $\mathbb P(\R^n)$ and let  $\beta \in \liep$.  Then
\[
\mathrm{max}_{x\in M} \mup^\beta=\mathrm{max}_{x\in \mathbb P(\R^n)} \mup^\beta=\mathrm{max}_{x\in \mathbb P(\C^n)} \mup^\beta=\mathrm{max}_{x\in \OO'} \mup^\beta.
\]
\end{lemma}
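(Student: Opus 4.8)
The plan is to show that all four maxima coincide with $\lambda_1$, the largest eigenvalue of $\beta$. The inequalities $\mathrm{max}_{x\in M}\mup^\beta \le \mathrm{max}_{x\in\mathbb P(\R^n)}\mup^\beta \le \mathrm{max}_{x\in\mathbb P(\C^n)}\mup^\beta$ and $\mathrm{max}_{x\in\OO'}\mup^\beta\le\mathrm{max}_{x\in\mathbb P(\C^n)}\mup^\beta$ are immediate from the inclusions $M\subset\mathbb P(\R^n)\subset\mathbb P(\C^n)$ and $\OO'\subset\mathbb P(\C^n)$, so the statement reduces to two facts: that the maximum of $\mup^\beta$ over $\mathbb P(\C^n)$ equals $\lambda_1$ (the value already attained on $\mathbb P(V_1)\subset\mathbb P(\R^n)$), and that both $M$ and $\OO'$ meet the top eigenspace.

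First I would compute the maximum over $\mathbb P(\C^n)$. The same computation giving the displayed formula for $\mup^\beta$ on $\mathbb P(\R^n)$ extends verbatim to complex points: writing $z=z_1+\cdots+z_k$ with $z_j\in V_j^\C$ the complexified eigenspaces of the real symmetric matrix $\beta$, one gets that $\mup^\beta([z])$ is the Rayleigh quotient $(\lambda_1\|z_1\|^2+\cdots+\lambda_k\|z_k\|^2)/(\|z_1\|^2+\cdots+\|z_k\|^2)$, whose maximum is $\lambda_1$. Equivalently, by Corollary \ref{MorseBott} the function $\mup^\beta$ is Morse--Bott on $\mathbb P(\C^n)$ with critical manifolds $\mathbb P(V_j^\C)$ (the fixed loci of the flow $\exp(t\beta)$); being constant on each connected critical component and agreeing with the real formula on $\mathbb P(V_j)$, it equals $\lambda_j$ there. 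Hence $\mathrm{max}_{\mathbb P(\C^n)}\mup^\beta = \lambda_1 = \mathrm{max}_{\mathbb P(\R^n)}\mup^\beta$, the middle equality.

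The core is the outer two equalities, which I would treat via the positive gradient flow $\phi_t=\exp(t\beta)$. Its gradient is $\beta_{\mathbb P(\C^n)}$, the limit $\phi_\infty(x)=\lim_{t\to+\infty}\exp(t\beta)x$ exists by Theorem \ref{line}, and on projective space it is given explicitly by $[z_1+\cdots+z_k]\mapsto[z_{j_0}]$, where $j_0$ is the smallest index with $z_{j_0}\neq 0$; in particular $\phi_\infty$ sends any point with nonzero $V_1$-component into $\mathbb P(V_1)$ (resp. $\mathbb P(V_1^\C)$), where $\mup^\beta\equiv\lambda_1$. Since $M$ is $G$-stable and $\beta\in\liep\subset\lieg$, the flow preserves $M$, and as $M$ is closed we get $\phi_\infty(x)\in M$ for every $x\in M$; likewise $\OO'$ is $U^\C$-stable, hence preserved by the flow $\exp(t\beta)$, which lies in $U^\C$, and compact, so $\phi_\infty(z)\in\OO'$ for $z\in\OO'$. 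It therefore suffices to produce a point of $M$ (resp. of $\OO'$) with nonzero component in $V_1$ (resp. $V_1^\C$).

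This last point is where irreducibility enters, and it is the step I expect to require the most care. The real span of the cone over $M$ is a nonzero $G$-invariant subspace of $\R^n$, hence all of $\R^n$ because $G$ acts irreducibly; consequently $M$ cannot lie in the proper projective subspace $\mathbb P(V_2\oplus\cdots\oplus V_k)$, i.e. $M$ meets the unstable manifold $W_1^\beta = \mathbb P(\R^n)\setminus\mathbb P(V_2\oplus\cdots\oplus V_k)$. Choosing $x\in M\cap W_1^\beta$ and flowing gives $\phi_\infty(x)\in M\cap\mathbb P(V_1)$, so $\mathrm{max}_M\mup^\beta\ge\lambda_1$, whence equality. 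The identical reasoning, using instead that $U^\C$ acts irreducibly on $\C^n$ so that the complex span of the cone over $\OO'$ is all of $\C^n$, shows $\OO'\not\subset\mathbb P(V_2^\C\oplus\cdots\oplus V_k^\C)$ and yields a point of $\OO'\cap\mathbb P(V_1^\C)$ on which $\mup^\beta=\lambda_1$, giving $\mathrm{max}_{\OO'}\mup^\beta=\lambda_1$. Combining the four evaluations, all the maxima equal $\lambda_1$ and the lemma follows.
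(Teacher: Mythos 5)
Your proposal is correct and follows essentially the same route as the paper: compute $\mup^\beta$ as a Rayleigh quotient on both $\mathbb P(\R^n)$ and $\mathbb P(\C^n)$, use irreducibility to show $M$ (resp. $\OO'$) cannot lie in $\mathbb P(V_2\oplus\cdots\oplus V_k)$ (resp. its complexification) and hence meets the top unstable manifold, and then push a point into $\mathbb P(V_1)$ via the limit of the flow $\exp(t\beta)$. The only difference is that you spell out a few steps the paper leaves implicit (the explicit formula for $\phi_\infty$ and why closedness plus $G$-stability keep the limit inside $M$), which is fine.
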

\begin{proof}
If $W_1^\beta \cap M=\emptyset$, then $M\subset \mathbb P(V_2 \oplus \cdots \oplus V_k)$ and so there  exists a proper $G$-stable subspace of $\R^n$. A contradiction.

Let $x\in W_1^\beta \cap M$. Then
\[
\lim_{t\mapsto +\infty} \exp(t\beta)x \in M \cap V_1,
\]
proving $\mathrm{max}_{x\in M} \mup^\beta=\mathrm{max}_{x\in \mathbb P(\R^n)} \mup^\beta$.

Since $\beta$ is a real matrix, we get the following orthogonal splitting $\C^n=V_1^\C\oplus\dots\oplus V_{k}^\C$, with respect to the canonical Hermitian scalar product, of eigespaces of $\beta$. In particular, the function $\mup^\beta:\mathbb P(\C^n) \lra \R$, is given by
\[
\mup^\beta ([x_1+\cdots+x_k])=\frac{\lambda_1 \parallel x_1 \parallel^2 + \cdots +\lambda_k \parallel x_k \parallel^2}{\parallel x_1 \parallel^2 + \cdots +\parallel x_k \parallel^2}
\]
and so $\mathbb {P}(V_1^\C)\cup\dots\cup \mathbb {P}(V_{k}^\C)$ are the critical points of $\mup^\beta$. The corresponding unstable manifolds are given by:
$$
W_1^\beta=\mathbb {P} (\C^n)\setminus \mathbb {P}(V_2^\C\oplus\dots\oplus V_k^\C),
$$
$$
W_2^\beta=\mathbb {P}(V_2^\C\oplus\dots\oplus V_k^\C)\setminus \mathbb {P}(V_3^\C\oplus\dots\oplus V_k^\C),
$$
$$
\vdots
$$
$$
W_{k-1}^\beta=\mathbb {P}(V_{k-1}^\C \oplus V_k^\C )\setminus \mathbb {P}(V_k^\C),
$$
$$
W_k^\beta=\mathbb {P}(V_k^\C).$$
This implies $\mathrm{max}_{x\in \mathbb P(\C^n)} \mup^\beta=\lambda_1$.

Let $\OO'$ be the unique compact $U^\C$-orbit in $\mathbb P(\C^n)$. Since $G$ acts irreducibly on $\C^n$, it follows that the unstable manifold $W_1^\beta$ of the Morse-Bott function $\mup^\beta: \mathbb P(\C^n) \lra \R$ intersects $\OO'$. As before, one has
\[
\maxm{\beta}{\mathcal O'}=\maxm{\beta}{\mathbb P (\C^n)} \cap \OO' ,
\]
concluding the proof.
\end{proof}
\begin{prop}\label{image}
Let $\lia \subset \liep$ be an Abelian subalgebra. Then
\[
\mua(\mathcal O)=\mua(\mathbb P(\R^n))=\mua(\mathbb P(\C^n))=\mua(\mathcal O').
\]
\end{prop}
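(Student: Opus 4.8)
The plan is to exploit the two chains of set-inclusions $\mathcal O\subseteq \mathbb P(\R^n)\subseteq \mathbb P(\C^n)$ and $\mathcal O\subseteq \mathcal O'\subseteq \mathbb P(\C^n)$. Applying $\mua$ and using monotonicity of images, these give $\mua(\mathcal O)\subseteq \mua(\mathbb P(\R^n))\subseteq \mua(\mathbb P(\C^n))$ together with $\mua(\mathcal O)\subseteq \mua(\mathcal O')\subseteq \mua(\mathbb P(\C^n))$. Hence it suffices to prove the single reverse inclusion $\mua(\mathbb P(\C^n))\subseteq \mua(\mathcal O)$: this collapses both chains and yields all four equalities simultaneously.

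First I would establish that $\mua(\mathcal O)$ is convex. By Proposition \ref{closed-orbit} the orbit $\mathcal O=G\cdot v$ is a $K$-orbit, so $\mathcal O=K\cdot v$; since $\mup$ is $K$-equivariant, $\mup(\mathcal O)=\Ad(K)\mup(v)$ is a single $\Ad(K)$-orbit in $\liep$, and therefore $\mua(\mathcal O)=\pi_\lia(\Ad(K)\mup(v))$. Choosing a maximal Abelian subalgebra $\lia'\subseteq \liep$ with $\lia\subseteq \lia'$, Kostant's convexity theorem \cite{kostant-convexity} makes $\pi_{\lia'}(\Ad(K)\mup(v))$ a convex polytope; because $\lia\subseteq\lia'$ one has $\pi_\lia=\pi_\lia\circ\pi_{\lia'}$, so $\mua(\mathcal O)$ is the linear image of that polytope, hence convex and compact.

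Next I would compare support functions on $\lia$. For every $\beta\in\lia$ and every $z$ one has $\langle\mua(z),\beta\rangle=\langle\mup(z),\beta\rangle=\mup^\beta(z)$, so the support function of $\mua(\mathcal O)$ at $\beta$ is $\max_{z\in\mathcal O}\mup^\beta(z)$, while that of $\conv(\mua(\mathbb P(\C^n)))$ at $\beta$ is $\max_{z\in\mathbb P(\C^n)}\mup^\beta(z)$. Lemma \ref{rm}, applied with $M=\mathcal O$ (which is closed and $G$-stable), asserts that these two maxima coincide for every $\beta\in\liep$, in particular for $\beta\in\lia$. Since $\mua(\mathcal O)\subseteq \conv(\mua(\mathbb P(\C^n)))$ and both are compact convex subsets of $\lia$ with equal support functions, Proposition \ref{convex-criterium} gives $\conv(\mua(\mathbb P(\C^n)))=\mua(\mathcal O)$. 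Consequently $\mua(\mathbb P(\C^n))\subseteq\conv(\mua(\mathbb P(\C^n)))=\mua(\mathcal O)$, which is precisely the inclusion isolated in the first paragraph, and the Proposition follows.

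The one genuinely nontrivial point — the main obstacle — is the convexity of $\mua(\mathcal O)$ used in the second paragraph: equality of support functions only forces equality of \emph{convex hulls}, so without knowing that $\mua(\mathcal O)$ is already convex one would obtain $\conv\mua(\mathcal O)=\conv\mua(\mathbb P(\C^n))$ rather than equality of the images themselves. The $K$-orbit structure of $\mathcal O$ combined with Kostant's theorem is exactly what supplies this convexity, and the subtlety that $\lia$ need not be maximal is resolved by factoring the projection through $\lia'$. As a consistency check one can argue directly: diagonalizing $\lia$ in a common orthonormal basis, the eigenvalue formula for $\mup^\beta$ shows that $\mua(z)$ depends only on $(|z_1|^2,\dots,|z_n|^2)/\parallel z\parallel^2$, which sweeps out the full simplex both for $z\in\mathbb P(\R^n)$ and for $z\in\mathbb P(\C^n)$; this shows at once that $\mua(\mathbb P(\R^n))=\mua(\mathbb P(\C^n))$ is a polytope, in agreement with the conclusion above.
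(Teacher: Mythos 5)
Your argument is correct, and it rests on the same three pillars as the paper's own proof --- Kostant's convexity theorem for $\mua(\OO)$, Lemma \ref{rm} for the equality of support functions, and Proposition \ref{convex-criterium} to upgrade equal support functions to equal sets --- but you organize the convexity input differently, and more economically. The paper proves $\mua(\OO)=\mua(\mathbb P(\R^n))$ by also invoking the convexity theorem of \cite{heinzner-schutzdeller} to know that $\mua(\mathbb P(\R^n))$ is itself a polytope, and in its second step it tacitly needs the analogous convexity of $\mua(\mathbb P(\C^n))$ and $\mua(\OO')$ before Proposition \ref{convex-criterium} can be applied again. You instead compare $\mua(\OO)$ with $\conv\bigl(\mua(\mathbb P(\C^n))\bigr)$, whose convexity and compactness are free, and then squeeze: once $\conv\bigl(\mua(\mathbb P(\C^n))\bigr)=\mua(\OO)$, every intermediate image in the two inclusion chains is pinned down simultaneously. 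This buys independence from the nonabelian convexity theorem --- the only convexity fact you need is that of $\mua(\OO)$, which you derive from Kostant's theorem by factoring $\pi_\lia$ through a maximal Abelian $\lia'\supseteq\lia$, a point worth making explicit since the statement allows $\lia$ to be non-maximal and the paper glosses over it. Your closing simplex computation is moreover a legitimate stand-alone proof that $\mua(\mathbb P(\R^n))=\mua(\mathbb P(\C^n))$ is a polytope. The only step you should spell out is that the support function of $\conv\bigl(\mua(\mathbb P(\C^n))\bigr)$ at $\beta\in\lia$ coincides with $\max_{z\in\mathbb P(\C^n)}\mup^\beta(z)$ (a set and its convex hull have the same support function, and $\langle\mua(z),\beta\rangle=\langle\mup(z),\beta\rangle$ because $\pi_\lia$ is an orthogonal projection and $\beta\in\lia$); this is routine and does not affect correctness.
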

\begin{proof}
Since $\OO$ is a $K$-orbit, keeping in mind that $\mup$ is $K$-equivariant and $\mua=\pi_\lia \circ \mup$, by a Theorem of Kostant \cite{kostant-convexity} it follows that $\mua(\mathcal O)$ is a polytope. $\mua(\mathbb P (\R^n))$ is a polytope as well \cite{heinzner-schutzdeller} and by Lemma \ref{rm} we get
\[
\maxm{\beta}{\mathcal O}=\maxm{\beta}{\mathbb P (\R^n)} \cap \OO,
\]
for any $\beta \in \liep$. Applying Proposition \ref{convex-criterium}, we have $\mua(\mathcal O)=\mua(\mathbb P (\R^n))$.

Lemma \ref{rm} also proves
\[
\maxm{\beta}{\mathbb P (\C^n)}=\maxm{\beta}{\mathbb P (\R^n)}=\maxm{\beta}{\OO'},
\]
for any $\beta \in \liep$. Applying again Proposition \ref{convex-criterium}, we get $\mua(\mathbb P (\C^n) )=\mua(\mathbb P (\R^n))=\mua(\mathcal O')$, concluding the proof.
\end{proof}
Let $v\in \mathbb P(\R^n)$ be such that $G\cdot v=\OO$ and $\OO'=U^\C \cdot v$.
\begin{prop}\label{gradient-closed-orbit}
The restricted $G$-gradient map
\[
\mup:K\cdot v \lra K\cdot \mup(v),
\]
is a diffeomorphism.
\end{prop}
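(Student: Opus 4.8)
The plan is to exploit that $\mup$ is $K$-equivariant and to reduce the statement to an identity between isotropy subgroups. Since $\mup(kv)=\Ad(k)\mup(v)$ for every $k\in K$, the map sends $K\cdot v$ onto the adjoint orbit $K\cdot\mup(v)$ and, under the identifications $K\cdot v\cong K/K_v$ and $K\cdot\mup(v)\cong K/K_{\mup(v)}$, it becomes the canonical projection $kK_v\mapsto kK_{\mup(v)}$. This is well defined because $K_v\subseteq K_{\mup(v)}$ (if $kv=v$ then $\Ad(k)\mup(v)=\mup(kv)=\mup(v)$), it is automatically a smooth surjective submersion, and it is a diffeomorphism precisely when $K_v=K_{\mup(v)}$. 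Thus everything reduces to proving the reverse inclusion $K_{\mup(v)}\subseteq K_v$.

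First I would pass to the complex picture. By Lemma \ref{restrizione} we have $\mup(v)=i\mu(v)$, so $\mu(v)=-i\beta$ with $\beta:=\mup(v)\in\liep$; in particular the $U$-centralizer of $\mu(v)$ equals $Z_U(\beta):=\{u\in U:\Ad(u)\beta=\beta\}$. Consequently $K_{\mup(v)}=\{k\in K:\Ad(k)\beta=\beta\}=Z_U(\beta)\cap K$ and, likewise, $K_v=U_v\cap K$, where $U_v$ denotes the $U$-isotropy of $[v]$. Hence it suffices to show $Z_U(\beta)\cap K\subseteq U_v$, and for this I would prove the stronger statement $U_v=U_{\mu(v)}=Z_U(\beta)$.

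The identity $U_v=U_{\mu(v)}$ is the content of the Borel--Weil description of the closed orbit: $\OO'=U\cdot v$ is the unique complex $U$-orbit, the $U$-momentum map restricts to a $U$-equivariant diffeomorphism of $\OO'$ onto the coadjoint orbit through $\mu(v)$, and therefore the isotropy of a point coincides with the isotropy of its moment image. Concretely, $Z_U(\beta)$ is the compact part of the Levi factor of the parabolic stabilizing the highest--weight line $\C v$, and it acts on that line by a unitary character, so it fixes $[v]$; this gives $Z_U(\beta)\subseteq U_v$, while $U_v\subseteq U_{\mu(v)}=Z_U(\beta)$ is immediate from equivariance. Combining, $K_v=U_v\cap K=Z_U(\beta)\cap K=K_{\mup(v)}$, and the reduction above shows that $\mup\colon K\cdot v\lra K\cdot\mup(v)$ is a diffeomorphism.

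The main obstacle is exactly the inclusion $Z_U(\beta)\subseteq U_v$, i.e. that the compact Levi fixes the highest--weight point; this is where the closed--orbit structure, and the fact that $v$ is a highest--weight vector (Proposition \ref{closed-orbit}), is indispensable. If one prefers to argue intrinsically on the real side, the same point can be reached by first showing, via Cauchy--Schwarz together with Lemma \ref{rm}, that $v$ lies in $\maxm{\beta}{\mathbb P(\R^n)}$, the top eigenspace of $\beta$: indeed $\|\mup\|$ is constant on $K\cdot v$, so $\mup^\beta(p)=\langle\mup(p),\beta\rangle\le\|\mup(p)\|\,\|\beta\|=\|\beta\|^2=\mup^\beta(v)$ on the orbit, whence $v$ maximizes $\mup^\beta$. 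One then checks that $Z_U(\beta)$, which preserves this eigenspace and acts on the highest--weight line by a character, must fix $[v]$. Either way the crux is the matching of the two isotropy groups.
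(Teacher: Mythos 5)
Your argument is correct and follows essentially the same route as the paper: reduce by $K$-equivariance to the equality of isotropy groups $K_v=K^{\mup(v)}$, obtain it from the classical fact that on the closed $U$-orbit $U_v=U^{\mu(v)}$ (the paper cites Guillemin--Sternberg for this), and transfer it to the real side via Lemma \ref{restrizione}. Your write-up merely spells out the inclusion $U^{\mu(v)}\subseteq U_v$ that the paper takes from the reference, so there is nothing substantively different to compare.
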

\begin{proof}
$U^\C \cdot v$ is compact, and $U^\C\cdot v=U\cdot v$. Then $U_{v}=U^{\mu(v)}$ and so the centralizer of $\mu(v)$ \cite{guillemin}. By Lemma \ref{restrizione} we have $\mu(v)=i\mu (v)$. Therefore, keeping in mind that $K_v=U_v \cap K$, we have $K_v=K^{\mup(v)}$. Since $\mup$ is $K$-equivariant the result is proved.
\end{proof}
Let $\beta \in \liep$ and let $\lambda_1>\dots> \lambda_k$ be the eigenvalues of $\beta$ acting on $\R^n$.  We denote by $W$ the eigenspace corresponding to $\lambda_1$.
\begin{teo}\label{parabolici}
The following results hold true:
\begin{enumerate}
\item $W$ is the unique subspace of $\R^n$ such that $G^{\beta+}$ acts irreducibly on it;
\item $\maxm{\beta}{\mathcal O}$ is connected and it coincides with the unique compact orbit of the $G^{\beta+}$-action on $\mathcal O'$. $\maxm{\beta}{\mathcal O}$  completely characterizes $W$;
\item $\maxm{\beta}{\mathcal O}$ is contained in the unique compact orbit of $(U^\C)^{\beta+}$-action on $\mathbb P(\C^n)$ given by $\maxm{\beta}{\mathcal O'}$. Moreover, $\maxm{\beta}{\mathcal O}$  is a Lagrangian submanifold  and the fixed point set of an anti-holomorphic involutive isometry of $\maxm{\beta}{\mathcal O'}$, and so it is totally geodesic.
\end{enumerate}
\end{teo}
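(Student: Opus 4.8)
The plan is to recognize the whole statement as the \emph{parabolic analogue} of Proposition \ref{closed-orbit}, Corollary \ref{intersection} and Theorem \ref{involution}, obtained by replacing the data $(G,\R^n,\C^n,\OO,\OO')$ by the data attached to the Levi factor: the reductive compatible group $L:=G^\beta$, its complexification $L^\C=(U^\C)^\beta$, the top eigenspace $W$ of $\beta$ with its complexification $W^\C$, and the flag $\maxm{\beta}{\OO'}$ in place of $\OO'$. Two inputs make this reduction run. First, Proposition \ref{parabolic-preserve-maximun} shows that $R^{\beta+}$ acts trivially on $\maxm{\beta}{\OO}$ and on $\maxm{\beta}{\OO'}$, so both $G^{\beta+}$-actions factor through $L=G^\beta$. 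Second, Lemma \ref{rm} together with the explicit formula for $\mup^\beta$ yields $\maxm{\beta}{\PP(\R^n)}=\PP(W)$, $\maxm{\beta}{\PP(\C^n)}=\PP(W^\C)$, $\maxm{\beta}{\OO}=\PP(W)\cap\OO$ and $\maxm{\beta}{\OO'}=\PP(W^\C)\cap\OO'$; by \cite[Corollary 1.0.1]{bj} the last is the unique compact $L^\C$-orbit, hence a Borel--Weyl flag for $L^\C$ acting on $W^\C$. Here $L$ is compatible by Lemma \ref{lemcomp}, and $L^\C$ is its Zariski closure exactly as in the introduction (via \cite{heinz-stoezel}), since $\liek^\beta\cap i\liep^\beta=\{0\}$.

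For (1) I would first check that $G^{\beta+}$ preserves $W$: with respect to the eigenspace blocks $V_1,\dots,V_k$ of $\beta$, an element $g$ for which $\lim_{t\to-\infty}\exp(t\beta)g\exp(-t\beta)$ exists is block upper triangular, hence preserves $V_1=W$, and $R^{\beta+}$ acts trivially on $W$; thus the action factors through $L=G^\beta$. Since $\beta$ is real symmetric, $W^\C=W\otimes\C$ and the $L$-action on $W^\C$ is the complexification of that on $W$. Conjugating $\beta$ into the closed positive chamber of $\lia$ by $\Ad(K)$, the highest weight vector lies in $W^\C$ and $W^\C$ is the irreducible $L^\C$-submodule it generates (the standard fact that an extreme eigenspace is an irreducible Levi module, i.e. Borel--Weyl for $L^\C$). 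By Zariski density of $L$ in $L^\C$, $L$ acts irreducibly on $W^\C$ over $\C$, hence on $W$ over $\R$. For uniqueness, any irreducible $G^{\beta+}$-submodule $W'$ has a nonzero $R^{\beta+}$-fixed subspace (Engel--Kolchin), which is $G^{\beta+}$-stable because $R^{\beta+}$ is normal; so $R^{\beta+}$ acts trivially on $W'$, and a short highest-weight/PBW argument gives $(\R^n)^{R^{\beta+}}=W$, whence $W'\subseteq W$ and, both being irreducible, $W'=W$.

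For (2) the reduction identifies $\maxm{\beta}{\OO}=\PP(W)\cap\OO=\PP(W)\cap\maxm{\beta}{\OO'}$ with the fixed locus of the real structure inside the flag $\maxm{\beta}{\OO'}$; applying the argument of Proposition \ref{closed-orbit} to the irreducible representation of $L$ on $(W^\C,W)$ shows this is the unique compact $L$-orbit in $\maxm{\beta}{\OO'}$ and a $K^\beta$-orbit. Since $G^{\beta+}$ acts through $L$ there, it is a single $G^{\beta+}$-orbit, and it characterizes $W$ because $W$ is the linear span of the cone over it (the $L$-action on $W$ being irreducible). Uniqueness among compact $G^{\beta+}$-orbits in $\OO'$ then follows from $G=KG^{\beta+}$: a compact orbit $G^{\beta+}\cdot z$ gives a compact $G\cdot z=K\cdot(G^{\beta+}\cdot z)=\OO$, so $z\in\OO$, and its ascending limit $\lim_{t\to+\infty}\exp(t\beta)z$ lies in $\maxm{\beta}{\OO}$, forcing $G^{\beta+}\cdot z=\maxm{\beta}{\OO}$ by $G^{\beta+}$-invariance of the maximum. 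The delicate point is connectedness: because $L=G^\beta$ and $K^\beta$ are disconnected and $L$ is only reductive, the real form of the flag could a priori split. I expect this to be the main obstacle, and I would settle it exactly as connectedness is obtained in Proposition \ref{closed-orbit}, namely by Wolf's theorem \cite{wolf}, applied to the real form $L$ acting on the complex flag $\maxm{\beta}{\OO'}$.

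Finally (3) copies the proof of Theorem \ref{involution}. From $\OO\subseteq\OO'$ and $\PP(W)\subseteq\PP(W^\C)$ we get $\maxm{\beta}{\OO}\subseteq\maxm{\beta}{\OO'}$. Since $\beta$ is real, $\exp(t\beta)$ commutes with complex conjugation, so $T([x])=[\overline x]$ commutes with the $\beta$-flow and preserves $\maxm{\beta}{\PP(\C^n)}=\PP(W^\C)$; as $T$ also preserves $\OO'$, it preserves $\maxm{\beta}{\OO'}$ and restricts there to an anti-holomorphic involutive isometry. Its fixed locus is $\maxm{\beta}{\OO'}\cap\PP(\R^n)=\PP(W^\C)\cap\OO=\PP(W)\cap\OO=\maxm{\beta}{\OO}$. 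The fixed-point set of an anti-holomorphic isometric involution is totally geodesic \cite{bm} and, writing the $\pm1$-eigenspaces of its differential at a fixed point as $E_+\oplus E_-$ with $J(E_+)=E_-$, it is isotropic of half dimension, hence Lagrangian; this gives all assertions of (3).
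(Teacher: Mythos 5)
Your proposal follows the same overall strategy as the paper: reduce to the Levi factor $G^\beta$ and its complexification $(U^\C)^\beta$ acting on $(W,W^\C)$, use Proposition \ref{parabolic-preserve-maximun} to kill the unipotent radical, identify $\maxm{\beta}{\mathcal O}=\PP(W)\cap\OO$ and $\maxm{\beta}{\OO'}=\PP(W^\C)\cap\OO'$ via Lemma \ref{rm}, and then rerun the arguments of Proposition \ref{closed-orbit} and Theorem \ref{involution} for the semisimple part of the Levi. The paper does exactly this for part (c), including the Schur-lemma step showing the center of $G^\beta$ acts trivially on $\PP(W)$ so that one may pass to the connected semisimple group $G^\beta_{ss}$. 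Where you genuinely diverge: for uniqueness in (a) you use Engel--Kolchin plus the identity $(\R^n)^{R^{\beta+}}=W$, whereas the paper complexifies any invariant $L\subseteq W$ and invokes the uniqueness of $W^\C$ as an irreducible $(U^\C)^{\beta+}$-module from \cite{bsg}; both work, yours is more self-contained but the highest-weight/PBW step you wave at is exactly where the work sits. For uniqueness in (b) your direct argument via $G=KG^{\beta+}$ and the ascending $\beta$-flow is a nice replacement for the citation of \cite[Theorem 1]{bj}.

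The one place your route is materially weaker is connectedness of $\maxm{\beta}{\mathcal O}$. The paper gets it cheaply and \emph{before} anything else: by \cite[Theorem 1.2]{biliotti-ghigi-heinzner-2} the face $\maxm{\beta}{K\cdot\mup(v)}$ of the polar orbit is a single $(K^\beta)^o$-orbit, hence connected, and Proposition \ref{gradient-closed-orbit} transports this through the diffeomorphism $\mup\colon K\cdot v\to K\cdot\mup(v)$. Your plan to ``settle it by Wolf's theorem as in Proposition \ref{closed-orbit}'' does not quite parse as stated: in Proposition \ref{closed-orbit} connectedness of $\OO=G\cdot v$ is automatic because $G$ is connected, and Wolf only supplies uniqueness; for the disconnected reductive group $G^\beta$ you must first carry out the Schur-lemma reduction to the connected semisimple $G^\beta_{ss}$ (your step (3)) and then run the analogue of Corollary \ref{intersection} for $G^\beta_{ss}$ acting on the flag $\maxm{\beta}{\OO'}$ to see that $\PP(W)\cap\maxm{\beta}{\OO'}$ is a single $G^\beta_{ss}$-orbit. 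So the logical order must be: irreducibility, then the reduction to $G^\beta_{ss}$, then connectedness --- not (b) before (c) as written. This is repairable, but as proposed the connectedness step is a gap.
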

\begin{proof}
Let $\mathcal O=K\cdot v$. By  \cite[Theorem 1.2]{biliotti-ghigi-heinzner-2} it follows that $\maxm{\beta}{K\cdot \mup(v)}$ it is $(K^\beta)^o$-orbit and so it is connected. By Proposition \ref{gradient-closed-orbit}, keeping in mind that $\mup$ is $K$-equivariant, we have $\maxm{\beta}{\mathcal O}$ is connected. By Proposition \ref{parabolic-preserve-maximun}, $R^{\beta+}$ acts trivially on $\maxm{\beta}{\mathcal O}$. Applying \cite[Theorem 1]{bj}, keeping also in mind Proposition \ref{parabolic-preserve-maximun}, it follows that $G^{\beta+}$ has a unique compact orbit in $\mathcal O$, which is a $(K^{\beta})^o$-orbit, and this orbit coincides with  $\maxm{\beta}{\mathcal O}$. By Lemma \ref{rm} $\maxm{\beta}{\mathcal O}\subset \maxm{\beta}{\mathcal O'}$ which is the  unique compact orbit of the $(U^\C)^{\beta+}$-action on $\mathbb P(\C^n )$ \cite{bj}. Hence the first part of $(b)$ and $(c)$ are proved.

By Proposition \ref{parabolic-preserve-maximun},  $G^{\beta+}$ preserves $\mathbb P(W)$ and $R^{\beta+}$ acts trivially on $\mathbb P(W)$. Since $G^{\beta}$ is reductive, $W$ splits as $G^\beta$-invariant, and so as $G^{\beta+}$-invariant, subspaces \cite{knapp}.

Let  $L\subseteq W$ be a $G^{\beta+}$-invariant subspace. Since $\mathbb P(W^\C)=\mathrm{Max}_{\mathbb P(\C^n)}(\beta)$, it follows that $W^\C$ is the unique subspace of $\C^n$ such that $(U^\C)^{\beta+}$ acts irreducibly on it \cite[Theorem 2]{bsg}. By Proposition \ref{parabolic-preserve-maximun}  the unipotent part of $(U^\C)^{\beta+}$ acts trivially on $\mathbb P(W^\C)$. Hence,  keeping in mind that $(\mathfrak g^\beta)^\C=(\mathfrak u^\C)^\beta$, it follows that $L^\C$ is a $(U^\C)^{\beta+}$-invariant subspace as well. This implies $L^\C=W^\C$ and so $L=W$. This proves $(a)$ and $(b)$.

The $(U^{\C})^{\beta}$-action on $W^\C$ is irreducible and the unipotent part of the parabolic subgroup $(U^{\C})^{\beta+}$ acts trivially on $\mathbb P(W^\C)$. By the Schur Lemma \cite{knapp} the center of $(U^\C)^\beta$ acts  trivially on $\mathbb P(W^\C )$. This implies that the center of $G^{\beta}$ acts trivially on $\mathbb P(W^\C )$, and so on $\mathbb P(W)$.

Let $G^\beta_{ss}$ be the connected subgroup of $G^\beta$ whose Lie algebra is $[\mathfrak g^\beta, \mathfrak g^\beta]$. Let  $(U^\C)^\beta_{ss}$ be the connected subgroup of $(U^\C)^\beta$ whose Lie algebra is $[(\mathfrak u^\C)^\beta, (\mathfrak u^\C)^\beta]$. $(G^\beta)_{ss}$ is semisimple, closed \cite{knapp} and a real form of the closed complex semisimple Lie group $(U^\C)^\beta_{ss}$.  By the above discussion both the $G^\beta_{ss}$-action on $W$ and the $(U^\C)^\beta_{ss}$-action on $W^\C$ are irreducible. The unique compact orbit of the $G^{\beta+}$-action on $\OO$ is a compact $G^\beta_{ss}$-orbit. The unique compact orbit of the $(U^{\C})^{\beta+}$-action on $\mathbb P(\C^n )$ is a compact $(U^\C)^\beta_{ss}$-orbit.  By Proposition \ref{closed-orbit} and Theorem \ref{involution},   $\maxm{\beta}{\mathcal O}$ is a Lagrangian submanifold and the fixed point set of an anti-holomorphic involutive isometry of $\maxm{\beta}{\OO'}$, concluding the proof.
\end{proof}
\begin{cor}
Let $\beta \in \liep$ and let $W$ be the eigenspace corresponding to the maximum eigenvalue of $\beta$ acting on $\R^n$. Let $z\in \mathbb P(\C^n)$ be such that $(U^{\C})^{\beta+} \cdot z$ is compact. Then  $(U^{\C})^{\beta+}\cdot z \cap \mathbb P(W)$ is the unique compact orbit of the $G^{\beta+}$-action on $\OO$.
\end{cor}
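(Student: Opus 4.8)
The plan is to deduce the statement from Theorem \ref{parabolici} and Lemma \ref{rm}, exploiting that the compact orbit of $(U^\C)^{\beta+}$ on $\mathbb P(\C^n)$ is unique.

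First I would recall, as in Theorem \ref{parabolici}(c) and \cite{bj}, that $\maxm{\beta}{\OO'}$ is \emph{the} unique compact orbit of the $(U^\C)^{\beta+}$-action on the whole of $\mathbb P(\C^n)$. Hence the hypothesis that $(U^\C)^{\beta+}\cdot z$ is compact immediately forces
\[
(U^\C)^{\beta+}\cdot z=\maxm{\beta}{\OO'}.
\]
By Lemma \ref{rm}, $\maxm{\beta}{\OO'}=\maxm{\beta}{\mathbb P(\C^n)}\cap \OO'=\mathbb P(W^\C)\cap \OO'$, since $W=V_1$ is the eigenspace of the maximal eigenvalue and $\maxm{\beta}{\mathbb P(\C^n)}=\mathbb P(W^\C)$.

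Next I would intersect with $\mathbb P(W)$. Because $\mathbb P(W)\subset \mathbb P(W^\C)$ and $\mathbb P(W)\subset \mathbb P(\R^n)$, and because $\OO=\OO'\cap \mathbb P(\R^n)$ by Corollary \ref{intersection}, one gets
\[
(U^\C)^{\beta+}\cdot z\cap \mathbb P(W)=(\mathbb P(W^\C)\cap \OO')\cap \mathbb P(W)=\mathbb P(W)\cap \OO.
\]
Recalling from the proof of Proposition \ref{image} that $\maxm{\beta}{\OO}=\maxm{\beta}{\mathbb P(\R^n)}\cap \OO=\mathbb P(W)\cap \OO$, the right-hand side is precisely $\maxm{\beta}{\OO}$, so that $(U^\C)^{\beta+}\cdot z\cap \mathbb P(W)=\maxm{\beta}{\OO}$.

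Finally I would invoke Theorem \ref{parabolici}(b): $\maxm{\beta}{\OO}$ is connected and is the unique compact orbit of the $G^{\beta+}$-action on $\OO'$; since it lies in $\OO$, it is a fortiori the unique compact $G^{\beta+}$-orbit in $\OO$. This proves the claim. The argument is essentially bookkeeping; the only points needing care are that the uniqueness of the compact $(U^\C)^{\beta+}$-orbit holds on all of $\mathbb P(\C^n)$, so that compactness alone determines the orbit, and that moving between $\OO$ and $\OO'$ is justified by Corollary \ref{intersection}.
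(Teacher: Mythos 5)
Your proof is correct, and it takes a slightly different route from the paper's. The paper's own argument goes through the semisimple Levi factors: it recalls from the proof of Theorem \ref{parabolici} that the unique compact $G^{\beta+}$-orbit in $\OO$ is a $G^{\beta}_{ss}$-orbit sitting inside the unique compact $(U^\C)^{\beta}_{ss}$-orbit in $\mathbb P(W^\C)$, and then applies Corollary \ref{intersection} to the reduced pair $G^{\beta}_{ss}\subset (U^\C)^{\beta}_{ss}$ acting irreducibly on $W\subset W^\C$, so that intersecting with $\mathbb P(W)$ picks out exactly the real compact orbit. You instead bypass the Levi reduction entirely: you pin down $(U^\C)^{\beta+}\cdot z=\maxm{\beta}{\OO'}$ by uniqueness of the compact $(U^\C)^{\beta+}$-orbit in $\mathbb P(\C^n)$, and then perform pure set-theoretic bookkeeping with the maximum loci, using Lemma \ref{rm} to write $\maxm{\beta}{\OO'}=\mathbb P(W^\C)\cap\OO'$ and $\maxm{\beta}{\OO}=\mathbb P(W)\cap\OO$, and Corollary \ref{intersection} for the ambient pair $G\subset U^\C$ to convert $\OO'\cap\mathbb P(W)$ into $\OO\cap\mathbb P(W)$. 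What your route buys is that you never need to verify that $G^{\beta}_{ss}$ is a compatible real form of $(U^\C)^{\beta}_{ss}$ acting irreducibly on $W$ (facts the paper extracts from the proof of Theorem \ref{parabolici}); what the paper's route buys is a cleaner conceptual statement, namely that the corollary is literally Corollary \ref{intersection} applied to the boundary representation of the Levi factor. Both arguments are sound and rest on the same two pillars, the uniqueness of the compact parabolic orbit and the real-form intersection principle.
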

\begin{proof}
 By the proof of the above Theorem, the unique compact $G^{\beta+}$-orbit on $\mathcal O$ is a compact $G^\beta_{ss}$-orbit contained in the unique compact orbit of the $(U^\C)^\beta_{ss}$-action on $\mathbb P(W^\C)$. The result follows by Corollary \ref{intersection}.
\end{proof}
\begin{cor}\label{oi}
Let $Q$ be a parabolic subgroup of $G$ and let $L\subset \C^n$ be a $Q$-invariant complex subspace. If the $Q$-action on $L$ is irreducible, then  there exists an invariant $Q$-subspace $W$ of $\R^n$ such that the $Q$-action on $W$ is irreducible and $L=W^\C$.
\end{cor}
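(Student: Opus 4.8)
The plan is to realize $Q$ as $G^{\beta+}$ and then play the real group against its complexification. Since every parabolic subgroup of $G$ equals $G^{\beta+}$ for some $\beta\in\liep$, I would write $Q=G^{\beta+}$. The single observation that drives everything is the inclusion $G^{\beta+}\subseteq (U^\C)^{\beta+}$, which is immediate from the definitions of the two limit sets together with $G\subseteq U^\C$. Let $W$ denote the eigenspace of $\beta$ acting on $\R^n$ for the largest eigenvalue $\lambda_1$; then $W^\C$ is the $\lambda_1$-eigenspace of $\beta$ on $\C^n$, i.e.\ $\mathbb P(W^\C)=\maxm{\beta}{\mathbb P (\C^n)}$.

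Next I would transfer irreducibility upward. If the smaller group $G^{\beta+}$ acts irreducibly on the complex subspace $L$, then a fortiori the larger group $(U^\C)^{\beta+}\supseteq G^{\beta+}$ acts irreducibly on $L$, since every $(U^\C)^{\beta+}$-invariant complex subspace is in particular $G^{\beta+}$-invariant, so $L$ can have no proper nonzero $(U^\C)^{\beta+}$-invariant subspace. Now I invoke the uniqueness already established in the proof of Theorem \ref{parabolici} (via \cite[Theorem 2]{bsg}): $W^\C$ is the \emph{unique} complex subspace of $\C^n$ on which $(U^\C)^{\beta+}$ acts irreducibly. Applying this to $L$ forces $L=W^\C$.

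It then remains only to check that $W$ itself is the required real subspace, and this is exactly Theorem \ref{parabolici}(a): $W$ is $Q$-invariant, the $Q=G^{\beta+}$-action on $W$ is irreducible, and of course $L=W^\C$ by construction. The point I expect to be the genuine obstacle is conceptual rather than computational: one is tempted to attack $L$ directly by decomposing it into $\beta$-eigenspaces (the unipotent radical $R^{\beta+}$ forces $L$ into a single eigenspace with $R^{\beta+}$ acting trivially) and then to worry whether $L$ is stable under the complex conjugation $z\mapsto\bar z$, equivalently whether $L$ lies in the top eigenspace and is of real type. Passing to $(U^\C)^{\beta+}$ and using the complex uniqueness statement bypasses this analysis entirely, precisely because irreducibility for the smaller real group $G^{\beta+}$ is a \emph{stronger} constraint than for its complexification; once $L=W^\C$ is in hand, both the top-eigenspace property and self-conjugacy come for free.
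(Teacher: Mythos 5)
Your argument is correct, and it reaches the conclusion by a genuinely shorter route than the paper's. Both proofs pass to the complexified parabolic $\tilde Q=(U^\C)^{\beta+}$, and both ultimately rest on the same external input, quoted in the proof of Theorem \ref{parabolici} from \cite{bsg}: the top eigenspace $W^\C$ of $\beta$, with $\mathbb P(W^\C)=\maxm{\beta}{\mathbb P(\C^n)}$, is the \emph{unique} subspace of $\C^n$ on which $(U^\C)^{\beta+}$ acts irreducibly. You apply that uniqueness directly: irreducibility of $L$ under the subgroup $G^{\beta+}$ forces irreducibility under the larger group $(U^\C)^{\beta+}$, so $L=W^\C$ at once, and Theorem \ref{parabolici}(a) supplies the required real subspace $W$. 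The paper instead runs through the orbit picture: Engel's theorem makes the unipotent radical act trivially on $\mathbb P(L)$, so $\tilde Q$ has a unique compact orbit inside $\mathbb P(L)$; via Proposition \ref{heinzner-maximun} and Theorem \ref{parabolici} this orbit is identified with the unique compact $G^{\beta+}$-orbit in $\OO$, which lies in $\mathbb P(\R^n)$ and has real span $\mathbb P(W)$, whence $W^\C\subseteq L$ and irreducibility gives equality. What the longer route buys is the explicit location of the compact $\tilde Q$-orbit inside $\mathbb P(L)\cap\OO'$, which is reused later (for instance in the proof of Proposition \ref{parabolic-coniugation}); what your route buys is brevity and a cleaner logical dependence.

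One step you should make explicit (the paper leaves it implicit as well): the hypothesis only gives $Q$-invariance of $L$, so before speaking of $(U^\C)^{\beta+}$ acting irreducibly on $L$ you must check that $(U^\C)^{\beta+}$ preserves $L$. This holds, but deserves a sentence: $L$ is a \emph{complex} subspace invariant under $G^{\beta+}$, hence under $\lieg^{\beta+}$ acting $\C$-linearly, hence under its complex span $(\lieg^{\beta+})^\C=(\liu^\C)^{\beta+}$; since parabolic subgroups of the connected complex group $U^\C$ are connected, this yields invariance under the full group $(U^\C)^{\beta+}$. With that inserted, your ``a fortiori'' step is exactly right: any proper nonzero $(U^\C)^{\beta+}$-invariant subspace of $L$ would in particular be $G^{\beta+}$-invariant, contradicting the hypothesis.
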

\begin{proof}
Let $\mathfrak q$ be the Lie algebra of $Q$. Then $\mathfrak q^\C$ is a parabolic subalgebra of $\mathfrak g^\C$ \cite{borel-ji-libro}. Let $\tilde Q$ denote the parabolic subgroup of $U^\C$ whose Lie algebra is $\mathfrak q^\C$. The $\tilde Q$-action on $\mathbb P (\C^n)$ has a unique compact orbit and it is contained in $\mathcal O'$. Since the unipotent part of $\tilde Q$ normalizes $\tilde Q$ and the $\tilde Q$ acts irreducibly on $L$, by Engel Theorem the unipotent part acts trivially on $\mathbb P (L)$. Hence $\tilde Q$ has a unique compact orbit contained in $\mathbb P(L)$ that we denote by $\OO(\tilde Q)$.

Let $\beta \in \liep$ be such that $Q=G^{\beta+}$. By the above discussion $R^{\beta+}$ acts trivially on $\mathbb P(L)$. By Proposition \ref{heinzner-maximun}, $(G^\beta)^o$ has a compact orbit on $\OO(\tilde Q)$ which is a $(K^\beta)^o$-orbit. Since $G^\beta$ has a finite number of connected components and any connected component intersect $K^\beta$, it follows that $G^\beta$ has a compact orbit on $\OO(\tilde Q)$.
By Theorem \ref{parabolici}, it is the unique compact $G^{\beta+}$-orbit  in $\OO$. This orbit is contained in $\mathbb P(\R^n)$. Hence its real span coincides with $\mathbb P(W)$, where $W$ is the unique $G^{\beta+}$-invariant subspace of $\R^n$ such that $G^{\beta+}$ acts irreducibly on it. Therefore $W^\C$ is contained in $L^\C$ and it is $\tilde Q$-invariant. This implies $L=W^\C$.
\end{proof}
Let $v\in \mathbb P(\R^n)$ be such that $U^\C \cdot v=\OO'$ and $G\cdot v=\OO$.
Let $\beta \in \liep$. By Theorem \ref{parabolici}, $\maxm{\beta}{\OO}$ is the unique compact $G^{\beta+}$-orbit contained in $\OO$. Now,
\[
\mup(\maxm{\beta}{\OO})=\maxm{\beta}{K\cdot \mup(v_\rho)}.
\]
By \cite[Corollary 3.1 p. $593$ and Proposition 3.9 p. $599$]{biliotti-ghigi-heinzner-2}, $\maxm{\beta}{K\cdot \mup(v_\rho)}$ is the unique compact orbit of the $G^{\beta+}$-action on $K\cdot \mup(v)$.  By Proposition \ref{gradient-closed-orbit} and Theorem \ref{parabolici}, we get the following result
\begin{prop}\label{gi}
Let $z\in \OO$. Then $G^{\beta+} \cdot z$ is compact if and only if $G^{\beta+}\cdot \mup(z)$ is compact and
\[
\mup(G^{\beta+}\cdot  z)=G^{\beta+}\cdot \mup(z)
\]
\end{prop}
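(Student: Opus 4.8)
The plan is to reduce both assertions to the single distinguished orbit $\maxm{\beta}{\OO}$ and its gradient image, exploiting that on the compact orbit $\OO$ the gradient map is a genuine diffeomorphism. First I would note that, since $z\in\OO=G\cdot v$, the full orbit satisfies $G\cdot z=\OO$, so $G^{\beta+}\cdot z\subset\OO$; likewise $G^{\beta+}\cdot\mup(z)\subset\mup(\OO)=K\cdot\mup(v)$, where the $K$-action on the polar orbitope $K\cdot\mup(v)$ is understood with its extension to a $G$-action. Both orbits therefore live in spaces to which Theorem \ref{parabolici} and \cite[Cor. 3.1, Prop. 3.9]{biliotti-ghigi-heinzner-2} apply: $\maxm{\beta}{\OO}$ is the unique compact $G^{\beta+}$-orbit in $\OO$, while $\maxm{\beta}{K\cdot\mup(v)}$ is the unique compact $G^{\beta+}$-orbit in $K\cdot\mup(v)$. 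Moreover $\mup(\maxm{\beta}{\OO})=\maxm{\beta}{K\cdot\mup(v)}$, as recorded just above the statement.

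For the forward implication I would argue that if $G^{\beta+}\cdot z$ is compact, then by uniqueness in Theorem \ref{parabolici} it must equal $\maxm{\beta}{\OO}$; in particular $z\in\maxm{\beta}{\OO}$, so $\mup(z)\in\mup(\maxm{\beta}{\OO})=\maxm{\beta}{K\cdot\mup(v)}$. As the latter is a single compact $G^{\beta+}$-orbit, $G^{\beta+}\cdot\mup(z)=\maxm{\beta}{K\cdot\mup(v)}$ is compact, and taking images gives $\mup(G^{\beta+}\cdot z)=\mup(\maxm{\beta}{\OO})=\maxm{\beta}{K\cdot\mup(v)}=G^{\beta+}\cdot\mup(z)$.

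For the converse the key tool is Proposition \ref{gradient-closed-orbit}, which makes $\mup$ restrict to a $K$-equivariant diffeomorphism $\OO=K\cdot v\to K\cdot\mup(v)$, in particular an injection on $\OO$. If $G^{\beta+}\cdot\mup(z)$ is compact, uniqueness of the compact orbit in $K\cdot\mup(v)$ forces $G^{\beta+}\cdot\mup(z)=\maxm{\beta}{K\cdot\mup(v)}=\mup(\maxm{\beta}{\OO})$, whence $\mup(z)\in\mup(\maxm{\beta}{\OO})$; injectivity of $\mup|_\OO$ then yields $z\in\maxm{\beta}{\OO}$, so $G^{\beta+}\cdot z=\maxm{\beta}{\OO}$ is compact and the displayed equality follows exactly as above.

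The delicate point, and the reason the statement is not automatic, is that $\mup$ is only $K$-equivariant and not $G$-equivariant, so $\mup(G^{\beta+}\cdot z)=G^{\beta+}\cdot\mup(z)$ cannot be read off by transporting the $G^{\beta+}$-action through $\mup$. The plan avoids this entirely: rather than pushing the group action through the gradient map, it identifies each orbit with a canonical object, namely the unique compact $G^{\beta+}$-orbit on its respective space, and then deduces the equality from the already-established identity $\mup(\maxm{\beta}{\OO})=\maxm{\beta}{K\cdot\mup(v)}$ together with the injectivity of $\mup|_\OO$. The main thing to check with care is thus that both uniqueness statements really do hold for the $G$-extended action on the orbitope $K\cdot\mup(v)$, which is precisely what the cited results of \cite{biliotti-ghigi-heinzner-2} supply.
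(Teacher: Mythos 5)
Your proposal is correct and follows essentially the same route as the paper: the paper's (terse) justification consists precisely of invoking Theorem \ref{parabolici} for uniqueness of the compact $G^{\beta+}$-orbit in $\OO$, the identity $\mup(\maxm{\beta}{\OO})=\maxm{\beta}{K\cdot \mup(v)}$ together with the cited results of \cite{biliotti-ghigi-heinzner-2} for uniqueness on the orbit $K\cdot\mup(v)$ with its extended $G$-action, and Proposition \ref{gradient-closed-orbit} for injectivity of $\mup$ on $\OO$. You have simply written out in full the deduction the paper leaves implicit.
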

We conclude this section showing how the gradient flow of the Morse Bott function $\mup^\beta$ determines the unique compact orbit of the $G^{\beta+}$-action on $\OO$.

Let $W\subset \R^n$ be a real subspace. We denote by
\[
\pi_W : \R^n \lra W,
\]
the orthogonal projection and by
\[
\hat\pi_W : \mathbb P(V) \setminus{\mathbb P(W^\perp)}  \lra \mathbb P(V), \qquad \hat\pi_W ([v])=[\pi_W (v)],
\]
its projectivization. Let $\beta \in \liep$ and let $W\subset \R^n$ be the eigenspace corresponding to the maximum eigenvalue of $\beta$.
Since $\mathbb P(W)=\maxm{\beta}{\mathbb P(\R^n)}$, the domain of the map $\hat\pi_{W}$ is the unstable manifold of the maximum of $\mup^{\beta}$. Moreover,  $\hat\pi_W$ coincides with the gradient flow $\phi_\infty$. Indeed, let $\lambda_1>\dots> \lambda_k$ be the eigenvalues of $\beta$. Let $V_2,\ldots,V_k$ be the eigenspaces associated to $\lambda_2,\ldots,\lambda_k$. Then
\[
\begin{split}
\lim_{t\mapsto +\infty} \exp(t\beta) [x_1+x_2+\cdots+x_k]&=\lim_{t\mapsto +\infty} [e^{t\lambda_1}x_1+e^{t\lambda_2}x_2+\cdots +e^{t\lambda_k}x_k]
\\ &=\lim_{t\mapsto +\infty} [x_1+e^{t(\lambda_2-\lambda_1)}x_2+\cdots +e^{t(\lambda_k-\lambda_1)}x_k]\\ &=[x_1] \\ &=\hat\pi_W (x).
\end{split}
\]
By Theorem \ref{decomposition}, an unstable manifold flows into the corresponding critical set. Hence $\hat\pi_{W} (\mathcal O )=\maxm{\beta}{\mathcal O}$.

Let $W^\C\subset \C^n$. The $(U^\C)^{\beta+}$-action on $W^\C$ is irreducible and
\[
\mathbb P(W^\C)=\maxm{\beta}{\mathbb P(\C^n)}.
\]
The prjojectivazion of the orthogonal projection onto $W^\C$ with respect to the canonical Hermitian product, i.e.
\[
\hat\pi_{W^\C} : \mathbb P(\C^n) \setminus{\mathbb P((W^\C)^\perp)}  \lra \mathbb P(V), \qquad \hat\pi_{W^\C} ([v])=[\pi_{W^\C} (v)],
\]
is the gradient flow of $\mup^\beta$ restricted to the unstable manifold corresponding to the maximum. Therefore $\hat\pi_{W^\C}(\mathcal O')=\maxm{\beta}{\mathcal O'}$ and $\hat\pi_{W^\C}(\mathcal O)=\maxm{\beta}{\mathcal O}$.
Summing up, we have proved the following result.
\begin{prop}\label{flow-parabolic}
Let $W\subset \R^n$ be the eigenspace corresponding to the maximum eigenvalue of $\beta$. Then both $\hat\pi_{W} (\mathcal O )$ and $\hat\pi_{W^\C}(\mathcal O)$  are the unique compact orbit of the $G^{\beta+}$-action on $\mathcal O$ and $\hat\pi_{W^\C} (\OO')$ is the unique compact orbit of the $(U^\C)^{\beta+}$-action on $\mathbb P(\C^n)$.
\end{prop}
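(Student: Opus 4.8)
The plan is to identify each of the two projectivized orthogonal projections with the endpoint map $\phi_\infty$ of the gradient flow of $\mup^\beta$, and then to read off the conclusion from the Morse--Bott decomposition together with Theorem \ref{parabolici}. First I would treat the real case. Diagonalizing $\beta$ with eigenvalues $\lambda_1 > \cdots > \lambda_k$ and eigenspaces $V_1 = W, V_2, \ldots, V_k$, a direct computation gives $\lim_{t \to +\infty} \exp(t\beta)[x_1 + \cdots + x_k] = [x_1] = \hat\pi_W([x])$ whenever $x_1 \neq 0$. Hence $\hat\pi_W$ agrees with $\phi_\infty$ on its natural domain $\mathbb P(\R^n) \setminus \mathbb P(W^\perp)$, which is exactly the unstable manifold $W_1^\beta$ of the maximal critical component $\mathbb P(W) = \maxm{\beta}{\mathbb P(\R^n)}$ of $\mup^\beta$.

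Next I would restrict the flow to $\OO$. Since $\OO$ is compact and $G$-stable it is $\exp(\R\beta)$-invariant, so $\phi_\infty$ preserves $\OO$ and $\mup^\beta\restr{\OO}$ is Morse--Bott with the same maximum value as on $\mathbb P(\R^n)$ by Lemma \ref{rm}; thus the top critical component of $\mup^\beta\restr{\OO}$ is $\OO \cap \mathbb P(W) = \maxm{\beta}{\OO}$, and irreducibility rules out $\OO \subset \mathbb P(W^\perp)$ (its linear span would be a proper nonzero $G$-invariant subspace, cf. the proof of Lemma \ref{rm}), so $\OO \cap W_1^\beta$ is open and dense in $\OO$. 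By Theorem \ref{decomposition} this unstable cell fibers \emph{onto} its critical component under $\phi_\infty$, whence $\hat\pi_W(\OO) = \phi_\infty(\OO \cap W_1^\beta) = \maxm{\beta}{\OO}$, which by Theorem \ref{parabolici} is precisely the unique compact orbit of the $G^{\beta+}$-action on $\OO$.

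The complex case runs in parallel. Using $\C^n = V_1^\C \oplus \cdots \oplus V_k^\C$ the same limit identifies $\hat\pi_{W^\C}$ with the gradient flow of $\mup^\beta : \mathbb P(\C^n) \to \R$ on the unstable manifold of its maximum $\mathbb P(W^\C) = \maxm{\beta}{\mathbb P(\C^n)}$. Applying Theorem \ref{decomposition} to the $\exp(\R\beta)$-stable orbit $\OO'$ gives $\hat\pi_{W^\C}(\OO') = \maxm{\beta}{\OO'}$, the unique compact $(U^\C)^{\beta+}$-orbit on $\mathbb P(\C^n)$ by Theorem \ref{parabolici}; restricting the same flow to $\OO \subset \OO'$ and using $\maxm{\beta}{\OO} = \maxm{\beta}{\mathbb P(\C^n)} \cap \OO$ (Lemma \ref{rm}) yields $\hat\pi_{W^\C}(\OO) = \maxm{\beta}{\OO}$, again the unique compact $G^{\beta+}$-orbit.

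The one point demanding care is the domain bookkeeping: $\OO$ genuinely meets $\mathbb P(W^\perp)$ (already for $\mathrm{SL}(2,\R)$ acting on $\mathbb P^1$), so $\hat\pi_W(\OO)$ must be understood as the image of the dense open set $\OO \cap W_1^\beta$ on which $\hat\pi_W$ is defined, and the argument must use that $\phi_\infty$ surjects this cell onto the \emph{whole} critical component, not merely into it, which is exactly the content of the fibration in Theorem \ref{decomposition}. Everything else is an immediate bookkeeping of Theorem \ref{parabolici} and Lemma \ref{rm}.
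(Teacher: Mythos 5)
Your proof is correct and follows essentially the same route as the paper: identify $\hat\pi_W$ (resp.\ $\hat\pi_{W^\C}$) with the endpoint map $\phi_\infty$ of the gradient flow of $\mup^\beta$ on the unstable manifold of the maximum, then apply Theorem \ref{decomposition} together with Theorem \ref{parabolici} and Lemma \ref{rm}. Your extra care about the domain (that $\hat\pi_W(\OO)$ means the image of the dense open cell $\OO\cap W_1^\beta$) and about surjectivity onto the critical component via the fibration is a welcome sharpening of the paper's terser ``flows into'' phrasing, but it is the same argument.
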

\section{$\rho$-connected subsets and the gradient map}\label{Satake-sempre-Satake}
Let $\mathcal E=\mathrm{conv}(\mup(\mathbb P (\R^n)))$.  This highly symmetric $K$-invariant convex body has been extensively studied in \cite{biliotti-ghigi-heinzner-2,bgh-israel-p}, see also \cite{ks,orbitope}  amongst  many others. Let $\lia \subset \liep$ be a maximal Abelian subalgebra. Since $P=\pi_\lia (\mathcal E)=\mua(\mathbb P (\R^n))=\mua(\mathcal O)$,  applying Theorem \ref{convex-reduction} it follows that  $\mathcal E$ coincides with the convex hull of $\mup(\mathcal O)$ and so the convex hull of a $K$-orbit. This means that $\mathcal E$ is a polar orbitope.  By Theorem \ref{meo-israel}, the face structure of $\mathcal E$, up to $K$-equivalence, is completely determined by the face structure of $P$  and any face of $\mathcal E$ is exposed. By a recent result, $\mathcal E$ is also a spectrahedron \cite{ks}.

Let $F$ be a face of $\mathcal E$. By Lemma \ref{face-chain}, there exists a chain of faces
\[
F=F_0 \subsetneq F_1 \subsetneq \cdots F_k \subsetneq \mathcal E.
\]
Since any face is exposed, there exists $\beta_0,\beta_1,\ldots,\beta_k \in \liep$ such that
\[
F_i=\mathrm{max}_{\mathcal E} (\beta_i):=\{z\in \mathcal E:\, \langle z , \beta_i \rangle =\mathrm{max}_{y\in \mathcal E} \langle \cdot , \beta_i \rangle \}
\]
Then
\[
\mup^{-1}(F_i)=\maxm{\beta_i}{\mathbb P (\R^n)}=\mathbb P(W_i),
 \]
where $W_i$ is the eigenspace corresponding to the maximum eigenvalue of $\beta_i$. Moreover,
\[
\maxm{\beta_i}{\mathcal O}=\maxm{\beta_i}{\mathbb P(\R^n)}\cap \OO \subseteq \maxm{\beta_i}{\OO'},
\]
and
\[
\mup( \maxm{\beta_i}{\mathcal O})=\mathrm{ext}\, F_i.
\]
Since $\mathbb P(W_i)$ is completely determined by $ \maxm{\beta_i}{\mathcal O}$, applying Theorem \ref{parabolici} we get the following result.
\begin{prop}\label{chain}
Given a chain of faces $F=F_0 \subsetneq F_1 \subsetneq \cdots \subsetneq F_k \subsetneq \mathcal E$, there exist two chains of submanifolds
\[
\begin{array}{ccccccccc}
\maxm{\beta_0}{\mathcal O'} & \subsetneqq & \maxm{\beta_1}{\mathcal O'} & \subsetneqq & \cdots & \cdots & \subsetneqq &  \maxm{\beta_k}{\mathcal O'} & \subseteq \mathbb P(\C^n) \\
\cup                                &                      & \cup                                &                      &           &            &         & \cup & \cup \\
\maxm{\beta_0}{\mathcal O} & \subsetneqq & \maxm{\beta_1}{\mathcal O} & \subsetneqq & \cdots & \cdots & \subsetneqq &  \maxm{\beta_k}{\mathcal O} & \subseteq \mathbb P(\R^n)\\
\end{array}
\]
such that the vertical inclusions are Lagrangian and totally geodesic immersions.
\end{prop}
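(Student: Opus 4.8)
The plan is to reduce the statement to the per-column assertion of Theorem \ref{parabolici}(c), so that the only genuine work is to verify that the two horizontal chains are strictly increasing. By the exposedness of the faces of $\mathcal E$ we already have $\beta_0,\dots,\beta_k\in\liep$ with $F_i=\maxm{\beta_i}{\mathcal E}$, together with the identities recorded just above the statement: $\mup\meno(F_i)=\maxm{\beta_i}{\mathbb P(\R^n)}=\mathbb P(W_i)$, $\maxm{\beta_i}{\mathcal O}=\mathbb P(W_i)\cap\OO$ and $\mup(\maxm{\beta_i}{\mathcal O})=\ext F_i$, where $W_i\subset\R^n$ is the eigenspace of $\beta_i$ for the maximal eigenvalue. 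Each vertical inclusion $\maxm{\beta_i}{\mathcal O}\subseteq\maxm{\beta_i}{\mathcal O'}$ is Lagrangian and totally geodesic by Theorem \ref{parabolici}(c), and the last vertical inclusion $\mathbb P(\R^n)\subseteq\mathbb P(\C^n)$ is the fixed point set of the complex conjugation $T$ from the discussion preceding Theorem \ref{involution}, hence also Lagrangian and totally geodesic. So the proof comes down to promoting the face inclusions $F_i\subsetneq F_{i+1}$ to strict inclusions of the corresponding $\mathrm{Max}$-sets.

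First I would show that the eigenspaces form a strictly increasing chain $W_0\subsetneq W_1\subsetneq\cdots\subsetneq W_k$. From $F_i\subseteq F_{i+1}$ and $\mup\meno(F_i)=\mathbb P(W_i)$ we get $\mathbb P(W_i)\subseteq\mathbb P(W_{i+1})$, hence $W_i\subseteq W_{i+1}$. If equality held for some $i$, then $\maxm{\beta_i}{\mathcal O}=\mathbb P(W_i)\cap\OO=\mathbb P(W_{i+1})\cap\OO=\maxm{\beta_{i+1}}{\mathcal O}$, and applying $\mup$ would give $\ext F_i=\ext F_{i+1}$; since $F_i=\conv(\ext F_i)$ by Minkowski's theorem, this forces $F_i=F_{i+1}$, contradicting $F_i\subsetneq F_{i+1}$. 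The same computation settles the bottom row at once: $\maxm{\beta_i}{\mathcal O}\subseteq\maxm{\beta_{i+1}}{\mathcal O}$ follows from $W_i\subseteq W_{i+1}$, and the two cannot be equal, since equality would again yield $\ext F_i=\ext F_{i+1}$ and hence $F_i=F_{i+1}$.

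For the top row I would pass to complexifications. Since $\beta_i$ is a real symmetric endomorphism, $W_i^\C$ is its maximal eigenspace on $\C^n$, so $\maxm{\beta_i}{\mathcal O'}=\maxm{\beta_i}{\mathbb P(\C^n)}\cap\OO'=\mathbb P(W_i^\C)\cap\OO'$, and $W_i\subsetneq W_{i+1}$ gives $W_i^\C\subsetneq W_{i+1}^\C$ and thus the inclusion $\maxm{\beta_i}{\mathcal O'}\subseteq\maxm{\beta_{i+1}}{\mathcal O'}$. Strictness then follows by intersecting with the real locus: using Corollary \ref{intersection} (so that $\OO'\cap\mathbb P(\R^n)=\OO$) and $\mathbb P(W_i^\C)\cap\mathbb P(\R^n)=\mathbb P(W_i)$, one checks $\maxm{\beta_i}{\mathcal O'}\cap\mathbb P(\R^n)=\maxm{\beta_i}{\mathcal O}$, so an equality on the top row would restrict to an equality on the bottom row, which we have just excluded. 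Assembling the strict horizontal chains with the vertical Lagrangian totally geodesic inclusions then yields the asserted diagram. I expect the only delicate point to be the bookkeeping that distinct faces force distinct eigenspaces, i.e. the strictness of the chains, whereas the Lagrangian and totally geodesic content is inherited column by column from Theorem \ref{parabolici}(c) and from the standard fact about $\mathbb P(\R^n)\subset\mathbb P(\C^n)$.
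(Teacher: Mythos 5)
Your proposal is correct and follows essentially the same route as the paper: the paper's argument is precisely the discussion preceding the proposition (the identities $\mup\meno(F_i)=\maxm{\beta_i}{\mathbb P(\R^n)}=\mathbb P(W_i)$, $\maxm{\beta_i}{\mathcal O}=\mathbb P(W_i)\cap\OO$ and $\mup(\maxm{\beta_i}{\mathcal O})=\ext F_i$) combined with Theorem \ref{parabolici} for the Lagrangian and totally geodesic vertical inclusions. Your explicit verification that distinct faces force distinct eigenspaces, hence strict horizontal inclusions in both rows, merely fills in a step the paper leaves implicit.
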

Let $\lia \subset \liep$ be a maximal Abelian subalgebra. Then
\[
\mathfrak z (\lia)=\mathfrak m \oplus \lia,
\]
where $\mathfrak m = \mathfrak z (\lia)\cap \mathfrak k$. If $\lia'\subset \mathfrak m$ is a maximal Abelian subalgebra of $\mathfrak m$, then $\lia'+ i \lia \subset \mathfrak u=\liek + i \liep$ is a maximal Abelian subalgebra of $\liu$ and so $(\lia'+ i \lia)^\C \subset \lieg^{\C}=\liu^{\C}$ is a Cartan subalgebra. Given $\lia, \lia'$,  and $\Pi\subset \Delta(\lieg,\lia)$ be a basis one can choose
a basis of $(\lia + i\lia')^*$ adapted to $\Pi$  and $(i\lia')^*$. Indeed, it is possible to define a set of simple roots
of $\hat{\Delta}$ such that the projection of a subset of the simple roots of $\hat{\Delta}$ onto $\mathfrak a^*$ is equal to $\Pi$  (see \cite[$p. 51-52$]{gjt},\cite[$p. 272-273$]{helgason}). In particular the Borel subalgebra of $\lieg^{\C}$ is contained in
$q_{\emptyset}^\C=(\mathfrak m + \lia + \mathfrak n)^{\C}$.

Let $\tilde \mu_\rho$ the highest weight of $\lieg^{\C}$ with respect to the partial ordering determined $\hat{\Delta}$. Let $x_o=[v_\rho]$, where $v_\rho$ is any highest weight vector.  It is well-known that
\[
\mu:\mathbb P(\C^n) \lra \lia'\oplus i \lia, \qquad \langle \mu(x_o) , \xi \rangle )=\tilde{\mu_\rho}(\xi),
\]
see \cite{biliotti-ghigi-American} that has opposite sign convection for $\mu$, and \cite{baston-eastwood}.
By Proposition \ref{closed-orbit}, one can choose $v_\rho \in \R^n$. Hence $G\cdot x_o=\mathcal O$, and so
\[
\langle \mua(x_o),\xi \rangle=(i\tilde{\mu_\rho})(\xi).
\]
$(i\tilde{\mu_\tau})_{\vert{\lia}}$ is the highest weight of $\lieg$ with respect the induced order on $\lia^*$.
From now on, we denote by $\mu_\rho=(i\tilde{\mu_\rho})_{\vert{\lia}}$ and also the vector in $\lia$, which represent $(i \tilde{\mu_\rho})_{\vert{\lia}}$ with respect to the
scalar product $\scalo$.  Since $\mu_\rho$ belongs to the positive Weyl chamber defined by the order chosen, we get the following result.
\begin{prop}\label{weight}
The image of the gradient map $\mup:\mathbb P(\R^n) \lra \lia$, is the convex hull of the Weyl group orbit of the highest weight $\mu_\rho$. Hence the weights of $\rho$ are contained in $\mathrm{conv}(\mathcal W \cdot \mu_\rho)$.
\end{prop}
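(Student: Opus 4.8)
The plan is to reduce the computation of the image to a single $K$-orbit and then to quote Kostant's linear convexity theorem. The map to $\lia$ in the statement is the $A$-gradient map $\mua=\pi_\lia\circ\mup$. First I would invoke Proposition \ref{image} to replace $\mathbb P(\R^n)$ by $\OO$: since $\mua(\mathbb P(\R^n)) = \mua(\OO)$, it suffices to determine $\mua(\OO)$. By Proposition \ref{closed-orbit} the orbit $\OO = G\cdot x_o$ is compact and therefore a $K$-orbit, so $\OO = K\cdot x_o$ with $x_o = [v_\rho]$ and $v_\rho\in\R^n$ a highest weight vector.

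The step carrying the actual content is the identification $\mup(x_o) = \mu_\rho$, together with the fact that this vector already lies in $\lia$. For this I would use that, as recalled above, $\mu(x_o)$ lies in the maximal Abelian subalgebra $\lia'\oplus i\lia$ of $\liu$. Multiplication by $i$ carries $\lia'\oplus i\lia$ onto $i\lia'\oplus\lia$, and since $i\lia'\subset i\liek$ is orthogonal to $\liep$ inside $i\liu$ while $\lia\subset\liep$, the $\liep$-component of $i\mu(x_o)$---which by definition is $\mup(x_o)$---lands in $\lia$. Hence $\mup(x_o) = \pi_\lia(\mup(x_o)) = \mua(x_o) = \mu_\rho$. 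Equivalently, writing $\mup(z)$ as a positive multiple of the orthogonal projection onto $\liep$ of $vv^T - \tfrac1n\mathrm{Id}_n$ for a unit representative $v$ of $z$, one checks that $\langle\mup(x_o),\beta\rangle = 0$ for every $\beta$ in a nonzero restricted root space, because such a $\beta$ moves $v_\rho$ into weight spaces $\mathtt g$-orthogonal to $v_\rho$.

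With $\mup(x_o) = \mu_\rho\in\lia$ established, I would use the $K$-equivariance of the gradient map, $\mup(k\cdot x_o) = \mathrm{Ad}(k)\mu_\rho$, to obtain
\[
\mua(\OO) = \pi_\lia(\mup(K\cdot x_o)) = \pi_\lia(\mathrm{Ad}(K)\mu_\rho) = \mathrm{conv}(\mathcal W\cdot\mu_\rho),
\]
the last equality being Kostant's convexity theorem \cite{kostant-convexity}, applicable precisely because $\mu_\rho$ lies in the positive Weyl chamber. Together with Proposition \ref{image} this gives $\mua(\mathbb P(\R^n)) = \mathrm{conv}(\mathcal W\cdot\mu_\rho)$, the first assertion.

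For the second assertion I would note that each restricted weight $\lambda$ of $\rho$ has a weight space $V_\lambda\subset\R^n$ consisting of common $\lia$-eigenvectors, so $\mathbb P(V_\lambda)$ is pointwise $A$-fixed, $\mua$ is constant on it, and this constant is the vector in $\lia$ associated with $\lambda$; since $\mathbb P(V_\lambda)\subset\mathbb P(\R^n)$, every weight lies in $\mua(\mathbb P(\R^n)) = \mathrm{conv}(\mathcal W\cdot\mu_\rho)$. The only genuinely delicate point is the one in the second paragraph: one must verify that the gradient image of the highest weight line lies inside $\lia$ itself, not merely that its $\lia$-projection equals $\mu_\rho$, since this is exactly what identifies $\mu_\rho$ as the generating vertex of the polytope and licenses the use of Kostant's theorem.
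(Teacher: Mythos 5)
Your argument is correct and follows essentially the same route as the paper: both proofs rest on Proposition \ref{image} to pass between $\mathbb P(\R^n)$ and $\OO=K\cdot x_o$, on the identification $\mua(x_o)=\mu_\rho$ with $\mu_\rho$ in the positive Weyl chamber, and on Kostant's convexity theorem applied to $\pi_\lia(\mathrm{Ad}(K)\mu_\rho)$. The only real difference is one of arrangement: the paper first records that $\mua(\mathbb P(\R^n))$ is the convex hull of the weight vectors and then matches it with $\mathrm{conv}(\mathcal W\cdot\mu_\rho)$, whereas you compute $\mua(\OO)$ first and deduce the weight containment afterwards, taking care to check explicitly that $\mup(x_o)$ actually lies in $\lia$ --- a point the paper leaves implicit in the discussion preceding the proposition.
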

\begin{proof}
Denote the weights of $\rho$ by $\mu_1,\ldots,\mu_p \in \lia^*$. It is easy to check that
\[
\mua(\mathbb P(\R^n))=\mathrm{conv}(v_1,\ldots,v_p),
\]
where $v_1,\ldots,v_p \in \lia$ satisfy
\[
\langle v_i,H\rangle =\mu_i (H),
\]
for any $H\in \lia$ an for $i=1,\ldots,n$. By Proposition \ref{image} we have
\[
\mua(\mathbb P(\R^n))=\mua(\mathcal O).
\]
Since $\mu_\rho$ lies in the positive Weyl chamber, it follows that $\mua(\OO)\cap \lia_{+}=\mua(x_o)$. Applying the Kostant's convexity Theorem \cite{kostant-convexity}, we have
\[
\mua(\mathbb P(\R^n))=\mua(\mathcal O)=\mathrm{conv}(\mathcal W \cdot \mu_\rho),
\]
concluding the proof.
\end{proof}
Let $W\subset \R^n$. We say that $W$ is $\rho$-admissible if $Q(W):=\{g\in G:\, gW=W\}$ is a parabolic subgroup of $G$ and the $Q(W)$-action on $W$ is irreducible. If $Q(W)=G^{\beta+}$ then $\mathbb P (W)$ is the linear span of the unique compact $Q(W)$-orbit  in $\OO$ given by $\maxm{\beta}{\mathcal O}$ and  $\mup( \maxm{\beta}{\mathcal O})=\mathrm{ext}\, F_\beta (\mathcal E)=F_W$.
\begin{lemma}\label{tau-faccie}
Let $W,W'\subseteq V$ be $\rho$-connected subspaces. Then $W\subseteq W'$ if and only if $F_W\subseteq F_W'$. Moreover $W=W'$ if and only if $\mathrm{relint}\, F_W = \mathrm{relint}\, F_{W'}$.
\end{lemma}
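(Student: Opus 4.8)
The plan is to turn the lemma into a formal statement about the order-preserving correspondence $W\mapsto F_W$, using only the identities already established in this section. For a subspace $W$ as in the statement, choose $\beta\in\liep$ with $Q(W)=G^{\beta+}$; by Theorem \ref{parabolici}(a) the space $W$ is the top eigenspace of $\beta$, so that $\maxm{\beta}{\mathbb P(\R^n)}=\mathbb P(W)$, $\maxm{\beta}{\OO}=\mathbb P(W)\cap\OO$, and $F_W$ is the exposed face $F_\beta(\mathcal E)$. I will use the two facts $\mup^{-1}(F_W)=\mathbb P(W)$ and $\ext F_W=\mup(\maxm{\beta}{\OO})=\mup(\mathbb P(W)\cap\OO)$ recorded above. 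In particular $\ext F_W$, and hence $F_W=\conv(\ext F_W)$ by Minkowski's theorem, depends only on $W$, so $F_W$ is well defined.

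For the first equivalence I treat the two implications separately. If $W\subseteq W'$, then $\mathbb P(W)\cap\OO\subseteq\mathbb P(W')\cap\OO$, hence $\ext F_W=\mup(\mathbb P(W)\cap\OO)\subseteq\mup(\mathbb P(W')\cap\OO)=\ext F_{W'}$; taking convex hulls and applying Minkowski's theorem to the compact convex faces yields $F_W=\conv(\ext F_W)\subseteq\conv(\ext F_{W'})=F_{W'}$. Conversely, if $F_W\subseteq F_{W'}$, then by monotonicity of preimages $\mathbb P(W)=\mup^{-1}(F_W)\subseteq\mup^{-1}(F_{W'})=\mathbb P(W')$, so $W\subseteq W'$. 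This establishes $W\subseteq W'\iff F_W\subseteq F_{W'}$.

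The second equivalence then follows from the first. If $W=W'$ then $F_W=F_{W'}$ and their relative interiors coincide trivially. Conversely, each $F_W$ is nonempty, since its extreme point set $\mup(\maxm{\beta}{\OO})$ is the image of the nonempty compact orbit $\maxm{\beta}{\OO}$, and therefore $\relint F_W\neq\vacuo$. Assuming $\relint F_W=\relint F_{W'}$, any common point of $\relint F_W\cap\relint F_{W'}$ together with Theorem \ref{schneider-facce} (distinct faces of a compact convex set have disjoint relative interiors) forces $F_W=F_{W'}$, whence $W=W'$ by the first assertion.

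I expect no deep obstacle here: once the correspondence is set up the argument is elementary convex geometry. The only point demanding care, rather than being purely formal, is the identification of $\ext F_W$ with $\mup(\maxm{\beta}{\OO})$ and of $\mup^{-1}(F_W)$ with $\mathbb P(W)$, which is where Theorem \ref{parabolici} and the Morse-theoretic description of $\maxm{\beta}{\mathbb P(\R^n)}$ enter and where the well-definedness of $F_W$ must be checked; granting these, the passage between subspaces and faces is purely monotone.
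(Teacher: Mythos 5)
Your proof is correct and follows essentially the same route as the paper: the forward implication via $\ext F_W=\mup(\mathbb P(W)\cap\OO)$ and Minkowski's theorem, the converse via $\mup^{-1}(F_W)=\mathbb P(W)$, and the relative-interior statement via Theorem \ref{schneider-facce}. The extra care you take about well-definedness of $F_W$ and nonemptiness of the relative interiors is a minor elaboration of the same argument, not a different approach.
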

\begin{proof}
If $W	\subseteq W'$, then $\mathbb P(W) \cap \OO \subseteq \mathbb P(W') \cap \OO$, and so
\[
\mathrm{ext}\, F_W=\mup(\mathbb P(W) \cap \OO) \subseteq \mup(\mathbb P(W') \cap \OO) \subseteq \mathrm{ext}\, F_{W'}.
\]
This implies $F_W \subseteq F_{W'}$ since any face is the convex hull of its extremal points. Vice-versa, if $F_W \subseteq F_{W'}$, then
\[
\mathbb P(W)=\mup^{-1}(F_W) \subseteq \mup^{-1}(F_{W'})=\mathbb P(W').
\]
By Theorem \ref{schneider-facce}, $F_W=F_{W'}$ if and only if  $\mathrm{relint}\, F_W = \mathrm{relint}\, F_{W'}$, concluding the proof.
\end{proof}
Since $\mup(\OO)$ is a $K$-orbit,  it is a fundamental fact that the action of K on $\mup(\OO)$  extends to an action of G see \cite[Prop. 6]{heinz-stoezel}. The set of extreme points of $F_W$, that we denote by $\mathrm{ext}\, F_W$, is contained in $\mup(\OO)$. We define
\[
Q_{F_W}:=\{h\in G:\, h \mathrm{ext}\, F_W = \mathrm{ext}\, F_W\}.
\]
\begin{prop}\label{stabilizzatore-faccie}
$Q_{F_W}=Q(W)$. Moreover, if $\beta \in \mathrm{C}_{F_W}^{H_F}$, then $G^{\beta+}=Q(W)$. Hence $Q(W)$ only depends on the face $F_W$.
\end{prop}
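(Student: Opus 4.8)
The plan is to prove the three assertions in turn, deriving the last from the first and isolating the single genuinely hard step.

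For the equality $Q_{F_W}=Q(W)$, the idea is to recognise both groups as the stabiliser of one and the same object, transported through the gradient map. Recall that $\mathbb P(W)$ is the projective span of $\maxm{\beta}{\OO}=\mathbb P(W)\cap\OO$ and that $\ext F_W=\mup(\maxm{\beta}{\OO})\subset\mup(\OO)$. By Proposition \ref{gradient-closed-orbit} the restriction $\mup\colon\OO\to\mup(\OO)$ is a $K$-equivariant diffeomorphism, and since the extended $G$-action on $\mup(\OO)$ is, by construction, the one transported from $\OO$ along this diffeomorphism (cf.\ \cite[Lemma 5]{heinzner-stoetzel-global} and Proposition \ref{gi}), the map $\mup\colon\OO\to\mup(\OO)$ becomes $G$-equivariant. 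Hence, for $g\in G$, one has $g\cdot\ext F_W=\ext F_W$ for the extended action if and only if $g\cdot\maxm{\beta}{\OO}=\maxm{\beta}{\OO}$ for the natural action on $\OO$, because $\maxm{\beta}{\OO}=(\mup|_\OO)^{-1}(\ext F_W)$. Finally $g$ preserves $\maxm{\beta}{\OO}=\mathbb P(W)\cap\OO$ if and only if it preserves its projective span $\mathbb P(W)$, i.e.\ if and only if $gW=W$. Chaining these equivalences gives $Q_{F_W}=\{g\in G:\,gW=W\}=Q(W)$.

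For the second assertion, note that $H_F:=\{k\in K:\,kF_W=F_W\}$ preserves both $\mathcal E$ and $F_W$, so $\mathrm{C}_{F_W}^{H_F}\neq\emptyset$ by Lemma \ref{u-cono}; fix $\beta$ there. For any $\beta\in\mathrm{C}_{F_W}$ the top eigenspace satisfies $\mathbb P(W_\beta)=\mup^{-1}(F_\beta(\mathcal E))=\mup^{-1}(F_W)=\mathbb P(W)$, so $W_\beta=W$, and by Proposition \ref{parabolic-preserve-maximun} the group $G^{\beta+}$ preserves $\maxm{\beta}{\mathbb P(\R^n)}=\mathbb P(W)$, hence $W$; thus $G^{\beta+}\subseteq Q(W)$, i.e.\ $\lieg^{\beta+}\subseteq\lieg^{\gamma+}=\lieg^\gamma\oplus\mathfrak r^{\gamma+}$, where $Q(W)=G^{\gamma+}$ has Levi $\lieg^\gamma$. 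Using the first part and that on $K$ the extended action is $\Ad$, one gets $H_F=K\cap Q(W)=K^\gamma$, the maximal compact subgroup of the Levi. I would then reduce the reverse inclusion to the claim that $\beta\in\lia_J:=\liez(\lieg^\gamma)\cap\liep$: granting this, $\beta$ centralises $\lieg^\gamma$, so $\lieg^\gamma\subseteq\lieg^{\beta+}$; and since $\ad\beta$ is symmetric with $\theta$-paired eigenvalues, no eigenvector $X\in\mathfrak r^{\gamma+}$ of $\ad\beta$ can have eigenvalue $\le 0$, for otherwise $\theta X\in\mathfrak r^{\gamma-}$ would have eigenvalue $\ge 0$ and hence lie in $\lieg^{\beta+}\subseteq\lieg^{\gamma+}$, which meets $\mathfrak r^{\gamma-}$ only in $0$. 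Thus $\ad\beta>0$ on $\mathfrak r^{\gamma+}$, whence $\lieg^{\beta+}=\lieg^\gamma\oplus\mathfrak r^{\gamma+}=\lieg^{\gamma+}$ and, a parabolic being the normaliser of its Lie algebra, $G^{\beta+}=Q(W)$.

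The heart of the matter is therefore to show that $\beta\in\mathrm{C}_{F_W}^{H_F}$ forces $\beta\in\lia_J$, and I expect this to be the main obstacle. The geometric content is that $K^\gamma$-invariance removes the component of $\beta$ in $\liep\cap[\lieg^\gamma,\lieg^\gamma]$, since the maximal compact subgroup of a semisimple Levi has no nonzero fixed vector in the associated $\liep$-part; what remains is to exclude any component of $\beta$ transverse to $\lieg^\gamma$. For this I would pass to the abelian picture: after conjugating $\beta$ into a closed chamber $\overline{\lia_+}$, one has $\lieg^{\beta+}=\mathfrak q_{I(\beta)}$ with $I(\beta)=\{\alpha\in\Pi:\alpha(\beta)=0\}$, and by the face--parabolic dictionary of \cite{biliotti-ghigi-heinzner-2} together with Satake's saturation, $Q(W)=Q_J$ where $J$ is the $\mu_\rho$-saturation of $I(\beta)$. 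The inclusion $G^{\beta+}\subseteq Q(W)$ gives $I(\beta)\subseteq J$, while invariance of $\beta$ under the reflections $s_\alpha\in H_F$ attached to the orthogonal roots $\alpha\in J\setminus I(\beta)$ forces $\alpha(\beta)=0$, so $I(\beta)\supseteq J$ and hence $I(\beta)=J$, i.e.\ $\beta\in\lia_J$. The delicate point — precisely, that the stabiliser $H_F$ of the face contains the reflections in the roots orthogonal to $\{\mu_\rho\}\cup I(\beta)$ — is exactly where the combinatorial description of the faces of $\mathcal E$ from \cite{biliotti-ghigi-heinzner-2} and \cite{Satake} must be invoked.

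The third assertion is then immediate: since $Q(W)=Q_{F_W}$ and $Q_{F_W}$ is defined solely through $\ext F_W$, the parabolic subgroup $Q(W)$ depends only on the face $F_W$; in particular any $\rho$-admissible subspace $W'$ with $F_{W'}=F_W$ has $Q(W')=Q(W)$.
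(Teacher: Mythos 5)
Your conclusions are right, but you take a much longer road than the paper and, in doing so, you end up reproving the result that the paper simply quotes. The paper's proof is a short deduction from \cite[Proposition 3.8]{biliotti-ghigi-heinzner-2}, which states precisely that $G^{\beta+}=Q_{F_W}$ for $\beta\in\mathrm{C}_{F_W}^{H_F}$. Granting that, $Q_{F_W}=G^{\beta+}\subseteq Q(W)$ because the top eigenspace of $\beta$ is $W$; for the reverse inclusion one writes $g\in Q(W)$ as $g=kp$ with $k\in K$, $p\in Q_{F_W}$ (Proposition \ref{decomposition-parabolic}), notes that $pW=W$ hence $kW=W$, hence $k$ preserves $\maxm{\beta}{\OO}=\mathbb P(W)\cap\OO$, and concludes $k\,\ext F_W=\ext F_W$ using only the $K$-equivariance of $\mup$. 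Everything in your second and third paragraphs --- that $\beta\in\mathrm{C}_{F_W}^{H_F}$ forces $\beta\in\lia_J$, hence $G^{\beta+}=Q_J=Q_{F_W}$ --- is exactly the content of that cited proposition together with Theorem \ref{parabolici-faccie}. Your sketch of it is plausible (the reflections $s_\alpha$ for $\alpha\in J\setminus I(\beta)$ do have representatives in $K\cap Q_J\subseteq H_F$, and $H_F$-invariance of $\beta$ then kills $\alpha(\beta)$), but as you yourself say it is not carried out, and it leaves unaddressed the normalization step of conjugating $\beta$ into the closed chamber while keeping track of how $F_W$, $H_F$ and the standard faces $F_I$ transform.

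The one step I would flag as a genuine gap rather than a detour is the equivariance claim opening your first paragraph. The extended $G$-action on $\mup(\OO)=K\cdot\mu_\rho$ from \cite[Lemma 5]{heinzner-stoetzel-global} is constructed through the identification $K/K^{\mu_\rho}\cong G/G^{\mu_\rho+}$, not by transport along $\mup\restr{\OO}$; the two actions agree, and $\mup\restr{\OO}$ becomes $G$-equivariant, only because $G_{x_o}=G^{\mu_\rho+}$. That equality is true (the parabolic attached to the highest weight stabilizes the highest weight line, so $G^{\mu_\rho+}\subseteq G_{x_o}$, and then $G=KG^{\mu_\rho+}$ together with $K\cap G_{x_o}=K^{\mu_\rho}$ from Proposition \ref{gradient-closed-orbit} forces equality), but it is an extra fact you must prove: ``by construction'' one only gets $K$-equivariance, which is all the paper ever uses. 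Without it, the equivalence ``$g\cdot\ext F_W=\ext F_W$ if and only if $g$ preserves $\maxm{\beta}{\OO}$'' is unjustified for general $g\in G$, and your chain of equivalences breaks at its first link. Supplying that fact (or replacing the first paragraph by the paper's $g=kp$ argument) repairs the proof.
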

\begin{proof}
Let $\beta\in \mathrm{C}_{F_W}^{H_F}$. Then $\mup^{-1}(F_\beta (\mathcal E))=\mathbb P(W)$ and $W(G^{\beta+})=W$. This implies  $G^{\beta+}\subseteq Q(W)$. Since $G^{\beta+}=Q_{F_W}$ \cite[Proposition 3.8, $p.598$]{biliotti-ghigi-heinzner-2}, it follows that $Q_{F_W}\subseteq Q(W)$.

Let $g\in Q(W)$. By Proposition \ref{decomposition-parabolic}, $g=kp$ for some $k\in K$ and some $p\in Q_{F_W}$. Then $gW=W$ implies $kW=W$ and so,  keeping in mind that $\maxm{\beta}{\OO}= P (W)\cap \mathcal O$, $k$ preserves $\maxm{\beta}{\OO}$.
By the $K$-equivariance of the $G$-gradient map $\mup$, we get $k\, \mathrm{ext}\, F_W=\mathrm{ext}\, F_W$ and so $k\in H_F\subset Q_{F_W}$. This proves $Q(W)\subseteq Q_{F_W}$, concluding the proof.
\end{proof}
Set $\mathcal H (\rho)=\left\{(W,Q(W)): W\, \mathrm{is\ } \rho-\mathrm{connected\ subspace\ of\ }\R^n \right\}$. The following Lemma is easy to prove.
\begin{lemma}\label{action-tauconnected}
Let $W$ be a $\rho$-connected subspace of $V$ and let $g\in G$. Then
\begin{enumerate}
\item $gW$ is a $\tau$-connected subspace;
\item there exists $k\in K$ such that $gW=kW$;
\item $Q(gW)=gQ(W)g^{-1}=kQ(W)k^{-1}$
\end{enumerate}
\end{lemma}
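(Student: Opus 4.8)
The plan is to deduce all three assertions from the single structural fact that, by hypothesis, $Q(W)$ is a parabolic subgroup of $G$, so that the decomposition $G=KQ(W)$ is at our disposal. This is Proposition \ref{decomposition-parabolic} together with the identity $G=KG^{\beta+}$ recorded after Theorem \ref{teo1}, using that every parabolic subgroup of $G$ equals $G^{\beta+}$ for some $\beta\in\liep$. Once this decomposition is available, (b) is essentially immediate, while (a) and (c) reduce to elementary conjugation bookkeeping.

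First I would establish the conjugation formula for stabilizers. Directly from the definition,
\[
Q(gW)=\{h\in G:\, hgW=gW\}=\{h\in G:\, (g^{-1}hg)W=W\}=gQ(W)g^{-1}.
\]
Because a conjugate of a parabolic subgroup is again parabolic, $Q(gW)$ is parabolic; and because the linear isomorphism $g\colon W\to gW$ intertwines the $Q(W)$-action on $W$ with the $Q(gW)$-action on $gW$ (indeed $(gqg^{-1})(gw)=g(qw)$ for $q\in Q(W)$ and $w\in W$), irreducibility is transported and $gW$ is again $\rho$-connected. This proves (a) and the first equality of (c). For (b), writing $g=kp$ with $k\in K$ and $p\in Q(W)$ according to $G=KQ(W)$, I obtain $gW=k(pW)=kW$, since $pW=W$. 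The second equality of (c) then follows because conjugation by the element $p\in Q(W)$ stabilizes the subgroup $Q(W)$, whence
\[
Q(gW)=gQ(W)g^{-1}=kp\,Q(W)\,p^{-1}k^{-1}=kQ(W)k^{-1}.
\]

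I do not anticipate any serious obstacle: the content is genuinely elementary once $G=KQ(W)$ is invoked. The only two points that deserve care are that the passage from $W$ to $gW$ preserves irreducibility---which requires the intertwining argument above rather than the bare group conjugation---and that the decomposition $G=KQ(W)$ is applied to a \emph{possibly disconnected} parabolic $Q(W)$. The latter is exactly what Proposition \ref{decomposition-parabolic} and the identity $G=KG^{\beta+}$ guarantee, so no additional argument is needed.
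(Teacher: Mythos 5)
Your proof is correct. The paper actually gives no proof of this lemma (it is stated with the remark that it ``is easy to prove''), and your argument --- the conjugation identity $Q(gW)=gQ(W)g^{-1}$, transport of irreducibility by the intertwining $g\colon W\to gW$, and the decomposition $G=KQ(W)$ valid for any parabolic subgroup --- is exactly the standard argument the author is implicitly relying on, with the one genuinely non-trivial input ($G=KQ(W)$, via Proposition \ref{decomposition-parabolic} and the fact that every parabolic equals some $G^{\beta+}$) correctly identified.
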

$K$ acts on $\mathcal H (\tau)$ as follows:
\[
k(W,Q(W)):=(kW,kQ(W)k^{-1}).
\]
As in \cite{bsg}, one can prove the following result.
\begin{prop}\label{tau-connected-face}
Let $\mathscr{F} ( \mathcal E)$ denote the set of the faces of $\mathcal E$. Then the map
\[
\mathscr{Z} : \mathcal H (\tau) \lra \mathscr{F} ( \mathcal E), \qquad (W,Q(W)) \mapsto F_W,
\]
is $K$-equivariant and bijective.
\end{prop}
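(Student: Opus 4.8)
The map $\mathscr Z$ assembles the results of this section, so the plan is to check in turn that it is well defined, $K$-equivariant, injective and surjective, each point reducing to a statement proved above. The dictionary I would use repeatedly is the following: for a $\rho$-connected subspace $W$ one has $Q(W)=G^{\beta+}$ for any $\beta\in\mathrm{C}_{F_W}^{H_F}$ by Proposition \ref{stabilizzatore-faccie}; by Theorem \ref{parabolici} (a) such a $W$ is exactly the eigenspace of $\beta$ for the largest eigenvalue; and therefore $\mathbb P(W)=\maxm{\beta}{\mathbb P(\R^n)}=\mup^{-1}(F_\beta(\mathcal E))$ while $F_W=F_\beta(\mathcal E)=\conv(\mup(\mathbb P(W)\cap\OO))$. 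In particular $F_W$ is a genuine exposed face of $\mathcal E$, so $\mathscr Z$ lands in $\mathscr F(\mathcal E)$, and since $Q(W)$ is determined by $W$ the assignment is well defined on pairs.

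For $K$-equivariance I would combine the equivariance $\mup(kz)=\Ad(k)\mup(z)$ with Lemma \ref{action-tauconnected}. If $k\in K$ then $kW$ is again $\rho$-connected and, as $\OO$ is $K$-stable, $\mathbb P(kW)\cap\OO=k(\mathbb P(W)\cap\OO)$; applying $\mup$ gives $\ext F_{kW}=\Ad(k)\,\ext F_W$, hence $F_{kW}=\Ad(k)F_W$. Since $\Ad(k)$ is precisely the induced $K$-action on $\mathscr F(\mathcal E)$ and $k\cdot(W,Q(W))=(kW,kQ(W)k^{-1})$, this is the required identity $\mathscr Z(k\cdot(W,Q(W)))=k\cdot\mathscr Z(W,Q(W))$.

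Injectivity is essentially the content of Lemma \ref{tau-faccie}: if $F_W=F_{W'}$ then a fortiori $\relint F_W=\relint F_{W'}$, so $W=W'$ and the two pairs coincide; equivalently one reads it off the identity $\mathbb P(W)=\mup^{-1}(F_W)=\mup^{-1}(F_{W'})=\mathbb P(W')$. For surjectivity I would start from an arbitrary $F\in\mathscr F(\mathcal E)$. Since $\mathcal E$ is a polar orbitope, every face is exposed, so $F=F_\beta(\mathcal E)$ for some $\beta\in\liep$. Letting $W$ be the maximal eigenspace of $\beta$, Theorem \ref{parabolici} (a) shows $G^{\beta+}$ acts irreducibly on $W$ and Proposition \ref{stabilizzatore-faccie} gives $Q(W)=G^{\beta+}$, a parabolic subgroup; hence $W$ is $\rho$-connected and $\mathscr Z(W,Q(W))=F_W=F_\beta(\mathcal E)=F$.

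Because all four points reduce to results established earlier, I do not expect a substantive obstacle at this stage; the genuine work sits in Theorem \ref{parabolici} and in the pair Lemma \ref{tau-faccie}/Proposition \ref{stabilizzatore-faccie}. The single point I would take care over is a hidden well-definedness issue inside the surjectivity step: the subspace attached to a face must not depend on the choice of exposing vector $\beta$, since many $\beta$ define the same $F_\beta(\mathcal E)$. This is guaranteed because $W$ is recovered from the face alone via $\mathbb P(W)=\mup^{-1}(F_\beta(\mathcal E))$, and by Theorem \ref{parabolici} (a) it is the unique subspace on which the parabolic $Q(W)=G^{\beta+}$ acts irreducibly; so the association $F\mapsto(W,Q(W))$ is unambiguous and provides the two-sided inverse of $\mathscr Z$.
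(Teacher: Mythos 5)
Your proof is correct and is precisely the assembly the paper intends: the paper itself gives no argument beyond the remark that one proceeds ``as in \cite{bsg}'', and your four steps (well-definedness, equivariance via Lemma \ref{action-tauconnected} and the $K$-equivariance of $\mup$, injectivity via Lemma \ref{tau-faccie}, surjectivity via exposedness of the faces of a polar orbitope together with Theorem \ref{parabolici} and Proposition \ref{stabilizzatore-faccie}) rest on exactly the lemmas the section sets up for this purpose. The only cosmetic caveat is that Proposition \ref{stabilizzatore-faccie} is stated for $W$ already $\rho$-connected, so in the surjectivity step one should strictly rerun its short proof to get $Q(W)=G^{\beta+}$ for the maximal eigenspace $W$ of an exposing $\beta\in \mathrm{C}_{F}^{H_F}$ \emph{before} concluding that $W$ is $\rho$-connected; your argument already contains everything needed for this.
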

In the sequel we always refer to \cite[Section I.1 Real Parabolic subgroups]{borel-ji-libro}.

\begin{defin}
  A subset $I\subset \simple$ is $\mu_\rho$-\enf{connected} if
  $I\cup\{\mu_\rho \}$ is connected, i.e., it is not the union of subsets orthogonal with respect to the Killing form.
\end{defin}
Connected components of $I$ are defined as usual.  The $I$ is $\mu_\rho$-connected subset if and only if any connected component of $I$ contains at least one element $\alpha$  which is not orthogonal to $\mu_\rho$.
\begin{defin}
    If $I\subset \simple$ is $\mu_\rho$-connected, denote by $I'$ the
    collection of all simple roots orthogonal to $\{\mu_\rho \}\cup I$.
    The set $J:=I\cup I'$ is called the $\mu_\rho$-\enf{saturation} of
    $I$.
\end{defin}
In \cite{biliotti-ghigi-heinzner-2}, see also \cite{ks}, the following theorem is proved.
\begin{teo}\label{parabolici-faccie}$\, $
\begin{enumerate}
\item Let $I \subset \Pi$ be a $\mu_\rho$-connected subset and let $J$ be its $\mu_\rho$-saturation. Then $Q_I\cdot \mu_\rho=Q_J \cdot \mu_\rho$ and $F:=\mathrm{conv}(Q_I\cdot \mu_\rho)$ is a face of $\mathcal E$. Moreover,
$F=F_\beta (\mathcal E)=\mathrm{conv}(K^{\beta} \cdot \mu_\rho)$, where $\beta \in \lia$ satisfies $Q_I=G^{\beta+}$;
\item  let $\beta'\in \lia$ be such that $Q_J=G^{\beta'+}$. Then $F=F_{\beta'} (\mathcal E)=\mathrm{conv}(K^{\beta'}\cdot \mu_\rho)$. Moreover $\{g\in G:\, g\ext{F}=\ext{F}\}=Q_J=G^{\beta'+}$;
\item Any face of  $\mathcal E$ is conjugate to one of the faces constructed
    in (a).
\end{enumerate}
\end{teo}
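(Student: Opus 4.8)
The plan is to build a dictionary between dominant vectors $\beta\in\lia$, standard parabolics, top eigenspaces, and exposed faces of $\mathcal E$, and then to read off the three assertions from it. For a dominant $\beta\in\lia$ (that is, $\beta$ in the closed positive Weyl chamber $\overline{\lia}_+$) I set $I=\{\alpha\in\Pi:\alpha(\beta)=0\}$, so that $G^{\beta+}=Q_I$ by Lemma \ref{para-beta} and the classification of parabolics. Since $\mu_\rho$ is dominant and $x_o$ is a highest weight point with $\mua(x_o)=\mu_\rho$, the point $x_o$ achieves the maximum of $\mup^\beta$, so by Theorem \ref{parabolici} the set $\maxm{\beta}{\mathcal O}$ is the unique compact $Q_I$-orbit $Q_I\cdot x_o$ and is a $K^\beta$-orbit. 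The basic identity I will use repeatedly is
\[
\ext F_\beta(\mathcal E)=\mup(\maxm{\beta}{\mathcal O})=Q_I\cdot\mu_\rho=K^\beta\cdot\mu_\rho,
\]
which follows by combining $\mup^{-1}(F_\beta(\mathcal E))=\maxm{\beta}{\mathbb P(\R^n)}$, the fact that $\ext\mathcal E=\mup(\OO)$ together with Lemma \ref{ext-facce}, Proposition \ref{gi} (which turns $\mup(Q_I\cdot x_o)$ into $Q_I\cdot\mu_\rho$), and the $K$-equivariance of $\mup$.

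For (a), with $\beta$ as above the identity gives at once $F:=\mathrm{conv}(Q_I\cdot\mu_\rho)=\mathrm{conv}(\ext F_\beta(\mathcal E))=F_\beta(\mathcal E)=\mathrm{conv}(K^\beta\cdot\mu_\rho)$, an exposed face. It then remains to prove $Q_I\cdot\mu_\rho=Q_J\cdot\mu_\rho$. I would first establish the corresponding equality inside $\lia$: by Proposition \ref{weight}, $P=\mathcal E\cap\lia=\mathrm{conv}(\mathcal W\cdot\mu_\rho)$, and since $\langle p,\beta\rangle=\langle\pi_\lia(p),\beta\rangle$ for $\beta\in\lia$ one has $F_\beta(\mathcal E)\cap\lia=F_\beta(P)$. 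The standard description of the faces of a weight polytope gives $F_\beta(P)=\mathrm{conv}(\mathcal W_I\cdot\mu_\rho)$. Because every root of $I'=J\setminus I$ is orthogonal to $\{\mu_\rho\}\cup I$, the reflections generating $\mathcal W_{I'}$ fix $\mu_\rho$ and commute with $\mathcal W_I$, hence fix $\mathcal W_I\cdot\mu_\rho$ pointwise; thus $\mathcal W_J\cdot\mu_\rho=\mathcal W_I\cdot\mu_\rho$ and $F_\beta(P)=F_{\beta'}(P)$, where $\beta'$ is dominant with $\{\alpha:\alpha(\beta')=0\}=J$. Finally I transfer this back to $\mathcal E$: since $Q_I\subseteq Q_J$ forces $K^\beta\subseteq K^{\beta'}$ one has $F_\beta(\mathcal E)\subseteq F_{\beta'}(\mathcal E)$, and these two nested faces have the same trace on $\lia$; by the correspondence of Theorem \ref{meo-israel} (whose inverse is induced by $F\mapsto F\cap\lia$) they lie in one $K$-orbit, and a nested pair of faces in a single $K$-orbit has equal dimension and hence coincides by Theorem \ref{schneider-facce}. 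Therefore $Q_I\cdot\mu_\rho=\ext F_\beta(\mathcal E)=\ext F_{\beta'}(\mathcal E)=Q_J\cdot\mu_\rho$.

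Part (b) is then obtained by running the same identity for the saturated index: with $\beta'$ as above, $F=F_{\beta'}(\mathcal E)=\mathrm{conv}(K^{\beta'}\cdot\mu_\rho)$, while the stabilizer statement $\{g\in G:\,g\,\ext F=\ext F\}=Q_J=G^{\beta'+}$ is exactly the identification $Q_{F_W}=Q(W)=G^{\beta'+}$ of Proposition \ref{stabilizzatore-faccie}, applied to the top eigenspace $W$ of $\beta'$. For (c), recall that $\mathcal E$ is a polar orbitope, so every face is exposed and an arbitrary face equals $F_\gamma(\mathcal E)$ for some $\gamma\in\liep$; since every element of $\liep$ is $\mathrm{Ad}(K)$-conjugate into $\overline{\lia}_+$ and $\mathcal E$ is $K$-invariant, after a $K$-translation I may take $\gamma\in\lia$ dominant. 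Putting $I_0=\{\alpha:\alpha(\gamma)=0\}$ and letting $I\subseteq I_0$ be the union of those connected components of $I_0$ that are not orthogonal to $\mu_\rho$, the set $I$ is $\mu_\rho$-connected, the remaining components of $I_0$ fix $\mu_\rho$ and $\mathcal W_I\cdot\mu_\rho$, so $\mathcal W_{I_0}\cdot\mu_\rho=\mathcal W_I\cdot\mu_\rho$. The nested-faces argument of (a) (applied to $Q_I\subseteq Q_{I_0}$, which gives $F\subseteq F_\gamma(\mathcal E)$ with equal trace on $\lia$) then yields $F_\gamma(\mathcal E)=F$, the face attached to $I$ in (a).

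The step I expect to be most delicate is the equality $Q_I\cdot\mu_\rho=Q_J\cdot\mu_\rho$, i.e. the passage from the transparent Weyl-group computation in $P$ to the full orbitope $\mathcal E$. The combinatorial input---that enlarging $I$ by roots orthogonal to $\mu_\rho$ does not enlarge the orbit---is precisely Satake's mechanism of $\mu_\rho$-saturation, and its geometric content is that the extra Levi factor indexed by $I'$ fixes the highest weight point $x_o$ projectively and hence acts trivially on $\maxm{\beta}{\mathcal O}$. Making the transfer rigorous rests on the reduction results for polar orbitopes (Theorems \ref{convex-reduction} and \ref{meo-israel}) and on the fact that nested faces lying in a common $K$-orbit must coincide; verifying that $F\cap\lia$ really realizes the inverse of the bijection in Theorem \ref{meo-israel} is the point that will require the most care.
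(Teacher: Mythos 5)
The paper does not prove Theorem \ref{parabolici-faccie} at all: it is imported verbatim from \cite{biliotti-ghigi-heinzner-2} (see also \cite{ks}), so there is no internal argument to measure your proposal against. Your reconstruction is a sensible assembly of the surrounding machinery, and its skeleton --- the identity $\ext F_\beta(\mathcal E)=\mup(\maxm{\beta}{\mathcal O})=Q_I\cdot\mu_\rho=K^\beta\cdot\mu_\rho$ for dominant $\beta\in\lia$, reduction to the Weyl polytope $P=\mathcal E\cap\lia$, and $K$-conjugation into the closed chamber for part (c) --- is essentially how the cited source argues, so the computation for (a) and (c) is convincing modulo the transfer step you yourself flag (that $F\mapsto F\cap\lia$ inverts the bijection of Theorem \ref{meo-israel}; this is true for polar orbitopes but is an IOU as written). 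Two further points deserve attention. First, you invoke $F_\beta(P)=\mathrm{conv}(\mathcal W_I\cdot\mu_\rho)$ as the ``standard description of the faces of a weight polytope''; this is classical (\cite{Satake}, \cite{borel-tits}, \cite{casselman}), but the paper's declared intention is to \emph{derive} that description (Theorem \ref{casselman}) from Theorem \ref{parabolici-faccie}, so your argument inverts the paper's architecture and you should cite the combinatorial fact from a source independent of \cite{biliotti-ghigi-heinzner-2} to avoid the appearance of circularity.

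Second, and more substantively, part (b) is not obtained by ``running the same identity for the saturated index.'' The direction $Q_J\subseteq\{g:g\,\ext F=\ext F\}$ is easy, but the reverse inclusion is the real content of $\mu_\rho$-saturation at the group level, and your appeal to Proposition \ref{stabilizzatore-faccie} does not close it: that proposition yields $G^{\beta+}=Q_{F_W}=Q(W)$ only for $\beta$ in the $H_F$-fixed part of the cone $C_{F_W}$, and you never verify that your dominant $\beta'$ (with $\{\alpha\in\Pi:\alpha(\beta')=0\}=J$) lies in that subcone, nor do you argue directly that the parabolic $Q_{F_W}$, which contains the standard parabolic $Q_J$ and hence equals some $Q_{J''}$ with $J''\supseteq J$, cannot be strictly larger (i.e.\ that any simple root $\alpha\notin J$ moves $\ext F$). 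Moreover, Proposition \ref{stabilizzatore-faccie} is itself justified in this paper only by citing \cite{biliotti-ghigi-heinzner-2}, the very reference the theorem is quoted from, so even granting it your proof is not independent of that source. Either supply the missing verification that $\beta'\in C_{F_W}^{H_F}$, or prove $Q_{J''}\cdot\mu_\rho\supsetneq Q_J\cdot\mu_\rho$ for $J''\supsetneq J$ directly; as it stands this is the one step of the theorem your note leaves genuinely uncovered.
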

Let $Q$ be a parabolic subgroup. We denote by $\OO(Q)$ the unique closed orbit of $Q$ in $\OO$By Theorem \ref{parabolici}, $\mup(\OO(Q))=\mathrm{ext}\, F$ for some face $F$ of $\mathcal O$. Moreover, $\mup^{-1}(F)=W$ is the unique $\rho$-connected subspace such that $Q$ acts irreducible on $W$,  $F=F_W$  and $Q\subset Q(W)$.

Let $I\subset \Pi$ be a $\mu_\rho$-connected subset and let $F_I$ be the face of $\mathcal E$ such that $\mathrm{ext}\, F_I=Q_I \cdot \mu_\rho$. We also denote by $W_I=\mup^{-1}(F_I)$ be the unique subspace of $\R^n$ such that $Q_I$ acts irreducible.
\begin{prop}\label{parabolic-coniugation}
If $Q$ is a parabolic subgroup of $G$, then  there exists $\mu_\rho$-connected subset $I\subset \Pi$ and $k\in K$ such that $W_I=kW$ and $
Q_I\subseteq kQk^{-1}\subseteq Q_J.$
\end{prop}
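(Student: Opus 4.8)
The plan is to conjugate the arbitrary parabolic $Q$ into standard position by a single element of $K$, and then to read off $I$ and its saturation $J$ from the combinatorics of simple roots. First I would write $Q=G^{\beta+}$ for some $\beta\in\liep$, which is possible by the characterization of parabolic subgroups recalled before the statement; by Theorem \ref{parabolici} the subspace $W$ attached to $Q$ is the eigenspace of $\beta$ for its largest eigenvalue on $\R^n$. Since every element of $\liep$ is $K$-conjugate into the closed Weyl chamber $\overline{\lia_{+}}$, I would pick $k\in K$ with $\beta_0:=\Ad(k)\beta\in\overline{\lia_{+}}$. Conjugation is equivariant, so $kQk^{-1}=G^{\beta_0+}$; as every positive restricted root is nonnegative on a dominant $\beta_0$, one gets $\lieg^{\beta_0+}=\lieq_{I_0}$ with $I_0:=\{\alpha\in\Pi:\alpha(\beta_0)=0\}$, and since parabolic subalgebras are self-normalizing this yields $kQk^{-1}=Q_{I_0}$, a standard parabolic whose attached subspace is $kW$, the maximal eigenspace of $\beta_0$.

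Next I would produce $I$ from $I_0$ purely combinatorially: let $I$ be the union of those connected components of $I_0$ that are not orthogonal to $\mu_\rho$, and let $J=I\cup I'$ be its $\mu_\rho$-saturation. Then $I$ is $\mu_\rho$-connected and $I\subseteq I_0\subseteq J$. The first inclusion is clear; for the second, any connected component $C$ of $I_0$ disjoint from $I$ consists of roots orthogonal to $\mu_\rho$ and, being a separate component, orthogonal to $I$ as well, hence $C\subseteq I'$. Because standard parabolic subalgebras are ordered by inclusion of their defining subsets, $I\subseteq I_0\subseteq J$ gives $Q_I\subseteq Q_{I_0}\subseteq Q_J$, that is $Q_I\subseteq kQk^{-1}\subseteq Q_J$.

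It then remains to identify the subspaces, i.e. to prove $kW=W_I$. For $\beta\in\overline{\lia_{+}}$ the maximal eigenspace of $\beta$ on $\R^n$ is the sum of the weight spaces whose weight lies in the exposed face $F_\beta(P)$ of $P=\conv(\mathcal W\cdot\mu_\rho)$, and this face is $\conv(\mathcal W_{I(\beta)}\cdot\mu_\rho)$ with $I(\beta)=\{\alpha\in\Pi:\alpha(\beta)=0\}$. Applying this to $\beta_0$, for which $I(\beta_0)=I_0$, and to a dominant $\beta_I\in\lia$ defining $Q_I=G^{\beta_I+}$, for which $I(\beta_I)=I$, I would use that the reflections indexed by $I'$ fix $\mu_\rho$ and that $\mathcal W_I\subseteq\mathcal W_{I_0}\subseteq\mathcal W_J$, whence $\mathcal W_I\cdot\mu_\rho=\mathcal W_{I_0}\cdot\mu_\rho=\mathcal W_J\cdot\mu_\rho$. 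Thus $F_{\beta_0}(P)=F_{\beta_I}(P)$, so $\beta_0$ and $\beta_I$ have the same maximal eigenspace, and $kW=W_I$, completing the argument with this choice of $I$ and $k$.

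The step I expect to be the main obstacle is the identification $kW=W_I$: one must see that discarding the $\mu_\rho$-orthogonal components of $I_0$ changes neither the relevant face of $P$ nor, consequently, the maximal eigenspace of the defining vector. This is precisely the invariance of $\conv(\mathcal W_I\cdot\mu_\rho)$ under passage to the saturation, the combinatorial heart of the descriptions of Satake and Casselman; the preliminary reduction to a standard parabolic through the closed Weyl chamber is what makes the subset ordering $I\subseteq I_0\subseteq J$, and hence the inclusions of parabolic subgroups, available in the first place.
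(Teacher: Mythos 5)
Your argument is correct, and it reaches the conclusion by a genuinely different route from the paper's. The paper works through the orbitope machinery: it conjugates $Q$ so that $\lia\subset k\lieq k^{-1}$ and $(k\,\ext F_W)\cap\lia$ is a face of $P$ (via \cite[Proposition 3.5]{biliotti-ghigi-heinzner-2} and the bijection $\mathscr F(P)/\mathcal W\cong\mathscr F(\mathcal E)/K$), writes $kQk^{-1}=Q_{\tilde J}$, and then pins down $I$ and $J$ from the chain of equalities $Q_{\tilde I}\cdot\mu_\rho=Q_{\tilde J}\cdot\mu_\rho=Q_{E}\cdot\mu_\rho$ together with the injectivity part of Theorem \ref{parabolici-faccie} and Proposition \ref{gi}. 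You instead conjugate the defining vector $\beta$ of $Q=G^{\beta+}$ into the closed Weyl chamber, so that $kQk^{-1}=Q_{I_0}$ with $I_0=\{\alpha\in\Pi:\alpha(\Ad(k)\beta)=0\}$ comes for free, take $I$ to be the union of the components of $I_0$ meeting $\mu_\rho$ non-orthogonally (which is precisely the maximal $\mu_\rho$-connected subset of $I_0$, the paper's $\tilde I$), and verify $kW=W_I$ by comparing the faces of the weight polytope $P=\conv(\mathcal W\cdot\mu_\rho)$ exposed by the two dominant vectors. Both proofs hinge on the same combinatorial identity $\mathcal W_I\cdot\mu_\rho=\mathcal W_{I_0}\cdot\mu_\rho=\mathcal W_J\cdot\mu_\rho$ (valid because $\mathcal W_{I'}$ fixes $\mu_\rho$ and $\mathcal W_J=\mathcal W_I\mathcal W_{I'}$), but your version is shorter and more self-contained: it needs only the standard exposed-face description of the weight polytope for dominant directions and the monotonicity $I\subseteq I'\Rightarrow Q_I\subseteq Q_{I'}$ of standard parabolic subgroups, and it bypasses the gradient map entirely. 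What the paper's longer detour buys is that the statement stays tied to the face correspondence for $\mathcal E$, recording along the way the identity $\ext F_I=k\,\ext F_W$, which your argument does not produce directly.
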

\begin{proof}
Let $W$ be the unique subspace of $V$ be such that $Q$ acts irreducibly on $W$. By Theorem \ref{parabolici-faccie}, there exists $k_1\in K$ such that $W=k_1W_I$ and $Q\subset k_1 Q_J k_1^{-1}$. Let $\OO(W)$ denote the unique compact $Q$-orbit contained in $\OO$. Since $k_1\OO (Q)=\OO(Q_I)$, keeping in mind that $\mup(\OO(Q))=\mathrm{ext}\,F_W$ and $\mup(\OO (Q_I))=\mathrm{ext}\, F_I$ and $\mup$ is $K$-equivariant, it follows
\[
\mathrm{ext}\, F_I=k_1^{-1} \mathrm{ext}\, F_W .
\]
By \cite[Proposition 3.5]{biliotti-ghigi-heinzner-2}, there exists $k_2 \in K$ such that  $\lia \subset k_2 \mathfrak q k_2^{-1}$ and  $(k_2 \mathrm{ext}\, F_W) \cap \lia$ is a face of $P$. By the main result  proved in \cite{biliotti-ghigi-heinzner-2} $\mathscr F (P)/\mathcal W \cong \mathscr F (\mathcal E) /K$. Hence, there exists $\theta \in N_K (\lia)$ such that
\[
\mathrm{ext}\, F_I \cap \lia=(\theta k_2 \mathrm{ext} F_W)\cap \lia.
\]
By \cite[Theorem 1.1]{biliotti-ghigi-heinzner-2}, we have $\mathrm{ext}\, F_I=\theta k_2 \mathrm{ext} F_W$.

Set $k=\theta k_2$. Since $\lia \subset k\lieq k^{-1}$, there exists a $\tilde J$ subset of $\Pi$ such that $Q_{\tilde J}=kQk^{-1}$. By Theorem \ref{parabolici-faccie} and Proposition \ref{gi}, we have
\[
Q_{\tilde J} \cdot \mu_\rho=Q_I \cdot \mu_\rho=\ext F_I.
\]
Let $\tilde I$ denote the maximal $\{\mu_\rho\}$-connected subset contained in $\tilde J$. By Theorem \ref{parabolici-faccie}, $Q_{\tilde I} \cdot \mu_\rho$ is a face and
$Q_{E}\cdot \mu_\rho=Q_{\tilde I}\cdot \mu_\rho$, where $E$ is the saturation of $\tilde I$. Since $\tilde I \subseteq \tilde J \subseteq E$, it follows
\[
Q_{\tilde I}\cdot \mu_\rho =Q_{\tilde J} \cdot \mu_\rho=Q_{E} \cdot \mu_\rho.
\]
On the other hand $Q_I\cdot \mu_\rho= Q_{\tilde J} \cdot \mu_\rho$ and so, by Theorem \ref{parabolici-faccie}, $Q_E =Q_{J}$.
This implies $E=J$, $\tilde I=I$  due to the fact that $I$, respectively $\tilde I$, is the maximal $\{\mu_\rho\}$-connected subspace of $J$, and
\[
Q_I\subset Q_{\tilde J}\subset Q_J,
\]
where $I\subseteq \tilde J \subseteq J$.
\end{proof}
Let $I'=\tilde J \setminus\{I\}$. The set $I'$ is perpendicular to $I$ and so the Langlands decomposition of $Q_{\tilde J}$ can be written as
\[
Q_{\tilde J}=N_{I} A_I M_I M_{I'},
\]
see \cite{borel-ji-libro}. By \cite[Lemma I.4.25, p. $69$]{borel-ji-libro}, $M_{I'}$ acts trivially on $W_I$. Hence, the $Q_{\tilde J}$-action on $W_I$ is completely determined by the $Q_I$-action on $W_I$. Hence, keeping in mind Corollary \ref{oi}, we have proved the following result.
\begin{teo}
The face structure of $\mathcal E$, up to $K$-equivalence, describes the irreducible representations of parabolic subgroups of $G$  induced by $\rho:G \lra \mathrm{SL}(n,\R)$ and so the irreducible representations of parabolic subgroups of $G$ induced by $\rho:G \lra \mathrm{SL}(n,\C)$.
\end{teo}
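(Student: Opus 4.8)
The plan is to assemble the correspondences established in the preceding results into a single dictionary between faces of $\mathcal E$ and irreducible representations of parabolic subgroups, and then to transfer this dictionary from the real to the complex setting. Since all the hard analytic and combinatorial work has already been carried out, the argument is essentially one of organization.

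First I would recall the bijection of Proposition \ref{tau-connected-face}: the assignment $(W,Q(W)) \mapsto F_W$ is $K$-equivariant and identifies $\rho$-connected subspaces of $\R^n$ with faces of $\mathcal E$. Combined with Theorem \ref{parabolici-faccie}, which realizes every face up to the $K$-action as $F_I$ for a $\mu_\rho$-connected subset $I \subset \simple$, this already shows that $K$-equivalence classes of faces are in bijection with the subspaces $W_I$ together with the parabolic subgroups acting irreducibly on them. In other words, the left-hand side of the claimed description is under control from the start.

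Next I would invoke Proposition \ref{parabolic-coniugation}: an arbitrary parabolic subgroup $Q$ is, after conjugating by a suitable $k\in K$, sandwiched as $Q_I \subseteq kQk^{-1}\subseteq Q_J$ with $kW=W_I$, where $W$ is the unique subspace on which $Q$ acts irreducibly (Theorem \ref{parabolici}). Thus every such irreducible representation of a parabolic subgroup is $K$-conjugate to the $Q_I$-representation on $W_I$, which is precisely the datum read off from the face $F_I$. The point that upgrades this to a genuine \emph{description} is that the representation is stable along the chain: writing the Langlands decomposition $Q_{\tilde J}=N_I A_I M_I M_{I'}$, the factor $M_{I'}$ acts trivially on $W_I$ by \cite[Lemma I.4.25, p. $69$]{borel-ji-libro}, so the $Q_{\tilde J}$-action, and hence the $Q_J$-action, on $W_I$ is completely determined by the $Q_I$-action.

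Finally I would pass to the complex representation. By Corollary \ref{oi}, if $Q$ is parabolic and $L\subset \C^n$ is a $Q$-invariant subspace on which $Q$ acts irreducibly, then $L=W^\C$ for a $Q$-invariant real subspace $W$ on which $Q$ acts irreducibly. Hence the irreducible representations of parabolic subgroups induced by $\rho:G \lra \mathrm{SL}(n,\C)$ are exactly the complexifications of those induced by $\rho:G \lra \mathrm{SL}(n,\R)$, and the real classification just obtained carries over verbatim. The main obstacle is the middle step: ensuring that the irreducible subspace remains unchanged along the whole chain $Q_I\subseteq kQk^{-1}\subseteq Q_J$ and that the $M_{I'}$-factor contributes nothing, which is exactly where the Satake-type root combinatorics of \cite{borel-ji-libro,Satake} and the irreducibility statement of Theorem \ref{parabolici} enter decisively.
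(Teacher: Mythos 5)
Your proposal is correct and follows essentially the same route as the paper: the author likewise combines Proposition \ref{parabolic-coniugation} (sandwiching $Q_I \subseteq kQk^{-1}\subseteq Q_J$ with $kW=W_I$), the Langlands decomposition $Q_{\tilde J}=N_I A_I M_I M_{I'}$ together with \cite[Lemma I.4.25]{borel-ji-libro} to see that $M_{I'}$ acts trivially on $W_I$, and Corollary \ref{oi} for the passage to the complex case. The only cosmetic difference is that you make explicit the preliminary bijection of Proposition \ref{tau-connected-face} with Theorem \ref{parabolici-faccie}, which the paper leaves implicit in the surrounding discussion.
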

Let $I\subset \Pi$ be a $\mu_\rho$-connected subset and let $F_I$  the face of $\mathcal E$ be such that $\mathrm{ext}\, F_I=Q_I \cdot \mu_\rho$. Let $\mathfrak a_I :=\bigcap_{\alpha \in I} \, \mathrm{Ker}\, \alpha$ and let $\mathfrak a^I$  be the orthogonal complement of $\mathfrak a_I$ in $\mathfrak a$.  Then
$
\mathfrak q_I=\mathfrak n_I \oplus \mathfrak a_I \oplus \mathfrak m_I,
$
where $\mathfrak m_I=\mathfrak z_{\mathfrak k}(\mathfrak a)\oplus \mathfrak a^I \bigoplus_{\alpha \in I} \mathfrak g_\alpha$ is the Lie algebra of a Levi factor of $Q_I$, that we denote by $M_I$ which is not connected in general. We recall that $\mathfrak g_{\alpha}:=\{v\in \lieg:\, [H,v]=\alpha(H)v,\, \forall H\in \lia\}$. $M_I$ is compatible and $K_I=K\cap Q_I$  is its maximal compact subgroup. The Abelian subalgebra $\mathfrak a^I$ is a maximal Abelian subalgebra of $\mathfrak m_I\cap \liep$. Let  $\mathcal W_I=N_{K_I}(\lia^I)$. $\mathcal W_I$  is the subgroup of the Weyl group generated by the the root reflections induced by the elements of $I$. We split $\mu_\rho=y_0+y_1$, where $y_0 \in \mathfrak a_I$ and $y_1\in \mathfrak a^I$.  By Proposition 3.5 in \cite{biliotti-ghigi-heinzner-2}, we have
\[
Q_I \cdot \mu_\rho=K_I\cdot \mu_\rho =y_0+K_I \cdot y_1.
\]
By the Kostant convexity Theorem \cite{kostant-convexity},
\[
\pi_\lia(Q_I \cdot \mu_\rho)=y_0+\mathrm{conv}(\mathcal W_I \cdot y_1).
\]
By Proposition \ref{gi}, $Q_I \cdot x_o$ is the unique compact $Q_I$-orbit on $\OO$ and $\mup(Q_I \cdot x_o )=Q_I\cdot \mu_\rho$. Therefore
\[
\mua(Q_I \cdot x_o)=y_0+\mathrm{conv}(\mathcal W_I \cdot y_1).
\]
Summing up, keeping in mind Theorem \ref{parabolici-faccie}, we  get a result proved by Casselman \cite{casselman} using the $G$-gradient map.
\begin{teo}\label{casselman}
The map $I \mapsto \mathrm{conv}\, (\mathcal W_I \cdot \mu_\rho)$ induces a bijection between the $\mu_\rho$-connected subset of $\Pi$ and the faces of $P$ up to the Weyl-group action. Moreover,
\[
\mua(Q_I \cdot x_o)=\mathrm{conv}\, (\mathcal W_I \cdot \mu_\rho)=  y_0+\mathrm{conv}\, (\mathcal W_I \cdot y_1 ),
\]
where $y_0 \in \mathfrak a_I$, $y_1\in \mathfrak a^I$ and $\mu_\rho=y_0+y_1$.
\end{teo}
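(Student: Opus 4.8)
The plan is to derive both assertions from the face-theoretic results already established, reducing the statement to the Satake combinatorics of the saturation operation. I would record the displayed formula first, since it is essentially the computation carried out just before the statement. By Proposition~\ref{gi}, $Q_I\cdot x_o$ is the unique compact $Q_I$-orbit in $\OO$ and $\mup(Q_I\cdot x_o)=Q_I\cdot\mu_\rho$; applying $\pi_\lia$ and using $\mua=\pi_\lia\circ\mup$ gives $\mua(Q_I\cdot x_o)=\pi_\lia(Q_I\cdot\mu_\rho)$. By Proposition~3.5 of \cite{biliotti-ghigi-heinzner-2} one has $Q_I\cdot\mu_\rho=K_I\cdot\mu_\rho=y_0+K_I\cdot y_1$ with $y_0\in\mathfrak a_I$, $y_1\in\mathfrak a^I$, so Kostant's convexity theorem \cite{kostant-convexity}, applied to the compatible group $M_I$ with maximal Abelian subalgebra $\mathfrak a^I$ of $\mathfrak m_I\cap\liep$ and Weyl group $\mathcal W_I$, gives $\pi_\lia(Q_I\cdot\mu_\rho)=y_0+\conv(\mathcal W_I\cdot y_1)$. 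Finally $\mathcal W_I$ is generated by the reflections $s_\alpha$, $\alpha\in I$, each of which fixes $\mathfrak a_I=\bigcap_{\alpha\in I}\ker\alpha$ pointwise; hence $\mathcal W_I$ fixes $y_0$ and $\conv(\mathcal W_I\cdot\mu_\rho)=y_0+\conv(\mathcal W_I\cdot y_1)$, which is the asserted chain of equalities.

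For the correspondence I would assemble the two bijections already available. By Theorem~\ref{parabolici-faccie}, each $\mu_\rho$-connected $I\subset\Pi$ yields a face $F_I=\conv(Q_I\cdot\mu_\rho)$ of $\mathcal E$, and every face of $\mathcal E$ is $K$-conjugate to some $F_I$, so $I\mapsto F_I$ is a surjection onto $\mathscr F(\mathcal E)/K$. By Theorem~\ref{meo-israel} the assignment $\sigma\mapsto K^{\sigma^\perp}\cdot\sigma$ induces a bijection $\mathscr F(P)/\mathcal W\cong\mathscr F(\mathcal E)/K$; under it the face of $P$ attached to $F_I$ is the polytope $\conv(\mathcal W_I\cdot\mu_\rho)$ computed above, since this set is a face of $P$ by the description of $\mathscr F(P)$ in \cite{biliotti-ghigi-heinzner-2}, it is contained in $F_I$ (because $\mathcal W_I\cdot\mu_\rho\subset Q_I\cdot\mu_\rho=\ext F_I$), and its $K^{\sigma^\perp}$-sweep recovers $F_I$. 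Composing the two maps shows that $I\mapsto\conv(\mathcal W_I\cdot\mu_\rho)$ is a well-defined surjection onto $\mathscr F(P)/\mathcal W$.

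The only real obstacle is injectivity, and I expect to handle it exactly as in the proof of Proposition~\ref{parabolic-coniugation}. Suppose $F_I$ and $F_{\hat I}$ lie in a single $K$-orbit. By Lemma~\ref{tau-faccie} and Proposition~\ref{stabilizzatore-faccie} the face $F_I=F_{W_I}$ determines its stabilizing parabolic $Q(W_I)=\{h\in G:\, h\,\ext F_I=\ext F_I\}$, which by Theorem~\ref{parabolici-faccie}(b) is the standard parabolic $Q_J$ attached to the $\mu_\rho$-saturation $J$ of $I$; likewise $F_{\hat I}$ determines $Q_{\hat J}$. A $K$-conjugacy of the faces conjugates these stabilizers, so $Q_J$ and $Q_{\hat J}$ are conjugate in $G$; since distinct standard parabolic subgroups of $G$ are never conjugate \cite{borel-ji-libro}, we get $J=\hat J$. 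Now $I$ is recovered intrinsically from $J$ as its maximal $\mu_\rho$-connected subset: writing $J=I\cup I'$ with $I'$ orthogonal to $\{\mu_\rho\}\cup I$, the connected components of $I'$ are orthogonal to $\mu_\rho$ and hence not $\mu_\rho$-connected, while those of $I$ are; the same description applies to $\hat I\subset\hat J=J$, whence $I=\hat I$. This yields injectivity, and together with the formula of the first paragraph it proves the theorem.
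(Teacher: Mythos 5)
Your proposal is correct and follows essentially the same route as the paper: the displayed formula is obtained exactly as in the text (Proposition \ref{gi} to get $\mup(Q_I\cdot x_o)=Q_I\cdot\mu_\rho$, then Proposition 3.5 of \cite{biliotti-ghigi-heinzner-2} and Kostant's convexity theorem applied to $M_I$), and the bijection is assembled from Theorem \ref{parabolici-faccie} together with the correspondence $\mathscr F(P)/\mathcal W\cong\mathscr F(\mathcal E)/K$ of Theorem \ref{meo-israel}. The only difference is that you spell out injectivity via the non-conjugacy of distinct standard parabolic subgroups and the recovery of $I$ as the maximal $\mu_\rho$-connected subset of its saturation, which the paper leaves to the cited results but which is exactly the argument it uses in Proposition \ref{parabolic-coniugation}.
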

\section{Norm square gradient map}
Let $\mup:\mathbb P (\R^n) \lra \liep$ be the $G$-gradient map and let
$\nu_\liep:=\frac{1}{2}\parallel \mup (x) \parallel^2$ denote the norm square gradient map. Let $\nu_\liu (p):=\frac{1}{2}\parallel \mu (p) \parallel^2$ denote the norm square momentum map. The gradient of $\nu_\liep$, respectively $\nu_\liu$, is given by $\mathrm{grad}\, \nu_\liep (x)= \mu(x)_{\mathbb P (\R^n)}$, respectively $\mathrm{grad}\, \nu_\liu (y)= \mu(x)_{\mathbb P (\C^n)}$ \cite{heinzner-schwarz-stoetzel}. Lemma \ref{restrizione} implies  $x\in \mathbb P(\R^n)$ is a critical point of $\nu_\liep$ if and only if $x\in \mathbb P(\R^n)$ is a critical point of $\nu_\liu$, see also \cite{jabo}. Proposition \ref{closed-orbit} implies
\[
\mathrm{Max}_{x\in \mathbb P(\R^n)} \nu_\liep =\mathrm{Max}_{x\in \mathbb P(\C^n)} \nu_\liu,
\]
and so the maximum of $\nu_\liu$ is achieved on $\mathbb P(\R^n)$. We shall investigate the strata associated to the critical $K$-orbits of $\nu_\liep$.
The negative gradient flow line of $\nu_\liep$ through $x_0\in \mathbb P(\R^n)$ is the solution of the differential equation
\[ \left\{ \begin{array}{ll}
         \dot{x}(t) = -\beta_{\mathbb P (\R^n)} (x(t)), \quad t\in \mathbb{R} \\
        x(0) = x_0.\end{array} \right.
\]
The Lojasiewicz gradient inequality holds and so the flow is defined for any $t\in \R$ and the limit
$$x_\infty := \lim_{t \rightarrow +\infty} x(t) $$
exist \cite[Theorem $3.3$]{bjmain}, see also \cite{grs} for the $U^\C$-action on $\mathbb P(\C^n)$  and the corresponding momentum map. Moreover,
$$
\parallel \mu_\mathfrak{p}(x_\infty)\parallel = \text{Inf}_{g\in G}\parallel \mu_\mathfrak{p}(gx_0)\parallel
$$
and the $K$-orbit of $x_\infty$ depends only on the $G$-orbit of $x_0$ \cite[Theorem $4.7$]{bjmain}, see also \cite{jabo}.

By Lemma \ref{restrizione}, $\mup=i\mu$ on $\mathbb P(\R^n)$. Then the negative gradient flow line of $\nu_\liep$ through $x_0$ coincides with the negative gradient flow of $\nu_\liu$.  Hence, given
$x_0 \in \mathbb P(\R^n)$, we get
\[
\begin{split}
\parallel \mu_\mathfrak{p}(x_\infty)\parallel &=\text{Inf}_{g\in G}\parallel \mu_\mathfrak{p}(gx_0)\parallel \\ &=\text{Inf}_{g\in G^\C} \parallel \mu_(gx_0)\parallel.
\end{split}
\]
Let $x\in \mathbb P(\C^n)$.  Applying the negative gradient flow of the norm square momentum map, the orbit $U^\C \cdot x$ flows on the $U$-orbit throughout $x_\infty=\lim_{t\mapsto +\infty} x(t)$.
If  $U^\C \cdot x \cap \mathbb P(\R^n)\neq \emptyset$ then $U\cdot x_\infty \cap \mathbb P(\R^n) \neq \emptyset$. Let $y\in U\cdot x_\infty \cap \mathbb P(\R^n)$. Then, keeping in mind that the norm square momentum map is $U$-invariant, we have
\[
\parallel \mu(x_\infty)=\parallel \mu(y) \parallel=\parallel \mup(y) \parallel,
\]
and so
\[
\mathrm{Inf}_{g\in U^\C}\parallel \mu (gx)\parallel =\mathrm{Inf}_{y\in U^\C \cdot x \cap \mathbb P(\R^n)} \parallel \mup (y)\parallel.
\]
Summing up, we have proved the following result.
\begin{prop}
Let  $x_0 \in \mathbb P(\R^n)$. Then
\[
\begin{split}
\parallel \mu_\mathfrak{p}(x_\infty)\parallel &=\text{Inf}_{g\in G}\parallel \mu_\mathfrak{p}(gx_0)\parallel \\ &=\text{Inf}_{g\in G^\C} \parallel \mu_(gx_0)\parallel.
\end{split}
\]
Let $x\in \mathbb P(\C^n)$ be such that  $U^\C \cdot x \cap \mathbb P(\R^n) \neq \emptyset $. Then
\[
\mathrm{Inf}_{g\in U^\C}\parallel \mu (gx)\parallel =\mathrm{Inf}_{y\in U^\C \cdot x \cap \mathbb P(\R^n)} \parallel \mup (y)\parallel.
\]
\end{prop}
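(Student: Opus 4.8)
The plan is to reduce both statements to the convergence theorems for the negative gradient flow — \cite[Theorems 3.3 and 4.7]{bjmain} in the real setting $(G,\mup)$ and their counterpart \cite{grs} in the complex setting $(U^\C,\mu)$ — by first establishing the one fact that makes the two pictures interchangeable: on $\mathbb P(\R^n)$ the negative gradient flow of $\nu_\liep$ and the negative gradient flow of $\nu_\liu$ are literally the same flow, and $\mathbb P(\R^n)$ is invariant under it.

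First I would record this geometric fact. The set $\mathbb P(\R^n)$ is the fixed-point set of the antiholomorphic isometric involution $T\colon[z]\mapsto[\bar z]$, and $\nu_\liu$ is $T$-invariant because complex conjugation preserves $\liu$ and is an isometry of it. Since the gradient of a $T$-invariant function at a fixed point of the isometry $T$ is tangent to $\mathrm{Fix}(T)=\mathbb P(\R^n)$, the vector field $\mathrm{grad}\,\nu_\liu$ is tangent to $\mathbb P(\R^n)$ along $\mathbb P(\R^n)$. As $\mathbb P(\R^n)$ is totally geodesic \cite{bm} and, by Lemma \ref{restrizione}, $\nu_\liu|_{\mathbb P(\R^n)}=\nu_\liep$ (because $\parallel\mup(z)\parallel=\parallel i\mu(z)\parallel=\parallel\mu(z)\parallel$ there, multiplication by $i$ being an isometry), this tangential gradient equals $\mathrm{grad}\,\nu_\liep$. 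Hence the negative gradient flow of $\nu_\liu$ leaves $\mathbb P(\R^n)$ invariant and restricts there exactly to the negative gradient flow of $\nu_\liep$, so for $x_0\in\mathbb P(\R^n)$ the limit $x_\infty$ (which exists by \cite[Theorem 3.3]{bjmain}) is the same point in either picture and lies in $\mathbb P(\R^n)$.

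For the first displayed chain, the equality $\parallel\mup(x_\infty)\parallel=\mathrm{Inf}_{g\in G}\parallel\mup(gx_0)\parallel$ is then immediate from \cite[Theorem 4.7]{bjmain}. For the second equality I would use that $x_\infty\in\mathbb P(\R^n)$, so Lemma \ref{restrizione} gives $\parallel\mup(x_\infty)\parallel=\parallel\mu(x_\infty)\parallel$, while the complex convergence theorem \cite{grs}, applied to the $U^\C$-flow through $x_0$ (the same flow, by the previous paragraph), identifies $\parallel\mu(x_\infty)\parallel=\mathrm{Inf}_{g\in U^\C}\parallel\mu(gx_0)\parallel=\mathrm{Inf}_{g\in G^\C}\parallel\mu(gx_0)\parallel$. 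For the second statement, let $x\in\mathbb P(\C^n)$ with $U^\C\cdot x\cap\mathbb P(\R^n)\neq\emptyset$ and pick $y_0$ in this intersection. Running the negative gradient flow of $\nu_\liu$ from $y_0$ stays in $\mathbb P(\R^n)$ and converges to a point of the minimal $U$-orbit $U\cdot x_\infty$, so $U\cdot x_\infty\cap\mathbb P(\R^n)\neq\emptyset$; choosing $y$ there, $U$-invariance of $\nu_\liu$ and Lemma \ref{restrizione} give $\parallel\mup(y)\parallel=\parallel\mu(y)\parallel=\parallel\mu(x_\infty)\parallel=\mathrm{Inf}_{g\in U^\C}\parallel\mu(gx)\parallel$. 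This value is a lower bound for $\parallel\mup(z)\parallel$ over all $z\in U^\C\cdot x\cap\mathbb P(\R^n)$, since each such $z$ satisfies $\parallel\mup(z)\parallel=\parallel\mu(z)\parallel\geq\mathrm{Inf}_{g\in U^\C}\parallel\mu(gx)\parallel$, and it is attained at $y$, yielding the claimed identity.

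The main obstacle is the second paragraph: carefully justifying that $\mathrm{grad}\,\nu_\liu$ is tangent to $\mathbb P(\R^n)$ and coincides there with $\mathrm{grad}\,\nu_\liep$, so that the two a priori different dynamical systems are one and the same. Everything else is bookkeeping with Lemma \ref{restrizione} and the cited convergence results; the only remaining point needing care is that the infimum over the intersection $U^\C\cdot x\cap\mathbb P(\R^n)$ is actually \emph{attained} rather than merely approached, which is exactly what the flow-invariance of $\mathbb P(\R^n)$ secures.
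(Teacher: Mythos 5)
Your overall strategy coincides with the paper's: identify the negative gradient flows of $\nu_\liep$ and $\nu_\liu$ on $\mathbb P(\R^n)$ and then invoke the convergence results of \cite{bjmain} for the real flow and of \cite{grs} for the complex one. Your justification of the flow identification---$T$-invariance of $\nu_\liu$ forces $\mathrm{grad}\,\nu_\liu$ to be tangent to $\mathrm{Fix}(T)=\mathbb P(\R^n)$, where it must then agree with the gradient of the restriction $\nu_\liu|_{\mathbb P(\R^n)}=\nu_\liep$---is a legitimate variant of the paper's more direct route, which simply notes that $\mathrm{grad}\,\nu_\liep(x)$ and $\mathrm{grad}\,\nu_\liu(x)$ are the vector fields induced by $\mup(x)$ and by $i\mu(x)$ respectively, and that these coincide for $x\in\mathbb P(\R^n)$ by Lemma \ref{restrizione}. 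Both arguments rest on that lemma; yours trades the explicit formula for a symmetry principle (and, as you note, total geodesy is not even needed for the gradient restriction, only tangency). The first displayed chain and the lower bound in the second identity are handled exactly as in the paper.

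The one genuine gap is the closing step of the second statement, where you assert that the infimum over $U^\C\cdot x\cap\mathbb P(\R^n)$ is \emph{attained} at the point $y\in U\cdot x_\infty\cap\mathbb P(\R^n)$, and your final paragraph presents attainment as the crux of the matter. But the flow limit $x_\infty$ (hence $y$) lies only in the closure $\overline{U^\C\cdot x}$; whenever the minimum of $\|\mu\|$ on that closure is realized on a boundary orbit---the typical situation for a non-closed orbit---one has $y\notin U^\C\cdot x$ and the infimum is \emph{not} attained on the set over which it is taken. Fortunately attainment is irrelevant: the statement equates two infima. The correct final move, already implicit in what you set up, is that the trajectory $t\mapsto y_0(t)$ of the negative gradient flow starting at $y_0\in U^\C\cdot x\cap\mathbb P(\R^n)$ remains for every finite $t$ both in $\mathbb P(\R^n)$ (your invariance argument) and in $U^\C\cdot x$ (the flow is tangent to $U^\C$-orbits), while $\|\mu(y_0(t))\|$ decreases to $\|\mu(x_\infty)\|=\mathrm{Inf}_{g\in U^\C}\|\mu(gx)\|$; together with your lower bound this yields the equality of infima. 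With that substitution the argument is complete and agrees with the paper's proof.
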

Let $K\cdot \beta$ be  a critical orbit of the norm square gradient map.  By Lemma \ref{restrizione}, $U\cdot i \beta$ is a critical orbit of the norm square momentum map. Moreover, by the above discussion,  the stratum associated to  $K\cdot \beta$  is contained in the stratum associated to  $U\cdot i \beta$.  We recall that the stratum associated to  $U\cdot i \beta$ is a smooth analytic subset of $\mathbb P(\C^n)$ \cite{kirwan}.
Assume that the stratum associated to $K\cdot \beta$ is open.  Since $\mathbb P(\R^n)$ is a Lagrangian submanifold of $\mathbb P(\C^n)$, it follows that the stratum associated to $U\cdot i\beta $ is open as well. It is well-known that there exists a unique open stratum of $\nu_\liu$ in $\mathbb P(\C^n)$. It is the minimal stratum \cite{kirwan}. Therefore there exists a unique open stratum for $\nu_\liep$ which is the minimal stratum. In particular,
\[
\mathrm{Inf}_{x\in \mathbb P(\R^n)} \nu_\liep = \mathrm{Inf}_{x\in \mathbb P(\C^n)} \nu_{\mathfrak u}.
\]
Since the other strata of the norm square gradient map has codimension at least one, the minimal strata is dense. Summing up, we have proved the following result.
\begin{prop}
The norm square gradient map $\nu_\liep$ has a unique open stratum, which is the minimal stratum. This stratum is open, dense and is given by the intersection of $\mathbb P(\R^n)$ and the minimal stratum of  $\nu_\liu$.
\end{prop}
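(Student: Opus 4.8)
The plan is to deduce everything from the Kirwan stratification of $\mathbb P(\C^n)$ attached to $\nu_\liu$, pulling it back to $\mathbb P(\R^n)$ through the identity $\mup = i\mu$ of Lemma \ref{restrizione}. First I would record the two stratifications: by \cite{heinzner-schwarz-stoetzel} one has a finite decomposition $\mathbb P(\R^n) = \bigsqcup_\beta S_\beta$ into smooth, locally closed, $K$-invariant strata indexed by the critical $K$-orbits $K\cdot\beta$ of $\nu_\liep$, and by \cite{kirwan} the analogous decomposition $\mathbb P(\C^n) = \bigsqcup_\gamma S^\C_\gamma$ for $\nu_\liu$. The bridge between them is that $\mup = i\mu$ on $\mathbb P(\R^n)$ forces the negative gradient flows of $\nu_\liep$ and $\nu_\liu$ to coincide there; since the generating field $\frac{\mathrm d}{\mathrm d t}\restr{t=0}\exp(t\mup(x))x$ is tangent to $\mathbb P(\R^n)$, the complex flow of a real point stays real, so its flow limit agrees with the real one. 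This gives, for each $\beta$, the identification $S_\beta = \mathbb P(\R^n)\cap S^\C_{i\beta}$.

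Existence of an open stratum is automatic: finitely many locally closed strata cover the connected $(n-1)$-manifold $\mathbb P(\R^n)$, so at least one is $(n-1)$-dimensional, hence open. The decisive step is to promote an open real stratum to an open complex one, and here I would use that $\mathbb P(\R^n)$ is Lagrangian, hence totally real. If $S_\beta$ is open in $\mathbb P(\R^n)$ and $x\in S_\beta$, then $T_x\mathbb P(\R^n)\subset T_x S^\C_{i\beta}$; as $S^\C_{i\beta}$ is a smooth complex, thus $J$-invariant, submanifold and $T_x\mathbb P(\R^n)\oplus J\,T_x\mathbb P(\R^n) = T_x\mathbb P(\C^n)$, one gets $T_x S^\C_{i\beta} = T_x\mathbb P(\C^n)$, so $S^\C_{i\beta}$ is open. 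By \cite{kirwan} the only open stratum of $\nu_\liu$ is the minimal one $S^\C_{\min}$, so there is at most one open real stratum, the one with $S^\C_{i\beta} = S^\C_{\min}$, and it equals $\mathbb P(\R^n)\cap S^\C_{\min}$. Since $\mathrm{Inf}_{\mathbb P(\R^n)}\nu_\liep = \mathrm{Inf}_{\mathbb P(\C^n)}\nu_\liu$, this stratum realizes the real infimum and is thus the minimal stratum of $\nu_\liep$.

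Density is then formal: every other real stratum is not open, hence is a locally closed submanifold of dimension $\leq n-2$, i.e. of codimension $\geq 1$; the finite union of these has empty interior, so its complement, the unique open stratum, is dense. I expect the main obstacle to be precisely the Lagrangian step that upgrades openness from the real to the complex side: one must know that $S^\C_{i\beta}$ is a genuine $J$-invariant submanifold near the relevant points, so that the tangent inclusion can be saturated under $J$, and that the flow identification is compatible with the labelling $\beta\mapsto i\beta$ so that $S_\beta = \mathbb P(\R^n)\cap S^\C_{i\beta}$ holds as an equality rather than a mere inclusion.
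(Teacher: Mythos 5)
Your proposal is correct and follows essentially the same route as the paper: it uses $\mup=i\mu$ on $\mathbb P(\R^n)$ to identify the negative gradient flows and hence include each real stratum in a complex one, the Lagrangian (totally real) property of $\mathbb P(\R^n)$ to promote openness of a real stratum to openness of the corresponding Kirwan stratum, Kirwan's uniqueness of the open (minimal) stratum of $\nu_\liu$, and the codimension-$\geq 1$ argument for density. The only difference is that you spell out the tangent-space saturation under $J$ and the existence of at least one open real stratum, which the paper leaves implicit.
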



\begin{thebibliography}{10}
\bibitem{akhiezer}
D. N. Akhiezer.
\newblock \emph{Lie group actions in complex analysis.}
\newblock Aspects of Mathematics, E27. Friedr. Vieweg $\&$ Sohn, Braunschweig, 1995
%
\bibitem{atiyah-commuting}
M.~F. Atiyah.
\newblock Convexity and commuting {H}amiltonians.
\newblock {\em Bull. London Math. Soc.}, \textbf{14} (1), (1982), 1--15.
%
%\bibitem{apostolov-jakobson-kokarev}
%V.~Apostolov, D.~Jakobson, and G.~Kokarev.
%\newblock An extremal eigenvalue problem in {K}\"ahler geometry.
%\newblock {\em J. Geom. Phys.}, \textbf{91} (2015), 108--116.
%
%\bibitem{arezzo-ghigi-loi}
%C.~Arezzo, A.~Ghigi, and A.~Loi.
%\newblock Stable bundles and the first eigenvalue of the {L}aplacian.
%\newblock {\em J. Geom. Anal.} \textbf{17} (3), (2007), 375--386.
%
\bibitem{baston-eastwood}
R.~J. Baston and M.~G. Eastwood.
\newblock {\em The {P}enrose transform}.
\newblock Oxford Mathematical Monographs. The Clarendon Press Oxford University
Press, New York, 1989.
%
\bibitem{bm}
G.R. Bettiol and M.M. Alexandrino,
\newblock \emph{Lie groups and geometric aspects of isometric actions}, Springer, Cham, 2015.
%
%\bibitem{audin-torus-actions} M.~Audin.  \newblock {\em Torus actions
%on symplectic manifolds}, volume~93 of {\em Progress in
%Mathematics}.  \newblock Birkh\"auser Verlag, Basel, revised edition, 2004.
%
%\bibitem{azad-loeb-bulletin} H.~Azad and J.-J. Loeb.  \newblock
%Plurisubharmonic functions and the {K}empf-{N}ess theorem.
%\newblock {\em Bull. London Math. Soc.}, 25(2):162--168, 1993.
%
%\bibitem{berger-geometry-1} M.~Berger.  \newblock {\em Geometry. {I}}.
%\newblock Universitext. Springer-Verlag, Berlin, 1987.  \newblock
%Translated from the French by M. Cole and S. Levy.
%
%\bibitem{bhc} Borel, A., Harish-Chandra,
%\newblock Arithmetic subgroups of algebraic groups,
%\newblock \emph{Ann. of Math.} (2) 75, 485-535 (1962).
%
%\bibitem{Bir}
%Birkes, D.,
%\newblock  Orbits of linear algebraic groups,
%\newblock \emph{Ann. of Math.} (2) 93, 459-475 (1971).
%
%\bibitem{vergne}
%Berline, N., Vergne, M.,
%\newblock \emph{Hamiltonian  manifolds  and  the  moment  map},
%\newblock preprint http://nicole.berline.perso.math.cnrs.fr/cours-Fudan.pdf.
%
%\bibitem{berger}
%M. Berger.
%\newblock Sur les primi\'eres velurs propres des vari\'et\'es riemanniennes.
%\newblock {\em Compositio Math.} \textbf{26}, (1973), 129--149.
%
%\bibitem{bgf} L.~Biliotti and A.~Ghigi.
%\newblock Homogeneous bundles and the first eigenvalue of symmetric spaces.
%\newblock {\em Ann. Inst. Fourier (Grenoble)}, \textbf{7} (2008), 2315--2331.
%
\bibitem{biliotti-ghigi-American}
L.~Biliotti and A.~Ghigi.
\newblock Satake-{F}urstenberg compactifications, the moment map and {$\lambda_1$}.
\newblock {\em Amer. J. Math.}, \textbf{135} (1), (2013), 237--274.
%
\bibitem{biliotti-ghigi-heinzner-2}
L.~Biliotti, A.~Ghigi, and P.~Heinzner.
\newblock {P}olar orbitopes.
\newblock {\em Comm. Anal. Geom.} \textbf{21} (3), (2013) 579--606.
%
\bibitem{biliotti-ghigi-heinzner-1}
L.~Biliotti, A.~Ghigi, and P.~Heinzner.
\newblock {C}oadjoint orbitopes.
\newblock \emph{Osaka J. Math.} \textbf{51} (4), (2014), 935--968.
%
%\bibitem{bghc} L.~Biliotti, A.~Ghigi, and P.~Heinzner.  \newblock A
%remark on the gradient map.
%\newblock {\em Documenta Mathematica}, \textbf{19},  (2014), 1017--1023.
%\newblock arXiv:1402.1785.
%
\bibitem{bgh-israel-p}
L.~Biliotti, A.~Ghigi, and P.~Heinzner.
\newblock Invariant convex sets in polar representations.
\newblock {\em Israel J. Math.} \textbf{213}, (2016), 423--441
%%%%%%%%%%%%%%%%%%%%%%%%%%%%%%%%%%%%%%%%%%%%%%%%%%%%%%%%%%%%%%%%%%%%%%%%%%%%%
%\bibitem{bgs}
%L. Biliotti and A. Ghigi.
%Stability of measures on K\"ahler manifolds.
%\newblock {\em Adv. Math.} \textbf{317}, (2017) 1108--1150.
%
%\bibitem{biliotti-raffero}
%L. Biliotti, and A. Raffero.
%\newblock Convexity theorems for the gradient map for the gradient map on probability measures.
%\emph{Complex Manifolds} \textbf{5}, (2018), 133--145.
%
%\bibitem{bilio-zedda}
%L. Biliotti and M. Zedda.
%\newblock Stability with respect to actions of real reductive Lie groups.
%\emph{Ann. Mat. Pura Appl.} \textbf{196}, (6), (2017), 2185--2211.
%
%\bibitem{bilio-ghigipr}
%L. Biliotti and A. Ghigi.
%\newblock Remarks on the abelian convexity theorem.
%\emph{Proc. Amer. Math. Soc.} \textbf{146}  (12), (2018), 5409--5419.
%
%\bibitem{bgt}
%L. Biliotti, A. Ghigi.,
%\newblock Mermorphic Limits of Automorphism,
%\newblock to appear on \emph{Tranform. Groups}.
%
%\bibitem{jpg}
%L. Biliotti,
%\newblock{Convexity properties of gradient maps associated to real reductive representations},
%\newblock \emph{J. Geom. Phys.} \textbf{151} (2020), 1--15.
%
%
\bibitem{bsb}
L. Biliotti,
\newblock{A Splitting Result for Real Submanifolds of a K\"ahler Manifold},
\newblock to appear on \emph{Bull. Braz. Math. Soc. New Series} (https://doi.org/10.1007/s00574-021-00280-7).
%
\bibitem{bsg}
L. Biliotti,
\newblock{Satake-Furstemberg compactifications and gradient maps},
\newblock \emph{arXiv:2012.14858}.
%
\bibitem{bj}
L. Biliotti, L and O.J. Windare,
\newblock Compact orbits of Parabolic subgroups,
\newblock to appear on \emph{Nagoya Math. J.}, (https://doi.org/10.1017/nmj.2021.14).
%
\bibitem{bjmain}
L. Biliotti and  O.J. Windare,
\newblock Properties of Gradient maps associated with Action of Real reductive Group,
\newblock \emph{arXiv:2106.13074}
%
\bibitem{borel-tits}
A. Borel and J. Tits.D.
\newblock Groupes r\'eductifs,
\newblock {\em Publ. Math. I. H. E. S.} \textbf{27}, (1965), 55--151.
%
\bibitem{borel-book}
\newblock A. Borel.
\newblock \emph{Linear Algebraic groups}.
\newblock Second edition. Graduate Texts in Mathematics \textbf{126} Springer-Verlag, New York, p. 288 (1991).
%
\bibitem{borel-ji-libro}
A.~Borel and L.~Ji.
\newblock {\em Compactifications of symmetric and locally symmetric spaces}.
\newblock Mathematics: Theory \& Applications. Birkh\"auser Boston Inc., Boston, MA, 2006.
%
%\bibitem{bourguignon-li-yau}
%J.-P. Bourguignon, P.~Li, and S.-T. Yau.
%\newblock Upper bound for the first eigenvalue of algebraic submanifolds.
%\newblock {\em Comment. Math. Helv.}, \textbf{69} (2), (1994), 199--207.
%
\bibitem{casselman}
W.A. Casselman.
\newblock Geometric rationality of Satake compactifications, In: \emph{Algebraic groups and Lie groups},
\newblock {\em Austral. Math. Soc. Lect. Ser.} \textbf{9}, Cambridge, 1997, 81--103
%\bibitem{deconcini-procesi}
%C.~De~Concini and C.~Procesi.
%\newblock Complete symmetric varieties.
%\newblock In {\em Invariant theory ({M}ontecatini, 1982)}, volume 996 of {\em
%Lecture Notes in Math.}, pages 1--44. Springer, Berlin, 1983.
%
%\bibitem{brion-completions} M.~Brion.  \newblock Group completions via
%{H}ilbert schemes.  \newblock {\em J. Algebraic Geom.},
%12(4):605--626, 2003.
%
%\bibitem{dadok-polar}
%J.~Dadok.
%\newblock Polar coordinates induced by actions of compact {L}ie groups.
%\newblock {\em Trans. Amer. Math. Soc.}, \textbf{288} (1), (1985), 125--137.
%
%\bibitem{dunford-schwartz-1}
%N.~Dunford and J.~T. Schwartz.
%\newblock \emph{Linear {O}perators. {I}. {G}eneral {T}heory.}
%\newblock With the assistance of W. G. Bade and R. G. Bartle. Pure and Applied Mathematics, Vol. 7. Interscience Publishers, Inc., New York;
%Interscience Publishers, Ltd., London, 1958.
%
%\bibitem{eberlein} P. Eberlein.
%\newblock {\em Geometry of nonpositively curved manifolds}.
%\newblock Chicago Lectures in Mathematics 1997.
%
%\bibitem{eberlein-jablonki}
%P. Eberlein and M. Jablonski.
%\newblock Closed orbit of semisimple Lie group actions and the real Hilbert-Munford functions.
%\newblock {\em New developments in Lie theory and geometry, Contemp. Math.}, Amer. Math. Soc., Providence, RI, \textbf{491} (2009), %283--321.
%
%\bibitem{folland-real-analysis}
%G.~B. Folland.
%\newblock {\em Real analysis}.
%\newblock Pure and Applied Mathematics. John Wiley \& Sons, New York, second edition, 1999.
%
%\bibitem{fur}
%H.H.~Furstenberg.
%\newblock Translation-invariant cones of functions on semi-simple lie groups.
%\newblock {\em Bull. Amer. Math. Soc.}, \textbf{71} (2), (1965), 271--326.
%
%\bibitem{furstenberg-Poisson}
%H.~Furstenberg.
%\newblock A {P}oisson formula for semi-simple {L}ie groups.
%\newblock {\em Ann. of Math.}, \textbf{77} (2), (1963), 335--386.
%
\bibitem{fh}
W. Fulton and J. Harris.
\newblock \emph{Representation theory a first course},
\newblock Springer, GTM \textbf{129} 1991.
%
\bibitem{grs}
V. Georgoulas, J.W. Robbin and D.A. Salamon.
\newblock \emph{The moment-weight inequality and the Hilbert-Mumford criterion},
Lecture Notes in Mathematics \textbf{2297}, Springer Verlag, November 2021..
%
\bibitem{gichev-polar}
V.~M. Gichev.
\newblock Polar representations of compact groups and convex hulls of their orbits.
\newblock {\em Differential Geom. Appl.}, \textbf{28} (5), (2010), 608--614.
%
%\bibitem{gm}
%I. Goldsheid and Y. Guivarc'h.
%\newblock Zariski closure and the dimension of the Gaussian law of the product of random matrices. I.
%\newblock {\em Probab. Theory Related Fields} \textbf{105}, (1996), 109--142.
%
\bibitem{goodman-wallach-Springer}
R.~Goodman and N.~R. Wallach.
\newblock {\em Symmetry, representations, and invariants},
\newblock Graduate Texts in Mathematics \textbf{255}, Springer, Dordrecht, (2009).
%
\bibitem{gp}
A. Gori and F. Podest\`a.
\newblock A Note on the moment map on compact K\"ahler manifolds,
\newblock \emph{Ann. Global Anal. Geom.} \textbf{26 (3)}, (2004), 315--318.
%
\bibitem{guillemin-sternberg-convexity-1}
V.~Guillemin and S.~Sternberg.
\newblock Convexity properties of the moment mapping.
\newblock {\em Invent. Math.}, \textbf{67} (3), (1982), 491--513.
%
\bibitem{guillemin}
V. Guillemin and S. Sternberg.
\newblock {\em Symplectic techniques in physics.} 2nd edition.
\newblock Cambridge University Press, Cambridge, 1990.
%
\bibitem{gjt}
Y. Guivarc'h, L. Ji, and J. C. Taylor.
\newblock \emph{Compactifications of symmetric spaces}.
\newblock Progress in Mathematics \textbf{156} Birkh\"auser Boston Inc., Boston, MA, 1998
%
%\bibitem{halmos}
%P.~R. Halmos.
%\newblock {\em Measure {T}heory}.
%\newblock D. Van Nostrand Company, Inc., New York, N. Y., (1950).
%
%\bibitem{hersch}
%J.~Hersch.
%\newblock Quatre propri\'et\'es isop\'erim\'etriques de membranes sph\'eriques homog\`enes.
%\newblock {\em C. R. Acad. Sci. Paris S\'er. A-B}, \textbf{270}, (1970), A1645--A1648.
%
\bibitem{huckleberry-introduction-DMV}
A.~Huckleberry.
\newblock {\em Introduction to group actions in symplectic and complex geometry}.
\newblock In Infinite dimensional {K}\"ahler manifolds ({O}berwolfach,1995), \textbf{31} of DMV Sem., pages 1--129. Birkh\"auser, Basel, (2001).
%
\bibitem{heinz-stoezel}
P.~Heinzner and H.~St{\"o}tzel.
\newblock{Semistable points with respect to real forms.}
\newblock {\em Math. Ann.} \textbf{338}, (2007), 1--9.
%
\bibitem{heinzner-schwarz-Cartan}
P.~Heinzner and G.~W. Schwarz.
\newblock Cartan decomposition of the moment map.
\newblock {\em Math. Ann.}, \textbf{337} (1), (2007), 197--232.
%
\bibitem{heinzner-schwarz-stoetzel}
P.~Heinzner, G.~W. Schwarz, and H.~St{\"o}tzel.
\newblock Stratifications with respect to actions of real reductive groups.
\newblock {\em Compos. Math.},  \textbf{144} (1), (2008), 163--185.
%
\bibitem{heinzner-stoetzel-global} P.~Heinzner and H.~St{\"o}tzel.
\newblock Critical points of the square of the momentum map.
\newblock In {\em Global aspects of complex geometry}, Spinger, Berlin, (2006), 211--226.
%
\bibitem{heinzner-schutzdeller}
P. Heinzner and P. Sch\"utzdeller.
\newblock Convexity properties of gradient maps.
\newblock \emph{Adv. Math.},  \textbf{225} (3), (2010), 1119--1133.
%
\bibitem{helgason}
S. Helgason.
\newblock {\em Differential Geometry, Lie Groups and Symmetric Spaces}.
\newblock Corrected reprint of the 1978 original. Graduate Studies in Mathematics \textbf{34}. American Mathematical Society, Providence, RI, 2001.
%
%\bibitem{humphreys}
%E.H. Humphrey.
%\newblock {\em Linear Algebraic groups}.
%\newblock Graduate Texts in Mathematics \textbf{21}, Springer-Verlag, New-York Heidelberg Berlin, 1981.
%
\bibitem{jabo}
M. Jablonski.
\newblock Distinguished orbits of reductive groups.
\newblock \emph{Rocky Mountain J. Math.} \textbf{42} (5), (2012), 1521--1549.
%
%\bibitem{kapovich-leeb-millson-convex-JDG}
%M.~Kapovich, B.~Leeb, and J.~Millson.
%\newblock Convex functions on symmetric spaces, side lengths of polygons and the stability inequalities for weighted configurations at infinity.
%\newblock {\em J. Differential Geom.}, \textbf{81} (2), (2009), 297--354.
%
\bibitem{kempf-ness}
G.~Kempf and L.~Ness.
\newblock The length of vectors in representation spaces.
\newblock In {\em Algebraic geometry (Proc. Summer Meeting, Univ. Copenhagen, Copenhagen} \textbf{732}
Lecture Notes in Math., Springer, Berlin, (1979), 233--243.
%
\bibitem{kirwan}
F.~C. Kirwan.
\newblock {\em Cohomology of quotients in symplectic and algebraic geometry,} volume \textbf{31} of {\em Mathematical Notes}.
\newblock Princeton University Press, Princeton, NJ, 1984.
%
\bibitem{knapp}
A.~W. Knapp.
\newblock {\em Lie groups beyond an introduction}, volume \textbf{140} of {\em Progress in Mathematics}.
\newblock Birkh\"auser Boston Inc., Boston, MA, second edition, 2002.
%
%\bibitem{kobert}
%T. Kobert.
%\newblock A spectrahedral Representation for Polar Orbitopes.
%\newblock \emph{arXiv:1611.05658.}
%
\bibitem{ks}
T. Kobert and C. Scheiderer.
\newblock A spectrahedral Representation of Polar Orbitopes.
\newblock to appear on \emph{Man. Math.}.
%
%\bibitem{kokarev}
%G. Kokarev.
%\newblock Bounds for Laplace eigenvalues of Kaehler metrics.
%\newblock {\em Adv. Math.} \textbf{365} (2020), 1--22.
%
%\bibitem{korany}
%A. Kor\'anyi.
%\newblock Remarks on the Satake compactifications,
%\newblock{\em Pure Appl. Math. Q.} \textbf{1} (4), (2005), 851--866.
%
\bibitem{kostant-convexity}
B.~Kostant.
\newblock On convexity, the {W}eyl group and the {I}wasawa decomposition.
\newblock {\em Ann. Sci. \'Ecole Norm. Sup. } \textbf{6} (4), (1973), 413--455.
%
%\bibitem{legrosa}
%E. Legendre  and  R.  Sena-Dias.
%\newblock Toric  aspects  of  the  first  eigenvalues.
%\newblock{\emph{J. Geom. Anal.}} \textbf{28}, (2018), 2395--2421.
%
%\bibitem{millson-zombro}
%J.~J. Millson and B.~Zombro.
%\newblock {K}\"ahler structure on the moduli space of isometric maps of a circle into {E}uclidean space.
%\newblock {\em Invent. Math.}, \textbf{123} (1) (1996), 35--59.
%
%\bibitem{moore}
%C.C. Moore.
%\newblock Compactification of symmetric spaces.
%\newblock{\em Amer. J. Math.}, \textbf{86}, (1964), 201--218.
%
\bibitem{mostow-self}
G.D. Mostow.
\newblock  Self-adjoint groups.
\newblock  \emph{Ann. of Math.} \textbf{62} (1), (1955) 44--55.
%
%\bibitem{Mosr}
%G.D. Mostow.
%\newblock {\em Strong rigidity of locally symmetric spaces.}
%\newblock Ann. Math. Studies \textbf{78}, Princeton, University Press, Princeton, N.J., 1975.
%
%\bibitem{mumford-GIT} D.~Mumford, J.~Fogarty, and F.~Kirwan.
%\newblock {\em Geometric invariant theory}, volume~\textbf{34} of {\em
%Ergebnisse der Mathematik und ihrer Grenzgebiete (2)}.  \newblock
%Springer-Verlag, Berlin, third edition, 1994.
%
%\bibitem{mundet-Crelles} I.~Mundet~i Riera.  \newblock A
%{H}itchin-{K}obayashi correspondence for {K}\"ahler fibrations.
%\newblock {\em J. Reine Angew. Math.}, 528:41--80, 2000.
%\bibitem{mundet-Trans} I.~Mundet~i Riera.  \newblock A
%{H}ilbert-{M}umford criterion for polystability in {K}aehler
%geometry.  \newblock {\em Trans. Amer. Math. Soc.},
%362(10):5169--5187, 2010.
%\bibitem{mundet-cont} I.~Mundet~i Riera.
%\newblock Maximal weights in K\"ahler geometry: flag manifolds and Tits distance. (
%With an appendix by A. H. W. Schmitt. \newblock{Contemp. Math.}, 522 "Vector bundles and complex geometry", 113--129, Amer. Math. Soc., Providence, RI, 2010.
%
%\bibitem{ness} L. Ness,
%\newblock Stratification of the null cone via the moment map, with an appendix by David Mumford,
%\newblock \emph{Amer. J. Math.} 106, 1281--1329, (1984).
%
%
\bibitem{rs} R. W. Richardson and P.J. Slodowoy.
\newblock Minimun vectors for real reductive algebraic groups,
\newblock \emph{J. London Math. Soc.} (2) 42, 409--429, (1990).
%
%\bibitem{neretin2} Y.~A. Neretin. \newblock Geometry of
%{${\rm GL}_n({\Bbb C})$} at infinity: hinges, complete
%collineations, projective compactifications, and universal
%boundary.  \newblock In {\em The orbit method in geometry and
%physics}, pages 297--327, Birkh\"{a}user Boston, Boston, MA, 2003.
%
%\bibitem{neretin}
%Y.~A. Neretin.
%\newblock Hinges and the {S}tudy-{S}emple-{S}atake-{F}urstenberg-{D}e
%{C}oncini-{P}rocesi-{O}shima boundary.
%\newblock In {\em Kirillov's seminar on representation theory}, volume 181 of
%{\em Amer. Math. Soc. Transl. Ser. 2}, pages 165--230. Amer. Math. Soc.,
%Providence, RI, 1998.
%\bibitem {neretin-libro} Y.~A. Neretin. \newblock \newblock {\em
%Categories of symmetries and infinite-dimensional
%groups}. \newblock Oxford University Press, 1996.
%
%\bibitem{panelli-podesta}
%F.~Panelli and F.~Podest{\`a}.
%\newblock On the first eigenvalue of invariant {K}\"ahler metrics.
%\newblock {\em Math. Z.}, \textbf{281}, (2015), 471--482.
%
\bibitem{orbitope}
S. Sanyal, F. Sottile and B. Sturmfels.
\newblock Orbitopes.
\newblock{\em Matematika}, \textbf{57} (2), (2011), 275--314.
%
\bibitem{Satake}
I. Satake.
\newblock{On representations and compactifications of symmetric Riemannian spaces}.
\newblock{\em Ann. Math.} \textbf{71} (2), (1960),  77--110.
%
\bibitem{schneider-convex-bodies}
R.~Schneider.
\newblock {\em Convex  bodies: the {B}runn-{M}inkowski theory}, volume~\textbf{44} of
{\em Encyclopedia of Mathematics and its Applications}.
\newblock Cambridge University Press, Cambridge, 1993.
%
\bibitem{sjamaar}
R.~Sjamaar.
\newblock Convexity properties of the momentum mapping re-examinated.
\newblock {\em Adv. Math.}, \textbf{138} (1), (1998), 46--91.
%
%\bibitem{schwartz} G. Schwartz, {\em The topology of algebraic quotients},
%Topological methods in algebraic tranformation groups (New Brunswick, NJ, 1988), 131--151, Prog. Math., Birkh\"auser Boston, 1989.
%
%\bibitem{teleman-symplectic-stability} A.~Teleman.  \newblock
%Symplectic stability, analytic stability in non-algebraic complex
%geometry.  \newblock {\em Internat. J. Math.}, 15(2):183--209, 2004.
%
%\bibitem{whitney}
%Whitney, H.,
%\newblock Elementary structure of real algebraic varieties
%\newblock \emph{Ann. of Math.} 66, 545- 556, 1957.
%
%\bibitem{warner}
%G. Warner.
%\newblock {\em Harmonic analysis on semisimple Lie group I.}
%\newblock Springer-Verlag, New York, 1972.
%
\bibitem{wolf}
J. Wolf.
\newblock The action of real of a real semisimple group on a complex flag manifold I. Orbit structure and holomorphic arc components.
\newblock {\em Bull. Amer. Math. Soc.} \textbf{75}, (1969), 1121--1237.
%
%\bibitem{Wolf-space}
%J. Wolf.
%\newblock {\em Spaces of constant curvature}.
%\newblock Sixth edition, AMS Chelsea Publishing, Providence, RI, 2011, xviii+424 pp.
%
%\bibitem{yang-yau}
%P.C. Yanga and S.T. Yau.
%\newblock Eigenvalue of the Laplacian of compact Riemannian surfaces and minimal submanifolds.
%\newblock {\em Ann. Scuola Norm. Supe. Pisa Cl. Sci.}, \textbf{7}, (1980), 55--63.
\end{thebibliography}
\end{document}